\documentclass[reqno, a4paper,12pt]{amsart}

\usepackage[capposition=bottom]{floatrow}
\usepackage{euscript,verbatim}
\usepackage{graphicx}
\usepackage[usenames]{color}
\usepackage{xurl}
\usepackage[colorlinks,linkcolor=red,anchorcolor=blue,citecolor=blue]{hyperref}
\usepackage{amsmath, mathtools,scalerel,xparse}
\usepackage{mathtools}
\usepackage{amsthm}
\usepackage[all]{xy}
\usepackage{amssymb} 
\usepackage{bm}
\usepackage{booktabs}

\usepackage[all]{xy}

\usepackage{mathrsfs}
\usepackage{amscd}

\usepackage{enumitem}
\usepackage[numbers]{natbib}

\usepackage{mathptmx}
\usepackage[T1]{fontenc}

\usepackage{microtype,needspace}
\usepackage{titlesec}

\usepackage{placeins}

\DeclareMathAlphabet{\mathcal}{OMS}{cmsy}{m}{n}

\makeatletter
\numberwithin{equation}{section}

\theoremstyle{plain}
\newtheorem{main theorem}{Main Theorem}
\newtheorem{theorem}{Theorem}[section]
\newtheorem{lemma}[theorem]{Lemma}

\newtheorem{corollary}[theorem]{Corollary}
\newtheorem{proposition}[theorem]{Proposition}
\newtheorem{claim}[theorem]{Claim}

\newtheorem*{theorem*}{``Theorem''}
\theoremstyle{definition}
\newtheorem{definition}[theorem]{Definition}
\newtheorem{remark}[theorem]{Remark}
\newtheorem{example}[theorem]{Example}

\newtheorem*{example*}{Example}
\newtheorem*{remark*}{Remark}

\newtheoremstyle{break}
{\topsep}{\topsep}%
{\itshape}{}%
{\bfseries}{}%
{\newline}{}%
\theoremstyle{break}

\newtheoremstyle{break}
{\topsep}{\topsep}%
{\normalshape}{}%
{\bfseries}{}%
{\newline}{}%

\numberwithin{equation}{section}

\newcommand{\diam}{\mathrm{diam}}
\newcommand{\Span}{\mathrm{Span}}

\newcommand{\setcond}{\hspace{2pt} \middle| \hspace{2pt}}

\DeclareMathAlphabet{\pazocal}{OMS}{zplm}{m}{n}
\DeclareMathOperator{\supp}{supp}

\DeclareMathOperator{\dist}{dist}

\newcommand{\one}{\mathbf1}
\newcommand{\eps}{\varepsilon}
\newcommand{\Scal}{\mathcal S}

\newcommand{\Kcal}{\mathcal K}
\newcommand{\Ncal}{\mathcal N}

\newcommand{\Lcal}{\mathcal L}
\newcommand{\Ecal}{\mathcal E}
\newcommand{\Ucal}{\mathcal U}
\newcommand{\Cscr}{\mathscr C}
\newcommand{\Escr}{\mathscr E}

\newcommand{\Hscr}{\mathscr H}

\newcommand{\Mscr}{\mathscr M}
\newcommand{\Uscr}{\mathscr U}

\newcommand{\norm}[1]{\left\lVert#1\right\rVert}
\newcommand{\abs}[1]{\left\lvert#1\right\rvert}
\newcommand{\ip}[2]{\left\langle#1,#2\right\rangle}
\newcommand{\ld}{\underline{d}_{\,*}}
\newcommand{\ud}{\overline{d}_{*}}
\newcommand{\lud}{d_*}

\NewDocumentCommand{\Int}{e{_^}m}{\ThisStyle{%
  \mathsurround=0pt
  \setbox4=\hbox{$\SavedStyle\int$}%
  \setbox6=\hbox{$\SavedStyle\int\nolimits_{}$}%
  \setbox8=\hbox{$\SavedStyle#3$}%
  \setbox0=\hbox{$\SavedStyle
    \scalerel*[1.1\wd4]{\copy4}
      {\vphantom{\int}\scalebox{.9}{\copy8}}$}%
  \setlength{\dimen0}
    {(\wd4-\wd6+\scriptspace)*\ratio{\wd0}{\wd4}}%
  \mathop{\vcenter{\hbox{\copy0}}}\nolimits
  \IfValueT{#1}{_{\kern-\dimen0 #1}}%
  \IfValueT{#2}{^{#2}}#3%
}}

\newcommand{\RN}[1]{%
\textup{\uppercase\expandafter{\romannumeral#1}}%
}

\titleformat{\section}
{\normalfont\bfseries\Large} % font for section titles
{\thesection} % section number
{1em} % space between number and title
{\bfseries} % makes the title bold
\titleformat{\subsection}
{\normalfont\bfseries\normalsize}
{\thesubsection} 
{1em} 
{\bfseries}

\hypersetup{pdftitle={Constellations for Lyapunov exponents},pdfauthor={Nima Alibabaei}}
\begin{document}

\newcommand\titlelowercase[1]{\texorpdfstring{\lowercase{#1}}{#1}}

\font\mathptmx=cmr12 at 12pt

\title[\fontsize{13}{12}\mathptmx {\it{C\titlelowercase{onstellations for} L\titlelowercase{yapunov exponents}}}]{\LARGE C\titlelowercase{onstellations for} L\titlelowercase{yapunov exponents}}

\author[\fontsize{13}{12}\mathptmx {\it{N\titlelowercase{ima} A\titlelowercase{libabaei}}}]{\fontsize{13}{12}\mathptmx N\titlelowercase{ima} A\titlelowercase{libabaei}}

\subjclass{37H15, 15B48, 28A80}

\keywords{Lyapunov exponent, random products of matrices, intersection of fractals, Hausdorff dimension, random Fibonacci sequence}

\begin{abstract}
We study the maximal Lyapunov exponent of random products of positive matrices, and prove that it can be calculated to arbitrary precision with arithmetic work polynomial in the number of requested digits. In nondegenerate settings, the complexity of our method depends on the intrinsic complexity of the dynamics---which we characterize---and not on the Euclidean dimension of the ambient space in which the dynamics live. As numerical applications, we determine the Hausdorff dimension of random intersections of Sierpi\'nski gaskets, and Viswanath's constant---the growth rate of the random Fibonacci sequence.

We first establish a general inequality for nonnegative matrix families showing that, if a signed measure with finite support is sufficiently close to being stationary, then it predicts the Lyapunov exponent accurately. This framework also encompasses Pollicott's Fredholm determinant approximation. We then show that, just like drawing a constellation by picking bright stars, one can construct a sequence of signed measures approximating stationarity by picking informative orbits of the projective dynamics. Furthermore, we prove that our construction uses an asymptotically optimal number of atoms, which is governed by an intrinsic quantity we call \emph{constellation dimension}.
\end{abstract}
\maketitle

\section{Introduction} \label{section:introduction}

Let $A_0,\ldots,A_{m-1}$ be $(d+1)\times(d+1)$ matrices, where $d\geq 1$ is an integer. Consider multiplying them randomly, choosing the indices independently with positive probabilities $\pi_0,\ldots,\pi_{m-1}$ which are fixed throughout. The asymptotic growth-rate,
\begin{equation} \label{eq: definition of lambda}
\lambda \;=\; \lim_{n\to\infty}\frac1n\log\norm{A_{i_1}\cdots A_{i_n}}
\end{equation}
exists and is constant almost surely, independent of the matrix norm \cite{Furstenberg--Kesten}. It is called the \emph{(maximal) Lyapunov exponent}. In this article, we develop a theory on the approximation of the concept of \emph{stationarity}. For (entrywise) positive matrices, we obtain a method to explicitly calculate $\lambda$ with a polynomial number of arithmetic operations in the number of required digits.

As an application, we determine the Hausdorff dimension of the typical random intersection of Sierpinski gaskets $\Scal\subset (\mathbb R/\mathbb Z)^2$. Kenyon--Peres \cite{Kenyon--Peres: translated Cantor sets} report a bound of \mbox{$0.97<\dim_H\big(\Scal\cap(\Scal+t)\big)<1.166$} for Lebesgue a.e.\ $t\in(\mathbb R/\mathbb Z)^2$. Our method yields, for a.e.\ $t$,
\[
\dim_H\!\big(\Scal\cap(\Scal+t)\big)
=
1.057167333781180286955509\cdots.
%\quad (\text{Lebesgue a.e.\ $t\in(\mathbb R/\mathbb Z)^2$})
\]

\subsection{Preliminaries}

Explicit calculation of Lyapunov exponents is known to be notoriously difficult. Kingman \cite[p.897]{Kingman} writes: “Pride of place among the unsolved problems of subadditive ergodic theory must go to the calculation of [the Lyapunov exponent]”. Peres posed the challenge: “Devise reasonably general and effective algorithms for explicit calculation (or at least approximation) of Lyapunov exponents.'' \cite[Question 3]{Peres92}. The problem has remained challenging to this date, even for seemingly simple cases of random matrix products.

An important vein of research was developed by Pollicott \cite{Pollicott10} for i.i.d.\ products of positive invertible matrices. It expresses $\lambda$ through Fredholm determinants of transfer operators, whose $n$-th coefficient as an entire function is determined by all matrix products of length at most $n$. The Taylor series converge rapidly, but their evaluation requires exponentially many words, resulting in \emph{subexponential} complexity in the number of requested digits; to achieve error $\eps>0$, one needs
\[
O\left(\exp\left(\alpha\big(\log(1/\eps)\big)^{\frac{d}{d+1}}\right)\right)
\]
arithmetic operations for some $\alpha>0$. For $2\times 2$ positive matrices, this direction was later developed further by Jurga--Morris \cite{Jurga--Morris} and again by Pollicott \cite{Pollicott20}, with the same arithmetic complexity. We note that the error bound given in \cite{Pollicott10} in higher dimensions ($d\geq2$) is not explicit. See the discussion in \cite[Section 6]{Jurga--Morris} for the difficulty of this approach in higher dimensions.

In a different direction, Bandtlow--Slipantschuk \cite{Bandtlow--Slipantschuk} develop a Lagrange-interpolation approximation of the transfer operator for positive invertible $2\times 2$ matrices with i.i.d.\ law. Their convergence estimate suggests polynomial arithmetic complexity, yet their Lyapunov-exponent error bound remains asymptotic. The method obtained by Protasov--Jungers \cite{Protasov--Jungers} treats nonnegative families, but could have doubly exponential work in $\log(1/\eps)$.

For $2\times 2$ positive families with i.i.d.\ dependence, the author discovered in \cite{Alibabaei26-1} a method to compute $\lambda$ explicitly with polynomial complexity; only $O\big(\big(\log(1/\eps)\big)^3\big)$ arithmetic operations are needed. Then, it was generalized to uniformly hyperbolic families driven by a Markov shift in \cite{Alibabaei26-2}. However, this approach is also essentially limited to the one-dimensional case ($2\times2$ matrices) as it uses rigid functions that ignore the dynamics. The curse of dimensionality makes a na\"ive approach using such functions practically infeasible even for $d=2$.

The purpose of this paper is to develop a method to compute $\lambda$ with polynomial complexity, for positive matrices of arbitrary dimension. To the author's knowledge, this is the first method to compute $\lambda$ for general families of $d\geq2$ in work polynomial in the number of requested digits. Our framework also explains Pollicott's Fredholm determinant approximations; see Remark~\ref{rmk: Fredholm determinant constellations}.

\subsection{Stationary measures}

The notion of \emph{stationary measures} encodes long-term information about the dynamics. Denote by $\norm{x}_1$ the sum of absolute values of entries of $x\in\mathbb R^{d+1}$, and let
\[
\Delta = \left\{x\in\mathbb R^{d+1} \setcond x_j\geq0,\ \norm{x}_1=1\right\}
\]
be a simplex, which we identify with the projective nonnegative cone. Denote its interior by $\Delta^\circ$. A nonzero, nonnegative matrix $A$ acts projectively as
\[
f_A:\Delta\setminus\ker_\Delta A\longrightarrow\Delta,
\qquad
f_A(x)=\frac{A x}{\norm{A x}_1}.
\]
Here $\ker_\Delta A \coloneqq \left\{ x\in\Delta\setcond Ax = 0\right\}$. If $A$ does not contain a zero row, $f_A$ maps $\Delta^\circ$ into itself.

In what follows, we assume that $A_0, \ldots, A_{m-1}$ are (entrywise) nonnegative and no $A_i$ contains a zero row. We will write $f_i\coloneqq f_{A_i}$ throughout this paper. We define the transfer operator $\Lcal$ for a continuous function $g:\Delta\to\mathbb R$ (or $g:\Delta^\circ\to\mathbb R$) by
\begin{equation}\label{eq:transfer operator}
\Lcal g(x)=\sum_i\pi_i \, g\circ f_i(x).
\end{equation}
For a finite signed measure $\nu$ on $\Delta$, we define $\Lcal^*\nu \coloneqq \sum_{i=0}^{m-1} \pi_i \nu\circ f_i^{-1}$.

\begin{definition}
A probability measure $\nu$ on $\Delta$ is said to be \emph{stationary} when $\Lcal^* \nu = \nu$.
\end{definition}

When all $A_0,\ldots,A_{m-1}$ are invertible, Furstenberg--Kifer \cite{Furstenberg--Kifer} prove
\begin{equation} \label{eq: variational principle for stationary measures}
\lambda
=
\sup_{\nu:\,\text{stationary}}\;
\nu(\Phi),
\qquad
\Phi(x) = \sum_{i = 0}^{m-1} \pi_i \log\norm{A_i x}_1.
\end{equation}
Here and henceforth, $\mu(f) \coloneqq \int_\Delta f\,d\mu$ for a signed measure $\mu$ and an integrable function $f$. Also, we set $\log0=-\infty$. Under positivity, the stationary measure $\nu$ is unique.

\subsection{Main results \RN{1}}

Our central insight is that, instead of approximating the stationary measure, we should approximate the concept of \emph{stationarity}. With this perspective, it is natural to consider signed measures, not just probability measures.
\begin{definition}
A signed measure $\eta$ on $\Delta^\circ$ is called a \emph{constellation} if it is finitely supported and $\eta(\Delta^\circ)  = 1$. In particular, for a natural number $n$ and $b\in\Delta^\circ$, an \emph{$n$-constellation with anchor $b$} is a signed measure $\eta$ on $\Delta^\circ$ such that
\[
\eta \;=\; \delta_b \;+\, \sum_{j=1}^n\alpha_j(\delta_{u_j}-\delta_{v_j}).
\qquad
\big( \alpha_j \in \mathbb R, \;\; u_j,v_j\in\Delta^\circ \big)
\]
Here, $\delta_x$ is the Dirac measure at $x\in\Delta$.
\end{definition}

Our method tries to simulate stationarity with sequentially evolving constellations. By introducing an auxiliary probability experiment involving joint exponential clocks, we arrive at the following kernel: for a \emph{reference} $a\in\Delta^\circ$,
\begin{equation}\label{eq:kernel}
K_a(x,y)
\;\coloneqq\;
\prod_{j=1}^{d+1} \frac{(x_j+a_j)(y_j+a_j)}{2a_j(x_j+y_j)},
\qquad x,y\in\Delta^\circ.
\end{equation}
Lemma \ref{lem: inner product of ell is the kernel} proves its positive-definiteness. A posteriori, we recognize $K_a$ as the restriction of the reproducing kernel of the Hardy space on the unit complex polydisc through the coordinatewise Cayley transform; see \S\S \ref{subsec: Connection to the Hardy kernel}. For a finitely supported signed measure $\sigma=\sum_jb_j\delta_{x_j}$ on $\Delta^\circ$, define
\begin{equation}\label{eq:kernel-norm}
 \norm{\sigma}_{K_a}^2 \;\coloneqq\; \sum_{j,k}b_jb_kK_a(x_j,x_k).
\end{equation}

\begin{theorem}[Clock inequality] \label{thm: Clock inequality}
Let $A_0,\ldots,A_{m-1}$ be nonnegative matrices, and suppose that no $A_i$ contains a zero row. Then for any constellation $\eta$ and $a\in\Delta^\circ$,
\begin{equation*}
\left| \lambda - \eta(\Phi) \right|^2
\;\leq\;
\frac{\pi^2}{2}\norm{\eta-\Lcal^*\eta}^2_{K_a}.
\end{equation*}
\end{theorem}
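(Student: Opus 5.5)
The plan is to reduce the statement to a uniform bound on how the signed measure $\sigma\coloneqq\eta-\Lcal^*\eta$ pairs with a family of logarithmic functions. That bound then comes from the reproducing kernel structure of $K_a$ given by Lemma~\ref{lem: inner product of ell is the kernel}. Note that $\sigma$ has total mass $0$, since $\eta(\Delta^\circ)=1$ and $\Lcal^*$ preserves total mass. If $\sigma=0$ we need $\eta(\Phi)=\lambda$, which the argument below also yields. The pairing step (Step~3) is the part I have not verified.

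\emph{Step 1 (telescoping cocycle).} For $n\ge0$ and $x\in\Delta^\circ$ put $g_n(x)\coloneqq\mathbb E\log\norm{A_{i_n}\cdots A_{i_1}x}_1$, with $g_0=0$. The multiplicativity $\log\norm{BAx}_1=\log\norm{Ax}_1+\log\norm{B f_A(x)}_1$ gives the recursion $g_{n+1}=\Phi+\Lcal g_n$. Every value is finite, because no $A_i$ has a zero row and hence each $f_i$ maps $\Delta^\circ$ into itself. Pairing the recursion with $\eta$ yields
\[
\eta(\Phi)=\eta(g_{n+1})-\eta(g_n)+\sigma(g_n).
\]
Averaging over $0\le n<N$ gives
\[
\eta(\Phi)=\tfrac1N\eta(g_N)+\tfrac1N\sum_{n<N}\sigma(g_n).
\]

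\emph{Step 2 (limit).} For each fixed $x\in\Delta^\circ$ we have $\tfrac1N g_N(x)\to\lambda$. This follows from Furstenberg--Kesten together with dominated convergence, since $\log\norm{Mx}_1$ and $\log\norm{M}$ differ by at most $\log\min_j x_j^{-1}$. As $\eta$ is finitely supported with mass $1$, we get $\tfrac1N\eta(g_N)\to\lambda$. Hence
\[
|\lambda-\eta(\Phi)|\le\sup_n|\sigma(g_n)|,
\]
and it remains to show $|\sigma(g_n)|\le\tfrac{\pi}{\sqrt2}\norm{\sigma}_{K_a}$ for every $n$.

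\emph{Step 3 (RKHS bound; main obstacle).} Each $g_n$ is an average, over random products $M$, of functions
\[
h_c(x)=\log(c\cdot x),\qquad c_j=\text{$j$-th column sum of }M\ge0,\ c\neq0.
\]
Since $\sigma$ kills constants and $|\sigma(\cdot)|$ is convex, it suffices to show $|\sigma(h_c)|\le\tfrac{\pi}{\sqrt2}\norm{\sigma}_{K_a}$ for each such $c$. By Lemma~\ref{lem: inner product of ell is the kernel}, $K_a(x,y)=\langle\ell_x,\ell_y\rangle_{L^2(\mathbb P)}$ for feature functions $\ell_x$ defined through the exponential clocks, so $\norm{\sigma}_{K_a}=\norm{\sum_j b_j\ell_{x_j}}_{L^2}$. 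I will look for $F_c\in L^2(\mathbb P)$ with $\langle\ell_x,F_c\rangle=\log(c\cdot x)+C$ for some constant $C$ and with $\norm{F_c-\mathbb E F_c}^2\le\pi^2/2$. Cauchy--Schwarz then gives
\[
|\sigma(h_c)|=\Big|\big\langle\textstyle\sum_jb_j\ell_{x_j},F_c\big\rangle\Big|\le\tfrac{\pi}{\sqrt2}\norm{\sigma}_{K_a}.
\]
If $\mathbb E\ell_x$ is not constant in $x$, I would instead absorb constants by subtracting from $F_c$ its projection onto $\mathrm{span}\{1\}$, using that $\sigma$ has mass $0$.

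The natural candidate for $F_c$ is a centred logarithm of a first arrival time, for instance $-\log\min_j\big(E_j/c_j\big)$ for independent unit exponentials $E_j$ reweighted by $\ell_x$. The reason is that $\min_j E_j/(c_jx_j)$ is exponential with rate $c\cdot x$, so the mean of minus its logarithm is $\log(c\cdot x)+\gamma$. Its fluctuations are then governed by variances of $\log$ of exponential variables, each equal to $\pi^2/6$, and the constant $\pi^2/2$ should arise as a combination of three such terms (or as $\pi^2/3+\pi^2/6$).

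Verifying this representation against the explicit product form of $K_a$, and checking that the bound is uniform in $c$ including when some $c_j=0$ (handled by a limit $c_j\downarrow0$), is the step I expect to be hardest. If the clock representation proves awkward, the fallback is to pass through the Hardy-space picture: compute the Hardy norm of $\log$ of a linear form after the coordinatewise Cayley transform, which has the same structure.
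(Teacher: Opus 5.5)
Your Steps~1 and~2 are exactly the paper's argument. Your $g_n$ is the paper's $S_n\Phi=\sum_{|w|=n}\pi_w\log\norm{A_wx}_1$, and the paper also concludes via the Ces\`aro identity $\eta(\Phi)=\frac1N\eta(g_N)+\frac1N\sum_{n<N}\sigma(g_n)$ and $\frac1Ng_N(x)\to\lambda$. The reduction to a single vector $c=M^\top\one$ is likewise the paper's.

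The genuine gap is Step~3. It carries the whole analytic content of the theorem (it is Lemma~\ref{lem: integral by sigma is bounded by norm}), and the representer you propose for it does not work. The functions whose Gram matrix is $K_a$ are $\ell_{x,a}=d\Escr_{(x+a)/2}/d\Escr_a$, so $\ip{\ell_{x,a}}{F}_{L^2(\Escr_a)}=\mathbb E_{\Escr_{(x+a)/2}}[F]$. Under this midpoint law, $\tau_c=\min_{j:\,c_j>0}T_j/c_j$ is exponential with rate $c^\top(x+a)/2$. Hence $F=-\log\tau_c$ represents $\log\big(c^\top(x+a)\big)+\mathrm{const}$, which is not $\log(c^\top x)$ plus a constant. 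You could instead reweight by $d\Escr_x/d\Escr_a$, which is what produces the rate $c^\top x$ in your heuristic. But that gives the Gram kernel $\prod_j x_jy_j/\big(a_j(x_j+y_j-a_j)\big)\neq K_a$, and the reweighting is not even square integrable unless $2x_j>a_j$ for all $j$. The constant $\pi^2/2$ also does not come from variances of logarithms of exponential variables.

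The missing idea is how to undo the midpoint shift. The paper inserts $e^{-sc^\top x}=e^{sc^\top a}\,\mathbb P_{\Escr_{(x+a)/2}}(\tau_c>2s)$ into the Frullani identity $\log(c^\top x/c^\top a)=\int_0^\infty\big(e^{-sc^\top a}-e^{-sc^\top x}\big)\,ds/s$. This yields the representer $G_{c,a}=\int_0^\infty\big(e^{-sc^\top a}-e^{sc^\top a}\one_{\tau_c>2s}\big)\,ds/s$. It is a function of $\tau_c$, but not $-\log\tau_c$, and it satisfies $\log(c^\top x/c^\top a)=\mathbb E_{\Escr_a}[G_{c,a}\ell_{x,a}]$ for every $x$. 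A direct computation gives $\norm{G_{c,a}}^2_{L^2(\Escr_a)}=2\int_0^1u^{-1}\log\frac{1+u}{1-u}\,du=4\sum_{n\geq0}(2n+1)^{-2}=\pi^2/2$, independently of $c$. Zero entries of $c$ are simply omitted from the minimum, so no limit $c_j\downarrow0$ is needed.

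Your Hardy-space fallback could also be completed, and it would give a genuinely different proof of this step.
\begin{itemize}
\item Set $w_j=c_ja_j/c^\top a$ and $F(\zeta)=\sum_jw_j\frac{1+\zeta_j}{1-\zeta_j}$. Then $\log(c^\top x/c^\top a)=\log F(\chi(x))$, with $\operatorname{Re}F>0$ and $F(0)=1$.
\item The angular average of $(\log F)^2$ over each torus $\{|\zeta_j|=r\}$ equals $(\log F(0))^2=0$. Hence the real and imaginary parts of $\log F$ have equal $L^2$ norms there.
\item Since $|\arg F|<\pi/2$, this gives $\norm{\log F}^2_{H^2(\mathbb D^{d+1})}\leq 2(\pi/2)^2=\pi^2/2$.
\item The Pull-back theorem then gives $\norm{\log(c^\top\cdot/c^\top a)}_{\Hscr_a}\leq\pi/\sqrt2$.
\end{itemize}
As written, however, neither route is carried out, so the proposal does not yet prove the inequality.
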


This shows that a constellation can accurately predict the infinite-time object $\lambda$ by being nearly invariant under a single step of the dynamics. This near-stationarity is quantified by the $K_a$-norm of the \emph{stationary defect} $\eta-\Lcal^*\eta$, which is explicitly calculable.

\smallskip

The stationary defect, measured with $K_a$, guides the discovery of informative points to add to the current constellation. For positive matrices, this yields a polynomial-complexity algorithm for computing the Lyapunov exponent. The construction of the constellations is simple and deterministic, and is given at the beginning of Section~\ref{section: construction of constellations}.

\begin{theorem} \label{thm: polynomial computation}
Fix any $a,b\in\Delta^\circ$ and a finite family of positive matrices. Then one can construct a sequence $\eta_1,\eta_2,\ldots$ where each $\eta_n$ is an $n$-constellation with anchor $b$ satisfying
\[
\norm{\eta_n-\Lcal^*\eta_n}_{K_a} \leq Ce^{-cn^{1/d}}
\]
for some $C,c>0$ independent of $n$. Furthermore, $\lambda$ can be approximated to error $\eps>0$ using
\begin{equation}\label{eq:universal-work}
O\left(\big(\log(1/\eps)\big)^{3d}\right)
\end{equation}
arithmetic operations.
\end{theorem}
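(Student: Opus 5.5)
Our plan is to reduce everything to an approximation problem in the reproducing kernel Hilbert space $\Hscr_a$ of $K_a$, and then to bound that problem using the analyticity that positivity provides.

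By Lemma~\ref{lem: inner product of ell is the kernel}, $\norm{\sigma}_{K_a}$ is the norm in $\Hscr_a$ of the kernel embedding $k_\sigma=\sum_j b_j K_a(x_j,\cdot)$. For an $n$-constellation $\eta=\delta_b+\sum_j\alpha_j(\delta_{u_j}-\delta_{v_j})$ the defect is
\[
\eta-\Lcal^*\eta=(\delta_b-\Lcal^*\delta_b)+\sum_j\alpha_j\big((\delta_{u_j}-\Lcal^*\delta_{u_j})-(\delta_{v_j}-\Lcal^*\delta_{v_j})\big).
\]
Minimizing its norm over the $\alpha_j$ is therefore a least-squares projection of the vector $k_{\delta_b-\Lcal^*\delta_b}$ onto the span of the $n$ vectors $w_j=k_{(I-\Lcal^*)(\delta_{u_j}-\delta_{v_j})}$.

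We construct the points greedily along orbits. We take $u_j,v_j$ to be images $f_{i_1}\circ\cdots\circ f_{i_k}(b)$ and $f_{i_1}\circ\cdots\circ f_{i_k}(a)$ for words of increasing length. This makes the new atoms of $\Lcal^*\eta$ cancel the old ones telescopically: adding $\delta_{f_w b}$ kills the defect atom at $f_w b$ and creates atoms at $f_i f_w b$. Words are added in order of decreasing weight $\pi_w$ times the contraction $\diam f_w(\Delta)$. The coefficients $\alpha_j$ are then either solved by least squares or simply set to $\pi_w$.

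The decay bound comes from positivity through a dual description of the defect norm. We write
\[
\norm{\eta-\Lcal^*\eta}_{K_a}=\sup\{\eta((I-\Lcal)g): \norm{g}_{\Hscr_a}\le1\}.
\]
Via the Cayley transform, functions in $\Hscr_a$ are Hardy functions on the polydisc. Positive matrices make each $f_i$ a strict Hilbert-metric contraction that extends holomorphically to a complex neighbourhood of $\Delta$ mapped compactly into a smaller neighbourhood. Hence $\Lcal$ maps the unit ball of $\Hscr_a$ into functions holomorphic on a fixed polyradius larger region. Such functions admit polynomial approximation on $\Delta$ of degree $N$ with error $\rho^{-N}$.

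Constellations built from $n\asymp N^d$ points on the orbit tree can reproduce the evaluation functionals against all polynomials of degree $\le N$ in the $d$ affine coordinates. This is a moment-matching, or quadrature, argument: choose the $\alpha_j$ so that $\eta-\Lcal^*\eta$ annihilates polynomials of degree $\le N$. We then bound the remaining analytic tail using a Bernstein-ellipse estimate together with the total variation of $\eta$. This gives $\norm{\eta_n-\Lcal^*\eta_n}_{K_a}\le Ce^{-cn^{1/d}}$.

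The main obstacle is controlling the total variation $\sum|\alpha_j|$, that is, the conditioning of the moment system. We would handle it in one of two ways. The first is to pick points along orbits that are well spread, using unisolvent nodes on a grid of contracted simplices, so that the Lebesgue constants grow only exponentially in $N$, which the $\rho^{-N}$ decay absorbs after adjusting $\rho$. The second is to compare directly with the optimal least-squares choice, which is no worse than any explicit choice.

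Finally, Theorem~\ref{thm: Clock inequality} gives $|\lambda-\eta_n(\Phi)|\le\frac{\pi}{\sqrt2}Ce^{-cn^{1/d}}$, so accuracy $\eps$ needs $n\asymp(\log(1/\eps))^d$.

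Computing $\eta_n$ takes the following work. Evaluating the Gram matrix of the $w_j$ uses $O(n^2m^2)$ kernel evaluations. Solving the least-squares system costs $O(n^3)$, or the same amortized by incremental Cholesky updates as points are added. Evaluating $\eta_n(\Phi)$ costs $O(n)$. The total is $O(n^3)=O\big((\log(1/\eps))^{3d}\big)$ arithmetic operations, which is \eqref{eq:universal-work}.
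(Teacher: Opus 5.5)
Your proposal has a genuine gap, and it is the step you flag yourself as the main obstacle. The analytic tail estimate only gives $\norm{\eta-\Lcal^*\eta}_{K_a}\lesssim\big(1+\sum_j|\alpha_j|\big)\rho^{-N}$. So the decay claim rests entirely on two facts you do not prove. First, the particular orbit nodes $f_wb,f_wa$ chosen by your weight-times-diameter rule must admit a solution of the moment system $(\eta-\Lcal^*\eta)|_{\mathcal P_N}=0$ at all. Second, that solution must satisfy $\sum_j|\alpha_j|\leq C\varrho^N$ for some $\varrho<\rho$.

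Your first remedy does not work as stated. The base $\rho$ is not yours to adjust: it is fixed by the distance of the relevant compact set from the boundary $\{\operatorname{Re}z_j=0\}$ of the region where unit-ball functions of $\Hscr_a$ are holomorphic. The growth of your conditioning constants, by contrast, is dictated by the dynamics. Classical Lebesgue-constant bounds also do not apply here. The constraints are not evaluations at freely placed unisolvent nodes but functionals $P\mapsto(I-\Lcal)P(f_wb)-(I-\Lcal)P(f_wa)$, essentially one scaled directional derivative per pair. These sit at points confined to simplices $f_w(\Delta)$ of diameter $O(\kappa^{|w|})$ and accumulating on $\Kcal$. That set may lie in a proper algebraic subvariety, where the constraints coming from $\mathcal P_N$ become nearly dependent.

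Your second remedy is circular. Least squares on fixed nodes is no worse than any explicit choice on those nodes, but you still need one good explicit choice.

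The fallback $\alpha_j=\pi_w$ does not help either. Telescoping of this kind leaves boundary terms such as $(\Lcal^*)^{k+1}(\delta_b-\delta_a)$. At best this gives defect $O(\kappa^k)$ after about $m^k$ atoms, i.e.\ a number of atoms polynomial in $1/\eps$. Note that mere existence of good constellations is cheap. Carath\'eodory--Tchakaloff gives a probability measure on $\Kcal$ with $O(N^d)$ atoms agreeing with $\nu$ on $\Span\{1\}+(I-\Lcal)\mathcal P_N$, hence with defect $O(\rho^{-N})$. But its nodes are unknown and are not your orbit points, so it yields neither a construction nor a work bound.

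The paper never estimates the coefficients $\alpha_{n,j}$. Its pairs are chosen adaptively from the current residual $r_n$ by the score search. Three ingredients combine: the lower bound $q_n/\rho\geq E_n^2/\sum_i\pi_i\rho(b,f_ib)$ at the initial pair, the increase of $q_n/\rho$ along rejected pairs, and the acceptance test. Together they yield the height estimate $\dist(\phi_{n+1},V_n)\geq cE_n$ for $\phi_{n+1}=(s_{u_{n+1}}-s_{v_{n+1}})/\rho(u_{n+1},v_{n+1})$.

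Separately, every normalized difference $(s_x-s_y)/\rho(x,y)$ with $x\neq y\in S$ lies within $Ce^{-cN}$ of a subspace of dimension $O(N^d)$. This comes from Taylor truncation of $e^{\vartheta_x}$ in the clock representation, plus truncated approximations of the derivatives for nearby pairs. The Gram-determinant sandwich of DeVore--Petrova--Wojtaszczyk then forces $E_n\leq Ce^{-cn^{1/d}}$.

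For your non-adaptive node set there is no analogue of the height estimate, nor any other argument that the span of your chosen directions approximates $s_b$ well. That is the missing ingredient. Your operation count is fine once the decay is in hand. It matches the paper's $O(n^3)$ from incremental $LDL^\top$ updates; there the search adds only $O(n\log(1/\eps))$ per step.
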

In this paper, arithmetic operations are defined as field operations and evaluation of logarithms.

\begin{remark} \label{rmk: Fredholm determinant constellations}
For positive invertible matrices, Pollicott \cite{Pollicott10} truncates the Fredholm determinant of the transfer operator at order $N$: this yields a linear functional $\mathcal I_N$ on analytic functions that lets us approximate $\lambda$ by $\widehat\lambda_N=\mathcal I_N(\Phi)$. He proves (Theorem~\ref{thm: Pollicott superexponential decay}) that there are $C, \gamma>0$ with
\begin{equation*}
\abs{\lambda-\widehat\lambda_N}\leq C\exp\left(-\gamma \,N^{1+1/d}\right).
\end{equation*}
In Appendix~\ref{appendix: Fredholm determinants and constellations}, we will see that computing $\widehat\lambda_N$ amounts to determining the support and masses of a constellation $\eta_N^{\mathrm F}$ satisfying $\mathcal I_N=\eta_N^{\mathrm F}$. Moreover, for each $a\in\Delta^\circ$, there are $\widetilde C,\widetilde\gamma>0$ such that
\begin{equation*}
\norm{\eta_N^{\mathrm F}-\Lcal^*\eta_N^{\mathrm F}}_{K_a}\leq \widetilde C\exp\left(-\widetilde\gamma \,N^{1+1/d}\right).
\end{equation*}
The clock inequality therefore recovers Pollicott's rate of convergence and provides an explicitly calculable error bound for each of these approximations.

The Fredholm determinant method constructs $\eta_N^{\mathrm F}$ on all periodic orbits with period up to $N$. Our method instead reduces the stationary defect directly by selecting informative orbits. This lets us achieve substantially more precision with the same number of points for the examples we have tested. See Appendix~\ref{appendix: Comparison to Fredholm determinant approach} for a numerical comparison.
\end{remark}

\subsection{Main results \RN{2}}

The natural question is whether the constructed constellations are close to optimal. We will see that the answer is affirmative in the asymptotic sense, and the approximation rate is controlled by an intrinsic value we call \emph{constellation dimension}, rather than the ambient dimension $d$.

Let us define the minimal number of points needed to achieve an accurate approximation of stationarity. For the remainder of this subsection, we assume that all $A_i$ are positive. Take any $a\in\Delta^\circ$, and let
\begin{equation} \label{eq: minimal model size}
r_{\min}^{\,a}(\eps)
\;\coloneqq\;
\min\left\{\#\supp\eta \setcond
\eta\text{ is a constellation, and }
\norm{\eta-\Lcal^*\eta}_{K_a}\leq \eps
\right\}.
\end{equation}
We call this the \emph{minimal model size}. Here, $\#$ denotes the number of points of a finite set.

Asymptotically, this $r_{\min}^{\,a}$ admits a spectral interpretation. We define $k_x:\Delta^\circ\to\mathbb R$ by $k_x(y)~\coloneqq~K_a(x,y)$, and their inner product by
\[
\ip{k_x}{k_y} \coloneqq K_a(x,y).
\]
Let $\big(\Hscr_a, \ip{\cdot}{\cdot}_{\Hscr_a}\big)$ be the resulting reproducing-kernel Hilbert space generated by $\{k_x:\Delta^\circ\to\mathbb R \,:\, x\in\Delta^\circ\}$. We denote by $L^2(\nu)$ the set of real-valued functions that are square integrable against the stationary measure $\nu$, identified up to $\nu$-a.e.\ equality, and define the bounded linear operator
\[
T_a:\Hscr_a \longrightarrow L^2(\nu), \qquad T_a(u) = u - \Lcal(u).
\]
We denote by $T_a^*$ its adjoint operator, and define the stationary-defect covariance operator:
\[
\Cscr_a: \Hscr_a\longrightarrow\Hscr_a, \qquad \Cscr_a = T_a^*T_a^{}.
\]
Then
\[
\ip{u}{\Cscr_a u}_{\Hscr_a}
=\int_\Delta|u-\Lcal u|^2\,d\nu.
\qquad (u\in\Hscr_a)
\]

%The operator $\Cscr_a$ records which observations have appreciable stationary defect under the stationary law. It is positive and trace class, as proved in Section~\ref{sec:source}.

%Letting $s_x(y) \coloneqq k_x(y) - \sum_i\pi_i\,k_{f_ix}(y)$, for any continuous function $u:\Delta\to\mathbb R$, we have
%\[ \ip{u}{s_x}_{\mathcal H} = u(x) - \mathscr Lu(x). \]
% Therefore, $s_x$ is a functional that extracts the stationary defect of a function.

Lemma~\ref{lem: the covariance operator is positive and trace class} proves that $\Cscr_a$ is positive and trace class, so we apply the Spectral Theorem to $\Cscr_a$ and write the eigenvalues $\sigma_j = \sigma_j(\Cscr_a)$ as $\sigma_1\geq\sigma_2\geq\cdots\geq0$ in non-increasing order and with multiplicities, where we append zeros if there are only finitely many. For $t\geq0$, let
\[
\Ncal_a(t) \coloneqq 1+\#\left\{j\in \mathbb N \setcond \sigma_j\geq e^{-t}\right\}.
\]

\begin{definition}
The \emph{lower} and \emph{upper constellation dimensions} are, respectively,
\begin{equation}\label{eq: constellation dimensions}
\ld \;\coloneqq\; \liminf_{t\to\infty}\frac{\log\Ncal_a(t)}{\log t},
\qquad
\ud \;\coloneqq\; \limsup_{t\to\infty}\frac{\log\Ncal_a(t)}{\log t}.
\end{equation}
When they agree, their common value $\lud$ is called the \emph{constellation dimension}.
\end{definition}

\begin{proposition} \label{prop: constellation dimension universal bound}
For a finite family of positive matrices, the values of $\ld$ and $\ud$ are independent of the choice of $a$, and
\[
0\leq \ld \leq \ud \leq \dim_{\mathbb C} \overline{\Kcal}^{\mathrm{Zar}} \leq d.
\]
\end{proposition}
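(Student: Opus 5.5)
The plan is to reduce everything to the quadratic form $u\mapsto\int_\Delta|u-\Lcal u|^2\,d\nu$ on $\Hscr_a$. I read $\Kcal$ as the support of the stationary measure $\nu$. Since $\nu$ is stationary, $\Kcal$ is forward invariant, and by positivity it is a compact subset of $\Delta^\circ$: indeed $\Kcal\subset\bigcup_i f_i(\Delta)\Subset\Delta^\circ$. The proof has three parts, taken in order: independence of $a$, the trivial bounds, and the main estimate $\ud\le k\coloneqq\dim_{\mathbb C}\overline{\Kcal}^{\mathrm{Zar}}$.

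\emph{Independence of $a$.} I will factor the kernel as
\[
K_a(x,y)=\varphi_a(x)\varphi_a(y)K_0(x,y),\qquad \varphi_a(x)=\prod_j\frac{x_j+a_j}{\sqrt{2a_j}},\qquad K_0(x,y)=\prod_j\frac1{x_j+y_j}.
\]
This shows that $u\mapsto\varphi_a u$ is a unitary map from the space $\Hscr_0$ with kernel $K_0$ onto $\Hscr_a$. Consequently $\Hscr_b=(\varphi_b/\varphi_a)\Hscr_a$. The function $\varphi_b/\varphi_a$ and its inverse are holomorphic on a neighbourhood of the closed region corresponding, through the coordinatewise Cayley transform of \S\S\ref{subsec: Connection to the Hardy kernel}, to the closed polydisc. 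I will therefore argue that both are bounded multipliers of the Hardy space, and hence of $\Hscr_a$. It follows that $\Hscr_a=\Hscr_b$ as sets of functions, with equivalent norms $c\norm{u}_{\Hscr_a}\le\norm{u}_{\Hscr_b}\le C\norm{u}_{\Hscr_a}$. The quadratic form $\int|u-\Lcal u|^2\,d\nu$ does not depend on $a$. The min--max principle then gives $c'\sigma_j(\Cscr_a)\le\sigma_j(\Cscr_b)\le C'\sigma_j(\Cscr_a)$, so that $\Ncal_b(t)$ is squeezed between $\Ncal_a(t\pm\log C')$. The shift by a constant in $t$ does not affect the $\liminf$ or $\limsup$ of $\log\Ncal(t)/\log t$.

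\emph{Trivial bounds.} The bound $0\le\ld$ holds because $\Ncal_a\ge1$, and $\ld\le\ud$ holds by definition. The bound $k\le d$ holds because $\Kcal$ lies in the affine hyperplane $\{\sum x_j=1\}$, whose Zariski closure has complex dimension $d$.

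\emph{Main estimate $\ud\le k$.} Write $T_a=(I-\Lcal)\circ R$, where $R:\Hscr_a\to C(\Kcal)$ is restriction. The operator $I-\Lcal$ is bounded on $C(\Kcal)$ by invariance, and $C(\Kcal)\hookrightarrow L^2(\nu)$ is bounded. The key step is to construct operators $P_n$ of rank $N_n\le Cn^k$ with $\norm{R-P_n}_{\Hscr_a\to C(\Kcal)}\le Ce^{-cn}$. To do this, use Cayley coordinates $w_j=(x_j-a_j)/(x_j+a_j)$. There, $\Hscr_a$ becomes the Hardy space $H^2$ of the polydisc (up to the unitary identification above). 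Since $\Kcal$ is compact in $\Delta^\circ$, its image lies in a polydisc of radius $r<1$. For $u\in H^2$, the Cauchy estimates then bound the Taylor tail beyond total degree $n$ by $C\,n^{d}r^n\norm{u}\le C'e^{-cn}\norm{u}$ uniformly on that image. Let $P_n$ be the truncation of the Taylor series at total degree $n$, followed by restriction to $\Kcal$. Its range is contained in the restrictions to $\Kcal$ of polynomials of degree $\le n$ in $w$. Because the coordinatewise Cayley map is birational, the Zariski closure of the image of $\Kcal$ still has dimension $k$. The Hilbert function of this image variety therefore bounds the dimension of the range by $Cn^k$. This yields $\sigma_{N_n+1}(T_a)\le Ce^{-cn}$, hence $\sigma_{N_n+1}(\Cscr_a)\le C^2e^{-2cn}$. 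Given $t$, choose $n\approx t/(2c)$; then $\Ncal_a(t)\le 1+N_n\le C''t^k$, so $\ud\le k$.

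The main obstacle is this last step. One part is making the Hardy-space identification of $\Hscr_a$ precise enough to get uniform exponential Taylor tails on $\Kcal$; this should come from the connection to the Hardy kernel together with $\Kcal\Subset\Delta^\circ$. The other part is checking that the Hilbert-function bound survives the change of variables; for this I would use the fact that Möbius maps in each coordinate preserve Zariski dimension. The multiplier boundedness used in the first part is a routine consequence of holomorphy on a neighbourhood of the closed polydisc.
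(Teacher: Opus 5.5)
Your proposal is correct, and in both nontrivial steps it takes a different route from the paper.

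\textbf{Independence of $a$.} The paper expands $K_a$ as a combination, with positive $a$-dependent coefficients, of finitely many $a$-independent positive semidefinite kernels (Lemma~\ref{lem: signed measure comparison}). It then characterizes membership in $\Hscr_a$ by a bound $\abs{\sigma(u)}\le M\norm{\sigma}_{K_a}$. Your factorization $K_a=\varphi_a(x)\varphi_a(y)K_0(x,y)$ combined with a multiplier argument also works, and is quite concrete. In the Cayley variable $w_j=\chi_j(x)$ centred at $a$ one has $\frac{x_j+b_j}{x_j+a_j}=\frac{(a_j+b_j)+(a_j-b_j)w_j}{2a_j}$. This is affine in $w_j$, and its reciprocal is holomorphic near the closed disc because $\abs{a_j-b_j}<a_j+b_j$. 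The multiplier bound passes to $\Hscr_a$ through minimal-norm extensions in Theorem~\ref{thm: pull-back theorem}. The paper's route is purely algebraic and never needs the Hardy picture; yours is shorter once that picture is available.

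\textbf{The bound $\ud\le k$.} The paper approximates the kernel sections $k_x$ by truncating $e^{\vartheta_x}$ in the clock representation (Proposition~\ref{prop: kolmogorov width of the features is stretched exponentially small}). This gives a vector-valued polynomial in $x$, which is fed into Lemma~\ref{lem: approximation bounds the spectrum}. You instead approximate the restriction operator $\Hscr_a\to C(\Kcal)$ by Taylor truncation in $w$ and bound the approximation numbers of $T_a$. The two are dual, via $\dist(k_x,U)=\sup\{\abs{u(x)}: u\perp U,\ \norm{u}_{\Hscr_a}=1\}$. The paper prefers its version because the same approximation, uniform on all of $S$ and in the original coordinates, is reused for Theorem~\ref{thm: polynomial computation}.

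Your version needs Zariski dimension to be invariant under the coordinatewise M\"obius map. This holds because $\chi$ is biregular on $\{x_j\ne-a_j\}$. More simply, $w^\alpha=\prod_j(x_j-a_j)^{\alpha_j}(x_j+a_j)^{n-\alpha_j}\big/\prod_j(x_j+a_j)^n$ for $\abs{\alpha}\le n$. So the range of your truncation has dimension at most $\dim\mathcal P_{(d+1)n}(\Kcal)$, and Lemma~\ref{lem: polynomial dimension growth} applies directly.

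\textbf{Wording fix.} $\Hscr_a$ does not ``become'' $H^2(\mathbb D^{d+1})$. It is isometric to the closed real span of the kernels $K_{\mathrm{Hardy}}(\cdot,\chi(x))$, $x\in\Delta^\circ$. What you actually use is the linear isometric lift $u\mapsto f_u$ to the minimal-norm extension, so that $\norm{f_u}_{H^2(\mathbb D^{d+1})}=\norm{u}_{\Hscr_a}$ controls the Taylor tail uniformly on $\chi(\Kcal)$.
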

Here, $\Kcal$ is the unique nonempty compact set $\Kcal\subset\Delta^\circ$, called the \emph{attractor}, satisfying
\[
\Kcal = \bigcup_{i=0}^{m-1}f_i(\Kcal).
\]
Existence and uniqueness of such $\Kcal$ follow since the projective maps of positive matrices are contractive in the Hilbert metric (Lemma~\ref{lemma: Birkhoff-Hopf constants}). Also, $\overline{\Kcal}^{\mathrm{Zar}}$ is the Zariski closure of $\Kcal$ (Definition~\ref{def: Zariski closure}). For a Zariski-closed $V\subseteq k^n$ over a field $k$, we write $\dim_k V$ for the Krull dimension of the ring $k[V]$ of polynomial functions on $V$.

Constellation dimensions govern the rate of optimal approximation by constellations.
\begin{proposition} \label{prop: optimal cardinality is the constellation dimension}
For every positive matrix family and $a\in\Delta^\circ$,
\begin{equation*}
\liminf_{\eps\to0} \frac{\log r_{\min}^{\,a}(\eps)}{\log\log(1/\eps)} \;=\; \ld,
\qquad
\limsup_{\eps\to0} \frac{\log r_{\min}^{\,a}(\eps)}{\log\log(1/\eps)} \;=\; \ud.
\end{equation*}
\end{proposition}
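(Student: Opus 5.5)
Our plan is to prove two comparisons between $r_{\min}^{\,a}(\eps)$ and the eigenvalue counting function $\Ncal_a$, each holding up to polynomial changes in $t=\log(1/\eps)$ and in the number of points:
\[
\Ncal_a\bigl(c_1\log(1/\eps)\bigr)-C_1 \;\le\; r_{\min}^{\,a}(\eps) \;\le\; C_2\,\Ncal_a\bigl(C_2\log(1/\eps)\bigr)^{C_3}.
\]
Taking $\log$ and dividing by $\log\log(1/\eps)$ then gives both the liminf and the limsup statements at once, because a constant rescaling of $t$ does not change $\log t$ asymptotically. We start by rewriting the stationary defect in Hilbert-space terms. For a constellation $\eta=\sum_j b_j\delta_{x_j}$ put $h_\eta=\sum_j b_j(k_{x_j}-\sum_i\pi_i k_{f_ix_j})\in\Hscr_a$. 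Then $\norm{\eta-\Lcal^*\eta}_{K_a}=\norm{h_\eta}_{\Hscr_a}$, and $\ip{u}{h_\eta}=\eta(u-\Lcal u)$. Duality with $T_a$ comes from the identity $\nu(u-\Lcal u)=0$ for stationary $\nu$. Taken together, these make $\eta-\Lcal^*\eta$ the pairing of an approximate point-evaluation functional with the range of $T_a$.

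\emph{Lower bound.} Let $\eta$ have $r$ atoms and satisfy $\norm{h_\eta}\le\eps$. Let $V$ be the span of the first $N$ eigenvectors of $\Cscr_a$, where $N=\Ncal_a(t)-1$, so that $\ip{u}{\Cscr_a u}\ge e^{-t}\norm{u}^2$ on $V$. If $N>r+1$, we pick $u\in V$ of unit norm with $u-\Lcal u$ vanishing at every atom $x_j$ and with a second normalisation constraint fixed by the anchor. This imposes at most $r+1$ linear conditions, so such a $u$ exists. We then need to contradict the smallness of the defect. The idea is that $\eta$ being nearly stationary forces it to control $\nu$ on smooth test functions, roughly $|\nu(g)-\eta(g)|\lesssim\eps\norm{g}$ for suitable $g$. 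We would apply this to $g=(u-\Lcal u)^2$. Its $\eta$-integral vanishes by construction, while its $\nu$-integral is $\ge e^{-t}$. To make this rigorous, we use that $\Hscr_a$ is closed under multiplication up to a change of reference $a$; this follows from the Hardy-space description in \S\S\ref{subsec: Connection to the Hardy kernel}. We also need a clock-type inequality in the style of Theorem~\ref{thm: Clock inequality}, giving $|\nu(g)-\eta(g)|\le C\norm{g}_{\text{(other norm)}}\,\norm{\eta-\Lcal^*\eta}_{K_{a'}}$, together with comparability of the norms for different references. The resulting contradiction $e^{-t}\lesssim\eps$ forces $r\gtrsim\Ncal_a(c\log(1/\eps))$.

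\emph{Upper bound.} Take the eigenfunctions $\phi_1,\dots,\phi_N$ with $\sigma_j\ge e^{-t}$. The tail of $\Cscr_a$ then has operator norm below $e^{-t}$. We choose $N$ points by a greedy or pivoted selection on the attractor $\Kcal$, using forward images, so that the evaluation functionals there approximate the projection onto $\mathrm{span}\{\phi_j\}$. We then solve a finite least-squares problem for the masses $\alpha_j$, whose defect is controlled by $\sqrt{e^{-t}}$ times a polynomial factor. This is the step that needs a Kolmogorov-width comparison between eigenvalue decay of $\Cscr_a$ and the attainable defect. We would adapt the argument behind Theorem~\ref{thm: polynomial computation}, which already constructs constellations from orbits, and replace the crude bound $n^{1/d}$ by the spectral count.

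\emph{Main obstacle.} We expect the lower bound to be the hardest step. It needs the multiplicativity of the Hardy-type spaces and a clock inequality valid for general test functions rather than only for $\Phi$, and these are the estimates we would develop first.
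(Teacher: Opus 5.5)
\textbf{Lower bound.} There is a genuine gap exactly at the step you call the main obstacle. Your argument needs $\abs{\nu(g)-\eta(g)}\le C\norm{\eta-\Lcal^*\eta}_{K_a}$ for $g=(u-\Lcal u)^2$. That amounts to solving $(I-\Lcal)h=g-\nu(g)$ with $\norm{h}_{\Hscr_a}\le C$, and neither tool you propose supplies it.

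\emph{The clock inequality is not a general principle.} It works for $\Phi$ only because $\sum_{k<n}\Lcal^k\Phi=\sum_{\abs{w}=n}\pi_w\log\norm{A_wx}_1$ is, modulo constants, an average of functions $\log(p^\top x/p^\top a)$. Each of these has $\Hscr_a$-norm at most $\pi/\sqrt2$, uniformly in $p$ (Lemma~\ref{lem: norm of G}). For an arbitrary $g$ you would need summable decay of $\norm{\Lcal^ng-\Lcal^ng(a)}_{\Hscr_a}$. The Hilbert-metric contraction of the $f_i$ gives nothing of the sort for general elements of $\Hscr_a$; the paper only uses $T_a$ as a map into $L^2(\nu)$, never a spectral gap of $\Lcal$ on $\Hscr_a$.

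\emph{Multiplicativity does not follow from the Hardy picture.} $H^2(\mathbb D^{d+1})$ is not an algebra. Products of elements of $\Hscr_a$ naturally live in the space with kernel $K_a^2$, which contains $\Hscr_a$ because $K_a^2-K_a$ is positive definite. Testing against that space would require control of $\norm{\eta-\Lcal^*\eta}_{K_a^2}\ge\norm{\eta-\Lcal^*\eta}_{K_a}$, which is the wrong direction. Changing the reference $a$ does not change the space at all (Lemma~\ref{lem: choice of the reference is immaterial}).

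\emph{What the paper does instead.} It notes that this direct design gives no bound on $\norm{h}_{\Hscr_a}$. It imposes the vanishing at the atoms on the smoothed function $F_u=\int_\Kcal K_a(\cdot,y)^\beta u(y)\,d\nu(y)$, $\beta=1/(16(d+1))$, which is still linear in $u$. Then $\Lcal^n(F_u^2)$ is an average of products of $\beta$-powers of ratios $b^\top x/c^\top x$ of positive linear forms with $d_H(b,c)\le C\kappa^n$, by contraction of $A_w^\top$. A Hardy-space angular-average argument (Lemma~\ref{lem: nearby positive forms}) bounds the centred iterates by $C\kappa^{n/8}$ in $\Hscr_a$, which gives a Poisson solution of bounded norm. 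Separately, positive-definiteness of $((x,s),(y,t))\mapsto K_a(x,y)^{st}$ gives $Q_1^2\le Q_\beta Q_{1/\beta}$, where $Q_\tau=\iint K_a(x,y)^\tau u(x)u(y)\,d\nu(x)\,d\nu(y)$. Hence $\nu(F_u^2)\gtrsim\sigma_{r+1}^4$, and the resulting bound $\sigma_{r+1}^4\lesssim\norm{\eta-\Lcal^*\eta}_{K_a}$ costs only a constant factor in $t$.

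\textbf{Upper bound.} This part is only sketched at its decisive step. The spectral tail bound says that every unit $u\perp U$ has $\norm{u-\Lcal u}_{L^2(\nu)}\le e^{-t/2}$, which is an averaged statement. The defect of a finitely supported constellation is governed instead by the uniform quantity $\sup_{x\in\Kcal}\dist(k_x,U)=\sup\{\abs{u(x)}: u\perp U,\ \norm{u}_{\Hscr_a}=1,\ x\in\Kcal\}$.

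The paper's bridge is the interpolation inequality $\norm{u}_{C(\Kcal)}\le C\norm{u-\Lcal u}_{L^2(\nu)}^{\alpha}\norm{u}_{\Hscr_a}^{1-\alpha}$ for $\nu(u)=0$ (Lemma~\ref{lem: Holder-type interpolation inequality}). It is proved by writing $u=\sum_{j<N}\Lcal^j(u-\Lcal u)+\Lcal^Nu$ along a word $w$ with $x\in f_w(\Kcal)$, using $\pi_w\ge\pi_*^N$ and the $\kappa^N$-contraction of the Lipschitz data. One adds $m_\nu=\int_\Kcal k_x\,d\nu(x)$ to $U$ to force $\nu(u)=0$. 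The resulting width bound $Ce^{-\alpha t/2}$ then goes directly into the volume argument of Proposition~\ref{prop: approximation controls the defect} with $b\in\Kcal$. No pivoted selection or separate least-squares step is needed.

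\textbf{Framing of the sandwich.} Your upper comparison with $\Ncal_a(\cdot)^{C_3}$ on the right would only give exponents at most $C_3\ld$ and $C_3\ud$. To get equality you need a bound linear in $\Ncal_a$, with all losses pushed into the constant multiplying $\log(1/\eps)$. That is exactly what the paper proves: $r_{\min}^{\,a}(\eps)\le 2n(\eps)+1\le C\Ncal_a(C\log(1/\eps))$ and $r_{\min}^{\,a}(\eps)\ge\frac12\Ncal_a(c\log(1/\eps))$.
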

The lower and upper constellation dimensions could be fractional and may not coincide; see Example \ref{ex: Liouville reset}. However, an invertible subfamily imposes a lower bound through the algebraic dimension of its attractor.

\begin{theorem} \label{thm: invertible subfamily lower bounds constellation dimension}
Let $A_0,\ldots,A_{m-1}$ be a family of positive matrices. If $\Kcal_0$ is the attractor of a nonempty subfamily consisting of invertible matrices, then
\[
\ld \geq \dim_\mathbb C\overline{\Kcal_0}^{\mathrm{Zar}}.
\]
\end{theorem}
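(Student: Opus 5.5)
We need a lower bound on the eigenvalue counting function: with $D=\dim_{\mathbb C}\overline{\Kcal_0}^{\mathrm{Zar}}$, we want $\Ncal_a(t)\gtrsim t^{D-o(1)}$. By the min–max principle it suffices, for each large $t$, to exhibit a subspace $W_t\subset\Hscr_a$ of dimension $\gtrsim t^{D}$ on which
\[
\int|u-\Lcal u|^2\,d\nu \;\ge\; e^{-Ct}\norm{u}_{\Hscr_a}^2 \qquad (u\in W_t).
\]
Rescaling $t$ by the constant $C$ does not affect $\liminf \log\Ncal_a(t)/\log t$, so this gives $\ld\geq D$.

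\emph{Choice of test space.} Through the Cayley-transform identification with the Hardy space on the polydisc (\S\S\ref{subsec: Connection to the Hardy kernel}), $\Hscr_a$ contains the polynomials $P_n$ of degree $\le n$ in the coordinates. Their norms are at most $e^{O(n)}$ times their sup norm on a fixed neighbourhood of $\Delta$. Restricted to $\overline{\Kcal_0}^{\mathrm{Zar}}$, the space $P_n$ has dimension equal to the Hilbert function, which is $\asymp n^{D}$. I would take $W_t$ to be a complement, inside $P_n$ with $n\asymp t$, of the polynomials vanishing on $\Kcal_0$. Every $u\in W_t$ is then nonzero on $\Kcal_0$, and hence on $\supp\nu\supseteq\Kcal_0$. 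Here $\supp\nu=\Kcal$ by positivity.

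\emph{Lower bound on the defect.} Set $g=u-\Lcal u$. We need $\norm{g}_{L^2(\nu)}$ not to be super-exponentially small in $n$.

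\begin{enumerate}
\item $g$ is not identically zero on $\Kcal$. If $\Lcal u=u$ on $\Kcal$, then iterating and using contraction of the $f_i$ in the Hilbert metric (Lemma~\ref{lemma: Birkhoff-Hopf constants}) forces $u$ to be constant on $\Kcal$, namely $u\equiv\nu(u)$. Constants contribute at most one dimension, which we discard.
\item Quantitative lower bound. Here invertibility of the subfamily is used: the maps $f_i$ with $i$ in the subfamily are projective automorphisms. Hence $u\circ f_i$ is again a rational function of degree $\le n$ on the same Zariski-dense set, with controlled denominators.
\item Remez/Bernstein–Walsh step. I would show that a nonzero polynomial of degree $n$ on $\overline{\Kcal_0}^{\mathrm{Zar}}$ satisfies
\[
\norm{p}_{L^2(\nu)}\ \ge\ e^{-Cn}\norm{p}_{\sup}.
\]
Such a Markov-type inequality holds for measures on attractors of contracting analytic IFS. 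The argument uses self-similarity to descend to cylinders, each of which carries mass at least $e^{-Cn}$. Applying it to the numerator of $g$, after multiplying through by the positive denominators $\prod\norm{A_i x}_1^{n}$, gives $\norm{g}_{L^2(\nu)}\ge e^{-Cn}\norm{u}$ on a suitable finite-codimension subspace. The norm comparison $\norm{u}_{\Hscr_a}\le e^{O(n)}\norm{u}_{\sup}$ then completes the bound.
\end{enumerate}

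\emph{Main obstacle.} The hard step is the uniform lower bound: it must hold for \emph{all} $u$ in a subspace of dimension $\gtrsim n^D$, not just for each $u$ individually. The map $u\mapsto u-\Lcal u$ raises the degree, from $n$ to roughly $n$ times the bound on the numerator degrees, and it may have a large approximate kernel. I would handle this with a dimension-counting argument: the degree of $u\circ f_i$ on the variety is $\le n$, so $\mathrm{id}-\Lcal$ maps $P_n|_V$ injectively (modulo constants) into a polynomial space of degree $O(n)$. Its smallest singular value is then bounded below by $e^{-Cn}$, via the Remez-type inequality applied on that larger, still polynomial-dimension space. Injectivity is only qualitative, though, and converting it into an $e^{-Cn}$ bound on the smallest singular value is the delicate part. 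If this fails, I would fall back on the invertible subfamily alone, using expansion of $f_i^{-1}$ to control how degree interacts with the transfer operator.
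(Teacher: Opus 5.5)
Your min--max reduction is the right frame, but the two steps that carry the proof are left open, and the justification you give for one of them is false.

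\textbf{The defect lower bound.} You need a uniform lower bound on $\norm{u-\Lcal u}_{L^2(\nu)}$, which you flag yourself, and you do not obtain it. Qualitative injectivity of $I-\Lcal$ gives no $e^{-Cn}$ bound on a smallest singular value. The proposed setting also does not fit: $\Lcal$ involves the maps $f_i$ outside the invertible subfamily, which need not preserve $\overline{\Kcal_0}^{\mathrm{Zar}}$, and $\nu$ lives on $\Kcal$, not on that variety. The missing tool is the interpolation inequality of Lemma~\ref{lem: Holder-type interpolation inequality}: for $\nu(u)=0$, $\norm{u}_{C(\Kcal)}\leq C\norm{u-\Lcal u}_{L^2(\nu)}^{\alpha}\norm{u}_{\Hscr_a}^{1-\alpha}$. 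It reduces the whole problem to a pure norm comparison $\norm{u}_{\Hscr_a}\leq e^{Cn}\norm{u}_{C(\Kcal)}$ on a large subspace, cut down by the single condition $\nu(u)=0$. No spectral analysis of $I-\Lcal$ on polynomial spaces is then needed.

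\textbf{The norm comparison.} That comparison is exactly your Remez/Bernstein--Walsh step, which you assert rather than prove. The reason you give is wrong.
\begin{itemize}
\item Because $\norm{u}_{\Hscr_a}$ is controlled by sup norms away from $\Delta$ (on the Cayley polydisc, via the weights of Lemma~\ref{lem: weighted polynomials}), you need the fractal to control degree-$n$ polynomials on a neighbourhood with only $e^{Cn}$ loss.
\item This fails for general attractors of contracting IFS. In Example~\ref{ex: Liouville reset} the attractor is countable, and Appendix~\ref{appendix: Liouville reset} exhibits degree-$M$ polynomials with sup at least $1$ on $\Delta$ but at most $e^{-cM^{3/2}}$ on $\Kcal$.
\item Self-similar descent to cylinders can at best compare $\norm{p}_{L^2(\nu)}$ with $\norm{p}_{C(\Kcal)}$. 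It says nothing about $p$ off the fractal, which is where cancellation among the many polynomials of your space matters.
\end{itemize}
So invertibility must be used essentially. Your sketch uses it only to note that $u\circ f_i$ is rational, which holds for any matrix.

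\textbf{What the paper does instead.} Lemma~\ref{lem: enlarging polynomial spaces} supplies the missing idea. Composing with $M_w^{-1}$ compresses a polynomial onto a small copy $f_w(\Kcal_0)$ while inflating its polydisc norm by only $R^{Nn}$. Lagrange-type bumps $(2L_j)^{c\Lambda^\ell}$ keep distinct copies from cancelling. Iterating gives spaces $\mathcal E_\ell$ of dimension $q^\ell>\Lambda^{s\ell}$ in degree $\Lambda^\ell$ with $\norm{Q}_{C(\mathbb D^{d+1})}\leq B^{\Lambda^\ell}\norm{Q}_{C(\Kcal_0)}$, for any fixed $s$ below your $D$. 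This yields $\ld\geq s$ for each such $s$. It never needs the comparison on the full quotient $\mathcal P_n(\Kcal_0)$ of dimension $\asymp n^{D}$, which your choice of $W_t$ would require. Even granting such a quotient inequality, an arbitrary linear complement need not consist of good representatives.
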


Invertibility of the whole family therefore forces the following rigidity by Proposition~\ref{prop: constellation dimension universal bound}.
\begin{corollary} \label{cor: rigidity of constellation dimension}
For a finite family of positive invertible matrices, $\lud$ is a nonnegative integer satisfying
\[
\lud = \dim_{\mathbb C} \overline{\Kcal}^{\mathrm{Zar}}.
\]
\end{corollary}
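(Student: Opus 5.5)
The corollary follows by combining Proposition~\ref{prop: constellation dimension universal bound} with Theorem~\ref{thm: invertible subfamily lower bounds constellation dimension}, applied to the whole family.

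First I will check the hypotheses. Every $A_i$ is positive and invertible, so the whole family $\{A_0,\ldots,A_{m-1}\}$ is a nonempty subfamily consisting of invertible matrices. Its attractor is the unique nonempty compact set $\Kcal_0\subset\Delta^\circ$ with $\Kcal_0=\bigcup_i f_i(\Kcal_0)$. By the uniqueness noted after Proposition~\ref{prop: constellation dimension universal bound} (contraction in the Hilbert metric, Lemma~\ref{lemma: Birkhoff-Hopf constants}), $\Kcal_0=\Kcal$.

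Theorem~\ref{thm: invertible subfamily lower bounds constellation dimension} then gives $\ld\geq \dim_{\mathbb C}\overline{\Kcal}^{\mathrm{Zar}}$. Proposition~\ref{prop: constellation dimension universal bound} gives the reverse chain
\[
\ld\leq\ud\leq\dim_{\mathbb C}\overline{\Kcal}^{\mathrm{Zar}}.
\]
Hence $\ld=\ud=\dim_{\mathbb C}\overline{\Kcal}^{\mathrm{Zar}}$, so the constellation dimension $\lud$ exists and equals this common value.

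Integrality and nonnegativity come from the fact that $\overline{\Kcal}^{\mathrm{Zar}}$ is a nonempty Zariski-closed set (it contains $\Kcal\neq\emptyset$). Its Krull dimension $\dim_{\mathbb C}$ is therefore a nonnegative integer, and it is at most $d$ because $\Kcal$ lies in the affine hyperplane $\{\sum_j x_j=1\}$. Independence from $a$ is already part of Proposition~\ref{prop: constellation dimension universal bound}.

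There is no real obstacle here. The only point needing care is the identification $\Kcal_0=\Kcal$, which rests on the uniqueness of the attractor.
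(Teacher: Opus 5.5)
Your proof is correct and is the paper's argument: apply the invertible-subfamily lower bound to the whole family, whose attractor is $\Kcal$ by uniqueness, and squeeze $\ld\le\ud\le\dim_{\mathbb C}\overline{\Kcal}^{\mathrm{Zar}}$ from the universal-bound proposition. Integrality then follows because the Krull dimension of the nonempty Zariski closure is a nonnegative integer.
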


When the anchor $b$ is in the attractor, the constellations we obtain in Theorem~\ref{thm: polynomial computation} are asymptotically optimal, in the sense that their asymptotic growth rate is the constellation dimension.

\begin{theorem} \label{thm: intrinsic complexity}
Let $A_0,\ldots,A_{m-1}$ be positive matrices. Fix $a\in\Delta^\circ$ and $b\in\Kcal$. For each $\eps>0$, let $\eta(\eps)$ be the first constellation in Theorem~\ref{thm: polynomial computation} with $\norm{\eta(\eps)-\Lcal^*\eta(\eps)}_{K_a}\leq \eps$. Then,
\begin{equation}\label{eq: optimality of the construction}
\liminf_{\eps\to0} \frac{\log \big( \# \supp \eta(\eps) \big)}{\log\log(1/\eps)} \;=\; \ld,
\qquad
\limsup_{\eps\to0} \frac{\log \big( \# \supp \eta(\eps) \big)}{\log\log(1/\eps)} \;=\; \ud.
\end{equation}

Assume in addition that there are two invertible matrices with distinct Perron-Frobenius vectors. Let $W(\eps)$ be the number of arithmetic operations required to achieve error at most $\eps$ using the implementation specified in \S\S\ref{subsection: proof of universal poly complexity}. Then,
\begin{equation}\label{eq: work count expressed through constellation dimension}
\liminf_{\eps\to0} \frac{\log W(\eps)}{\log\log(1/\eps)}=3\ld,
\qquad
\limsup_{\eps\to0} \frac{\log W(\eps)}{\log\log(1/\eps)}=3\ud.
\end{equation}
\end{theorem}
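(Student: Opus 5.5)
The two statements in \eqref{eq: optimality of the construction} will follow from matching upper and lower bounds on $\#\supp\eta(\eps)$ in terms of $\Ncal_a$. Proposition~\ref{prop: optimal cardinality is the constellation dimension} gives the lower bound immediately: every constellation with $\norm{\eta-\Lcal^*\eta}_{K_a}\le\eps$ has at least $r_{\min}^{\,a}(\eps)$ atoms. So the liminf and limsup of $\log\#\supp\eta(\eps)/\log\log(1/\eps)$ are at least $\ld$ and $\ud$, respectively. The real content is the matching upper bound: up to constant factors and a constant rescaling of $\log(1/\eps)$, the greedy construction of Section~\ref{section: construction of constellations} is as efficient as the optimal one.

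To get the upper bound, I will view the stationary defect of an $n$-constellation with anchor $b$ as a vector in $\Hscr_a$. By the reproducing property, $\norm{\sigma}_{K_a}$ is the $\Hscr_a$-norm of $\sum b_j k_{x_j}$. The defect of $\eta$ is then $s_b+\sum_j\alpha_j(s_{u_j}-s_{v_j})$, where $s_x=k_x-\sum_i\pi_i k_{f_ix}$. Since the construction picks the coefficients $\alpha_j$ optimally, it is a least-squares projection of $s_b$ onto the span of the selected differences. I expect the selected points to be images of $b$ under words, hence in $\Kcal$ when $b\in\Kcal$. The key step is then to compare the greedy projection error with the Kolmogorov $n$-widths of the set $\{s_x : x\in\Kcal\}$. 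The spectrum of $\Cscr_a=T_a^*T_a$ controls these widths: integrating $|u-\Lcal u|^2$ against $\nu$, which is supported on $\Kcal$, lets me identify the relevant widths with $\sqrt{\sigma_{n+1}}$ up to constants. Here I would use that the $s_x$ depend analytically on $x$ in a complex neighbourhood of $\Kcal$, together with the uniform Hilbert-metric contraction of Lemma~\ref{lemma: Birkhoff-Hopf constants}. Analyticity gives uniform-versus-$L^2(\nu)$ comparison with only a constant loss in the exponent. Combining this with the standard result that greedy algorithms in Hilbert space achieve widths up to $\sigma_{cn}^{c'}$ (DeVore--Petrova--Wojtaszczyk) gives
\[
\norm{\eta_n-\Lcal^*\eta_n}_{K_a}\le C\,\sigma_{\lfloor cn\rfloor}^{c'} .
\]
Inverting through $\Ncal_a$, this gives $\#\supp\eta(\eps)\le C\,\Ncal_a(C'\log(1/\eps))$. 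Because $\log\Ncal_a(\kappa t)/\log t$ has the same liminf and limsup as $\log\Ncal_a(t)/\log t$ for any fixed $\kappa>0$, this yields the upper bound in \eqref{eq: optimality of the construction}.

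For \eqref{eq: work count expressed through constellation dimension}, I will count operations in the implementation of \S\S\ref{subsection: proof of universal poly complexity}. Each step updates a Cholesky/Gram factorization, costing $O(n^2)$, and scans $O(n)$ candidate orbit points, each requiring kernel evaluations against the current $O(n)$ support. Summing over $n\le N$ gives $W=\Theta(N^3)$, so $\log W/\log\log(1/\eps)\to 3\times$(the support exponent), provided the cost is also bounded below by $cN^3$. The lower bound on the cost is where the extra hypothesis enters. Two invertible matrices with distinct Perron--Frobenius vectors force $\Kcal$ to be infinite, and by Theorem~\ref{thm: invertible subfamily lower bounds constellation dimension} they force $\ld\ge1$. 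This guarantees that $N\to\infty$ polynomially in $\log(1/\eps)$, so the cubic Gram work genuinely dominates fixed overheads and the degenerate $\ld=0$ case cannot spoil the factor $3$. Errors from the final evaluation of $\eta(\Phi)$ are lower order.

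The main obstacle is the greedy-versus-width comparison. The set of candidates the algorithm may pick is a countable orbit set, not all of $\Kcal$. I also need the widths measured in $\Hscr_a$ to agree with the $L^2(\nu)$ eigenvalues of $\Cscr_a$ up to an exponent constant. My first attempt will be to show that orbit points are dense in $\Kcal$ at scale $e^{-cn^{1/d}}$ after $n$ steps, and then use analyticity to control the remaining gap.
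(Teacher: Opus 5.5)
Your lower bound via $r_{\min}^{\,a}(\eps)\le\#\supp\eta(\eps)$ is fine, and so is your use of Theorem~\ref{thm: invertible subfamily lower bounds constellation dimension} to get $\ld\ge1$. The upper bound, however, has a genuine gap at exactly the step you flag.

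You cannot invoke a DeVore--Petrova--Wojtaszczyk greedy theorem. The score search is not a weak greedy algorithm: it never guarantees $\dist(f_{n+1},V_n)\ge\gamma\max_f\dist(f,V_n)$ over any fixed family, and the quantity to control is $E_n=\norm{r_n}_{\Hscr_a}$, not a worst-case distance. Widths of $\{s_x:x\in\Kcal\}$ are also the wrong object. The only height guarantee available for an accepted pair is $\dist(s_u-s_v,V_n)\ge c\,\rho(u,v)E_n$, which is useless against an absolute approximation error when $\rho(u,v)$ is small.

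The missing idea has three parts:
\begin{enumerate}
\item Work with the normalized directions $\phi_n=(s_{u_n}-s_{v_n})/\rho(u_n,v_n)$ and prove $\dist(\phi_{n+1},V_n)\ge cE_n$ directly (Lemma~\ref{lem: error is bounded by the increment of new pair}). This follows from the acceptance test \eqref{eq:acceptance}, the growth of $q_n/\rho$ along the search, and the initial-pair bound $q_n/\rho\ge E_n^2/\sum_i\pi_i\rho(b,f_ib)$.
\item Upgrade a $q$-dimensional space approximating every $k_x$, $x\in\Kcal$, within $\gamma$ to a $(d+1)q$-dimensional space approximating every $(s_x-s_y)/\rho(x,y)$ within $C\sqrt\gamma$. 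This uses a first-order Taylor expansion plus truncation of $t_j-t_{d+1}$ (Lemma~\ref{lem: normalized differences admit nice approximation}).
\item The Gram-determinant sandwich then gives $E_{2(d+1)q}\le C\gamma^{1/4}$ (Proposition~\ref{prop: approximation controls the defect}).
\end{enumerate}
Your proposed remedy, density of orbit points in $\Kcal$, is beside the point. All selected points lie in the forward-invariant set $\Kcal$, so approximation on all of $\Kcal$ already covers them, and no density or weak-greedy property is needed.

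The eigenvalue-to-width step is also not ``up to constants'', and analyticity is not the mechanism. Two things are needed:
\begin{itemize}
\item \textbf{A mean direction.} Eigenfunctions of $\Cscr_a$ with positive eigenvalue are orthogonal to $\ker\Cscr_a\ni 1$. You must therefore add a mean direction such as $m_\nu=\int_\Kcal k_x\,d\nu(x)$, so that $u\perp U$ forces $\nu(u)=0$.
\item \textbf{An interpolation inequality.} Turning $\norm{u-\Lcal u}_{L^2(\nu)}\le e^{-t/2}$ into smallness of $\sup_{x\in\Kcal}\abs{u(x)}$ requires Lemma~\ref{lem: Holder-type interpolation inequality}. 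Its proof uses the telescoping identity $u=\sum_{j<N}\Lcal^j(u-\Lcal u)+\Lcal^Nu$, the contraction $\norm{\Lcal^Nu}_{C(\Kcal)}\le C\kappa^N\norm u_{\Hscr_a}$ for $\nu(u)=0$, and $\abs{\nu(g\circ f_w)}\le\pi_w^{-1/2}\norm g_{L^2(\nu)}$. It loses a power $\alpha$, which is harmless since it only rescales $t$.
\end{itemize}

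Finally, your work count is wrong. The search at step $n$ has depth $O(\log(1/E_n))$ (Proposition~\ref{prop: score search terminates}), not $O(n)$ candidates, so $W(\eps)\le C\big(N^3+N^2\log(1/\eps)\big)$. The hypothesis $\ld\ge1$ is needed precisely to make the term $N^2\log(1/\eps)$ invisible in the exponent; for $\ld<1$ this term can exceed $N^3$. Under your accounting $W=\Theta(N^3)$, that hypothesis would be superfluous.
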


\subsection{Roadmap}

Section~\ref{section: examples} presents the examples. In Section~\ref{section: exponential clock and stationarity}, we represent the logarithmic observables using an auxiliary probability experiment based on joint exponential clocks. This representation yields the kernel $K_a$ and is used to prove the clock inequality (Theorem~\ref{thm: Clock inequality}).

\smallskip

Using the reproducing kernel Hilbert space obtained from $K_a$, Section~\ref{section: construction of constellations} constructs the constellations explicitly by devising a ``score'' search that navigates the selection of the next pair of points. In Section~\ref{sec: polynomial complexity}, polynomial approximation of the relevant space and a volume argument prove the convergence and the polynomial complexity in Theorem~\ref{thm: polynomial computation}. The volume argument follows the idea of DeVore--Petrova--Wojtaszczyk \cite{DeVore--Petrova--Wojtaszczyk}; see \S\S\ref{subsec: polynomial complexity overview} for the comparison.

\smallskip

Section~\ref{sec: constellation dimension} justifies the definitions of constellation dimensions, including their independence of the reference point $a$. Proposition~\ref{prop: constellation dimension universal bound} is proven here.

\smallskip

Section~\ref{sec: constellation dimension and minimality} then relates constellation dimensions to the minimal model size. The eigenspaces of $\Cscr_a$ provide a good approximation space for the constellations we constructed. The upper bound then follows by proving a H\"older-type interpolation inequality that controls supremum norms using stationary defects. We prove the lower bound by constructing an observation $u$ that vanishes on the support of a constellation $\eta$ and is visible to the stationary measure $\nu$, but at the same time has a controlled solution $h$ to the Poisson equation $u - \nu(u) = (I-\Lcal)h$. This proves Proposition~\ref{prop: optimal cardinality is the constellation dimension} and equation~\eqref{eq: optimality of the construction}.

\smallskip

Section~\ref{section: invertibility and algebraic dimension} proves Theorem~\ref{thm: invertible subfamily lower bounds constellation dimension}. Invertibility lets us pull back a polynomial on a small copy of the attractor. By considering an analogue of Lagrange interpolation polynomials, we can turn this into a bump-like function that prohibits cancellations, which yields the spectral lower bound. The section also completes the proof of Theorem~\ref{thm: intrinsic complexity}. 

In Appendix \ref{appendix: Fredholm determinants and constellations}, we will see how the Fredholm determinant method can be understood in our framework, and Appendix \ref{appendix: Comparison to Fredholm determinant approach} contains a numerical comparison to it. Appendices \ref{appendix: Reduction of the random Fibonacci sequence}, \ref{appendix: Constellation dimension for Sierpi\'nski gasket family}, and \ref{appendix: Liouville reset} supply the discussion on the examples.

\section{Examples} \label{section: examples}

\begin{example} \label{ex: the random Fibonacci sequence}
Viswanath defined the random Fibonacci sequence $\{f_n\}_{n=1}^\infty$ by $f_1=f_2=1$ and
\[
f_{n+1} = f_n \pm f_{n-1},
\]
where the sign is chosen independently to be plus or minus with probability $1/2$ each. In his celebrated paper \cite{Viswanath}, he calculated its growth rate to be, almost surely,
\[
\sqrt[n]{\abs{f_n}} \xrightarrow{n\to\infty} 1.13198824\cdots.
\]
This number $\rho$ is called \emph{Viswanath's constant}, and the previously most accurate calculation is $1.1319882487943$ by Bai \cite{Bai}.

Our method obtains the following certified number, displayed here to $128$ decimal places:
\begin{align*}
\rho \;=\;
&1.1319882487943061492200406895676898694177702561274706026177581431 \\
&\phantom{0.} 7138032486366457166421444015056805832041038452410722048305488722\cdots.
\end{align*}
Only a $587$-constellation\footnote{See Remark~\ref{rmk: visualizer and reproducible materials} for the GitHub link to the constellation.} with $623$ distinct support points was enough to certify the number above. The computation took about $5$ minutes on the author's laptop.

We need a nontrivial reduction to apply our method: first, we have
\[
\begin{pmatrix}
f_n & f_{n+1}
\end{pmatrix}
=
\begin{pmatrix}
f_{n-1} & f_n
\end{pmatrix}
B_*
\quad \text{for} \quad
B_+ 
=
\begin{pmatrix}
0 & 1 \\ 1 & 1
\end{pmatrix},
\quad
B_-
=
\begin{pmatrix}
0 & -1 \\
1 & 1
\end{pmatrix},
\]
with $*$ chosen to be $+$ or $-$ with probability $1/2$. Thus the Lyapunov exponent $\lambda_{\mathrm{orig}}$ of the symmetric random products of $\{B_+,B_-\}$ satisfies $\log \rho = \lambda_{\mathrm{orig}}$. 

Now let $\lambda_{\mathrm{red}}$ denote the Lyapunov exponent of the i.i.d.\ products of the following family $\{A_i\}_{i=0}^3$ with probabilities $\{\pi_i\}_{i=0}^3$:
\[
\begin{array}{c@{\;}c@{\qquad}c@{\;}c}
A_0=\begin{pmatrix}2&1\\1&1\end{pmatrix},
&\displaystyle \;\;\pi_0=\frac{7-3\sqrt5}{2},
&A_1=\begin{pmatrix}1&2\\0&1\end{pmatrix},
&\displaystyle \;\;\pi_1=\sqrt5-2,\\[14pt]
A_2=\begin{pmatrix}1&0\\2&1\end{pmatrix},
&\displaystyle \pi_2=\sqrt5-2,
&A_3=\begin{pmatrix}1&1\\1&2\end{pmatrix},
&\displaystyle \;\;\pi_3=\frac{3-\sqrt5}{2}.
\end{array}
\]
Appendix~\ref{appendix: Reduction of the random Fibonacci sequence} proves that
\[
\lambda_{\mathrm{orig}} \;=\; \dfrac{\sqrt5-1}{8} \,\lambda_{\mathrm{red}}.
\]
The value of $\rho$ displayed above was obtained by applying the construction of Section \ref{section: construction of constellations} to this nonnegative family $\{A_i\}_{i=0}^3$ and using Theorem \ref{thm: Clock inequality} to certify the error.

The family $\{A_i\}_{i=0}^3$ is not \emph{positive-conjugate}; no invertible matrix $P$ makes all $P^{-1}A_iP$ simultaneously positive. Nevertheless, we can prove that our construction works for it: $\norm{\eta_n-\Lcal^*\eta_n}_{K_a}\leq Ce^{-c\sqrt{n}}$, and that $W(\eps)=O\big(\big(\log(1/\eps)\big)\,^{\!6}\,\big)$. We omit the proof for conciseness.
\end{example}

\begin{example} \label{ex: Sierpinski gaskets}
Write $\mathbb T = \mathbb R/\mathbb Z$ and let $\Scal\subset\mathbb T^2$ be the Sierpi\'nski gasket. More precisely,
\[
\Scal \coloneqq
\left\{
\left( \sum_{n=1}^\infty \frac{a_n}{2^n}, \sum_{n=1}^\infty \frac{b_n}{2^n} \right) \in \mathbb T^2
\setcond
(a_n,b_n)\in \{(0,0),(1,0),(0,1)\} \text{ for every } n\geq1
\right\}.
\]
(Employing the definition using equilateral triangles does not alter the following conclusion.)

Kenyon--Peres \cite[Example~5.10]{Kenyon--Peres: translated Cantor sets} proved that, for Lebesgue a.e.\ $t\in\mathbb T^2$,
\[
\dim_H\big(\Scal\cap(\Scal+t)\big)
=
\frac{\lambda}{\log 2},
\]
where $\dim_H$ denotes the Hausdorff dimension, the intersection is taken in $\mathbb T^2$, and $\lambda$ is the Lyapunov exponent of the symmetric random products of the following $8\times8$ singular matrices:
\[
A_0 =
\begin{pmatrix}
3&0&0&0&0&0&0&0\\
0&1&0&0&1&0&0&0\\
1&0&1&0&0&0&0&0\\
0&1&1&0&0&0&0&0\\
1&0&0&0&2&0&0&0\\
0&1&1&0&1&0&0&0\\
1&1&1&0&0&0&0&0\\
0&1&1&0&1&0&0&0
\end{pmatrix},
\qquad
A_1 =
\begin{pmatrix}
1&0&1&0&0&0&0&0\\
0&1&1&0&0&0&0&0\\
0&0&3&0&0&0&0&0\\
0&0&1&1&0&0&0&0\\
1&1&1&0&0&0&0&0\\
0&0&2&0&0&1&0&0\\
0&0&2&0&0&0&1&0\\
0&0&2&0&0&0&0&1
\end{pmatrix},
\]
\[
A_2 =
\begin{pmatrix}
1&1&0&0&0&0&0&0\\
0&1&0&0&0&0&0&0\\
0&1&0&0&0&1&0&0\\
0&1&0&1&0&0&0&0\\
1&1&0&0&0&0&0&0\\
0&1&0&0&0&1&0&0\\
0&2&0&0&0&0&0&1\\
0&2&0&0&0&0&0&1
\end{pmatrix},
\qquad
A_3 =
\begin{pmatrix}
0&1&1&0&0&0&0&0\\
0&0&0&1&0&0&0&0\\
0&0&0&1&0&1&0&0\\
0&0&0&3&0&0&0&0\\
0&1&0&0&0&0&1&0\\
0&0&0&2&0&1&0&0\\
0&0&0&2&0&0&0&1\\
0&0&0&2&0&0&0&1
\end{pmatrix}.
\]

Kenyon--Peres \cite{Kenyon--Peres: translated Cantor sets} report a bound of $0.97<\dim_H<1.166$. Using our method, we certify the following first 24 decimal places for Lebesgue a.e.\ $t\in\mathbb T^2$.
\[
\dim_H\big(\Scal\cap(\Scal+t)\big)
=
1.057167333781180286955509\cdots.
\]
A $10850$-constellation\footnote{See Remark~\ref{rmk: visualizer and reproducible materials} for the GitHub link to the constellation.} with $12676$ distinct support points
%1.0571673337811802869555091 for an $11497$-constellation with $13382$ distinct support points
was used here, which took roughly a day for the author's laptop to compute. For this family, it can be proved that $\lud = 4$, $\norm{\eta_n-\Lcal^*\eta_n}_{K_a}\leq Ce^{-cn^{1/\lud}}$, and that $W(\eps) = O\big(\big(\log(1/\eps)\big)\,^{\!3\lud}\big)$. (The matrices are not positive-conjugate, but constellation dimensions can be defined with the same expressions.) See Appendix \ref{appendix: Constellation dimension for Sierpi\'nski gasket family} for an explanation of why $\lud = 4$.
\end{example}

% Accuracy of 10^{-24} corresponds to measuring one light-year—about 9.5 trillion kilometres—with an uncertainty of only 10 nanometres, thousands of times smaller than the width of a human hair.

\begin{remark}
The Fredholm determinant method by Pollicott \cite{Pollicott10} cannot be applied directly to the nonnegative matrix family (or any of its conjugations) in Example~\ref{ex: Sierpinski gaskets}. Indeed, we have $\det(xI-A_2)=x^3(x-1)^5$, so the leading eigenvalue of $A_2$ is not simple. This makes the trace formula of equation~\eqref{eq: Fredholm fixed-point trace formula} fail for this family, since
\begin{equation} \label{eq: Fredholm determinant formula for matrix}
\det\left(I-D\overline A(x)\right)
= \prod_{j=2}^{d+1} \left(1-\frac{\lambda_j(A)}{\lambda_1(A)}\right)
\end{equation}
is $0$ for $A = A_2$. Here $\overline A$ is the projective action of a matrix $A$, $D\overline A$ is its Jacobian, $x$ is a Perron--Frobenius direction, and the eigenvalues $\lambda_i(A)$ are counted with multiplicity.
\end{remark}

\begin{example}[Liouville reset] \label{ex: Liouville reset}
The conclusion of Corollary \ref{cor: rigidity of constellation dimension} can fail even for a positive $2\times2$ family with only one singular matrix. Let $q_1=2$, $q_{k+1}=2^{2^{q_k}}$, and $\alpha=1+\sum_{k=1}^\infty\frac1{q_k}$. Then $\alpha\approx 1.5625$ is a Liouville number. Consider the symmetric random product of
\[
 A_0 = \begin{pmatrix}9&3\\3&1\end{pmatrix},
 \qquad
 A_1 = \frac12\begin{pmatrix}3&1\\1&3\end{pmatrix},
 \qquad
 A_2 = \begin{pmatrix}
 1+2^{-\alpha}&1-2^{-\alpha}\\
 1-2^{-\alpha}&1+2^{-\alpha}
 \end{pmatrix}.
\]
Then $A_0$ has rank one, and $A_1,A_2$ are invertible. Appendix~\ref{appendix: Liouville reset} proves that, for this family,
\[
\ld=\frac12, \qquad \ud=\frac23, \qquad \dim_{\mathbb C} \overline{\Kcal}^{\mathrm{Zar}} = 1.
\]
\end{example}

\section{Exponential clock and stationarity} \label{section: exponential clock and stationarity}

The purpose of this section is to prove Theorem \ref{thm: Clock inequality}.

\subsection{Exponential clock}

The observables are functions of the form $\log(p^\top x)$, where $p\in\Delta^\circ$ represents the growth introduced by matrix products, and $x\in\Delta^\circ$ represents a point in the support of a constellation we want to investigate.

Suppose that $\eta$ is a constellation, and let $\sigma = \eta - \Lcal^* \eta$. Write $\sigma = \sum_k b_k\delta_{x^{(k)}}$, where $\delta_x$ denotes the Dirac measure at $x$. We will eventually integrate with respect to $\sigma$, which has mass $0$. Therefore, we can fix any $a\in\Delta^\circ$ and consider the centered version of the observable:
\[
\log(p^\top x) - \log(p^\top a) \;=\; \log \left( \frac{p^\top x}{p^\top a} \right).
\]

The key insight is this: \emph{we can represent $\log\big((p^\top x)/(p^\top a)\big)$ using an auxiliary probability experiment in which $x$ determines the law and $p$ determines an event.} The original stochastic process generates the random matrix products. The auxiliary experiment will let us control the observable for each fixed $p$ arising from those products.

First notice the following elementary integral identity:
\begin{equation} \label{eq: log is integral of exp}
\log \left( \frac{p^\top x}{p^\top a} \right) = \Int_0^\infty{\frac{e^{-s p^\top a}-e^{-s p^\top x}}{s}\,ds}.
\end{equation}
For now we investigate the exponentials inside the integral. Since we have $e^{-sp^\top x} = \prod_j e^{-sp_jx_j}$, it is natural to consider the following joint exponential-clock probability distribution:
\[
d\Escr_x(t) \;\coloneqq\; \prod_{j=1}^{d+1}x_je^{-x_jt_j}\,dt_j.
\]
Let $T_j$ be its waiting time for each $j = 1, \ldots, d+1$. Namely,
\[
\mathbb P_{\Escr_x}(T_j > t) = e^{-tx_j}. \quad (t\geq0)
\]
By independence,
\[
\mathbb P(T_j > sp_j \text{ for every } j) = \prod_j e^{-sp_jx_j} = e^{-sp^\top x}.
\]
Therefore, equation \eqref{eq: log is integral of exp} can be interpreted in terms of either of the distributions $\Escr_a$ and $\Escr_x$. However, neither proves to be useful.

An interesting identity is known for $\Escr_a$:
\begin{equation} \label{eq: def of normalized square-root likelihood}
\ell_{x,a}
\coloneqq \frac{\sqrt{\frac{d\Escr_x}{d\Escr_a}}}{\mathbb E_{\Escr_a} \left[ \sqrt{\frac{d\Escr_x}{d\Escr_a}} \right]}
= \frac{d\Escr_{(x+a)/2}}{d\Escr_a},
\end{equation}
which is readily verified with elementary calculus. (See \cite[Section~2.5, equation~(2.57)]{Amari} for a general formula.) Therefore, although the likelihood ratio $d\Escr_x/d\Escr_a$ may not be square-integrable, the midpoint version $\ell_{x,a}$ always is. Define the $p$-weighted, minimal waiting time by
\[
\tau_p \;\coloneqq \min_{1\leq j\leq d+1,\ p_j>0}\;\frac{T_j}{p_j}.
\]
We then have, for any $s\geq0$,
\[
\mathbb P_{\Escr_{(x+a)/2}}( \tau_p > 2s ) = \prod_j e^{-2sp_j \frac{x_j+a_j}{2}} = e^{-sp^\top(x+a)},
\]
which implies
\[
e^{-sp^\top x} \;=\; e^{sp^\top a} \; \mathbb E_{\Escr_{(x+a)/2}}\left[ \one_{\tau_p > 2s} \right].
\]
Here, $\one_X$ is the indicator function of a set $X$. Substituting back into equation \eqref{eq: log is integral of exp},
\[
\log \left( \frac{p^\top x}{p^\top a} \right)
=
\Int_0^\infty{ \mathbb E_{\Escr_{(x+a)/2}}\left[ \frac{e^{-sp^\top a}-e^{sp^\top a} \one_{\tau_p > 2s}}{s} \right] ds}
\]
We can interchange the integrals because the $\Escr_{(x+a)/2}$-expectation of the absolute value of the numerator is $O(s)$ near $0$, and exponentially decaying as $s\to\infty$. Letting
\[
G_{p,a} = \Int_0^\infty {\frac{e^{-sp^\top a}-e^{sp^\top a} \one_{\tau_p > 2s}}{s} ds},
\]
we have
\begin{equation} \label{eq: separation of p and x}
\log \left( \frac{p^\top x}{p^\top a} \right)
=
\mathbb E_{\Escr_{(x+a)/2}}\left[ G_{p,a} \right]
=
\mathbb E_{\Escr_a}\left[ G_{p,a} \ell_{x,a} \right].
\end{equation}
This middle expression cleanly separates $p$ and $x$, as promised. Moreover, all the effect of long-term matrix evolution, expressed through $p$, is compressed in $G_{p,a}$.

\begin{lemma} \label{lem: norm of G}
Denote by $\norm{\cdot}_{L^2(\Escr_a)}$ the $L^2$-norm with respect to the distribution $\Escr_a$. We have
\[
\norm{G_{p,a}}_{L^2(\Escr_a)}^2 = \frac{\pi^2}{2}.
\]
\end{lemma}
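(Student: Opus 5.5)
The plan is to reduce everything to a single exponential random variable and then compute the second moment as a double integral of a covariance. Under $\Escr_a$ the clocks $T_j$ are independent exponentials with rates $a_j$, so $T_j/p_j$ is exponential with rate $p_ja_j$. Hence $\tau_p$ is exponential with rate $c\coloneqq p^\top a>0$, and $G_{p,a}$ is a function of $\tau_p$ alone. The change of variables $s\mapsto s/c$ in the definition of $G_{p,a}$ shows that
\[
G_{p,a}=\Int_0^\infty\frac{e^{-u}-e^{u}\one_{\sigma>2u}}{u}\,du,\qquad \sigma\coloneqq c\,\tau_p\sim\mathrm{Exp}(1).
\]
Since $du/u$ is scale invariant, the norm is therefore independent of $p$ and $a$, and it suffices to treat this normalized variable.

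Next I expand the square formally:
\[
\mathbb E\big[G^2\big]=\Int_0^\infty\!\!\Int_0^\infty\frac{\kappa(s,t)}{st}\,ds\,dt,
\qquad
\kappa(s,t)\coloneqq\mathbb E\big[(e^{-s}-e^{s}\one_{\sigma>2s})(e^{-t}-e^{t}\one_{\sigma>2t})\big].
\]
Using $\mathbb P(\sigma>2s)=e^{-2s}$ and $\mathbb P(\sigma>2\max(s,t))=e^{-2\max(s,t)}$, the four terms of $\kappa$ are $e^{-s-t}$, $-e^{-s-t}$, $-e^{-s-t}$ and $e^{s+t-2\max(s,t)}$. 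They combine to
\[
\kappa(s,t)=e^{-|s-t|}-e^{-(s+t)}\;\ge 0.
\]
Then I substitute $t=rs$ and integrate in $s$ first. Frullani's integral gives
\[
\Int_0^\infty\frac{e^{-s|1-r|}-e^{-s(1+r)}}{s}\,ds=\log\frac{1+r}{|1-r|},
\qquad\text{so}\qquad
\mathbb E[G^2]=\Int_0^\infty\frac1r\log\frac{1+r}{|1-r|}\,dr.
\]
The substitution $r\mapsto1/r$ maps the part over $(1,\infty)$ onto the part over $(0,1)$. The latter equals $\sum_{k\ge0}2/(2k+1)^2=\pi^2/4$, by expanding $\log\frac{1+r}{1-r}=2\sum_{k\ge0}r^{2k+1}/(2k+1)$ and integrating termwise; this is justified by monotone convergence since all terms are positive. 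The total is therefore $\pi^2/2$.

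The main obstacle is justifying the interchange of expectation and the double $ds\,dt$ integral, because the integrand $(e^{-s}-e^{s}\one)/s$ is not absolutely integrable jointly in $(u,\omega)$ near the diagonal. I would handle this by truncation. Set $G_{\epsilon,R}\coloneqq\Int_\epsilon^R(\cdots)\,du/u$; for fixed $\epsilon,R$ the integrand is bounded, so Fubini applies and
\[
\mathbb E\big[(G_{\epsilon,R}-G_{\epsilon',R'})^2\big]
\]
equals the integral of the nonnegative kernel $\kappa/(st)$ over a symmetric-difference region. Since $\kappa/(st)$ is integrable on $(0,\infty)^2$ by the computation above, these quantities tend to $0$, so $G_{\epsilon,R}$ is Cauchy in $L^2(\Escr_a)$. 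On the other hand, $G_{\epsilon,R}\to G$ pointwise for each $\sigma>0$, since the integral defining $G$ converges as an improper integral: near $0$ the integrand is $O(1)$ once $2u<\sigma$, and at $\infty$ it is $e^{-u}/u$. The $L^2$ limit therefore coincides with $G$, and $\mathbb E[G^2]=\lim\mathbb E[G_{\epsilon,R}^2]=\iint\kappa/(st)=\pi^2/2$.
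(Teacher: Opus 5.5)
Your proof is correct and is essentially the paper's. It uses the same covariance kernel $e^{-|s-t|}-e^{-(s+t)}$ (the paper keeps the factor $p^\top a$, which your normalization to $\sigma\sim\mathrm{Exp}(1)$ removes), the substitution $t=rs$ with Frullani's integral, and the reduction to $2\int_0^1 r^{-1}\log\frac{1+r}{1-r}\,dr$; your $r\mapsto 1/r$ step plays the role of the paper's restriction to the triangle $0<t<s$, and your $L^2$-Cauchy truncation spells out the limit passage that the paper only sketches by restricting to $[1/R,R]$.

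There are two minor points. First, for non-nested truncations $\mathbb E\big[(G_{\epsilon,R}-G_{\epsilon',R'})^2\big]$ is only bounded by, not equal to, the integral of $\kappa/(st)$ over $D\times D$ with $D=[\epsilon,R]\,\triangle\,[\epsilon',R']$, because the difference has opposite signs on the two pieces of $D$; the bound still suffices. Second, the obstacle you flag does not actually arise: $\mathbb E\big[\big(\int_0^\infty|e^{-u}-e^{u}\one_{\sigma>2u}|\,du/u\big)^2\big]<\infty$, since the inner integral is $O(\log(1/\sigma))$ as $\sigma\to0$ and $O(e^{\sigma/2}/\sigma)$ as $\sigma\to\infty$, so Fubini applies even without truncation.
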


\begin{proof}
First restrict the double integrals to the interval $[1/R,\,R]$ with $R>0$. Fubini applies to its squared norm, and the expectation computed below is nonnegative and has finite integral. This justifies exchanging the order of integrations:
\begin{align*}
\norm{G_{p,a}}_{L^2(\Escr_a)}^2
&=
\Int_0^\infty{ \Int_0^\infty{ \mathbb E_{\Escr_a}\left[
\frac{1}{st} \big(e^{-s p^\top a}-e^{s p^\top a}\mathbf1_{\{\tau_p>2s\}}\big)
\big(e^{-t p^\top a}-e^{t p^\top a}\mathbf1_{\{\tau_p>2t\}}\big)
\right] ds}dt}\\
&=
\Int_0^\infty{ \Int_0^\infty{
\frac{1}{st} \left(
e^{-(s+t)p^\top a}
-e^{-(s+t)p^\top a}
-e^{-(s+t)p^\top a}
+e^{(s+t-2\max(s,t))p^\top a}
\right) ds}dt}\\
&=
\Int_0^\infty{ \Int_0^\infty{
\frac{1}{st} \left( e^{-|s-t|p^\top a}-e^{-(s+t)p^\top a} \right) ds}dt}.
\end{align*}
By symmetry, the integral is twice the integral on the triangle $0<t<s$. Letting $t=su$ and using Tonelli's Theorem yields
\begin{align*}
\norm{G_{p,a}}_{L^2(\Escr_a)}^2
&\;=\;
2 \Int_0^\infty{ \Int_0^s{ \frac{e^{-(s-t)p^\top a} - e^{-(s+t)p^\top a}}{st}dt}ds} \\
&\;=\;
2\Int_0^1{ \frac1u \Int_0^\infty{ \frac{e^{-s(1-u)p^\top a} - e^{-s(1+u)p^\top a}}{s}ds}du} \\
&\;=\;
2\Int_0^1{ \frac1u \log\frac{1+u}{1-u} du}
\;=\;
4 \sum_{n=0}^\infty \frac{1}{(2n+1)^2} \;=\; \frac{\pi^2}{2}. \qedhere
\end{align*}
\end{proof}

\smallskip

Thus, we have uniform control over the effect of matrix products. By integrating equation \eqref{eq: separation of p and x} with $\sigma=\sum_k b_k\delta_{x^{(k)}}$ over $\Delta$,
\[
\sigma\left( x\mapsto \log\left(\frac{p^\top x}{p^\top a}\right) \right)
=
\sum_k b_k \log \left( \frac{p^\top x^{(k)}}{p^\top a} \right)
%= \sum_k b_k \mathbb E_{\Escr_a}\left[ \ell_{x^{(k)},a} G_{p,a} \right]
=
\mathbb E_{\Escr_a}\left[ G_{p,a} \cdot \left( \sum_k b_k\ell_{x^{(k)},a} \right) \right].
\]
By the Cauchy-Schwarz inequality and Lemma \ref{lem: norm of G},
\begin{equation} \label{eq: integral wrt sigma is bounded by signed sum of ell}
\abs{ \sigma\left( x\mapsto \log\left(\frac{p^\top x}{p^\top a}\right) \right) }^2
\leq
\frac{\pi^2}{2} \norm{ \sum_k b_k\ell_{x^{(k)},a} }_{\Escr_a}^2.
\end{equation}
To evaluate the remaining norm, we need to determine $\mathbb E_{\Escr_a}\left[ \ell_{x^{(k)},a} \ell_{x^{(k')},a}\right]$.

%Now, we have $\norm{ \sum_j b_j\ell_{x,a} }_{\Escr_a}^2 = \sum_{j,k} b_j b_k \mathbb E_{\Escr_a}\left[ \ell_{x_j,a} \ell_{x_k,a}\right]$. Thus 

\begin{lemma} \label{lem: inner product of ell is the kernel}
For every $a,x,y\in\Delta^\circ$,
\[
\mathbb E_{\Escr_a}\left[ \ell_{x,a} \ell_{y,a}\right]
= \prod_{j=1}^{d+1} \frac{(x_j+a_j)(y_j+a_j)}{2a_j(x_j+y_j)}
\eqqcolon K_a(x,y).
\]
In particular, $K_a$ is a positive-definite kernel. That is, $\sum_{i,j} c_ic_j K_a(\zeta_i,\zeta_j) \geq 0$ for every $c_1,\ldots,c_L\in\mathbb R$ and $\zeta_1,\ldots,\zeta_L\in\Delta^\circ$.
\end{lemma}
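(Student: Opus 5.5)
The plan is a direct computation that uses the product structure of $\Escr_a$. By \eqref{eq: def of normalized square-root likelihood}, $\ell_{x,a} = d\Escr_{(x+a)/2}/d\Escr_a$, and every $\Escr_c$ is a product of independent exponential laws. So the Radon--Nikodym derivative factorizes coordinatewise:
\[
\ell_{x,a}(t) = \prod_{j=1}^{d+1} \frac{\tfrac{x_j+a_j}{2}\, e^{-\frac{x_j+a_j}{2} t_j}}{a_j e^{-a_j t_j}} .
\]
I will first verify this identity, or simply take it from \eqref{eq: def of normalized square-root likelihood} as stated. The product $\ell_{x,a}\ell_{y,a}$ then also factorizes over $j$. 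Since the coordinates $T_j$ are independent under $\Escr_a$, the expectation splits into a product of one-dimensional integrals.

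For each coordinate $j$, write $\alpha=(x_j+a_j)/2$ and $\beta=(y_j+a_j)/2$, and compute
\[
\int_0^\infty \frac{\alpha e^{-\alpha t}\,\beta e^{-\beta t}}{a_j e^{-a_j t}}\,dt = \frac{\alpha\beta}{a_j(\alpha+\beta-a_j)} .
\]
This integral converges because $\alpha+\beta-a_j=(x_j+y_j)/2>0$ whenever $x,y\in\Delta^\circ$. Substituting the values of $\alpha$ and $\beta$ gives
\[
\frac{(x_j+a_j)(y_j+a_j)/4}{a_j(x_j+y_j)/2} = \frac{(x_j+a_j)(y_j+a_j)}{2a_j(x_j+y_j)} .
\]
Taking the product over $j$ yields $K_a(x,y)$. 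Tonelli's theorem justifies the factorization, since every integrand is nonnegative.

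Positive-definiteness then follows from the Gram-matrix argument. By linearity of expectation,
\[
\sum_{i,k} c_i c_k K_a(\zeta_i,\zeta_k) = \mathbb E_{\Escr_a}\Big[\Big(\sum_i c_i \ell_{\zeta_i,a}\Big)^2\Big] \geq 0 .
\]
All terms involved are finite, because each $\ell_{\zeta,a}$ lies in $L^2(\Escr_a)$ by the computation above with $x=y$.

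I do not expect a real obstacle. The only point needing care is the positivity of the exponent $(x_j+y_j)/2$, which is what guarantees square-integrability. This holds because every point lies in the open simplex.
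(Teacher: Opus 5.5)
Your proof is correct and matches the paper's: it writes $\ell_{x,a}=\prod_j \frac{x_j+a_j}{2a_j}e^{-t_j(x_j-a_j)/2}$, uses independence of the clocks to reduce to one-dimensional integrals $\int_0^\infty e^{-t(x_j+y_j)/2}\,dt$, and multiplies the results. The paper leaves positive-definiteness as an immediate consequence, and your Gram-matrix identity $\sum_{i,k}c_ic_kK_a(\zeta_i,\zeta_k)=\mathbb E_{\Escr_a}\big[(\sum_i c_i\ell_{\zeta_i,a})^2\big]\geq 0$ is exactly the intended justification.
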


\begin{proof}
By equation \eqref{eq: def of normalized square-root likelihood},
\begin{equation} \label{eq: explicit formula for ell x a}
\ell_{x,a}
=
\prod_{j=1}^{d+1} \frac{x_j+a_j}{2a_j} e^{-t_j(x_j-a_j)/2}
\end{equation}
Since the clocks are independent,
\begin{align*}
\mathbb E_{\Escr_a}\left[ \ell_{x,a} \ell_{y,a}\right]
&\;=\;
\prod_{j=1}^{d+1} \int_0^\infty \frac{x_j+a_j}{2a_j} \frac{y_j+a_j}{2a_j} e^{-t(x_j+y_j-2a_j)/2}a_j e^{-a_j t}\,dt \\
&\;=\; \prod_{j=1}^{d+1} \frac{(x_j+a_j)(y_j+a_j)}{4a_j} \int_0^\infty e^{-t(x_j+y_j)/2}dt
\;=\; \prod_{j=1}^{d+1} \frac{(x_j+a_j)(y_j+a_j)}{2a_j(x_j+y_j)}. \qedhere
\end{align*}
\end{proof}

Recall the definition of $\norm{\sigma}_{K_a}^2$ in equation \eqref{eq:kernel-norm}.

\begin{lemma} \label{lem: integral by sigma is bounded by norm} 
Let $p\in\Delta$, $a\in\Delta^\circ$. For any signed measure $\sigma$ that is finitely supported on $\Delta^\circ$ and $\sigma(\Delta)=0$,
\begin{equation*}
\abs{ \sigma\left( x\mapsto \log\left(\frac{p^\top x}{p^\top a}\right) \right) }^2
\leq
\frac{\pi^2}{2} \norm{\sigma}_{K_a}^2.
\end{equation*}
\end{lemma}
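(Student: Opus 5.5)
The proof assembles what has already been established in this section, namely inequality \eqref{eq: integral wrt sigma is bounded by signed sum of ell} and Lemma~\ref{lem: inner product of ell is the kernel}. Write $\sigma=\sum_k b_k\delta_{x^{(k)}}$ with $x^{(k)}\in\Delta^\circ$. The first step is to check that the separation identity \eqref{eq: separation of p and x} holds for $p\in\Delta$, not only for $p\in\Delta^\circ$. This matters because some coordinates of $p$ may vanish. The identity \eqref{eq: log is integral of exp} only requires $p^\top x>0$ and $p^\top a>0$. Both hold because $x,a\in\Delta^\circ$ and $p\neq0$ is nonnegative.

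The variable $\tau_p$ was defined as a minimum over the indices with $p_j>0$. This index set is nonempty, so the computation $\mathbb P_{\Escr_{(x+a)/2}}(\tau_p>2s)=e^{-sp^\top(x+a)}$ still goes through. The interchange of integrals is justified as in the text. The mean $\mathbb E_{\Escr_{(x+a)/2}}$ of the absolute numerator is $O(s)$ near $0$ and decays exponentially at infinity, uniformly enough for Fubini. Lemma~\ref{lem: norm of G} only used $p^\top a>0$, so $\norm{G_{p,a}}^2_{L^2(\Escr_a)}=\pi^2/2$ remains valid.

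Next I integrate \eqref{eq: separation of p and x} against $\sigma$, using linearity over the finitely many atoms. This gives
\[
\sigma\Bigl(x\mapsto\log\tfrac{p^\top x}{p^\top a}\Bigr)=\mathbb E_{\Escr_a}\Bigl[G_{p,a}\sum_k b_k\ell_{x^{(k)},a}\Bigr].
\]
The hypothesis $\sigma(\Delta)=0$ is not strictly needed for this step. It explains why centering by $\log(p^\top a)$ is harmless: subtracting the constant changes nothing, so the bound also applies to $\sigma(\log p^\top x)$. Applying Cauchy--Schwarz in $L^2(\Escr_a)$ then yields \eqref{eq: integral wrt sigma is bounded by signed sum of ell}. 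Each $\ell_{x,a}$ lies in $L^2(\Escr_a)$ by Lemma~\ref{lem: inner product of ell is the kernel}, because $K_a(x,x)<\infty$.

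Finally I expand the square:
\[
\Bigl\|\sum_k b_k\ell_{x^{(k)},a}\Bigr\|^2_{L^2(\Escr_a)}=\sum_{k,k'}b_kb_{k'}\,\mathbb E_{\Escr_a}[\ell_{x^{(k)},a}\ell_{x^{(k')},a}]=\sum_{k,k'}b_kb_{k'}K_a(x^{(k)},x^{(k')})=\norm{\sigma}_{K_a}^2,
\]
using Lemma~\ref{lem: inner product of ell is the kernel} and the definition \eqref{eq:kernel-norm}. The only step that is not routine bookkeeping is the boundary case $p\in\partial\Delta$ in the first paragraph, and the definition of $\tau_p$ restricted to $p_j>0$ handles it directly.
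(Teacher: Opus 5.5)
Your proposal is correct and follows the paper's proof: you combine the Cauchy--Schwarz bound \eqref{eq: integral wrt sigma is bounded by signed sum of ell} with Lemma~\ref{lem: inner product of ell is the kernel} by expanding the square of $\sum_k b_k\ell_{x^{(k)},a}$. Your extra check that the separation identity and Lemma~\ref{lem: norm of G} remain valid for $p$ on the boundary of $\Delta$ (through the restriction to indices with $p_j>0$ in the definition of $\tau_p$) is accurate and makes explicit what the paper leaves implicit.
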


\begin{proof}
Since $\norm{ \sum_k b_k\ell_{x^{(k)},a} }_{\Escr_a}^2 = \sum_{k, k'} b_k b_{k'} \mathbb E_{\Escr_a}\left[ \ell_{x^{(k)},a} \ell_{x^{(k')},a}\right]$, the conclusion follows from equation \eqref{eq: integral wrt sigma is bounded by signed sum of ell} and Lemma \ref{lem: inner product of ell is the kernel}.
\end{proof}

\subsection{Proof of Theorem \ref{thm: Clock inequality}}

\begin{proof}
Take any constellation $\eta$ and let $\sigma = \eta - \Lcal^* \eta$. Pick any $x\in \Delta^\circ$. Recall that $\Phi(x) = \sum_{i = 0}^{m-1} \pi_i \log\norm{A_i x}_1$, and write, for a natural number $n$,
\[
S_n\Phi (x) \coloneqq \sum_{k=0}^{n-1} \Lcal^k \Phi(x),
\]
and $S_0\Phi \coloneqq 0$. Notice that
\[
\Lcal \Phi(x)
\;=\; \sum_{i,j} \pi_i\pi_j \log\norm{ A_i \frac{A_jx}{\norm{A_jx}_1} }_1
\;=\; \sum_{i,j} \pi_i\pi_j \log\norm{A_i A_j x}_1 - \sum_j \pi_j \log\norm{A_jx}_1,
\]
Therefore, by telescoping,
\[
S_n\Phi(x) = \sum_{w\,:\,|w|=n} \pi_w \log\norm{A_wx}_1,
\]
where for a word $w = (i_1,\ldots,i_n)$, we define $|w|\coloneqq n$, $\pi_w \coloneqq \pi_{i_1}\cdots\pi_{i_n}$, $A_w \coloneqq A_{i_1}\cdots A_{i_n}$, and $f_w \coloneqq f_{i_1}\circ\cdots\circ f_{i_n}$. Let $\widetilde{p_w} \coloneqq A_w^\top \one$ denote the row-sum vector of $A_w^\top$, where $\one = (1, \ldots, 1)^\top$. Let $p_w \coloneqq \widetilde{p_w}/\norm{\widetilde{p_w}}_1$.

Since $\sigma(\Delta)=0$, subtracting the constant $\log(\widetilde{p_w}^\top a)$ does not change the value of integrals. Also $\widetilde{p_w}^\top x /\widetilde{p_w}^\top a = p_w^\top x / p_w^\top a$. Then Lemma \ref{lem: integral by sigma is bounded by norm} gives, for every natural number $n$,
\begin{equation} \label{eq: bound on sigma of time average}
\abs{ \sigma(S_n\Phi) }
%= \abs{ \sum_{w\,:\,|w|=n} \pi_w \sigma\big(x\mapsto \log(p_w^\top x) \big)} \\
=
\abs{ \sum_{w\,:\,|w|=n} \pi_w \sigma\left( x\mapsto \log\left(\frac{p_w^\top x}{p_w^\top a}\right) \right)}
\leq
\frac{\pi}{\sqrt2} \norm{\sigma}_{K_a} \sum_{w\,:\,|w|=n} \pi_w
=
\frac{\pi}{\sqrt2} \norm{\sigma}_{K_a}.
\end{equation}
Since we have $S_n\Phi - \Lcal S_n\Phi = \Phi - \Lcal^n\Phi$, we have $\sigma(S_n\Phi) = \eta(\Phi) - \eta(\Lcal^n\Phi)$. For any $N\in\mathbb N$,
\[
\frac1N \sum_{n=0}^{N-1} \sigma(S_n\Phi)
\;=\;
\eta(\Phi) - \frac1N \eta(S_N\Phi).
\]
By equation \eqref{eq: bound on sigma of time average},
\begin{equation} \label{eq: bound on time average minus raw}
\abs{\frac1N \eta(S_N\Phi) - \eta(\Phi)} \leq \frac{\pi}{\sqrt2} \norm{\sigma}_{K_a}
\end{equation}

It remains to show $\frac1N S_N\Phi(x) \to \lambda$ for all $x\in\Delta^\circ$. Since each entry of a fixed $x\in\Delta^\circ$ is positive,
\[
\norm{B}_x \;\coloneqq\; \sum_{i,j} \,|B_{i,j}|\, x_j
\]
defines a norm on $(d+1)\times(d+1)$ matrices $B=(B_{i,j})_{i,j}$. Since $\frac1N \log \norm{A_w}_x$ with $|w|=N$ is uniformly bounded for $w$ and $N$ due to the no-zero-row assumption, a.s.-convergence in equation \eqref{eq: definition of lambda} implies $L^1$-convergence. Therefore,
\[
\frac1N S_N\Phi(x)
=
\mathbb E\left[ \frac1N \log \norm{A_w x}_1 \right]
=
\mathbb E\left[ \frac1N \log \norm{A_w}_x \right]
\xrightarrow{N\to\infty}
\lambda
\]
by the norm-independence in equation \eqref{eq: definition of lambda}. Letting $N\to\infty$ in equation \eqref{eq: bound on time average minus raw} proves the claim.
\end{proof}

\section{Construction of constellations} \label{section: construction of constellations}

\subsection{Construction}

Below we explain the construction of constellations. Intuitions are provided in the next subsection. Let $A_0,\ldots,A_{m-1}$ be positive matrices, and fix $a,b\in\Delta^\circ$. Then $\ker_\Delta A_i = \varnothing$ for all $i$.

\begin{enumerate}[label=\textup{\arabic*.},ref=\arabic*,leftmargin=*]
\item \textbf{Preparation.}
For $x\in\Delta^\circ$, we define the functions $k_x,s_x: \Delta^\circ\to\mathbb R$ by
\[
k_x \coloneqq K_a(x,\cdot),\qquad s_x \coloneqq k_x-\sum_i\pi_i k_{f_ix}.
\]
A pair $(u,v)\in (\Delta^\circ)^2$ is said to be \emph{diagonal} if $u=v$. We define $\rho,\theta:(\Delta^\circ)^2\to\mathbb R$ by
\[
\rho(u,v) \coloneqq \frac{\max_j(u_j/v_j)}{\min_j(u_j/v_j)}-1,
\!\qquad
\theta(u,v) \coloneqq \frac{\sum_i\pi_i\rho(f_iu,f_iv)}{\rho(u,v)}
\!\quad(u\ne v),
\!\qquad
\theta(u,v) \coloneqq 0
\!\quad(u = v).
\]

Let $X_0 \coloneqq b$ and $\eta_0 \coloneqq \delta_b$. In what follows we recursively define a tuple $X_n = (b,u_1,v_1,\ldots,u_n,v_n)\in(\Delta^\circ)^{2n+1}$ and an $n$-constellation $\eta_n$ with anchor $b$:
\begin{equation}\label{eq:anchored-constellation}
\eta_n \;=\; \delta_b + \sum_{j=1}^n\alpha_{n,j}(\delta_{u_j}-\delta_{v_j}).
\qquad
(\alpha_{n,j} \in \mathbb R)
\end{equation}

\item \textbf{Stationary equations.}
Given $X_n$ with $n\geq1$, we define the coefficients $\alpha_{n,j}$ by imposing stationarity on $\eta_n$ at $s_{u_j}-s_{v_j}$. That is,
\begin{equation}\label{eq: stationary equations}
\eta_n\big(s_{u_j}-s_{v_j}\big) = \eta_n\Big(\Lcal\big(s_{u_j}-s_{v_j}\big)\Big)
\qquad(1\le j\le n).
\end{equation}
Under our construction, these finite linear equations for $(\alpha_{n,1},\ldots,\alpha_{n,n})\in \mathbb R^n$ have a unique solution; see Lemma \ref{lem: solvability}.

\item \textbf{Enlarging the constellation.}
Given $X_n$ and $\eta_n$ with $n\geq0$, set $r_n:\Delta^\circ\to\mathbb R$ and $E_n\in\mathbb R$ as
\[
r_n \,\coloneqq\, s_b + \sum_{j=1}^n\alpha_{n,j}(s_{u_j}-s_{v_j}),
\qquad
E_n \,\coloneqq\, \norm{\eta_n - \Lcal^*\eta_n}_{K_a},
\]
with the convention $\sum_{j=1}^0 = 0$. If $E_n = 0$, $\eta_n$ satisfies $\lambda = \eta_n(\Phi)$, so we set $\eta_m = \eta_n$ for all $m>n$. Suppose $E_n > 0$. Let us define the ``score'' function
\[
q_n:(\Delta^\circ)^2 \longrightarrow \mathbb R, \qquad q_n(u,v) = r_n(u) - r_n(v).
\]
Since $E_n^2 = \sum_i\pi_iq_n(b,f_ib)>0$, we can pick a non-diagonal pair $(u,v)=(b,f_ib)$ maximizing $q_n/\rho$ among $i\in\{0,\ldots,m-1\}$. We accept the pair $(u,v)$ if
\begin{equation}\label{eq:acceptance}
\left|q_n(u,v)-\sum_i\pi_iq_n(f_iu,f_iv)\right| \geq \frac{1-\theta(u,v)}2\,q_n(u,v).
\end{equation}
Otherwise, not all the children $(f_iu,f_iv)$ are diagonal. Thus we can replace the current pair by the non-diagonal child pair $(f_iu,f_iv)$ maximizing $q_n/\rho$, and repeat. This score test succeeds after at most $O\big(\log(2+1/E_n)\big)$ updates of the initial pair; see Proposition~\ref{prop: score search terminates}. (In the selections above, choose the smallest index $i$ in a tie.)

For the accepted pair $(u_{n+1},v_{n+1})$, we set $X_{n+1} = \left(X_n, u_{n+1},v_{n+1}\right)$, and return to item $2$.
\end{enumerate}

\begin{remark}
Our construction was initially inspired by the following greedy basis selection in Banach spaces studied by DeVore--Petrova--Wojtaszczyk \cite{DeVore--Petrova--Wojtaszczyk}. It aims to approximate a compact subset $\mathcal F$ of a Banach space by finite-dimensional linear spaces. Fix $\gamma\in(0,1]$. After choosing $f_1,\ldots,f_n$, they assume that one can choose $f_{n+1}$ satisfying
\[
\dist(f_{n+1},V_n)\,\geq\,\gamma\,\max_{f\in\mathcal F}\dist(f,V_n)
\quad \text{for} \quad V_n \coloneqq \Span\{f_1,\ldots,f_n\}.
\]
They prove that $V_n$ approximates $\mathcal F$ at a rate controlled by the Kolmogorov widths of $\mathcal F$.
\end{remark}

\subsection{Intuition behind the construction} \label{subsection: intuition behind the construction}

To develop an intuition, let us first revisit the Hilbert space $\Hscr_a$. It has the following \emph{reproducing property}:
\begin{equation} \label{eq: reproducing property}
\text{\emph{For any $u\in\Hscr_a$ and $x\in\Delta^\circ$, one has $\ip{u}{k_x}_{\Hscr_a} = u(x)$.}}
\end{equation}
%\begin{proof} The conclusion is immediate if $u$ is a linear combination of $\{k_x \,|\, x\in\Delta^\circ\}$. Otherwise, let $u_n\to u$ in $\norm{\cdot}_{\Hscr_a}$, where each $u_n$ is a linear combination of $\{k_x \,|\, x\in\Delta^\circ\}$. By Cauchy-Schwarz,
%\[ \abs{u_n(x)-u_m(x)} = \abs{\ip{u_n-u_m}{k_x}} \leq \norm{u_n-u_m}_{\Hscr_a}\norm{k_x}_{\Hscr_a} \leq \sqrt{K_a(x,x)}\norm{u_n-u_m}_{\Hscr_a}. \] Thus $u(x) = \lim_nu_n(x) = \lim_n\ip{u_n}{k_x} = \ip{u}{k_x}$. \end{proof}

Therefore, $k_x$ is the Riesz representation of $\delta_x$ in $\Hscr_a$, and $s_x$ is that of the stationary defect $\delta_x - \Lcal^* \delta_x$.

Now imagine increasing $\alpha_{n,j}$ by $t$ in equation \eqref{eq:anchored-constellation}. The change in the constellation is $t\left(\delta_{u_j}-\delta_{v_j}\right)$, so the change in the stationary defect is represented by $t\left(s_{u_j}-s_{v_j}\right)$. Thus the residual $r_n$ represents the entire stationary defect of $\eta_n$. By definition,
\[
\norm{r_n}_{\Hscr_a} = \norm{\eta_n-\Lcal^*\eta_n}_{K_a}.
\]
Let
\[
V_n = \mathrm{Span}\left\{s_{u_j}-s_{v_j} \setcond 1\leq j\leq n\right\}.
\]
All $r_n$ attainable by changing the coefficients $\alpha_{n,j}$ form the affine space $s_b + V_n$. We choose the coefficients $\alpha_{n,j}$ so that $r_n$ is the point of this affine space nearest to zero, thus minimizing $\norm{r_n}_{\Hscr_a} = \norm{\eta_n-\Lcal^*\eta_n}_{K_a}$. Equivalently, $r_n\perp V_n$, or $\ip{r_n}{\,s_{u_j}-s_{v_j}}_{\Hscr_a} = 0$ for all $1\leq j\leq n$. This is the stationary equation imposed in equation 
\eqref{eq: stationary equations}.

\medskip

But how do we determine which points to add? Since constellations have to have mass one, the available corrections are of the form
\begin{equation} \label{eq: available change for constellations}
\eta_n \;\leadsto\; \eta_n + t(\delta_u-\delta_v), \qquad t\in\mathbb R, \;\;u,v\in\Delta^\circ.
\end{equation}
Since $(f_i)_*(\delta_u-\delta_v) = \delta_{f_i(u)}-\delta_{f_i(v)}$, it is natural to consider the space $(\Delta^\circ)^2$ of pairs $(u,v)$, equipped with the following random maps:
\[
(f_i\otimes f_i)(u,v) \coloneqq (f_iu, f_iv)
\text{\:\: is applied with probability $\pi_i$}.
\]

Now, let $\Ecal(\eta) \coloneqq \norm{\eta-\Lcal^*\eta}_{K_a}^2$ for a constellation $\eta$. We want to minimize $\Ecal$. It responds to the change in equation \eqref{eq: available change for constellations} as
\[
\Ecal\big(\eta_n+t(\delta_u-\delta_v)\big)
=
\Ecal(\eta_n) + 2t\ip{r_n}{\,s_u-s_v} + t^2\norm{s_u-s_v}_{\Hscr_a}^2.
\]
Thus the first-order ``usefulness'' of a pair $(u,v)$ can be measured with $\ip{r_n}{\,s_u-s_v}\eqqcolon D_n(u,v)$.
\begin{lemma} \label{lem: first-order derivative is stationary defect of score}
\[
D_n(u,v) \;=\; q_n(u,v) - \sum_i\pi_iq_n(f_iu,f_iv).
\]
\end{lemma}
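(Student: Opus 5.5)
The plan is a direct computation in $\Hscr_a$ using the reproducing property~\eqref{eq: reproducing property}. By definition, $D_n(u,v)=\ip{r_n}{s_u-s_v}_{\Hscr_a}$ and $s_x=k_x-\sum_i\pi_i k_{f_ix}$. Expanding by linearity gives
\[
D_n(u,v)=\ip{r_n}{k_u}_{\Hscr_a}-\ip{r_n}{k_v}_{\Hscr_a}-\sum_i\pi_i\Big(\ip{r_n}{k_{f_iu}}_{\Hscr_a}-\ip{r_n}{k_{f_iv}}_{\Hscr_a}\Big).
\]

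The first step is to check that $r_n\in\Hscr_a$. This holds because $r_n$ is a finite linear combination of the functions $s_x$, each of which is a finite linear combination of kernel sections $k_y$ with $y\in\Delta^\circ$. Here positivity of the matrices ensures $f_i(\Delta^\circ)\subset\Delta^\circ$, so every point $f_ix$ lies in the domain where the kernel sections are defined.

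The second step applies the reproducing property to each inner product: $\ip{r_n}{k_x}_{\Hscr_a}=r_n(x)$ for every $x\in\Delta^\circ$. Substituting, the expression becomes
\[
D_n(u,v)=r_n(u)-r_n(v)-\sum_i\pi_i\big(r_n(f_iu)-r_n(f_iv)\big).
\]
By the definition of the score, $r_n(u)-r_n(v)=q_n(u,v)$ and $r_n(f_iu)-r_n(f_iv)=q_n(f_iu,f_iv)$, which is exactly the claimed formula.

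There is no real obstacle here. The only point needing care is confirming that the inner product on the span of the kernel sections, defined through $\ip{k_x}{k_y}=K_a(x,y)$, is the one in which $D_n$ is computed, so that the reproducing property holds trivially on finite combinations without any completion argument. Finite combinations suffice because $r_n$ and $s_u-s_v$ both lie in that span.
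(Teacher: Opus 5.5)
Your proposal is correct and follows the paper's own proof: expand $s_u-s_v$ using $s_x=k_x-\sum_i\pi_i k_{f_ix}$, apply the reproducing property to get $r_n(u)-\sum_i\pi_i r_n(f_iu)-\bigl(r_n(v)-\sum_i\pi_i r_n(f_iv)\bigr)$, and regroup into $q_n$. Your extra remarks, that $r_n$ and $s_u-s_v$ lie in the finite span of kernel sections so no completion argument is needed, are accurate but only make explicit what the paper uses implicitly.
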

\begin{proof}
Since $s_x = k_x - \sum_i\pi_i k_{f_ix}$, by the reproducing property of equation \eqref{eq: reproducing property},
\[
D_n(u,v)
=r_n(u) - \sum_i\pi_ir_n(f_i(u)) - \left( r_n(v) - \sum_i\pi_ir_n(f_i(v)) \right)
=q_n(u,v)-\sum_i\pi_iq_n(f_iu,f_iv). \qedhere
\]
\end{proof}

However, one-step usefulness itself neither predicts nor controls the usefulness of its future trajectories. Therefore, we consider the expectation of accumulated future variation: let $(U_k,V_k)$ be the $k$-th pair along the common random trajectory originating from $(u,v)$, and define
\[
\Ucal_n(u,v)
\;\coloneqq\; \mathbb E \left[ \sum_{k=0}^\infty D_n(U_k,V_k) \right]
\;=\; \sum_{k=0}^\infty \sum_{\abs{w} = k} \pi_w D_n\big(f_w u, f_w v\big).
\]

In fact, a direct calculation shows that
\begin{equation*} %\label{eq: score is usefulness}
\Ucal_n(u,v) \;=\; q_n(u,v).
\end{equation*}
Thus, $q_n(u,v)$ measures the potential future contributions of trajectories starting at $(u,v)$, and its stationary defect $D_n$ measures the immediate usefulness of incorporating the pair.

In the construction, we divide $q_n$ by the pair's geometric separation $\rho$ to normalize the size. We follow a trajectory by maximizing $q_n/\rho$, and accept a pair if it has enough $D_n$ relative to its potential.

\subsection{Proof of solvability}

The proof uses the contractions introduced by the positivity of the matrices. We use the Hilbert metric
\begin{equation*}
d_H(y,z)
\;\coloneqq\;
\log\!\left(\frac{\max_{1\leq p\leq d+1} y_p/z_p}{\min_{1\leq q\leq d+1} y_q/z_q}\right)
\qquad (y,z\in \mathbb R_{>0}^{d+1}).
\end{equation*}
Since $d_H(ty,z) = d_H(y,tz) = d_H(y,z)$ for all $t>0$, it also defines a metric on the projective positive cone. 

For a positive matrix $A=(a_{pq})_{1\leq p,q\leq d+1}$, let
\[
\Gamma(A) \coloneqq \max_{1\leq p,q,r,s\leq d+1} \frac{a_{pr} a_{qs}}{a_{ps} a_{qr}},
\qquad
\kappa(A) \coloneqq \frac{\sqrt{\Gamma(A)}-1}{\sqrt{\Gamma(A)}+1}.
\]

\begin{lemma} \label{lemma: Birkhoff-Hopf constants}
For every positive matrix $A$, one has $0\leq \kappa(A) <1$, and for every $x,y\in \mathbb R_{>0}^{d+1}$,
\[
d_H\big(Ax,Ay\big) \leq \kappa(A) d_H(x,y).
\qquad d_H(A^\top x, A^\top y) \leq \kappa(A) d_H(x,y).
\]
Moreover, $\kappa(A)$ is the optimal Lipschitz constant for $A, A^\top: \mathbb R_{>0}^{d+1}\to \mathbb R_{>0}^{d+1}$ with respect to $d_H$.
\end{lemma}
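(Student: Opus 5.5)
This is the Birkhoff--Hopf theorem, and the plan is to reduce it to a two-dimensional computation that can be done by hand. The first observation is that $d_H$ is a metric on rays, and for positive $x,y$ it can be written as
\[
d_H(x,y)=\log\frac{M(x/y)}{m(x/y)},\qquad M(x/y)=\inf\{t: x\le ty\},\quad m(x/y)=\sup\{t: x\ge ty\}.
\]
The bound $0\le\kappa(A)<1$ is immediate: $\Gamma(A)\ge1$ (take $r=s$), and $\Gamma(A)<\infty$ because all entries are positive. The statement for $A^\top$ follows from the statement for $A$, since $\Gamma(A^\top)=\Gamma(A)$; swapping the roles of $(p,q)$ and $(r,s)$ shows the maximum is unchanged. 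So it suffices to treat $A$.

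\textbf{Step 1 (projective diameter).} Let $D=\sup_{x,y>0} d_H(Ax,Ay)$. Writing $Ax=\sum_r x_r a_{\cdot r}$, we get
\[
\frac{(Ax)_p}{(Ay)_p}\cdot\frac{(Ay)_q}{(Ax)_q}\le \max_{r,s}\frac{a_{pr}a_{qs}}{a_{ps}a_{qr}},
\]
because a ratio of positive combinations is bounded by the extreme ratios of the terms. Hence $D=\log\Gamma(A)$; extremizing $x,y$ toward the vertices $e_r,e_s$ shows equality.

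\textbf{Step 2 (reduction to a segment).} Fix $x,y$ and set $\beta=M(x/y)$, $\alpha=m(x/y)$, so that $u=x-\alpha y\ge0$ and $v=\beta y-x\ge0$. If $u$ or $v$ is $0$, then $x\propto y$ and there is nothing to prove. Otherwise $Au$ and $Av$ are positive, and
\[
Ax=\frac{\beta Au+\alpha Av}{\beta-\alpha},\qquad Ay=\frac{Au+Av}{\beta-\alpha}.
\]
Put $\lambda=M(Au/Av)$ and $\mu=m(Au/Av)$; Step 1 gives $\lambda/\mu\le\Gamma$. Then a direct computation with $t=(Au)_p/(Av)_p\in[\mu,\lambda]$ yields
\[
d_H(Ax,Ay)\le\log\frac{(\beta\lambda+\alpha)(\mu+1)}{(\lambda+1)(\beta\mu+\alpha)},
\]
using that $t\mapsto(\beta t+\alpha)/(t+1)$ is monotone.

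\textbf{Step 3 (the main obstacle).} It remains to show
\[
\log\frac{(\beta\lambda+\alpha)(\mu+1)}{(\lambda+1)(\beta\mu+\alpha)}\le \tanh\!\Big(\tfrac14\log\tfrac{\lambda}{\mu}\Big)\log\frac{\beta}{\alpha}.
\]
Since $\tanh(D/4)=(\sqrt\Gamma-1)/(\sqrt\Gamma+1)=\kappa(A)$, this is exactly the contraction bound. Normalize $\alpha=1$, $\beta=e^{s}$, $\lambda/\mu=e^{D}$, and optimize over $\mu$; the maximum is at $\mu=e^{-s/2-D/2}$ by symmetry. This reduces the claim to the one-variable inequality
\[
2\log\frac{\cosh((s+D)/4)}{\cosh((D-s)/4)}\le s\tanh(D/4).
\]
That inequality follows from concavity in $s$: both sides vanish at $s=0$, the derivatives agree there, and the left side has nonpositive second derivative for $s\ge0$, being a difference of $\log\cosh$ terms whose derivative is $\tanh$-monotone. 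This calculus step is the delicate part, and I would verify it carefully.

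\textbf{Optimality.} Restrict to the two-dimensional cone spanned by the columns $a_{\cdot r}$ and $a_{\cdot s}$ that attain $\Gamma$. There the map is a linear fractional map of a segment whose projective image has cross-ratio $\Gamma$, and the ratio $d_H(Ax,Ay)/d_H(x,y)$ tends to $\kappa(A)$ for symmetric points $x,y$ shrinking toward the hyperbolic midpoint. Taking that limit shows $\kappa(A)$ cannot be improved.
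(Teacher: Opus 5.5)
Your argument is correct, but it follows a different route from the paper. The paper carries out only your Step~1: the cross-ratio bound, with equality from $f_A(e_r)$ and $f_A(e_s)$, which gives $\mathrm{diam}_{d_H} f_A(\Delta^\circ)=\log\Gamma(A)$. It then quotes the Birkhoff--Hopf theorem (Bushell, Theorem~3.2), which supplies both the contraction constant $\tanh(\frac14\,\mathrm{diam})=\kappa(A)$ and its optimality at once. The transpose is handled as you do, via $\Gamma(A^\top)=\Gamma(A)$. You instead reprove Birkhoff--Hopf for the orthant: the decomposition $x=(\beta u+\alpha v)/(\beta-\alpha)$, $y=(u+v)/(\beta-\alpha)$ reduces the problem to a second difference of $\log(1+e^z)$, and then to a one-variable concavity estimate. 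The citation is shorter and covers general cones. Your route is self-contained and shows exactly where the constant comes from: with $\varsigma(z)=e^z/(1+e^z)$, one has $\varsigma(\frac12\log\Gamma)-\varsigma(-\frac12\log\Gamma)=\tanh(\frac14\log\Gamma)=\kappa(A)$. It also produces an explicit near-extremal pair.

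Three points need tightening.
\begin{enumerate}
\item ``By symmetry'' in the maximization over $\mu$ needs an argument. With $m=\log\mu$, the $m$-derivative of your bound is $G(m+s)-G(m)$, where $G(z)=\varsigma(z+D)-\varsigma(z)$. $G$ is symmetric and unimodal about $-D/2$, so the derivative is $\ge0$ exactly when $m\le-(s+D)/2$. That justifies the maximizer.
\item In Step~3 your $D$ is $\log(\lambda/\mu)$, which is only $\le\log\Gamma(A)$. The final comparison with $\kappa(A)$ therefore uses the monotonicity of $\tanh$, which you should state.
\item In the optimality argument, points on the face spanned by $e_r,e_s$ are not in $\mathbb R^{d+1}_{>0}$. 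Use instead $x(\tau)=e^\tau e_r+e_s+\varepsilon\sum_{j\ne r,s}e_j$, for which $d_H(x(\tau),x(\tau'))=|\tau-\tau'|$ exactly. Bound $d_H(Ax(\tau),Ax(\tau'))$ from below by the $(p,q)$ log-ratio, with $(p,q,r,s)$ attaining $\Gamma(A)$ and $c_p=\log(a_{pr}/a_{ps})$. Its $\tau$-derivative at $\varepsilon=0$ is $\varsigma(\tau+c_p)-\varsigma(\tau+c_q)$, with $c_p-c_q=\log\Gamma(A)$. At $\tau=-(c_p+c_q)/2$ this equals $\kappa(A)$; this is your ``hyperbolic midpoint''.
\end{enumerate}
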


\begin{proof}
This is an application of the Birkhoff-Hopf theorem. Since $A=(a_{pq})_{1\leq p,q\leq d+1}$ is positive, we have $\Gamma(A)\geq 1$, hence $\kappa(A)\in [0,1)$. By the definition of $d_H$, we can scale $x,y$ and assume $x,y\in \Delta^\circ$. Set $u=A x$, $v=A y$. Then
\[
u_p v_q
=
\sum_{r,s=1}^{d+1} x_r y_s\, a_{pr} a_{qs}.
\qquad (1\leq p,q\leq d+1)
\]
By the definition of $\Gamma(A)$, we have $a_{pr} a_{qs} \leq \Gamma(A)\, a_{ps} a_{qr}$ for every $1\leq p,q,r,s\leq d+1$. Hence
\[
u_p v_q
\leq
\Gamma(A) \sum_{r,s=1}^{d+1} x_r y_s\, a_{ps} a_{qr}
=
\Gamma(A)\, \left(  \sum_{r=1}^{d+1} a_{qr}x_r \right) \left( \sum_{s=1}^{d+1} a_{ps}y_s \right)
=
\Gamma(A)\, u_q v_p.
\]
Therefore $\dfrac{u_p/v_p}{u_q/v_q} \leq \Gamma(A)$ for $1\leq p,q\leq d+1$, and so $ d_H(f_A(x),f_A(y)) \leq \log \Gamma(A)$. Thus, $\diam_{d_H}\big( f_A(\Delta^\circ) \big) \leq \log \Gamma(A)$. To prove the reverse inequality, choose indices $p,q,r,s$ attaining the maximum in the definition of $\Gamma(A)$. Write $e_i\in \Delta$ for the standard basis vectors. Then
\[
d_H\big(f_A(e_r),f_A(e_s)\big) = \log \left( \frac{\max_{\alpha} a_{\alpha r}/a_{\alpha s}} {\min_{\beta} a_{\beta r}/a_{\beta s}} \right) \geq \log \left( \frac{a_{pr}/a_{ps}}{a_{qr}/a_{qs}} \right) = \log \Gamma(A).
\]
This proves $\diam_{d_H}\big( f_A(\Delta^\circ) \big) = \log \Gamma(A)$. Then
\[
\tanh\left( \frac{1}{4}\diam_{d_H}\big( f_A(\Delta^\circ) \big) \right)
=
\tanh\left( \frac{1}{4}\log \Gamma(A) \right)
=
\frac{\sqrt{\Gamma(A)}-1}{\sqrt{\Gamma(A)}+1}
=
\kappa(A).
\]
The assertion follows by applying the Birkhoff-Hopf theorem \cite[Theorem 3.2]{Bushell} to $A$. The same computation applies to $A^\top$, since the cross-ratio maximum defining $\Gamma(A)$ is unchanged by transposition.
\end{proof}

Choose $0<\kappa<1$ such that $\kappa(A_i) = \kappa(A_i^\top) \leq\kappa$ for every $i$.

\begin{lemma} \label{lem: basic inequalities for rho and theta}
For every $x,y\in\Delta^\circ$,
\[
\norm{x-y}_1\leq\rho(x,y),
\qquad
\rho(f_ix,f_iy)\leq\kappa\rho(x,y),
\qquad
0\leq\theta(x,y)\leq\kappa<1.
\]
\end{lemma}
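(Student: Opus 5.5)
The plan is to treat the three inequalities in order, each reducing to an elementary fact about the quantity $R \coloneqq \max_j(x_j/y_j)/\min_j(x_j/y_j)$, so that $\rho(x,y)=R-1=e^{d_H(x,y)}-1$.

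\emph{First inequality.} Since $x,y\in\Delta^\circ$ both have unit $\ell^1$-norm, $\sum_j y_j(x_j/y_j)=1$. Hence $M\coloneqq\max_j x_j/y_j\geq 1$ and $m\coloneqq\min_j x_j/y_j\leq 1$. Then for each $j$,
\[
|x_j-y_j| = y_j\,|x_j/y_j-1| \leq y_j\,(M-m),
\]
because $x_j/y_j$ and $1$ both lie in $[m,M]$. Summing over $j$ gives $\norm{x-y}_1\leq M-m$. Since $m\leq1$, we have $M-m\leq (M-m)/m = M/m-1=\rho(x,y)$.

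\emph{Second inequality.} By scale invariance, $d_H(f_ix,f_iy)=d_H(A_ix,A_iy)\leq\kappa\, d_H(x,y)$ by Lemma~\ref{lemma: Birkhoff-Hopf constants} and the choice of $\kappa$. Write $t=d_H(x,y)\geq0$. It then suffices to check that $e^{\kappa t}-1\leq\kappa(e^t-1)$ for $0\leq\kappa\leq1$ and $t\geq 0$. This follows from convexity of $s\mapsto e^{st}$ on $[0,1]$: the chord from $s=0$ to $s=1$ gives $e^{\kappa t}\leq (1-\kappa)+\kappa e^t$. Combining this with monotonicity of $t\mapsto e^t-1$ yields $\rho(f_ix,f_iy)\leq e^{\kappa t}-1\leq\kappa\rho(x,y)$.

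\emph{Third inequality.} If $x=y$, then $\theta=0$ by definition. If $x\neq y$ in $\Delta^\circ$, then $x$ is not a scalar multiple of $y$ (both are normalized), so $\rho(x,y)>0$. The numerator of $\theta$ is nonnegative, and by the second inequality it is at most $\sum_i\pi_i\kappa\rho(x,y)=\kappa\rho(x,y)$. This gives $0\leq\theta\leq\kappa<1$.

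The only step that is not purely mechanical is the passage from the Hilbert-metric contraction to a contraction of $\rho$. The convexity inequality handles it, and notably it does not lose a constant.
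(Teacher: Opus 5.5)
Your proposal is correct and follows essentially the same route as the paper: you sum the ratio deviations $|x_j/y_j-1|$ weighted by $y_j$ for the first bound, and for the second you write $\rho=e^{d_H}-1$, apply Lemma~\ref{lemma: Birkhoff-Hopf constants}, and use $e^{\kappa t}-1\leq\kappa(e^t-1)$, after which the bound on $\theta$ follows at once. The differences are cosmetic: you bound the sum by $M-m$ where the paper uses the pointwise bound $|x_k/y_k-1|\leq\rho(x,y)$ obtained by symmetry, and you justify the exponential inequality by convexity, which the paper states without proof.
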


\begin{proof}
Since $\sum_jx_j = \sum_jy_j = 1$, we have $\min_j (x_j/y_j) \leq 1 \leq \max_j (x_j/y_j)$. Then,
\[
0 \;\leq\; \frac1{\min_j (x_j/y_j)} -1 \;\leq\; \frac{\max_j (x_j/y_j)}{\min_j (x_j/y_j)} -1 \;=\; \rho(x,y).
\]
By symmetry, we obtain $\abs{x_k/y_k -1 } \leq \rho(x,y)$ for every $k$. Multiplying by $y_k$ and summing over $k$ proves $\norm{x-y}_1\leq\rho(x,y)$. Next, since $\rho(x,y)=e^{d_H(x,y)}-1$, the second inequality follows from Lemma \ref{lemma: Birkhoff-Hopf constants} and $e^{\kappa t}-1\leq\kappa(e^t-1)$ for $t\geq0$. As a consequence we obtain the inequality for $\theta$.
\end{proof}

Now we can prove the solvability of the stationary equations.

\begin{lemma} \label{lem: solvability}
Equation \eqref{eq: stationary equations} has a solution $(\alpha_{n,1},\ldots,\alpha_{n,n})\in \mathbb R^n$ and it is unique.
\end{lemma}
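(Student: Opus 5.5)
The plan is to recast the stationary equations \eqref{eq: stationary equations} as the normal equations of a least-squares problem in $\Hscr_a$. Then existence is automatic, and uniqueness reduces to the linear independence of the vectors $w_j \coloneqq s_{u_j}-s_{v_j}$, $1\le j\le n$. That independence I will prove by induction on $n$, using the acceptance test \eqref{eq:acceptance}.

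\textbf{Step 1 (reformulation).} By the reproducing property \eqref{eq: reproducing property}, $s_x$ is the Riesz representative of $\delta_x-\Lcal^*\delta_x$. Hence for a constellation $\eta$ with residual $r=\sum_x \eta(\{x\}) s_x$ and any $g\in\Hscr_a$ we have $\ip{r}{g}_{\Hscr_a}=\eta(g)-\eta(\Lcal g)$. Taking $g=w_j$, equation \eqref{eq: stationary equations} reads $\ip{r_n}{w_j}_{\Hscr_a}=0$ for $1\le j\le n$. Substituting $r_n=s_b+\sum_k\alpha_{n,k}w_k$ gives the linear system
\[
\sum_{k=1}^n \ip{w_j}{w_k}_{\Hscr_a}\,\alpha_{n,k} \;=\; -\ip{s_b}{w_j}_{\Hscr_a}, \qquad 1\le j\le n,
\]
whose matrix is the Gram matrix of $w_1,\dots,w_n$. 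A solution always exists: take the orthogonal projection of $-s_b$ onto the finite-dimensional space $V_n$ and expand it in the $w_k$. The solution is unique if and only if the Gram matrix is nonsingular, i.e.\ if and only if $w_1,\dots,w_n$ are linearly independent.

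\textbf{Step 2 (induction).} Suppose $w_1,\dots,w_n$ are independent (vacuous for $n=0$), and let $r_n$ be the unique solution from Step~1. Then $r_n\perp V_n$. The pair $(u_{n+1},v_{n+1})$ is only added when $E_n>0$. If I can show that
\[
\ip{r_n}{w_{n+1}}_{\Hscr_a}=D_n(u_{n+1},v_{n+1})\neq0,
\]
then $w_{n+1}\notin V_n$, and independence propagates to $w_1,\dots,w_{n+1}$. By Lemma~\ref{lem: first-order derivative is stationary defect of score}, $D_n$ is exactly the left-hand side of \eqref{eq:acceptance}. Since $\theta<1$ by Lemma~\ref{lem: basic inequalities for rho and theta}, acceptance gives $|D_n(u,v)|\ge \tfrac{1-\theta(u,v)}{2}q_n(u,v)>0$, provided $q_n(u,v)>0$ for the accepted pair.

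\textbf{Step 3 (positivity of the score along the search).} This is the main point to check. Initially $\sum_i\pi_iq_n(b,f_ib)=E_n^2>0$, so some $i$ has $q_n(b,f_ib)>0$. Any such pair is necessarily non-diagonal, since diagonal pairs have $q_n=0$. So the maximizer of $q_n/\rho$ has $q_n>0$.

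Now suppose the current pair $(u,v)$ has $q_n(u,v)>0$ and is rejected. Then $D_n(u,v)<\tfrac{1-\theta}{2}q_n(u,v)$, which yields
\[
\sum_i\pi_iq_n(f_iu,f_iv)=q_n(u,v)-D_n(u,v)>\tfrac{1+\theta}{2}q_n(u,v)>0 .
\]
Diagonal children contribute $0$, so some non-diagonal child has $q_n>0$. Hence the $q_n/\rho$-maximizing child also has positive score, and positivity is preserved at every step. Termination of the search is Proposition~\ref{prop: score search terminates}.

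Combining Steps 1–3 gives linear independence for every $n$, and therefore a unique solution of \eqref{eq: stationary equations}.
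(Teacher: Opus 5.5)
Your proposal is correct and follows essentially the same argument as the paper. Existence comes from the orthogonal projection onto $V_n$, and uniqueness comes from linear independence of the $s_{u_j}-s_{v_j}$: you propagate $q_n>0$ through rejected pairs via $\sum_i\pi_iq_n(f_iu,f_iv)\geq\frac{1+\theta}{2}q_n(u,v)>0$, and then the acceptance test gives $\abs{D_n(u_{n+1},v_{n+1})}>0$, so the new direction lies outside $V_n$.
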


\begin{proof}
Equation \eqref{eq: stationary equations} is equivalent to $\ip{r_n}{\,s_{u_j}-s_{v_j}}_{\Hscr_a} = 0$ for all $1\leq j\leq n$. Since $V_n = \mathrm{Span}\left\{s_{u_j}-s_{v_j} \setcond 1\leq j\leq n\right\}$ is a finite-dimensional subspace of the Hilbert space $\Hscr_a$, there is an orthogonal projection $v_b$ of $s_b$ onto it. The vector $s_b - v_b$ is orthogonal to $V_n$, and this gives the coefficients $\alpha_{n,j}$.

Next we prove uniqueness. At the enlargement step of $X_n$ to $X_{n+1}$, let us see that every intermediate pair $(u,v)$ satisfies $q_n(u,v)>0$ by induction. The initial pair $(u,v)$ satisfies $q_n(u,v)>0$ due to $E_n^2 = \sum_i\pi_iq_n(b,f_ib)>0$. If the score test failed for some $(u,v)$, then
\begin{equation} \label{eq: failed test}
\sum_i\pi_iq_n(f_iu,f_iv) \geq \frac{1+\theta(u,v)}2\,q_n(u,v) > 0.
\end{equation}
Therefore, the chosen child pair also satisfies $q_n>0$. Thus an accepted pair $(u,v)$ satisfies, by Lemma~\ref{lem: first-order derivative is stationary defect of score},
\[
\abs{\ip{r_n}{\,s_u-s_v}}
= \abs{D_n(u,v)}
\geq \frac{1-\theta(u,v)}2\,q_n(u,v) > 0
\]

This implies that each enlargement adds a pair $(u,v)$ whose corresponding direction $s_u-s_v$ lies outside of the current span $V_n$; thus the vectors $\left\{s_{u_j}-s_{v_j} \setcond 1\leq j\leq n\right\}$ are linearly independent, making the coefficients of the orthogonal projection unique.
\end{proof}

\subsection{Proof of termination}

Our next objective is to prove that the score search in Step $3$ of the construction terminates.

Write $A_i = \left( a^{(i)}_1, \ldots, a^{(i)}_{d+1}\right)$, where each $a^{(i)}_j$ is its $j$-th column vector. Let $\gamma^{(i)}_j \coloneqq a^{(i)}_j\big/\|a^{(i)}_j\|_1$, and let $S\subset\Delta^\circ$ be the convex hull of $\{a,b\}\cup\big\{ \gamma^{(i)}_j\,|\, 0\leq i\leq m-1, \,1\leq j\leq d+1\big\}$. Then $S$ is compact and $f_i(\Delta)\subset S$ for every $i$.

To give an overview, suppose that from an intermediate, rejected pair $(u,v)$, we chose a child $(f_iu,f_iv)$. The important fact is that positivity of matrices makes $\rho$ decay with each forward map, and the failure of the score test in equation~\eqref{eq:acceptance} forces $(f_iu,f_iv)$ to retain a sufficiently large fraction of $q_n(u,v)$: Lemma~\ref{lem: normalized future usefulness is increasing} proves that their ratio is strictly increasing.
\begin{equation} \label{eq: normalized future usefulness is increasing}
\frac{q_n(f_iu,f_iv)}{\rho(f_iu,f_iv)}
\geq
\frac{1+\kappa}{2\kappa} \cdot \frac{q_n(u,v)}{\rho(u,v)},
\end{equation}
where $(1+\kappa)/(2\kappa) > 1$. At the same time, this ratio is bounded from above: Proposition~\ref{prop: score search terminates} proves that there is some $C>0$ such that for any non-diagonal $(u,v)\in S^2$,
\begin{equation} \label{eq: normalized future usefulness is bounded}
\frac{\abs{q_n(u,v)}}{\rho(u,v)} \leq C E_n
\end{equation}
Combined, these imply that the search cannot continue indefinitely.

\begin{lemma} \label{lem: normalized future usefulness is increasing}
Equation~\eqref{eq: normalized future usefulness is increasing} holds true for every intermediate pair $(u,v)$ and its selected child $(f_iu,f_iv)$ during Step $3$ in the construction.
\end{lemma}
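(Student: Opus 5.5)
The argument uses only the failed score test at $(u,v)$, the maximality of the selected child, and the contraction bound for $\rho$. Since $(u,v)$ is an intermediate (rejected) pair, the proof of Lemma~\ref{lem: solvability} gives $q_n(u,v)>0$ and the inequality \eqref{eq: failed test}:
\[
\sum_i\pi_iq_n(f_iu,f_iv)\;\geq\;\frac{1+\theta(u,v)}{2}\,q_n(u,v).
\]
The task is to turn this averaged statement into a statement about the selected child $(f_{i^*}u,f_{i^*}v)$, the non-diagonal child maximizing $q_n/\rho$.

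First I bound the left-hand side from above by the maximal ratio. For a diagonal child, $q_n(f_iu,f_iv)=0$ and $\rho(f_iu,f_iv)=0$, so its term vanishes. For a non-diagonal child, $q_n(f_iu,f_iv)=\frac{q_n}{\rho}(f_iu,f_iv)\,\rho(f_iu,f_iv)\leq M\rho(f_iu,f_iv)$, where $M\coloneqq q_n(f_{i^*}u,f_{i^*}v)/\rho(f_{i^*}u,f_{i^*}v)$. Some child has positive $q_n$ because the left-hand side is positive, so $M>0$. Hence
\[
\sum_i\pi_iq_n(f_iu,f_iv)\;\leq\; M\sum_i\pi_i\rho(f_iu,f_iv)\;=\;M\,\theta(u,v)\,\rho(u,v),
\]
where the last equality is the definition of $\theta$, valid since $u\neq v$.

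Combining the two displays gives
\[
M\;\geq\;\frac{1+\theta(u,v)}{2\,\theta(u,v)}\cdot\frac{q_n(u,v)}{\rho(u,v)}.
\]
The only subtle point is that $\theta(u,v)>0$. This holds because at least one child is non-diagonal, and the matrices are positive, hence have full rank on the cone, so $\rho(f_iu,f_iv)>0$ for that child. Even without this, the display $0<\sum_i\pi_i q_n(f_iu,f_iv)\leq M\theta\rho$ already forces $\theta(u,v)>0$.

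Finally, $t\mapsto(1+t)/(2t)$ is decreasing on $(0,\infty)$, and $\theta(u,v)\leq\kappa$ by Lemma~\ref{lem: basic inequalities for rho and theta}. Therefore $(1+\theta)/(2\theta)\geq(1+\kappa)/(2\kappa)$. Since $q_n(u,v)/\rho(u,v)>0$, this yields \eqref{eq: normalized future usefulness is increasing}. I expect no serious obstacle; the only care needed is in handling diagonal children and the sign of $q_n$, both of which are settled by the positivity established in the proof of Lemma~\ref{lem: solvability}.
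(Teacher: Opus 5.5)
Your proof is correct and follows the paper's argument essentially line by line. You bound each non-diagonal child's score by $M\rho$, sum using the definition of $\theta$, combine with the failed test \eqref{eq: failed test}, and finish with $\theta\le\kappa$. One small correction: positive matrices need not have full rank (the rank-one $A_0$ in Example~\ref{ex: Liouville reset} is positive), so drop that justification and rely on your fallback argument, which correctly gives $\theta(u,v)>0$, as does the paper's observation that $\theta(u,v)=0$ would make every child diagonal and the test \eqref{eq:acceptance} pass.
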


\begin{proof}
If $(u,v)$ is rejected and $\theta(u,v)=0$, then all children are diagonal, making equation~\eqref{eq:acceptance} true, a contradiction. Thus $\theta(u,v)>0$. Now, for every non-diagonal $(f_ju,f_jv)$,
\[
q_n(f_ju,f_jv)
\leq
\frac{q_n(f_iu,f_iv)}{\rho(f_iu,f_iv)}
\,\rho(f_ju,f_jv).
\]
Multiplying by $\pi_j$ and summing over $j$,
\[
\sum_j\pi_jq_n(f_ju,f_jv)
=
\sum_{j:\ \rho(f_ju, f_jv)>0}\pi_jq_n(f_ju,f_jv)
\leq
\frac{q_n(f_iu,f_iv)}{\rho(f_iu,f_iv)}
\,\theta(u,v)\rho(u,v).
\]
Then by equation~\eqref{eq: failed test} and Lemma~\ref{lem: basic inequalities for rho and theta},
\[
\frac{q_n(f_iu,f_iv)}{\rho(f_iu,f_iv)}
>\frac{1+\theta(u,v)}{2\theta(u,v)}\frac{q_n(u,v)}{\rho(u,v)}
\geq\frac{1+\kappa}{2\kappa}\frac{q_n(u,v)}{\rho(u,v)}. \qedhere
\]
\end{proof}

\bigskip

Our next objective is to prove boundedness of $q_n/\rho$. Let $L_{\mathrm{clock}}^2(\Escr_a)$ be the closed subspace of $L^2(\Escr_a)$ generated by the functions $\ell_{x,a}$ with $x\in \Delta^\circ$. Lemma \ref{lem: inner product of ell is the kernel} shows that $\ip{\ell_{x,a}}{\ell_{y,a}}_{\Escr_a} \coloneqq \mathbb E_{\Escr_a}\left[\ell_{x,a}\ell_{y,a}\right] = \ip{k_x}{k_y}_{\Hscr_a}$. Thus $k_x\mapsto \ell_{x,a}$ extends to a linear isometry from $\Hscr_a$ onto $L_{\mathrm{clock}}^2(\Escr_a)$. Denote the isometry by
\begin{equation} \label{eq: clock isometry}
J: \Hscr_a \xrightarrow{\cong} L_{\mathrm{clock}}^2(\Escr_a).
\end{equation}

Also, since $S$ is compact, it has a relatively open neighborhood $U\subset \Delta^\circ$ such that there is $\delta>0$ for which any $x=(x_1,\ldots,x_{d+1})\in U$ satisfies $x_j\geq \delta > 0$ for all $j$.

\begin{lemma} \label{lem: bounded derivatives of features}
The map $\Delta^\circ\ni x\mapsto k_x\in\Hscr_a$ is Fr\'echet differentiable any finite number of times on a neighborhood of $S$, and its derivatives of each fixed order are uniformly bounded on $S$.
\end{lemma}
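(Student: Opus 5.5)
Since $J$ in \eqref{eq: clock isometry} is a linear isometry, it suffices to show that $x\mapsto \ell_{x,a}\in L^2(\Escr_a)$ is Fr\'echet differentiable any number of times on the neighborhood $U$ of $S$, with derivatives of each fixed order bounded there. Indeed, if $J^{-1}$ is applied to the derivatives of $x\mapsto\ell_{x,a}$, they remain derivatives of $x\mapsto k_x$, and their norms are unchanged. By \eqref{eq: explicit formula for ell x a}, $\ell_{x,a}(t)=\prod_j \frac{x_j+a_j}{2a_j}e^{-t_j(x_j-a_j)/2}$. This is a product of one-variable factors $g_j(x_j,t_j)$, each smooth in $x_j$. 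Formally,
\[
\partial^\beta_x \ell_{x,a}(t)=\prod_j \partial_{x_j}^{\beta_j} g_j(x_j,t_j),
\]
and each factor $\partial_{x_j}^{\beta_j} g_j$ has the form $P_{\beta_j}(x_j,t_j)\,e^{-t_j(x_j-a_j)/2}$, where $P_{\beta_j}$ is a polynomial in $t_j$ of degree at most $\beta_j$ with coefficients polynomial in $x_j$. We work on $\mathbb R^{d+1}$ near $U$, or equivalently with tangent directions in the hyperplane $\sum_j h_j=0$; the product structure makes either choice harmless.

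The key estimate is square integrability, uniformly in $x$. By independence of the clocks,
\[
\mathbb E_{\Escr_a}\big[|\partial^\beta_x\ell_{x,a}|^2\big]
=\prod_j \int_0^\infty |P_{\beta_j}(x_j,t)|^2 e^{-t(x_j-a_j)}\,a_je^{-a_jt}\,dt .
\]
Each exponent equals $e^{-tx_j}$, and $x_j\ge\delta>0$ on $U$. Hence each integral is bounded by $\int_0^\infty C(1+t)^{2\beta_j}e^{-\delta t}\,dt<\infty$, uniformly in $x\in U$. This is exactly where the neighborhood $U$ with $x_j\geq\delta$ is used.

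It remains to justify that these formal derivatives are genuine Fr\'echet derivatives in $L^2(\Escr_a)$. We argue by induction on the order. Fix $x\in S$ and a small $h$ with $x+\theta h\in U$ for $\theta\in[0,1]$. Writing the first-order Taylor remainder of $\partial^\beta\ell$ in integral form gives
\[
\partial^\beta\ell_{x+h}-\partial^\beta\ell_x-D\partial^\beta\ell_x[h]=\int_0^1\big(D\partial^\beta\ell_{x+\theta h}-D\partial^\beta\ell_x\big)[h]\,d\theta .
\]
It suffices to show that this is $o(|h|)$ in $L^2$. Applying the mean value theorem once more bounds it pointwise by $|h|^2\sup_{\theta}|D^2\partial^\beta\ell_{x+\theta h}(t)|$. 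The supremum over a small ball is dominated by $C(1+|t|)^{N}e^{-\sum_j t_j(\delta'-a_j)/2}$, where $\delta'=\delta/2$ if we shrink the ball so that coordinates stay $\ge\delta/2$. Its square integrated against $\Escr_a$ is finite by the same computation. So the remainder is $O(|h|^2)$ in $L^2$, uniformly for $x\in S$.

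The main obstacle is this domination step: the factor $e^{t_ja_j/2}$ can grow, and it is tamed only by the $e^{-a_jt_j}$ weight of $\Escr_a$ together with $x_j\ge\delta/2$. The bound works because the net exponent after squaring is $e^{-t_jx_j}$. Uniform bounds of each order on $S$ then follow from the uniform integral estimate above, and transferring back through $J^{-1}$ finishes the proof.
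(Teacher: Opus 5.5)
Your proposal is correct and follows essentially the paper's route. Like the paper, you transfer through the isometry $J$ and use the explicit product form of $\ell_{x,a}$, so that every derivative is $\ell_{x,a}$ times a polynomial in $t$; you then use $x_j\geq\delta$ on the neighborhood to get the uniform integrable bound $(1+\sum_j t_j)^{N}e^{-\delta\sum_j t_j}$ after squaring against $\Escr_a$. The only minor variation is in justifying Fr\'echet differentiability. You bound the Taylor remainder directly by a pointwise second-derivative estimate with a dominating function, which gives $O(|h|^2)$ in $L^2(\Escr_a)$. The paper instead proves continuity of the derivative in operator norm via dominated convergence, then applies the fundamental theorem of calculus to get $o(\|h\|_1)$.
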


\begin{proof}
Let $V = \left\{ h\in\mathbb R^{d+1} \setcond \sum_j h_j = 0\right\}$ be the tangent space of $\Delta^\circ$. Every finite-order directional derivative of $\ell_{x,a}$ is $\ell_{x,a}$ times a polynomial of $t$ with bounded degree, and its coefficients are uniformly bounded as a function of $x$ on $U$. For example, for any direction $h\in V$ and $t\in[0,\infty)^{d+1}$,
\[
D\ell_{x,a}[h](t) = \ell_{x,a}(t)\sum_j h_j \left( \frac{1}{x_j+a_j} - \frac{t_j}{2} \right).
\]
Thus, by equation~\eqref{eq: explicit formula for ell x a} and the choice of $U$, there are $C_2,C_3>0$ with
\[
\abs{ D\ell_{x,a}[h](t)}^2d\Escr_a(t)
\leq C_2 \norm{h}_1^2 \left(1+\sum_j t_j\right)^2 \ell_{x,a}(t)^2d\Escr_a(t)
\leq C_3 \norm{h}_1^2 \left(1+\sum_j t_j\right)^2 e^{-\delta\sum_jt_j}dt.
\]
The right-hand side is independent of $x$ and integrable. Integration gives, for some $C_4>0$,
\[
\norm{D\ell_{x,a}[h]}_{L^2(\Escr_a)}^2
\leq C_4 \norm{h}_1^2.
\]
By dominated convergence, $D\ell_{x,a}[h]$ is continuous in $\norm{\cdot}_{L^2(\Escr_a)}$ as a function of $x$ for any $h\in V$. Since $V$ is finite-dimensional, this implies continuity of $D\ell_{x,a}: V\to L^2(\Escr_a)$ in $x$ in operator norm. Then the scalar fundamental theorem of calculus, the Cauchy-Schwarz inequality, and Tonelli's theorem imply
\begin{align*}
\frac{ \norm{\ell_{x+h,a} - \ell_{x,a} - D\ell_{x,a}[h]}_{L^2(\Escr_a)}^2 }{ \norm{h}_1^2 }
&\leq \frac{1}{\norm{h}_1^2}
\int_0^1 \norm{\left( D\ell_{x+sh,a} - D\ell_{x,a} \right)[h]}_{L^2(\Escr_a)}^2 ds \\
&\leq \sup_{0\leq s\leq 1} \norm{D\ell_{x+sh,a} - D\ell_{x,a}}_{\mathrm{op}}^2
\;\xrightarrow{\;h\to0\;}\; 0.
\end{align*}
This proves that $\ell_{x,a}$ is differentiable. A similar argument proves that it is differentiable any finite number of times. The compactness of $S$ and $L_{\mathrm{clock}}^2(\Escr_a) \cong \Hscr_a$ finish the proof.
\end{proof}

\begin{lemma} \label{lem: derivative bound for kx and sx}
There is a constant $C>0$ such that for every $x,y\in S$,
\[
\norm{k_x-k_y}_{\Hscr_a}\leq C\rho(x,y),
\qquad
\norm{s_x-s_y}_{\Hscr_a}\leq C(1+\kappa)\rho(x,y).
\]
\end{lemma}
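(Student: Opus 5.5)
The plan is to obtain both bounds from the mean value inequality for the map $x\mapsto k_x$, using Lemma~\ref{lem: bounded derivatives of features}, and then to transport the second bound through the maps $f_i$.

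\textbf{The bound on $k_x-k_y$.} Since $S$ is convex, for $x,y\in S$ the segment $x+t(y-x)$, $t\in[0,1]$, lies in $S$. By Lemma~\ref{lem: bounded derivatives of features} the map $x\mapsto k_x$ is Fr\'echet differentiable on a neighborhood of $S$, with $\sup_{z\in S}\norm{Dk_z}_{\mathrm{op}}\leq C_1$. Here the operator norm is taken from $(V,\norm{\cdot}_1)$ to $\Hscr_a$. The vector-valued fundamental theorem of calculus (or the mean value inequality in Banach spaces) then gives
\[
\norm{k_x-k_y}_{\Hscr_a}\leq \int_0^1\norm{Dk_{x+t(y-x)}[y-x]}_{\Hscr_a}\,dt\leq C_1\norm{x-y}_1.
\]
Lemma~\ref{lem: basic inequalities for rho and theta} gives $\norm{x-y}_1\leq\rho(x,y)$, so the first inequality holds with constant $C_1$.

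\textbf{The bound on $s_x-s_y$.} Write
\[
s_x-s_y=(k_x-k_y)-\sum_i\pi_i\,(k_{f_ix}-k_{f_iy}).
\]
The points $f_ix,f_iy$ lie in $f_i(\Delta)\subset S$, so the first step applies to them:
\[
\norm{k_{f_ix}-k_{f_iy}}_{\Hscr_a}\leq C_1\rho(f_ix,f_iy)\leq C_1\kappa\,\rho(x,y),
\]
where the last inequality is again Lemma~\ref{lem: basic inequalities for rho and theta}. Since $\sum_i\pi_i=1$, the triangle inequality yields
\[
\norm{s_x-s_y}_{\Hscr_a}\leq C_1(1+\kappa)\rho(x,y).
\]
Both claims therefore hold with $C=C_1$.

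The only delicate point is justifying the uniform derivative bound along segments. I expect this to be immediate, because $S$ is convex and compact and Lemma~\ref{lem: bounded derivatives of features} already gives bounded first derivatives on $S$. The remaining care is to use the $\norm{\cdot}_1$-normalization of the operator norm, so that it matches the $\norm{h}_1$ appearing in that lemma's proof.
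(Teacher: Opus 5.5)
Your proposal is correct and follows the paper's own argument: you bound $\norm{k_x-k_y}_{\Hscr_a}$ using the uniform derivative bound of Lemma~\ref{lem: bounded derivatives of features} and $\norm{x-y}_1\leq\rho(x,y)$, then get the $s_x$ bound from the triangle inequality and $\rho(f_ix,f_iy)\leq\kappa\rho(x,y)$. You also state the points the paper leaves implicit, namely that $S$ is convex (so the mean value inequality applies along segments) and that $f_i(\Delta)\subset S$ (so the first bound applies to $f_ix,f_iy$).
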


\begin{proof}
Lemma \ref{lem: bounded derivatives of features}, combined with $\norm{x-y}_1\leq\rho(x,y)$, proves $\norm{k_x-k_y}_{\Hscr_a}\leq C\rho(x,y)$. The triangle inequality and $\rho(f_ix,f_iy)\leq\kappa\rho(x,y)$ prove the bound for $\norm{s_x-s_y}_{\Hscr_a}$.
\end{proof}

\begin{proposition} \label{prop: score search terminates}
If $E_n>0$, the score search in Step $3$ of the construction terminates after at most $O\big(\log(2+1/E_n)\big)$ updates of the initial pair.
\end{proposition}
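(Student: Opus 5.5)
The plan is to combine the geometric growth of $q_n/\rho$ from Lemma~\ref{lem: normalized future usefulness is increasing} with an a priori upper bound of the form \eqref{eq: normalized future usefulness is bounded}, and with a lower bound on the starting ratio. Write $(u^{(0)},v^{(0)})=(b,f_ib)$ for the initial pair and $(u^{(k)},v^{(k)})$ for the pair after $k$ updates. All these pairs lie in $S^2$: the initial one does because $b\in S$ and $f_i(\Delta)\subset S$, and later ones are images under the $f_i$. By the proof of Lemma~\ref{lem: solvability}, every intermediate pair has $q_n>0$ and is non-diagonal.

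\textbf{Upper bound.} For non-diagonal $(u,v)\in S^2$, the reproducing property \eqref{eq: reproducing property} and Cauchy--Schwarz give
\[
\abs{q_n(u,v)}=\abs{\ip{r_n}{k_u-k_v}_{\Hscr_a}}\leq \norm{r_n}_{\Hscr_a}\norm{k_u-k_v}_{\Hscr_a}\leq C E_n\,\rho(u,v).
\]
Here $\norm{r_n}_{\Hscr_a}=E_n$, and the last inequality is Lemma~\ref{lem: derivative bound for kx and sx}. This proves \eqref{eq: normalized future usefulness is bounded}.

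\textbf{Lower bound.} The initial pair maximizes $q_n(b,f_ib)/\rho(b,f_ib)$ over $i$. Since $E_n^2=\sum_i\pi_iq_n(b,f_ib)$, some $i$ has $q_n(b,f_ib)\geq E_n^2$. Terms with $f_ib=b$ contribute $0$, so this $i$ has $f_ib\neq b$. Since $\rho(b,f_ib)\leq R:=\sup_{S^2}\rho<\infty$ by compactness of $S\subset\Delta^\circ$, we get $q_n/\rho\geq E_n^2/R$ at step $0$.

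\textbf{Conclusion.} Iterating Lemma~\ref{lem: normalized future usefulness is increasing} $k$ times gives
\[
\Big(\tfrac{1+\kappa}{2\kappa}\Big)^k\frac{E_n^2}{R}\leq \frac{q_n(u^{(k)},v^{(k)})}{\rho(u^{(k)},v^{(k)})}\leq CE_n .
\]
Hence $k\leq \log(CR/E_n)/\log\frac{1+\kappa}{2\kappa}=O(\log(2+1/E_n))$. The constant is uniform in $n$: $E_n\leq E_0=\norm{s_b}_{\Hscr_a}$ by the minimization property, and $C,R$ do not depend on $n$. So the search must accept before exceeding this many updates.

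If $\kappa=0$, every child pair is diagonal. Then $\theta=0$ and the proof of Lemma~\ref{lem: normalized future usefulness is increasing} shows acceptance occurs immediately, so this case is trivial.

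The main obstacle is checking that the upper bound from Lemma~\ref{lem: derivative bound for kx and sx} applies to every intermediate pair. This needs all iterates to stay in $S$, and needs $E_n$ to be the norm of $r_n$ in $\Hscr_a$. Both facts were set up earlier, so I expect this to be routine bookkeeping.
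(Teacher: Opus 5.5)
Your proof is correct and follows the paper's argument. It uses the same upper bound $|q_n|/\rho\le CE_n$ from Lemma~\ref{lem: derivative bound for kx and sx}, a lower bound of order $E_n^2$ on the initial ratio, and the geometric growth from Lemma~\ref{lem: normalized future usefulness is increasing}. The only difference is that the paper gets the initial lower bound $E_n^2/\sum_i\pi_i\rho(b,f_ib)$ from a $\rho$-weighted average, while your cruder $E_n^2/\sup_{S^2}\rho$ works equally well; your separate $\kappa=0$ case is unnecessary, since the paper fixes $\kappa\in(0,1)$ as an upper bound for the $\kappa(A_i)$.
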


\begin{proof}
Lemma~\ref{lem: derivative bound for kx and sx} and the reproducing property give, for every non-diagonal $(u,v)\in S^2$,
\begin{equation} \label{eq: the normalized score is bounded by the error}
\frac{|q_n(u,v)|}{\rho(u,v)}
=\frac{|\ip{r_n}{k_u-k_v}_{\Hscr_a}|}{\rho(u,v)}
\leq C\norm{r_n}_{\Hscr_a}
= CE_n.
\end{equation}
Also, by the orthogonality of $r_n$ to $V_n$ and the reproducing property,
\begin{equation*}
E_n^2
=\norm{r_n}_{\Hscr_a}^2
=\ip{r_n}{s_b}_{\Hscr_a}
=\sum_i\pi_iq_n(b,f_ib).
\end{equation*}
Suppose $E_n>0$. Then not all the pairs are diagonal, so $\sum_i\pi_i\rho(b,f_ib) > 0$. Let $p_i = \dfrac{\pi_i \rho(b,f_ib)}{\sum_j \pi_j \rho(b,f_jb)}$. Then $p_0, \ldots,p_{m-1}$ are nonnegative and $\sum_ip_i = 1$. Thus, if $(b,f_kb)$ is the initial pair maximizing $q_n/\rho$,
\begin{equation} \label{eq: initial pair is at least squared error}
\frac{E_n^2}{\sum_i\pi_i\rho(b,f_ib)}
=
\frac{\sum_i\pi_iq_n(b,f_ib)}{\sum_i\pi_i\rho(b,f_ib)}
=
\sum_{i\,:\,\rho(b,f_ib)>0} p_i \frac{q_n(b,f_ib)}{\rho(b,f_ib)}
\leq
\frac{q_n(b,f_kb)}{\rho(b,f_kb)}.
\end{equation}
Combining equations \eqref{eq: the normalized score is bounded by the error} and \eqref{eq: initial pair is at least squared error} with Lemma~\ref{lem: normalized future usefulness is increasing} proves the statement.
\end{proof}

\section{Polynomial complexity} \label{sec: polynomial complexity}

\subsection{Overview} \label{subsec: polynomial complexity overview}

In this section we will prove that the error $E_n$ of the $n$-constellation $\eta_n$ is exponentially small in $n^{1/d}$. After that we will count the number of arithmetic operations needed, completing the proof of Theorem~\ref{thm: polynomial computation}. Let us briefly explain the proof strategy.

For each accepted pair $(u_n,v_n)$, we define $\phi_n\in\Hscr_a$ as
\[
\phi_n \coloneqq \frac{s_{u_n}-s_{v_n}}{\rho(u_n,v_n)}.
\]
Then $V_n=\mathrm{Span}\{\phi_1,\ldots,\phi_n\}$. By Gram-Schmidt orthogonalization,
\begin{equation}  \label{eq: Gram-Schmidt orthogonalization}
\det\left( \big(\ip{\phi_i}{\phi_j}_{\Hscr_a}\big)_{1\leq i,j\leq n} \right)
\;=\;
\prod_{j=1}^n\dist(\phi_j,V_{j-1})^2
\end{equation}
Lemma \ref{lem: error is bounded by the increment of new pair} proves that the determinant is bounded from below by a power of the error $E_n$. This is because a large remaining error forces the newly accepted direction $\phi_n$ to contribute substantial new ``height'', bounding the squared volume (=Gram determinant) from below.

On the other hand, Proposition~\ref{prop: kolmogorov width of the features is stretched exponentially small} and Lemma~\ref{lem: normalized differences admit nice approximation} prove that the Kolmogorov width of $\left\{ (s_x-s_y)/\rho(x,y) \setcond x,y\in S, \,x\ne y \right\}$ is exponentially small in $n^{1/d}$. Vectors near a small-dimensional space cannot keep increasing the volume (Lemma~\ref{lem: volume is bounded by the approximation space}).

These observations yield the exponential decay of $E_n$ in $n^{1/d}$, whose proof will be given in \S\S\ref{subsection: proof of universal poly complexity}. The final sandwich inequality is essentially the elegant volume argument underlying the convergence estimates for the greedy algorithm in \cite[Theorem~3.2 and Corollary~3.3]{DeVore--Petrova--Wojtaszczyk}. However, neither direction of the bounds follows from their argument. We note in particular that their greedy algorithm assumes that one can choose the next point with a substantial new height; we must independently establish this for our score search.

\subsection{The volume bounds the error}

\begin{lemma} \label{lem: error is bounded by the increment of new pair}
There is $c>0$ such that for every integer $n\geq0$, if $E_n>0$,
\begin{equation*}
\dist(\phi_{n+1},V_n)\geq cE_n
\end{equation*}
\end{lemma}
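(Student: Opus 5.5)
We bound the distance from below through the residual $r_n$. Since $r_n\perp V_n$ (the stationary equations), for any $\psi\in\Hscr_a$ we have $\ip{r_n}{\psi}_{\Hscr_a}=\ip{r_n}{\psi-P_{V_n}\psi}_{\Hscr_a}$. Here $P_{V_n}$ denotes the orthogonal projection onto $V_n$. Applying this with $\psi=\phi_{n+1}$ and using Cauchy--Schwarz gives
\[
\dist(\phi_{n+1},V_n)\;\geq\;\frac{\abs{\ip{r_n}{\phi_{n+1}}_{\Hscr_a}}}{\norm{r_n}_{\Hscr_a}}
\;=\;\frac{\abs{D_n(u_{n+1},v_{n+1})}}{E_n\,\rho(u_{n+1},v_{n+1})}.
\]
Here we used $\ip{r_n}{s_u-s_v}_{\Hscr_a}=D_n(u,v)$ (Lemma~\ref{lem: first-order derivative is stationary defect of score}) and $\norm{r_n}_{\Hscr_a}=E_n$. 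It therefore suffices to show that the accepted pair satisfies $\abs{D_n(u,v)}/\rho(u,v)\geq c' E_n^2$ for a constant $c'>0$ independent of $n$.

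The acceptance criterion \eqref{eq:acceptance} together with Lemma~\ref{lem: basic inequalities for rho and theta} gives $\abs{D_n(u,v)}\geq\frac{1-\theta(u,v)}{2}q_n(u,v)\geq\frac{1-\kappa}{2}q_n(u,v)$. It thus remains to bound $q_n(u,v)/\rho(u,v)$ from below along the search path. By Lemma~\ref{lem: normalized future usefulness is increasing}, the ratio $q_n/\rho$ only increases (by a factor $(1+\kappa)/(2\kappa)>1$) with each update. Hence the accepted pair has $q_n/\rho$ at least that of the initial pair $(b,f_kb)$. By \eqref{eq: initial pair is at least squared error}, the initial value is at least $E_n^2/\sum_i\pi_i\rho(b,f_ib)$. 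The denominator is a fixed positive constant $R_b\coloneqq\sum_i\pi_i\rho(b,f_ib)$, independent of $n$. It is positive whenever $E_n>0$, as shown in the proof of Proposition~\ref{prop: score search terminates}, and it is finite since $b\in\Delta^\circ$. Combining,
\[
\dist(\phi_{n+1},V_n)\;\geq\;\frac{1}{E_n}\cdot\frac{1-\kappa}{2}\cdot\frac{E_n^2}{R_b}\;=\;\frac{1-\kappa}{2R_b}\,E_n,
\]
so $c=(1-\kappa)/(2R_b)$ works.

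The only delicate point is the bookkeeping in the chain of inequalities. It relies on $q_n>0$ along the whole search path, which was shown in the proof of Lemma~\ref{lem: solvability}; this makes the absolute values harmless and keeps the monotonicity of Lemma~\ref{lem: normalized future usefulness is increasing} usable. A second subtlety is the degenerate case where the initial pair is accepted immediately: then no monotonicity step is needed, and the bound \eqref{eq: initial pair is at least squared error} applies directly. One must also check that the accepted pair is non-diagonal so that $\phi_{n+1}$ is defined. This holds because every selected pair is chosen to be non-diagonal, which is guaranteed by $\theta(u,v)>0$ at rejected pairs.
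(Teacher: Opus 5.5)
Your proposal is correct and follows essentially the same route as the paper: Cauchy--Schwarz against the residual $r_n\perp V_n$, the acceptance criterion combined with $0\le\theta\le\kappa$ and $q_n>0$, and the lower bound on $q_n/\rho$ at the initial pair propagated along the search by the monotonicity lemma. This yields the same constant $c=(1-\kappa)/\bigl(2\sum_i\pi_i\rho(b,f_ib)\bigr)$ as the paper.
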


\begin{proof}
By the orthogonality of $r_n$ to $V_n$ and the reproducing property,
\begin{equation*}
E_n^2
=\norm{r_n}_{\Hscr_a}^2
=\ip{r_n}{s_b}_{\Hscr_a}
=\sum_i\pi_iq_n(b,f_ib).
\end{equation*}
Since $E_n>0$, by equation \eqref{eq: initial pair is at least squared error} and Lemma~\ref{lem: normalized future usefulness is increasing},
\[
\frac{E_n^2}{\sum_i\pi_i\rho(b,f_ib)}
\leq
\frac{q_n(u_{n+1},v_{n+1})}{\rho(u_{n+1},v_{n+1})}
\]
Now $(u_{n+1},v_{n+1})$ satisfies equation \eqref{eq:acceptance}. Take any $v\in V_n$. By $r_n\perp V_n$ and Lemma~\ref{lem: basic inequalities for rho and theta},
\[
\abs{\ip{r_n}{\phi_{n+1}-v}_{\Hscr_a}}
=\frac{\abs{D_n(u_{n+1},v_{n+1})}}{\rho(u_{n+1},v_{n+1})}
\geq \frac{1-\kappa}{2}\frac{q_n(u_{n+1},v_{n+1})}{\rho(u_{n+1},v_{n+1})}
\geq \frac{1-\kappa}{2}\cdot\frac{E_n^2}{\sum_i\pi_i\rho(b,f_ib)}.
\]
On the other hand, the Cauchy--Schwarz inequality implies $\abs{\ip{r_n}{\phi_{n+1}-v}_{\Hscr_a}}\leq E_n\norm{\phi_{n+1}-v}_{\Hscr_a}$, which proves the assertion.
\end{proof}

\medskip

\subsection{Exponentially small Kolmogorov width imposes small volume}

For a natural number $n$ and a bounded subset $U\subset \Hscr_a$, the \emph{Kolmogorov $n$-width} of $U$ is
\[
d_n^{\mathrm{Kol}}(U;\Hscr_a) \;\coloneqq\; \inf_{\substack{V\subset\Hscr_a \\ \dim V\leq n}} \,\sup_{u\in U} \,\,\dist_{\Hscr_a}(u,V).
\]

Let $\Mscr \coloneqq \left\{ k_x \setcond x\in S \right\} \subset \Hscr_a$. The following statement proves, in particular, that $d_n^{\mathrm{Kol}}\big(\Mscr;\Hscr_a\big) \leq Ce^{-cn^{1/d}}$ for some $c,C>0$.

\begin{proposition} \label{prop: kolmogorov width of the features is stretched exponentially small}
For every natural number $N$, there is a subspace in $\Hscr_a$ with only $O(N^d)$ dimensions such that it approximates every $k_x\,$ for $x\in S$ exponentially accurately.

More precisely, there is a polynomial map $P_N: S\to \Hscr_a$ of degree at most $N$ such that
\begin{equation} \label{eq:polynomial-clock-approximation}
\sup_{x\in S}\norm{k_x-c_a(x)P_N(x)}_{\Hscr_a}\leq Ce^{-cN}.
\end{equation}
Here, $c_a:S\to\mathbb R$ is defined by $c_a(x) \coloneqq \displaystyle \prod_{j=1}^{d+1}\dfrac{x_j+a_j}{2a_j}$, and $C,c>0$ are independent of $N$. The space $\Span\left\{ c_a(x)P_N(x) \setcond x\in S\right\}\subset\Hscr_a$ has dimension at most $\binom{N+d}{d}$.
\end{proposition}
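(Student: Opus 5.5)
Via the clock isometry $J$ of \eqref{eq: clock isometry}, it suffices to approximate $\ell_{x,a}$ in $L^2(\Escr_a)$. By \eqref{eq: explicit formula for ell x a},
\[
\ell_{x,a}(t) \;=\; c_a(x)\prod_{j=1}^{d+1} e^{-t_j(x_j-a_j)/2}.
\]
The prefactor $c_a(x)$ is exactly the one in the statement, so the task is to approximate the exponential factor $E_x(t)\coloneqq\prod_j e^{-t_j(x_j-a_j)/2}$ by something polynomial in $x$. The natural choice is a truncated Taylor expansion in the variables $x_j-a_j$:
\[
E_x(t)=\sum_{\beta\in\mathbb N^{d+1}}\frac{\prod_j\big(-t_j/2\big)^{\beta_j}}{\beta!}\,(x-a)^\beta ,
\]
with $P_N(x)\coloneqq J^{-1}\big(\sum_{|\beta|\le N}\frac{(-t/2)^\beta}{\beta!}(x-a)^\beta\big)$. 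This requires each monomial $t^\beta$ to lie in the range of $J$, i.e.\ in $L^2_{\mathrm{clock}}(\Escr_a)$. It does: derivatives of $x\mapsto\ell_{x,a}$ at $x=a$, which exist by Lemma~\ref{lem: bounded derivatives of features}, are limits of combinations of the $\ell_{x,a}$ and produce exactly these polynomials in $t$.

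One subtlety is that $x$ lives on the hyperplane $\sum_j x_j=1$, and differentiation there only gives directions $h$ with $\sum h_j=0$. I would handle this by writing $E_x$ as the exponential of a linear form in the $d$ free coordinates $x_j-a_j$, $j\le d$, with $x_{d+1}-a_{d+1}=-\sum_{j\le d}(x_j-a_j)$. The expansion then runs over $\beta\in\mathbb N^d$, which gives dimension $\binom{N+d}{d}$. Each coefficient $t$-polynomial is a directional derivative of $\ell_{\cdot,a}/c_a$ at $a$ along $V$. Since $c_a$ is itself polynomial, these derivatives are combinations of derivatives of $\ell_{\cdot,a}$, hence lie in $L^2_{\mathrm{clock}}$.

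The main estimate, and the expected obstacle, is the remainder. We need
\[
\Big\|c_a(x)\sum_{|\beta|>N}\frac{(-t/2)^\beta}{\beta!}(x-a)^\beta\Big\|_{L^2(\Escr_a)}\le Ce^{-cN}
\]
uniformly for $x\in S$. Since $\Escr_a$ has exponential tails of rate $a_j$, the typical size of $t_j$ is of order $1/a_j$. The $L^2$ norm of the degree-$k$ homogeneous part is therefore roughly $\big(C\max_j|x_j-a_j|/\min_j a_j\big)^k$, using moment bounds of the form $\mathbb E[t^{2\beta}]^{1/2}\le \beta!\,a^{-\beta}\cdot 2^{|\beta|}$ together with Cauchy--Schwarz over the multinomial count.

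This yields geometric decay only when $|x_j-a_j|$ is small relative to $a_j$, which need not hold on all of $S$. The fix I would try first is to expand not around $a$ but around finitely many centers $z$ covering $S$. This uses $\ell_{x,a}=c\cdot\ell_{z,a}\cdot(\text{exponential in }(x-z)\cdot t)$: the weight $\ell_{z,a}^2\,d\Escr_a\propto d\Escr_{z}$ has tails of rate $z_j\ge\delta$, so the series converges geometrically for $|x-z|$ below a fixed multiple of $\delta$. Alternatively, one can use the analyticity of $x\mapsto\ell_{x,a}$ into $L^2$ on a complex neighbourhood where $\mathrm{Re}\,x_j>0$, which follows from Lemma~\ref{lem: bounded derivatives of features}-type bounds, and then apply Chebyshev or Bernstein-ellipse polynomial approximation of degree $N$ on the compact set $S$. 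This gives error $C\rho^{-N}$ with a single global polynomial $P_N$, which is the cleanest route.

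So I would ultimately adopt the analytic route. First, $x\mapsto \ell_{x,a}/c_a(x)=E_x\in L^2(\Escr_a)$ extends holomorphically to $\{x\in\mathbb C^{d+1}:\mathrm{Re}\,x_j>\delta/2\}$ with uniform bounds, since $|E_x(t)|^2 d\Escr_a$ is dominated by $e^{-\delta\sum t_j/2}dt$. Second, apply multivariate Chebyshev interpolation on a box or simplex containing $S$ inside this domain. The interpolant is $\sum_k L_k(x)E_{x^{(k)}}$ with Lagrange polynomials $L_k$ of degree $\le N$, so its values lie in the span of the $E_{x^{(k)}}$. Third, pull back via $J^{-1}$ after multiplying by $c_a$. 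Note that $c_a(x)E_{x^{(k)}}$ is not in $\mathrm{ran}\,J$, so I would define $P_N(x)\coloneqq \sum_k L_k(x)\,c_a(x^{(k)})^{-1}k_{x^{(k)}}$. Then $c_aP_N$ corresponds to $c_a(x)\sum_k L_k(x)E_{x^{(k)}}$ in $L^2$, and the error is $c_a(x)\cdot(\text{interpolation error})\le Ce^{-cN}$. The dimension bound follows because $c_a(x)P_N(x)$ equals $c_a(x)$ times a polynomial of degree $\le N$ in $d$ free variables with vector coefficients. That gives at most $\binom{N+d}{d}$ coefficient vectors, so the span is at most that large.
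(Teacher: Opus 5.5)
Your final route, interpolating the holomorphic map $x\mapsto E_x$, differs from the paper's. Its decisive step, exponential convergence of one degree-$N$ interpolant uniformly on $S$, is asserted rather than proved, and in the form you set it up it does not apply. For $J\bigl(c_a(x)P_N(x)\bigr)=c_a(x)\sum_kL_k(x)E_{x^{(k)}}$ to hold, the nodes must lie in $\Delta^\circ$. So you are interpolating on the hyperplane $\sum_jx_j=1$, where $x\mapsto E_x\in L^2(\Escr_a)$ is holomorphic only on the tube over the open simplex. For $d\ge2$ no box fits there in general. If $S$ contains points near every vertex of $\Delta$ (as when the normalized columns $\gamma^{(i)}_j$ are near the vertices), then a convex set symmetric about $c$, containing $S$ and contained in $\Delta$, forces $c_j\gtrsim1/2$ for every $j$, contradicting $\sum_jc_j=1$. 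So tensor-product Chebyshev/Bernstein-ellipse bounds are unavailable. On a simplex you would need three things: a Bernstein--Walsh--Siciak estimate for best approximation, uniform over holomorphic functions bounded on a fixed neighbourhood; unisolvent nodes with subexponential Lebesgue constant (e.g.\ Fekete points); and a duality argument to pass to $L^2(\Escr_a)$-valued maps. All of this is true, but it is the entire content of the proposition, and none of it is supplied. (For nodes in $\Delta^\circ$, your remark that $c_a(x)E_{x^{(k)}}\notin\mathrm{ran}\,J$ is false, since $E_{x^{(k)}}=\ell_{x^{(k)},a}/c_a(x^{(k)})$, but this is harmless.)

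Suppose instead you interpolate on a box in $\mathbb R^{d+1}$ such as $[\delta,1]^{d+1}$. Holomorphy is then fine, but the nodes leave the hyperplane, and three things change. First, $E_{x^{(k)}}\notin L^2_{\mathrm{clock}}(\Escr_a)$. Second, the function $K_a(x^{(k)},\cdot)$ on $\Delta^\circ$ equals $J^{-1}\Pi\ell_{x^{(k)},a}$, where $\Pi$ is the orthogonal projection onto $L^2_{\mathrm{clock}}(\Escr_a)$. Third, $c_aP_N$ corresponds to $\Pi$ applied to your interpolant, not to the interpolant itself. The error bound survives only because $\Pi$ is a contraction fixing $\ell_{x,a}$, and that step is missing from your proposal.

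This projection is the missing idea, and it is what the paper uses. It frees the expansion centre from the simplex, so your first idea already works once the Taylor centre is moved from $a$ to $\one$: the polydisc $\{|x_j-1|<1\}$ contains all of $\Delta^\circ$. Concretely, $\ell_{x,a}=c_a(x)B_ae^{\vartheta_x}$ with $B_a=e^{-\vartheta_a}$ and $\vartheta_x(t)=\frac12\sum_j(1-x_j)t_j\ge0$. Because $\vartheta_x\ge0$, the tail obeys the pointwise bound $0\le e^{\vartheta_x}-\sum_{k\le N}\vartheta_x^k/k!\le(1+\delta)^{-N-1}e^{(1+\delta)\vartheta_x}$. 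This bound is square-integrable against $B_a^2\,d\Escr_a=(\prod_ja_j)e^{-\sum_jt_j}\,dt$, because $(1+\delta)(1-x_j)\le1-\delta^2$ on $S$. Setting $P_N(x)=J^{-1}\Pi Q_N(x)$ then gives a degree-$N$ polynomial map with the stated error and dimension bounds. No interpolation nodes or pluripotential theory are needed.
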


\begin{proof}
Define $B_a, \vartheta_x\in L^2(\Escr_a)$ by
\[
\vartheta_x(t)=\frac12\sum_{j=1}^{d+1}(1-x_j)t_j,
\qquad
B_a(t)=e^{-\vartheta_a(t)}.
\]
Then $\ell_{x,a}(t)=c_a(x)B_a(t)e^{\vartheta_x(t)}$. Recall that $J: \Hscr_a \xrightarrow{\cong} L_{\mathrm{clock}}^2(\Escr_a)$ is the isometry, and write $\Pi: L^2(\Escr_a) \to L_{\mathrm{clock}}^2(\Escr_a)$ to denote the orthogonal projection. For each $x\in S$, we define
\[
Q_N(x): [0,\infty)^{d+1} \longrightarrow \mathbb R,
\qquad
Q_N(x)(t) = B_a(t) \sum_{k=0}^N\frac{\vartheta_x(t)^k}{k!},
\]
Then $Q_N(x)\in L^2(\Escr_a)$. Indeed, $Q_N$ is a polynomial in $x$ of degree at most $N$, and each of its coefficients (as a polynomial in $x$) is $B_a(t)$ times a polynomial in $t$. Since
\[
B_a(t)^2d\Escr_a(t) = \left(\prod_{j=1}^{d+1} a_j\right) e^{-\sum_jt_j}dt,
\]
we see that $Q_N(x)\in L^2(\Escr_a)$. Finally, for each $x\in S$, we define $P_N(x)\in\Hscr_a$ by
\[
P_N(x) = J^{-1} \big( \Pi Q_N(x) \big).
\]
Since $\Pi\ell_{x,a} = \ell_{x,a}$ and $\Pi$ does not increase norms,
\begin{equation} \label{eq: approximation error to kx}
\norm{k_x-c_a(x)P_N(x)}_{\Hscr_a}
= \norm{\ell_{x,a}-c_a(x)\Pi Q_N(x)}_{L^2(\Escr_a)}
\leq c_a(x) \norm{\left(e^{\vartheta_x} - \sum_{k=0}^N \frac{\vartheta_x^k}{k!} \right)\cdot B_a }_{L^2(\Escr_a)}.
\end{equation}
By the definition of $\delta$, we have $x_j\geq\delta$ for every $x\in S$ and $j$. Then, Taylor's theorem implies
\begin{equation} \label{eq: Taylor tail}
0\leq e^{\vartheta_x}-\sum_{k=0}^N\frac{\vartheta_x^k}{k!}
\leq \frac{e^{(1+\delta)\vartheta_x}}{(1+\delta)^{N+1}}.
\end{equation}
Direct integration gives
\[
\norm{\left(\frac{e^{(1+\delta)\vartheta_x}}{(1+\delta)^{N+1}} \right)\cdot B_a }_{L^2(\Escr_a)}^2
= \frac1{(1+\delta)^{2(N+1)}}\prod_{j=1}^{d+1} \frac{a_j}{1-(1+\delta)(1-x_j)}
\leq \frac{\prod_j (a_j/\delta^2)}{(1+\delta)^{2(N+1)}} < \infty.
\]
Equations \eqref{eq: approximation error to kx} and \eqref{eq: Taylor tail} then imply that $\norm{k_x-c_a(x)P_N(x)}_{\Hscr_a}$ is exponentially small in $N$, uniformly for all $x\in S$.
\end{proof}

The approximations of $k_x$ can be upgraded to those of normalized differences $\dfrac{k_x-k_y}{\rho(x,y)}$.

\begin{lemma} \label{lem: normalized differences admit nice approximation}
Let $K\subset S$. Suppose a $q$-dimensional subspace $\Uscr$ of $\Hscr_a$ approximates every $k_x$, $x\in K$, within $0<\gamma\leq1/2$. Then a subspace of dimension at most $(d+1)q$ approximates every
\[
\frac{k_x-k_y}{\rho(x,y)},\qquad x,y\in K,\quad x\ne y,
\]
within $C\sqrt\gamma$, for some fixed $C>0$ independent of $x$, $y$, $K$, $q$, and $\gamma$. If $K$ is forward invariant, the same space approximates every $(s_x-s_y)/\rho(x,y)$ within $C(1+\kappa)\sqrt\gamma$. Here, forward invariant means $f_i(K)\subset K$ for all $i$.
\end{lemma}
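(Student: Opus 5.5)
The plan is to split the pairs $(x,y)$ into two regimes according to the size of $\rho(x,y)$ relative to $\sqrt\gamma$. The second claim will then follow from the first by linearity. Throughout, let $P$ be the orthogonal projection onto $\Uscr$, so that $\norm{k_x-Pk_x}_{\Hscr_a}\le\gamma$ for $x\in K$.

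\emph{Well-separated pairs.} Suppose $\rho(x,y)\ge\sqrt\gamma$. Then $\Uscr$ itself works:
\[
\norm{\tfrac{k_x-k_y}{\rho(x,y)}-\tfrac{Pk_x-Pk_y}{\rho(x,y)}}_{\Hscr_a}\le \frac{2\gamma}{\rho(x,y)}\le 2\sqrt\gamma .
\]
So the extra dimensions are needed only for close pairs.

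\emph{Close pairs.} Suppose $\rho(x,y)<\sqrt\gamma$. By Lemma~\ref{lem: bounded derivatives of features} (second derivatives bounded on the convex set $S\supset K$) and $\norm{x-y}_1\le\rho(x,y)$ from Lemma~\ref{lem: basic inequalities for rho and theta}, Taylor expansion along the segment gives
\[
\frac{k_x-k_y}{\rho(x,y)}=Dk_y[h]+O(\rho(x,y)),\qquad h=\frac{x-y}{\rho(x,y)}\in V,\ \norm{h}_1\le1 .
\]
It therefore suffices to approximate the derivative directions $Dk_y[h]$, $y\in K$, within $C\sqrt\gamma$ by a space of dimension at most $(d+1)q$. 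I would work on the clock side via the isometry $J$ of \eqref{eq: clock isometry}. The explicit formula in the proof of Lemma~\ref{lem: bounded derivatives of features} gives
\[
D\ell_{y,a}[h]=\sum_j \frac{h_j}{y_j+a_j}\,\ell_{y,a}-\tfrac12\sum_{j=1}^{d}h_j\,(M_j-M_{d+1})\ell_{y,a},
\]
where $M_j$ is multiplication by $t_j$, and we have used $\sum_jh_j=0$. This suggests the candidate space
\[
\Uscr' \coloneqq \Uscr+\sum_{j=1}^{d}J^{-1}\Pi\,(M_j-M_{d+1})\,J\,\Uscr ,
\]
which has dimension at most $(d+1)q$, as required.

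\emph{Main obstacle.} The derivative term then carries the error $(M_j-M_{d+1})(\ell_{y,a}-JPk_y)$. Since $M_j$ is unbounded, smallness of $\ell_{y,a}-JPk_y$ in $L^2(\Escr_a)$ does not immediately give smallness after multiplication by $t_j$. I expect this to be the hard step. My first attempt is an interpolation inequality:
\[
\norm{t_jg}_{L^2(\Escr_a)}^2\le\norm{g}_{L^2(\Escr_a)}\,\norm{t_j^2g}_{L^2(\Escr_a)} .
\]
Applied to $g=\ell_{y,a}-JPk_y$, the first factor is at most $\gamma$. It then remains to show that $\norm{t_j^2 g}$ is bounded by a constant independent of $K,q,\gamma$. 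The $\ell_{y,a}$ part is fine because $y_j\ge\delta$ on $S$, which gives exponential moments. For the $JPk_y$ part, I would try to replace the orthogonal projection by a weighted-stable approximant inside $\Uscr$, or first enlarge $\Uscr$ harmlessly so that its approximants enjoy uniform weighted bounds. This produces exactly the $\sqrt\gamma$ loss in the statement: the close-pair error becomes $O(\rho)+O(\sqrt\gamma)=O(\sqrt\gamma)$. Choosing the threshold $\sqrt\gamma$ balances the two regimes, and $\gamma\le1/2$ makes all constants uniform.

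\emph{Second claim.} Assume $K$ is forward invariant, so $f_ix,f_iy\in K$. Write
\[
\frac{s_x-s_y}{\rho(x,y)}=\frac{k_x-k_y}{\rho(x,y)}-\sum_i\pi_i\,\frac{\rho(f_ix,f_iy)}{\rho(x,y)}\cdot\frac{k_{f_ix}-k_{f_iy}}{\rho(f_ix,f_iy)} ,
\]
omitting the terms with $f_ix=f_iy$. By Lemma~\ref{lem: basic inequalities for rho and theta} each coefficient satisfies $\rho(f_ix,f_iy)/\rho(x,y)\le\kappa$. Applying the first part to each quotient with the same space $\Uscr'$ and using the triangle inequality gives the bound $C(1+\kappa)\sqrt\gamma$.
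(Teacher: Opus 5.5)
Your far/close split, the Taylor step, the dimension count, and the reduction of the $(s_x-s_y)/\rho(x,y)$ claim to the first claim are all fine and agree with the paper. The gap is exactly at the step you flag, and it cannot be closed the way you suggest.

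\textbf{Why the proposed repair fails.} The hypothesis controls only $\norm{\ell_{y,a}-u_y}_{L^2(\Escr_a)}$ for approximants $u_y\in J\Uscr$. It says nothing about weighted norms of $u_y$, and it does not even guarantee $t_ju\in L^2(\Escr_a)$ for $u\in J\Uscr$, so your $\Uscr'$ need not be defined. No choice of approximant inside $\Uscr$ repairs this. For example, take $K$ so small that $\norm{k_x-k_y}_{\Hscr_a}\le\gamma/2$ for all $x\in K$, and take $\Uscr=\Span\{k_y+e\}$ with $e=\frac{\gamma}{2}k_z/\norm{k_z}_{\Hscr_a}$, where $z\in\Delta^\circ$. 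This $\Uscr$ satisfies the hypothesis. Every element of $\Uscr$ within $\gamma$ of $k_y$ has the form $c(k_y+e)$ with $\abs{c-1}\le2\gamma$. A direct computation gives
\[
\norm{t_jJe}_{L^2(\Escr_a)}=\frac{\gamma}{\sqrt2\,z_j},
\]
which is unbounded as $z_j\to0$ (keep the other coordinates of $z$ bounded below). So for fixed $\gamma$, the error $(t_j-t_{d+1})(\ell_{y,a}-u_y)$ of your derivative approximant is not bounded independently of $\Uscr$ for any admissible $u_y$. A uniform bound on $\norm{t_j^2(\ell_{y,a}-u_y)}$ is false a fortiori. ``Enlarging $\Uscr$'' is also unavailable, since your dimension budget $(d+1)q$ is already spent.

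\textbf{The missing idea: truncate the multiplier, not the approximant.} Replace $t_j-t_{d+1}$ by its clipping $M_j=\max\{-B,\min\{t_j-t_{d+1},B\}\}$ with $B=\gamma^{-1/2}$. Then
\[
\norm{(t_j-t_{d+1})\ell_{y,a}-M_ju_y}_{L^2(\Escr_a)}\le\norm{(t_j-t_{d+1}-M_j)\ell_{y,a}}_{L^2(\Escr_a)}+B\norm{\ell_{y,a}-u_y}_{L^2(\Escr_a)}.
\]
The second term is at most $B\gamma=\sqrt\gamma$ because $\abs{M_j}\le B$. The first term is at most $B^{-1}\norm{(t_j-t_{d+1})^2\ell_{y,a}}_{L^2(\Escr_a)}\le C\sqrt\gamma$, since $\abs{s-\max\{-B,\min\{s,B\}\}}\le s^2/B$. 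Here the weight falls only on $\ell_{y,a}$, whose moments are uniformly bounded on $S$, as you correctly observed. The space $J^{-1}\Pi\big(J\Uscr+\sum_{j=1}^{d}M_jJ\Uscr\big)$ has dimension at most $(d+1)q$; it may depend on $\gamma$. With it, the rest of your argument goes through unchanged, and this is the paper's proof. Cutting off $J\Uscr$ by $\one_{\{\max_k t_k\le B\}}$ would serve the same purpose: it is the concrete form of the ``weighted-stable modification'' you were looking for.
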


\begin{proof}

By the isometry $J: \Hscr_a \xrightarrow{\cong} L_{\mathrm{clock}}^2(\Escr_a)$ in equation \eqref{eq: clock isometry}, we work in $L^2(\Escr_a)$. Let $u_x\in J(\Uscr)$ approximate $\ell_{x,a}$ within $\gamma$ for each $x\in K$. Then, for all $x,y\in K$ with $x\ne y$,
\[
\norm{ \frac{\ell_{x,a}-\ell_{y,a}}{\rho(x,y)} -\frac{u_x-u_y}{\rho(x,y)} }_{L^2(\Escr_a)}
\leq \frac{2\gamma}{\rho(x,y)}.
\]
Thus, for ``far'' pairs satisfying $\rho(x,y)\geq\sqrt{\gamma}$, this proves the assertion.

Assume that $\rho(x,y)<\sqrt{\gamma}$. Since $S$ is convex, Taylor's theorem and Lemma \ref{lem: bounded derivatives of features} give some $C_1>0$ such that for any $x,y\in K$,
\begin{equation} \label{eq: derivatives approximate the normalized differences}
\norm{ \frac{\ell_{x,a}-\ell_{y,a}}{\rho(x,y)} - \frac1{\rho(x,y)}\sum_{j=1}^d(x_j-y_j)\frac{\partial}{\partial x_j}\ell_{x,a} }_{L^2(\Escr_a)}
\leq C_1\frac{\norm{x-y}_1^2}{\rho(x,y)}
\leq C_1\rho(x,y)
\leq C_1\sqrt{\gamma}.
\end{equation}
Here, we used $\norm{x-y}_1\leq\rho(x,y)$ from Lemma~\ref{lem: basic inequalities for rho and theta}. Therefore we need to approximate the derivatives $\frac{\partial}{\partial x_j}\ell_{x,a}$. Direct differentiation gives
\[
\frac{\partial}{\partial x_j}\ell_{x,a}
=
\left( \frac1{x_j+a_j} -\frac1{x_{d+1}+a_{d+1}} \right)\ell_{x,a}
-\frac12(t_j-t_{d+1})\ell_{x,a}.
\]
The terms inside the first parentheses are uniformly bounded since $x\in S$. The term $t_j-t_{d+1}$ is unbounded, so we truncate it: let $B=1/\sqrt{\gamma}$ and
\[
M_j(t)=\max\{-B,\min\{t_j-t_{d+1},B\}\}.
\]
Then $M_j$ cuts $t_j-t_{d+1}$ so that it stays within $[-B,B]$. Let us see that $M_ju_x$ approximates $(t_j-t_{d+1})\ell_{x,a}$ well enough. We have
\[
\norm{(t_j-t_{d+1})\ell_{x,a}-M_ju_x}_{L^2(\Escr_a)}
\leq
\norm{(t_j-t_{d+1}-M_j)\ell_{x,a}}_{L^2(\Escr_a)} + \norm{M_j(\ell_{x,a}-u_x)}_{L^2(\Escr_a)}.
\]
Since $\abs{t_j-t_{d+1}-M_j}\leq (t_j-t_{d+1})^2/B$, the first term is bounded by $C_2/B = C_2\sqrt{\gamma}$ with some $C_2>0$. The second term is bounded by $B\gamma = \sqrt{\gamma}$ by definitions. Thus,
\begin{equation} \label{eq: derivatives are approximated by multiplying truncation}
\norm{
\frac{\partial}{\partial x_j}\ell_{x,a}
-
\left\{ \left( \frac1{x_j+a_j} -\frac1{x_{d+1}+a_{d+1}} \right)u_x - \frac12M_ju_x \right\}
}_{L^2(\Escr_a)}
\leq
C_3\sqrt{\gamma}
\end{equation}
for some $C_3>0$. By combining equations \eqref{eq: derivatives approximate the normalized differences} and \eqref{eq: derivatives are approximated by multiplying truncation}, we see that the space
\[
W \coloneqq J(\Uscr) + \sum_{j=1}^d M_j J(\Uscr) \subset L^2(\Escr_a)
\]
approximates $(\ell_{x,a}-\ell_{y,a})/\rho(x,y)$ within $C\sqrt{\gamma}$ for some $C>0$. Each space $M_jJ(\Uscr)$ has dimension at most $q$, so the dimension of $W$ is at most $q+dq$. Finally, since orthogonal projection does not increase the norm, and since $(\ell_{x,a}-\ell_{y,a})/\rho(x,y)$ is already in $L^2_{\mathrm{clock}}(\Escr_a)$, the space $\Pi W$ preserves the approximation, and thus $J^{-1}(\Pi W)$ is the approximation space we wanted to obtain.

Next suppose $K$ is forward invariant. Take $w$ and $w_i\in J^{-1}(\Pi W)$ so that they approximate $(k_x-k_y)/\rho(x,y)$ and $(k_{f_ix}-k_{f_iy})/\rho(f_ix,f_iy)$ within $C\sqrt{\gamma}$, respectively; when $f_ix = f_iy$, we set $w_i=0$. Then, using Lemma~\ref{lem: basic inequalities for rho and theta},
\[
\norm{
\frac{s_x-s_y}{\rho(x,y)}
-
\left( w - \sum_i \pi_i \frac{\rho(f_ix,f_iy)}{\rho(x,y)} w_i \right)
}_{\Hscr_a}
\leq
C\sqrt{\gamma} \left( 1 + \sum_i \pi_i \frac{\rho(f_ix,f_iy)}{\rho(x,y)} \right)
\leq
C(1+\kappa)\sqrt\gamma. \qedhere
\]

\end{proof}

The following lemma says that $2n$ vectors close to an $n$-dimensional subspace must form a thin $2n$-dimensional parallelepiped. It is a special case of \cite[Lemma~2.1]{DeVore--Petrova--Wojtaszczyk}; for completeness, we include the proof.

\begin{lemma} \label{lem: volume is bounded by the approximation space}
Let $(\Hscr, \ip{\cdot}{\cdot}_{\Hscr})$ be a real Hilbert space and $\{v_1,\ldots,v_{2n}\}\subset\Hscr$, for some integer $n\geq1$. Suppose that a subspace $W\subset\Hscr$ satisfies
\begin{equation} \label{eq: condition on finite dimensional approximation to Gram}
\dim W = n,
\qquad
\dist(v_j, W)\leq\eta \quad (1\leq j\leq 2n)
\end{equation}
for some $\eta>0$. Then,
\[
\det\left( \big(\ip{v_i}{v_j}_{\Hscr}\big)_{1\leq i,j\leq 2n} \right)
\leq
\left(2\eta\max_{1\leq j\leq 2n}\norm{v_j}_{\Hscr}\right)^{2n}.
\]
\end{lemma}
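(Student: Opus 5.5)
We bound the Gram determinant by a volume computation, splitting each $v_j$ into its component in $W$ and its component orthogonal to $W$. Let $P$ denote the orthogonal projection onto $W$, and write $v_j = w_j + e_j$ with $w_j = Pv_j\in W$ and $e_j = v_j - w_j \perp W$, so that $\norm{e_j}_{\Hscr}\leq\eta$ and $\norm{w_j}_{\Hscr}\leq\norm{v_j}_{\Hscr}$. Let $G$ be the Gram matrix of $v_1,\ldots,v_{2n}$. Since
\[
\det G = \norm{v_1\wedge\cdots\wedge v_{2n}}^2,
\]
it suffices to show
\[
\norm{v_1\wedge\cdots\wedge v_{2n}} \leq \big(2\eta\max_j\norm{v_j}_{\Hscr}\big)^{n}.
\]

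First reduce to finite dimensions: all vectors lie in the finite-dimensional space $W+\Span\{e_1,\ldots,e_{2n}\}$, which is an orthogonal sum $W\oplus E$. Expand the wedge product multilinearly,
\[
v_1\wedge\cdots\wedge v_{2n} = \sum_{I\subset\{1,\ldots,2n\}} \pm \bigwedge_{j\in I} w_j \wedge \bigwedge_{j\notin I} e_j .
\]
Any term with $\#I>n$ vanishes, because it contains more than $n=\dim W$ vectors from $W$. A nonzero term factors as a wedge of vectors in $W$ and a wedge of vectors in $E\perp W$. By Hadamard's inequality and orthogonality, its norm is at most
\[
\prod_{j\in I}\norm{w_j}\prod_{j\notin I}\norm{e_j}\leq M^{\#I}\eta^{2n-\#I},
\qquad M\coloneqq\max_j\norm{v_j}_{\Hscr}.
\]

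Then sum over $I$. Since $\eta\le\norm{e_j}$ is not automatically comparable to $M$, use the bound with $\#I\leq n$ as follows. If $\eta\leq M$, each term is at most $M^n\eta^n$, because $\#I\le n$ and $\eta\le M$. There are at most $2^{2n}$ subsets, so
\[
\norm{v_1\wedge\cdots\wedge v_{2n}}\leq 4^n(M\eta)^n = (4M\eta)^n .
\]
This gives $\det G\le(4M\eta)^{2n}$, which is weaker than claimed. To get the sharp constant $2\eta M$, I would instead argue via the Gram determinant directly, $\det G = \det(\ip{w_i}{w_j}+\ip{e_i}{e_j}) = \det(A+B)$, where $A$ is positive semidefinite of rank $\le n$. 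Expanding the determinant of a sum as a sum of products of principal-type minors (the Cauchy–Binet / mixed discriminant formula) gives a sum over complementary index sets $I$ of $\det A_{I}\det B_{I^c}$-type terms. Here $\det A_I=0$ for $\#I>n$, $\det A_I\le M^{2\#I}$ and $\det B_{I^c}\le\eta^{2(2n-\#I)}$, bounded again by Hadamard. Counting the surviving $I$ with $\#I\le n$, using $\sum_{k\le n}\binom{2n}{k}\le 4^n$, yields $\det G\le 4^n M^{2n}\eta^{2n}=(2\eta M)^{2n}$, exactly the claim.

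In the case $\eta>M$, Hadamard gives $\det G\le M^{4n}\le(\eta M)^{2n}$ directly.

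The main obstacle is the determinant-of-a-sum expansion for positive semidefinite matrices. The identity $\det(A+B)=\sum_{I}\det A[I]\det B[I^c]$ is false for general matrices (there are off-diagonal mixed terms). I would handle this via the wedge/Cauchy–Binet route: write $A+B=\widetilde V^\top\widetilde V$ with $\widetilde V$ the matrix of coordinates in an orthonormal basis of $W\oplus E$. Applying Cauchy–Binet over $2n$-subsets of rows then gives $\det G=\sum_R \det(\widetilde V_R)^2$. In each term, at most $n$ rows come from $W$. Laplace-expanding along the $W$-rows gives at most $\binom{2n}{k}$ products of a $W$-minor and an $E$-minor, each bounded by Hadamard. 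I would then apply Cauchy–Schwarz over the Laplace terms and sum over $R$; this reassembles to the counted bound with factor $4^n$.
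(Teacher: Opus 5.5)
Your multilinear wedge expansion is correct, but it proves only $\det G\le(4\eta M)^{2n}$ when $\eta\le M$ (your case $\eta>M$ is fine). The step meant to sharpen this to $(2\eta M)^{2n}$ fails.

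As you say, $\det(A+B)=\sum_I\det A[I]\det B[I^c]$ is false. It fails even as an upper bound in exactly this setting: for $n=1$, $v_1=w+e$, $v_2=w-e$ with unit vectors $w\in W$, $e\perp W$, one has $\det G=4$ while $\sum_I\det A[I]\det B[I^c]=2$.

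Your repair does not give the factor $4^n$ either. Cauchy--Schwarz over the $\binom{2n}{k}$ Laplace terms of $\det\widetilde V_R$ costs a factor $\binom{2n}{k}$, and summing over $R$ with Cauchy--Binet yields
\[
\det G\le\sum_{k\le n}\binom{2n}{k}\sum_{\#J=k}\det A[J]\det B[J^c]\le(M\eta)^{2n}\sum_{k\le n}\binom{2n}{k}^2.
\]
Here $\sum_{k\le n}\binom{2n}{k}^2=\tfrac12\bigl(\binom{4n}{2n}+\binom{2n}{n}^2\bigr)$ equals $5$ for $n=1$ and grows like $16^n$ up to polynomial factors. So both of your routes give the constant $4$, not $2$. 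The weaker constant would suffice for Proposition~\ref{prop: approximation controls the defect}, but it is not the stated lemma.

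The missing idea is to rescale the $W$-component rather than expand. Take $\tau=(\eta/M)^2\le1$. The vectors $\sqrt\tau\,w_j+e_j$ have Gram matrix $\tau A+B$, and their coordinate matrix is $\widetilde V$ with its $W$-rows multiplied by $\sqrt\tau$. By Cauchy--Binet,
\[
\det(\tau A+B)=\sum_R\tau^{k_R}\det(\widetilde V_R)^2\geq\tau^n\det G,
\]
where $k_R\le n$ is the number of $W$-rows in $R$. Hadamard gives $\det(\tau A+B)\le\prod_j\bigl(\tau\norm{w_j}^2+\norm{e_j}^2\bigr)\le(2\eta^2)^{2n}$. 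Hence $\det G\le\tau^{-n}(2\eta^2)^{2n}=(2\eta M)^{2n}$.

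This is the paper's argument written in coordinates. There, $W$ is replaced by its projection onto $U=\Span\{v_1,\ldots,v_{2n}\}$, enlarged to dimension $n$. Then $T=\frac{\eta}{M}P+(I-P)$ is an operator on $U$ with $\det T=(\eta/M)^n$ and $\norm{Tv_j}^2\le2\eta^2$. The identity $\det\bigl(\ip{Tv_i}{Tv_j}\bigr)=(\eta/M)^{2n}\det G$, combined with Hadamard's inequality, finishes the proof.
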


\begin{proof}
Let $M = \max_{1\leq j\leq 2n}\norm{v_j}_{\Hscr}$ and $U = \Span\{v_1,\ldots,v_{2n}\}$. We can assume that the Gram determinant is nonzero; then $\dim U = 2n$ and $M>0$. Denote by $\widetilde{W}$ the projection of $W$ onto $U$, and enlarge $\widetilde W$ if necessary until $\dim\widetilde W = n$. Then equation \eqref{eq: condition on finite dimensional approximation to Gram} still holds for $\widetilde W$. Let $P:\Hscr\to \widetilde W$ be the orthogonal projection, and $I:U\to U$ the identity map. Define
\[
T:U\longrightarrow U, \qquad T = \frac{\eta}M P + (I-P).
\]
Then for every $1\leq j\leq 2n$,
\[
\norm{Tv_j}_{\Hscr}^2 = \left(\frac{\eta}M\right)^2 \norm{Pv_j}_{\Hscr}^2 + \norm{(I-P)v_j}_{\Hscr}^2
\leq 2\eta^2
\]
Also, since $T$ scales precisely $n$ coordinate directions by $\eta/M$, we have $\det T = (\eta/M)^n$. Since the Gram matrix is positive semidefinite, by Hadamard's inequality,
\[
\left(\frac\eta{M}\right)^{2n}
\det\left( \big(\ip{v_i}{v_j}_{\Hscr}\big)_{1\leq i,j\leq 2n} \right)
=
\det\left( \big(\ip{Tv_i}{Tv_j}_{\Hscr}\big)_{1\leq i,j\leq 2n} \right)
\leq
\prod_{j=1}^{2n} \norm{Tv_j}_{\Hscr}^2
\leq (2\eta^2)^{2n}. \qedhere
\]

\end{proof}

\subsection{From approximation space to the error bound}

The preceding lemmas give the following consequence, which we will reuse in Section~\ref{sec: constellation dimension and minimality}.

\begin{proposition} \label{prop: approximation controls the defect}
Let $K\subset S$ be forward invariant and contain the anchor $b$, and let $q\geq1$ be an integer. If a subspace of $\Hscr_a$ of dimension at most $q$ approximates every $k_x$, $x\in K$, within $0<\gamma\leq1/2$, then
\begin{equation}\label{eq:width-to-error}
E_{2(d+1)q}\leq C\gamma^{1/4},
\end{equation}
where $C>0$ is independent of $K,q,\gamma$.
\end{proposition}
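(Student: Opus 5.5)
The plan is to run the volume sandwich of \S\S\ref{subsec: polynomial complexity overview}. The lower bound on the Gram volume comes from Lemma~\ref{lem: error is bounded by the increment of new pair}; the upper bound comes from Lemma~\ref{lem: volume is bounded by the approximation space}, applied to an approximation space built via Lemma~\ref{lem: normalized differences admit nice approximation}.

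\textbf{Step 1: the pairs stay in $K$.} Since $b\in K$ and $K$ is forward invariant, every intermediate pair in the score search lies in $K^2$. Indeed, the initial pair is $(b,f_ib)$, and children are images $(f_iu,f_iv)$. Hence every accepted pair $(u_j,v_j)\in K^2$, with $u_j\neq v_j$. By Lemma~\ref{lem: normalized differences admit nice approximation}, there is a subspace $W$ of dimension at most $(d+1)q$ with
\[
\dist(\phi_j,W)\leq C(1+\kappa)\sqrt\gamma=:\eta \qquad \text{for all } j.
\]
Lemma~\ref{lem: derivative bound for kx and sx} also gives $\norm{\phi_j}_{\Hscr_a}\leq C(1+\kappa)=:M$ uniformly.

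\textbf{Step 2: the sandwich.} Set $n_0=(d+1)q$ and $N=2n_0$. If some $E_k=0$ with $k\leq N$, then $E_N=0$, since the errors are nonincreasing: $V_k$ grows and $r_k$ is the minimal-norm point of $s_b+V_k$. So assume all $E_k>0$ for $k<N$.

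Enlarge $W$ to dimension exactly $n_0$ if needed. Lemma~\ref{lem: volume is bounded by the approximation space} gives
\[
\det\big(\ip{\phi_i}{\phi_j}\big)_{i,j\leq N}\leq (2\eta M)^{N}.
\]
By the Gram--Schmidt identity \eqref{eq: Gram-Schmidt orthogonalization}, Lemma~\ref{lem: error is bounded by the increment of new pair}, and monotonicity of $E_k$, the same determinant satisfies
\[
\det\big(\ip{\phi_i}{\phi_j}\big)_{i,j\leq N}=\prod_{j=1}^{N}\dist(\phi_j,V_{j-1})^2\geq \prod_{k=0}^{N-1}c^2E_k^2\geq c^{2N}E_{N-1}^{2N}.
\]
Combining the two and taking $N$-th roots yields $E_{N-1}^2\leq 2\eta M/c^2\leq C'\sqrt\gamma$. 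Hence $E_N\leq E_{N-1}\leq C''\gamma^{1/4}$, with constants depending only on $a$, $S$, $\kappa$, and the $\pi_i$.

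\textbf{Main obstacle.} The main point to check is that the constant $c$ in Lemma~\ref{lem: error is bounded by the increment of new pair} is genuinely uniform. That proof has $c=(1-\kappa)/\bigl(2\sum_i\pi_i\rho(b,f_ib)\bigr)$, which depends only on $b$ and not on $n$, so this holds. A second point is that Lemma~\ref{lem: normalized differences admit nice approximation} needs $K\subset S$, which is part of the hypotheses. The remaining bookkeeping is the exponent: the squared error is controlled by $\sqrt\gamma$, which is exactly what produces the $\gamma^{1/4}$ in \eqref{eq:width-to-error}.
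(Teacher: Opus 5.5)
Your proof is correct and follows the paper's argument essentially step for step. Forward invariance keeps all accepted pairs in $K$, and Lemma~\ref{lem: normalized differences admit nice approximation} supplies a $(d+1)q$-dimensional space within $C(1+\kappa)\sqrt\gamma$ of every $\phi_j$. The Gram determinant is then bounded above by Lemma~\ref{lem: volume is bounded by the approximation space}, and below by the Gram--Schmidt product together with Lemma~\ref{lem: error is bounded by the increment of new pair} and monotonicity of $E_n$. The only differences are cosmetic: you bound $E_{N-1}$ first and then use $E_N\le E_{N-1}$, and you make explicit the uniform bound $\norm{\phi_j}_{\Hscr_a}\le C(1+\kappa)$ and the $n$-independence of $c$, which the paper leaves implicit.
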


\begin{proof}
Let $p=(d+1)q$. If $E_{2p}=0$, the assertion follows, so assume otherwise. Lemma~\ref{lem: normalized differences admit nice approximation} gives a subspace $W$ of dimension at most $p$ approximating all $\phi_1,\ldots,\phi_{2p}$ within $C\sqrt\gamma$. Since the error $E_n = \norm{r_n}_{\Hscr_a}$ is the distance from the origin to $s_b + V_n$, and $V_n$ is increasing, $E_n$ is non-increasing. Then Lemma~\ref{lem: error is bounded by the increment of new pair} gives $\dist(\phi_j,V_{j-1})\geq cE_{2p}$ for $j\leq2p$. By equation \eqref{eq: Gram-Schmidt orthogonalization},
\[
(cE_{2p})^{4p}
\leq
\det\big(\ip{\phi_i}{\phi_j}_{\Hscr_a}\big)_{i,j=1}^{2p}.
\]
For the other direction, by applying Lemma~\ref{lem: volume is bounded by the approximation space} to the vectors $\phi_1, \ldots, \phi_{2p}$ and $W$ enlarged to dimension $p$ if necessary, we obtain the following, proving the assertion:
\[
\det\big(\ip{\phi_i}{\phi_j}_{\Hscr_a}\big)_{i,j=1}^{2p}
\leq\left(C_0\sqrt\gamma\right)^{2p}. \qquad (C_0>0)\qedhere
\]
\end{proof}

\subsection{Proof of Theorem~\ref{thm: polynomial computation}} \label{subsection: proof of universal poly complexity}

\begin{proof}

For each $N\geq1$, applying Proposition~\ref{prop: kolmogorov width of the features is stretched exponentially small} yields a subspace in $\Hscr_a$ of dimension at most $q=\binom{N+d}{d}$ approximating every $k_x\,$ for $x\in S$ within $\gamma=C_0e^{-c_0N}$ for some fixed $C_0,c_0>0$. For $n\geq1$, we choose $N$ as
\[
N=\left\lfloor\left(\frac{n}{2(d+1)}\right)^{1/d}\right\rfloor-d.
\]
Sufficiently large $n$ makes $N\geq1$ and $C_0e^{-c_0N}\leq 1/2$. The choice of $N$ ensures that $2(d+1)q\leq n$. Proposition~\ref{prop: approximation controls the defect} then implies, by combining with the monotonicity of $E_n$,
\begin{equation*}
E_n \leq E_{2(d+1)q}\leq C_1e^{-\frac{c_0}{4}N}.
\end{equation*}
After adjusting the constant this implies exponential smallness of $E_n$ in $n^{1/d}$, for all $n\geq 1$.

Next we count the arithmetic operations needed to achieve $E_n\leq \eps$. While we search for the next pair $(u_{n+1},v_{n+1})$, Proposition~\ref{prop: score search terminates} bounds the search depth by $O(\log(1/\eps))$ since $E_n>\eps$. Since the score has $n+1$ terms, the search for $(u_{n+1},v_{n+1})$ costs $O((n+1)\log(1/\eps))$. Since $E_n\leq \eps$ for $n$ at most $O\left(\big(\log(1/\eps)\big)^d\right)$, the total cost of all searches is $O\left(\big(\log(1/\eps)\big)^{2d+1}\right)$.

At each $n$, the stationary equations~\eqref{eq: stationary equations} have the form $G_n\alpha = c$ for some $\alpha,c\in\mathbb R^n$ and $n\times n$ real matrix $G_n$. We perform the factorization $G_n = L_n D_n L_n^\top$, where $L_n$ is lower triangular and $D_n$ is diagonal. Since $G_{n+1}$ is obtained by appending a row and a column to $G_n$, updating this factorization and solving only costs $O((n+1)^2)$. (See \cite[Algorithm 4.1.1]{Golub--VanLoan} for example.) Since $E_n\leq \eps$ for $n$ at most $O\left(\big(\log(1/\eps)\big)^d\right)$, the total cost of solving is $O\left(\big(\log(1/\eps)\big)^{3d}\right)$.
\end{proof}

\section{Constellation dimension} \label{sec: constellation dimension}

In this section we establish the well-definedness of $\Cscr_a$ and prove Proposition~\ref{prop: constellation dimension universal bound}. We are given positive matrices $A_0,\ldots,A_{m-1}$. Let $\nu$ be the unique stationary measure. Then $\Kcal = \supp\nu$.

\subsection{Constellation dimension is independent of the reference point}

Let us first justify the definitions. Fix $a\in\Delta^\circ$. Recall that $L^2(\nu)$ is the space of $\nu$-square integrable real functions, and $T_a:\Hscr_a \to L^2(\nu)$ is defined by $T_a(u) = u - \Lcal(u)$. By the reproducing property,
\[
\norm{T_a u}_{L^2(\nu)}^2
=\int_\Kcal\abs{\ip{u}{s_x}_{\Hscr_a}}^2\,d\nu(x)
\leq
\norm{u}_{\Hscr_a}^2
\int_\Kcal\norm{s_x}_{\Hscr_a}^2\,d\nu(x).
\]
Since $x\mapsto s_x$ is a continuous function on the compact set $\Kcal$, the last expression is bounded, and we see that $T_a$ is a bounded operator. Therefore we can define the bounded operator $\Cscr_a:\Hscr_a\to \Hscr_a$ by $\Cscr_a = T_a^* T_a^{}$. It is positive by definition; here, positive means $\ip{u}{\Cscr_au}_{\Hscr_a}\geq0$ for every $u\in\Hscr_a$.

Notice that
\begin{equation}\label{eq:covariance-integral}
\Cscr_a u=\int_\Kcal\ip{u}{s_x}_{\Hscr_a}s_x\,d\nu(x).
\end{equation}
This identity can be confirmed by taking an inner product with an arbitrary element of $\Hscr_a$. Thus $\Cscr_a$ is a covariance operator, since $\int s_xd\nu(x) = 0$.

\begin{lemma} \label{lem: the covariance operator is positive and trace class}
The operator $\Cscr_a$ is positive and trace class for any $a\in\Delta^\circ$.
\end{lemma}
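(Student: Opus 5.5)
Positivity is already established: $\ip{u}{\Cscr_a u}_{\Hscr_a}=\norm{T_a u}_{L^2(\nu)}^2\geq0$. So the real task is the trace-class property, and the natural route is to show that $T_a$ is Hilbert--Schmidt. Then $\Cscr_a=T_a^*T_a$ is trace class, with $\operatorname{tr}\Cscr_a=\norm{T_a}_{\mathrm{HS}}^2$.

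To compute the trace I would use the integral representation \eqref{eq:covariance-integral}. Fix an orthonormal basis $(e_k)_k$ of $\Hscr_a$; if $\Hscr_a$ is nonseparable, use any orthonormal family, since only countably many terms will be nonzero. Then
\[
\sum_k \ip{e_k}{\Cscr_a e_k}_{\Hscr_a}
=\sum_k\int_\Kcal \abs{\ip{e_k}{s_x}_{\Hscr_a}}^2\,d\nu(x)
=\int_\Kcal \sum_k \abs{\ip{e_k}{s_x}_{\Hscr_a}}^2\,d\nu(x)
=\int_\Kcal \norm{s_x}_{\Hscr_a}^2\,d\nu(x).
\]
The interchange of sum and integral is justified by Tonelli, since every term is nonnegative. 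The last equality is Parseval's identity applied to each $s_x$.

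It remains to check that this integral is finite. The kernel $K_a$ is continuous on $\Delta^\circ\times\Delta^\circ$, so $x\mapsto k_x$ is continuous into $\Hscr_a$, because $\norm{k_x-k_y}^2_{\Hscr_a}=K_a(x,x)-2K_a(x,y)+K_a(y,y)$. Each $f_i$ is continuous on $\Delta^\circ$, hence $x\mapsto s_x=k_x-\sum_i\pi_ik_{f_ix}$ is continuous as well. The set $\Kcal\subset\Delta^\circ$ is compact, so $\sup_{x\in\Kcal}\norm{s_x}_{\Hscr_a}<\infty$, and $\nu$ is a probability measure. Therefore the trace is finite. For a positive operator, finite $\sum_k\ip{e_k}{\Cscr_a e_k}$ over an orthonormal basis is exactly the trace-class condition.

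One small point needs attention. Measurability of $x\mapsto \abs{\ip{e_k}{s_x}}^2$ follows from continuity. The identity $\ip{u}{\Cscr_a u}=\int\abs{\ip{u}{s_x}}^2d\nu$ follows from the reproducing property $(T_au)(x)=\ip{u}{s_x}$, which was noted earlier. I do not expect any step here to be a genuine obstacle; the only care needed is in invoking Tonelli and Parseval correctly.
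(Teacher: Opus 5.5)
Your proof is correct and follows the paper's argument. You apply Tonelli and Parseval to the integral representation \eqref{eq:covariance-integral} to get $\mathrm{tr}\,\Cscr_a=\int_\Kcal\norm{s_x}_{\Hscr_a}^2\,d\nu(x)$, which is finite because $x\mapsto s_x$ is continuous on the compact set $\Kcal$. The only difference is minor: the paper gets a countable orthonormal basis from separability of $\Hscr_a$ via the clock isometry $J$ into $L^2(\Escr_a)$, whereas you avoid separability. Your countability claim holds because $\{s_x : x\in\Kcal\}$ is the continuous image of a separable set.
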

\begin{proof}
We have already proved positivity. Since $L^2(\Escr_a)$ is a separable space, by the isometry in equation \eqref{eq: clock isometry}, $\Hscr_a$ is also separable. Then it has a countable orthonormal basis $\{e_j\}_{j=1}^\infty$. By Tonelli's theorem and Parseval's identity,
\[
\mathrm{tr} \,\Cscr_a
=\sum_{j=1}^\infty \ip{e_j}{\Cscr_a e_j}_{\Hscr_a}
=\int_\Kcal\sum_j|\langle e_j,s_x\rangle_{\mathscr H_a}|^2\,d\nu(x)
=\int_\Kcal \norm{s_x}_{\Hscr_a}^2\,d\nu(x)
<\infty. \qedhere
\]

\end{proof}

\medskip

Let us see how the choice of the reference point $a$ is immaterial to the definition of constellation dimension. We need several lemmas.

\begin{lemma} \label{lem: signed measure comparison}
For any $a,a'\in\Delta^\circ$, there is $C\geq1$ such that every signed measure $\sigma$ finitely supported on $\Delta^\circ$ satisfies
\begin{equation*}
C^{-1}\norm{\sigma}_{K_{a'}}^2 \leq \norm{\sigma}_{K_a}^2 \leq C\norm{\sigma}_{K_{a'}}^2.
\end{equation*}
\end{lemma}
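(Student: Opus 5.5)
The plan is to show that, coordinate by coordinate, a constant multiple of the one-variable factor of $K_{a'}$ dominates the corresponding factor of $K_a$ in the sense of positive-semidefinite kernels. The two inequalities of the statement then follow by the Schur product theorem, with no compactness or support assumptions on $\sigma$.

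\textbf{Step 1: factorize.} Write
\[
K_a(x,y)=\prod_{j=1}^{d+1}\frac{1}{2a_j}\,\kappa_{a_j}(x_j,y_j),
\qquad
\kappa_c(s,t)\coloneqq\frac{(s+c)(t+c)}{s+t}\quad(s,t>0,\ c>0).
\]
The constants $1/(2a_j)$ only rescale the norm by the fixed factor $\prod_j(2a_j)^{-1}$, so they are absorbed into $C$. It therefore suffices to prove that $C_1\,\prod_j\kappa_{a'_j}-\prod_j\kappa_{a_j}$ is a positive-semidefinite kernel on $(0,\infty)^{d+1}$ for some constant $C_1$. Then $\norm{\sigma}_{K_a}^2\le C\norm{\sigma}_{K_{a'}}^2$, and the reverse inequality follows by swapping $a$ and $a'$.

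\textbf{Step 2: one-variable decomposition.} Expanding the numerator gives
\[
\kappa_c(s,t)=\frac{st}{s+t}+c+\frac{c^2}{s+t}.
\]
Each of the three kernels
\[
P_1(s,t)=\frac{st}{s+t},\qquad P_2\equiv1,\qquad P_3(s,t)=\frac1{s+t}
\]
is positive semidefinite. This follows from
\[
\frac1{s+t}=\int_0^\infty e^{-\tau s}e^{-\tau t}\,d\tau,
\]
which writes $P_3$ as an integral of rank-one kernels, and $P_1$ is $P_3$ conjugated by the function $s\mapsto s$. This mirrors the clock representation of Lemma~\ref{lem: inner product of ell is the kernel}. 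Choose
\[
C_j\ge\max\bigl\{1,\ a_j/a'_j,\ (a_j/a'_j)^2\bigr\}.
\]
Then
\[
C_j\kappa_{a'_j}-\kappa_{a_j}=(C_j-1)P_1+(C_ja'_j-a_j)P_2+(C_ja'^2_j-a^2_j)P_3
\]
is a nonnegative combination of positive-semidefinite kernels, hence positive semidefinite.

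\textbf{Step 3: tensorize.} Write $\kappa_j=\kappa_{a_j}(x_j,y_j)$ and $\kappa'_j=C_j\kappa_{a'_j}(x_j,y_j)$. Telescope
\[
\prod_j\kappa'_j-\prod_j\kappa_j=\sum_{k}\Bigl(\prod_{j<k}\kappa_j\Bigr)(\kappa'_k-\kappa_k)\Bigl(\prod_{j>k}\kappa'_j\Bigr).
\]
Each summand is a Schur (pointwise) product of positive-semidefinite kernels in the variables $(x,y)$, so it is positive semidefinite by the Schur product theorem. Hence the whole difference is positive semidefinite, and evaluating it against $\sigma=\sum_k b_k\delta_{x^{(k)}}$ gives the claimed bound with $C=\prod_j C_j\cdot\prod_j(a_j/a'_j)^{\pm1}$ (taking the larger of the two directions).

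I expect the only delicate point to be Step 2: finding a decomposition of $\kappa_c$ into positive-semidefinite pieces whose coefficients are monotone in $c$. The expansion above does exactly this. If it failed, the fallback would be to treat multiplication by $(x_j+a_j)/(x_j+a'_j)$ as a bounded multiplier of the reproducing kernel Hilbert space of $\prod_j(x_j+y_j)^{-1}$.
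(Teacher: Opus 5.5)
Your proof is correct and is essentially the paper's argument: both split each coordinate factor as $\frac{st}{s+t}+c+\frac{c^2}{s+t}$, i.e.\ into three positive-semidefinite kernels (the paper writes the first and third as Laplace integrals) whose nonnegative coefficients depend on $a$. The only difference is in combining coordinates: the paper expands the product into $3^{d+1}$ terms and compares coefficients term by term, whereas you telescope and invoke the Schur product theorem, which amounts to the same thing.
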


\begin{proof}
Take any $a,a',x,y\in\Delta^\circ$. Notice that we can rewrite $K_a$ as
\[
K_a(x,y)= \frac{1}{2^{d+1}}\prod_{k=1}^{d+1}\left(
a_k\int_0^\infty e^{-x_kt_k}e^{-y_kt_k}\,dt_k
\;+\;\frac1{a_k}\int_0^\infty (x_ke^{-x_kt_k})(y_ke^{-y_kt_k})\,dt_k
\;+\;1
\right).
\]
Expanding the product on the right-hand side, the terms including $x$ or $y$ have the same expression in $x$ and $y$. That is, they have the form
\[
w(a)\int F(x,t)F(y,t)\,dt,
\]
where $w(a)>0$ is the product of the selected coefficients, $F(x,t)$ is the product of the selected factors $e^{-x_kt_k}$, $1$, or $x_ke^{-x_kt_k}$, and $t$ collects the selected integration variables.

Take any finitely-supported signed measure $\sigma = \sum_i c_i \delta_{x_i}$ with $c_i\in\mathbb R$ and $x_i\in\Delta^\circ$. We have
\[
w(a)\sum_{i,j}c_ic_j\int F(x_i,t)F(x_j,t)\,dt
=w(a)\Int{\left(\sum_i c_iF(x_i,t)\right)^2dt\geq0}.
\]
Therefore, the quadratic form $\norm{\sigma}_{K_a}^2 = \sum_{i,j}c_ic_jK_a(x_i,x_j)$ is a finite sum of nonnegative quantities independent of $a$, with positive coefficients depending on $a$. Comparing these coefficients for $a$ and $a'$ gives a constant $C\geq1$ such that
\[
C^{-1}\sum_{i,j}c_ic_jK_{a'}(x_i,x_j)
\leq\sum_{i,j}c_ic_jK_a(x_i,x_j)
\leq C\sum_{i,j}c_ic_jK_{a'}(x_i,x_j). \qedhere
\]
\end{proof}

Next, we characterize membership in $\Hscr_a$ through signed measures.
\begin{lemma} \label{lem: characterization of membership in the Hilbert space}
A function $u:\Delta^\circ\to\mathbb R$ belongs to $\Hscr_a$ if and only if there is $M\geq0$ such that
\[
\abs{\sigma(u)}\leq M\norm{\sigma}_{K_a}
\]
for every finitely-supported signed measure $\sigma$ on $\Delta^\circ$. In this case, we have $\norm{u}_{\Hscr_a}\leq M$.
\end{lemma}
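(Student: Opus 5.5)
The plan is to use the standard characterization of elements of a reproducing kernel Hilbert space through bounded functionals on the pre-Hilbert space $\Hscr_a^0 \coloneqq \Span\{k_x \setcond x\in\Delta^\circ\}$. The key identification is the map sending a finitely supported signed measure $\sigma=\sum_i c_i\delta_{x_i}$ to $k_\sigma \coloneqq \sum_i c_i k_{x_i}\in\Hscr_a^0$. By the definition of the inner product, $\norm{k_\sigma}_{\Hscr_a}^2=\norm{\sigma}_{K_a}^2$. By the reproducing property \eqref{eq: reproducing property}, every $u\in\Hscr_a$ satisfies $\sigma(u)=\ip{u}{k_\sigma}_{\Hscr_a}$. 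Every element of $\Hscr_a^0$ is of the form $k_\sigma$.

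For the forward direction, take $u\in\Hscr_a$. The Cauchy--Schwarz inequality gives $\abs{\sigma(u)}=\abs{\ip{u}{k_\sigma}_{\Hscr_a}}\leq\norm{u}_{\Hscr_a}\norm{\sigma}_{K_a}$, so $M=\norm{u}_{\Hscr_a}$ works.

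For the converse, suppose $\abs{\sigma(u)}\leq M\norm{\sigma}_{K_a}$ for all $\sigma$. Define the functional $\Lambda(k_\sigma)\coloneqq\sigma(u)$ on $\Hscr_a^0$. The first point to check is well-definedness. If $k_\sigma=k_{\sigma'}$, then $\norm{\sigma-\sigma'}_{K_a}=\norm{k_\sigma-k_{\sigma'}}_{\Hscr_a}=0$, and the hypothesis gives $\sigma(u)=\sigma'(u)$. The functional $\Lambda$ is linear and satisfies $\abs{\Lambda(h)}\leq M\norm{h}_{\Hscr_a}$. Since $\Hscr_a^0$ is dense in $\Hscr_a$, $\Lambda$ extends uniquely to a bounded functional on $\Hscr_a$ with norm at most $M$. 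By the Riesz representation theorem there is $g\in\Hscr_a$ with $\norm{g}_{\Hscr_a}\leq M$ and $\ip{g}{h}_{\Hscr_a}=\Lambda(h)$. Taking $h=k_x$, i.e.\ $\sigma=\delta_x$, gives $g(x)=\ip{g}{k_x}_{\Hscr_a}=u(x)$ for every $x\in\Delta^\circ$. Hence $u=g\in\Hscr_a$ as functions, and $\norm{u}_{\Hscr_a}\leq M$.

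The only step that needs some care is confirming that $\Hscr_a$ is genuinely a space of functions on $\Delta^\circ$, i.e.\ that elements of the completion are identified with their pointwise values via $x\mapsto\ip{\cdot}{k_x}$, and that this identification is injective. Injectivity holds because an element orthogonal to all $k_x$ is orthogonal to the dense subspace $\Hscr_a^0$ and hence is zero. This is exactly the setting in which the paper states the reproducing property, so I will invoke it directly. The rest is routine.
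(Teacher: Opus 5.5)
Your proposal is correct and follows essentially the same route as the paper: Cauchy--Schwarz via the reproducing property for the forward direction, and, for the converse, a well-defined bounded functional on $\Span\{k_x\}$ extended by density (the paper cites the bounded linear transformation theorem), represented by Riesz, and identified with $u$ by evaluating at $k_x$. The only cosmetic difference is that you read off $\norm{u}_{\Hscr_a}\leq M$ from the norm of the Riesz representative, whereas the paper computes $\norm{u}_{\Hscr_a}^2=\mathscr I(u)\leq M\norm{u}_{\Hscr_a}$ directly.
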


\begin{proof}
If $u\in\Hscr_a$, the reproducing property gives, for any $\sigma=\sum_i c_i\delta_{x_i}$,
\[
\sigma(u)=\ip{u}{\;\sum_i c_i k_{x_i}}_{\Hscr_a},
\qquad
\norm{\sigma}_{K_a}=\norm{\sum_i c_i k_{x_i}}_{\Hscr_a}.
\]
Then applying Cauchy--Schwarz proves the bound with $M=\norm{u}_{\Hscr_a}$.

Conversely, the bound makes the assignment $\sum_i c_i k_{x_i}\longmapsto\sum_i c_i u(x_i)$ well-defined: if $\sum_ic_ik_{x_i}~=~\sum_j d_jk_{y_j}$ represents the same vector, then
\[
\abs{\sum_i c_i u(x_i)-\sum_j d_j u(y_j)}
\leq
M\norm{\sum_i c_i k_{x_i}-\sum_j d_j k_{y_j}}_{\Hscr_a}
=0.
\]
Furthermore, this assignment is bounded by $M$: we have $\abs{\sum_i c_i u(x_i)} = \abs{\sigma(u)} \leq M \norm{\sum_i c_i k_{x_i}}_{\Hscr_a}$. By the Bounded Linear Transformation Theorem, this assignment extends uniquely to an $M$-bounded linear functional $\mathscr I$ on $\Hscr_a$. Let $v_u\in\Hscr_a$ be its Riesz representation. Then for any $x\in\Delta^\circ$,
\[
v_u(x) = \ip{v_u}{k_x}_{\Hscr_a} = \mathscr I(k_x) = u(x).
\]
Thus $u=v_u\in\Hscr_a$, proving the characterization. The assertion $\norm{u}_{\Hscr_a}\leq M$ follows from the following:
\[
\norm{u}_{\Hscr_a}^2 = \norm{v_u}_{\Hscr_a}^2 = \mathscr I(v_u) \leq M\norm{v_u}_{\Hscr_a} = M\norm{u}_{\Hscr_a}. \qedhere
\]
\end{proof}

\begin{lemma}\label{lem: choice of the reference is immaterial}
For $a,a'\in\Delta^\circ$, we have $\Hscr_a=\Hscr_{a'}$ as sets of functions on $\Delta^\circ$. Also, there is $C\geq1$ such that every $u\in\Hscr_a$ satisfies
\begin{equation*}
C^{-1}\norm{u}_{\Hscr_{a'}}^2 \leq \norm{u}_{\Hscr_a}^2 \leq C\norm{u}_{\Hscr_{a'}}^2.
\end{equation*}
\end{lemma}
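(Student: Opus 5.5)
The plan is to combine the two preceding lemmas directly. Fix $a,a'\in\Delta^\circ$, and let $C\geq1$ be the constant given by Lemma~\ref{lem: signed measure comparison}. We take $u\in\Hscr_a$ and aim to show $u\in\Hscr_{a'}$ together with the norm bound $\norm{u}_{\Hscr_{a'}}^2\leq C\norm{u}_{\Hscr_a}^2$. Exchanging the roles of $a$ and $a'$ then gives the reverse inclusion and the other inequality, since Lemma~\ref{lem: signed measure comparison} is symmetric in $a$ and $a'$. So both $\Hscr_a=\Hscr_{a'}$ as sets of functions on $\Delta^\circ$ and the two-sided norm comparison follow from a single argument.

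The key step is the forward direction of Lemma~\ref{lem: characterization of membership in the Hilbert space} applied in $\Hscr_a$. For every finitely supported signed measure $\sigma$ on $\Delta^\circ$ it gives
\[
\abs{\sigma(u)}\leq\norm{u}_{\Hscr_a}\norm{\sigma}_{K_a}.
\]
Lemma~\ref{lem: signed measure comparison} bounds the right-hand side by $\sqrt C\,\norm{u}_{\Hscr_a}\norm{\sigma}_{K_{a'}}$. We then apply the converse direction of Lemma~\ref{lem: characterization of membership in the Hilbert space}, now in $\Hscr_{a'}$, with $M=\sqrt C\,\norm{u}_{\Hscr_a}$. It yields $u\in\Hscr_{a'}$ and $\norm{u}_{\Hscr_{a'}}\leq\sqrt C\,\norm{u}_{\Hscr_a}$. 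Squaring gives $C^{-1}\norm{u}_{\Hscr_{a'}}^2\leq\norm{u}_{\Hscr_a}^2$, and the symmetric argument supplies the upper bound with the same $C$.

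There is no real obstacle here, since the substantive work was done in the two lemmas. The only point to check is that Lemma~\ref{lem: characterization of membership in the Hilbert space} is stated for arbitrary $a\in\Delta^\circ$, so that its converse direction applies verbatim in $\Hscr_{a'}$. We also confirm that both spaces are spaces of functions on the same domain $\Delta^\circ$, so that the identification $\Hscr_a=\Hscr_{a'}$ is a literal equality of function sets, and the norm equivalence is then meaningful for each fixed $u$.
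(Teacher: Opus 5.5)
Your proposal is correct and matches the paper's proof: the forward direction of Lemma~\ref{lem: characterization of membership in the Hilbert space} (Cauchy--Schwarz) in $\Hscr_a$, then Lemma~\ref{lem: signed measure comparison}, then the converse direction in $\Hscr_{a'}$ with $M=\sqrt C\,\norm{u}_{\Hscr_a}$. The reverse inclusion and inequality follow by symmetry, exactly as in the paper.
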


\begin{proof}
Lemma~\ref{lem: signed measure comparison} shows that $\sigma\mapsto\sigma(u)$ is bounded for $\norm{\cdot}_{K_a}$ if and only if it is bounded for $\norm{\cdot}_{K_{a'}}$. Lemma~\ref{lem: characterization of membership in the Hilbert space} therefore gives $\Hscr_a=\Hscr_{a'}$ as sets of functions. Also, by the Cauchy-Schwarz inequality and Lemma~\ref{lem: signed measure comparison}, there is $C\geq1$ with
\[
\abs{\sigma(u)}
\leq \norm{u}_{\Hscr_a}\norm{\sigma}_{K_a}
\leq \sqrt{C}\norm{u}_{\Hscr_a}\norm{\sigma}_{K_{a'}}
\]
Applying Lemma~\ref{lem: characterization of membership in the Hilbert space} to $a'$ with $M=\sqrt{C}\norm{u}_{\Hscr_a}$, we obtain $\norm{u}_{\Hscr_{a'}}\leq \sqrt{C} \norm{u}_{\Hscr_a}$. The reverse direction follows as well.
\end{proof}

\begin{corollary}\label{cor: reference independence of constellation dimension}
For $a,a'\in\Delta^\circ$, there is $C\geq1$ such that
\begin{equation}\label{eq: comparable spectrum for different references}
C^{-1}\sigma_j(\Cscr_{a'})\leq\sigma_j(\Cscr_a)
\leq C\sigma_j(\Cscr_{a'})\qquad(j\geq1).
\end{equation}
Consequently, the lower and upper constellation dimensions are independent of the choice of $a$.
\end{corollary}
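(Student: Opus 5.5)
The plan is to compare the two covariance operators through the identity map between $\Hscr_a$ and $\Hscr_{a'}$, and then apply the min--max characterization of eigenvalues. By Lemma~\ref{lem: choice of the reference is immaterial}, $\Hscr_a=\Hscr_{a'}$ as sets of functions, and the two norms are equivalent with some constant $C_0\geq1$. Let $\iota:\Hscr_{a'}\to\Hscr_a$ be the identity map on functions. Then $\iota$ is a bounded linear bijection with bounded inverse, and $\norm{\iota}\le\sqrt{C_0}$, $\norm{\iota^{-1}}\le\sqrt{C_0}$.

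The first key observation is that the quadratic form $\ip{u}{\Cscr_a u}_{\Hscr_a}=\int_\Delta |u-\Lcal u|^2\,d\nu$ depends only on the function $u$, not on $a$. Indeed, $T_a u=u-\Lcal u$ in $L^2(\nu)$ is defined pointwise from $u$ alone. Hence $T_{a'}=T_a\circ\iota$ as operators $\Hscr_{a'}\to L^2(\nu)$, and so
\[
\ip{u}{\Cscr_{a'}u}_{\Hscr_{a'}}=\norm{T_a u}_{L^2(\nu)}^2=\ip{u}{\Cscr_a u}_{\Hscr_a}
\qquad (u\in\Hscr_a=\Hscr_{a'}).
\]

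Next I would use the Courant--Fischer min--max principle for the compact positive operators $\Cscr_a$ and $\Cscr_{a'}$; both are trace class by Lemma~\ref{lem: the covariance operator is positive and trace class}. It reads
\[
\sigma_j(\Cscr_a)=\sup_{\substack{V\subset\Hscr_a\\ \dim V=j}}\;\inf_{0\ne u\in V}\frac{\norm{T_a u}^2_{L^2(\nu)}}{\norm{u}^2_{\Hscr_a}} ,
\]
which remains valid when some $\sigma_j$ are zero and zeros are appended. The $j$-dimensional subspaces of $\Hscr_a$ and of $\Hscr_{a'}$ are the same collection of function spaces. The numerators agree, and the denominators differ by at most the factor $C_0$, so each Rayleigh quotient for $a$ lies within a factor $C_0$ of the one for $a'$. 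Taking the inf and then the sup preserves these bounds, which gives equation~\eqref{eq: comparable spectrum for different references} with $C=C_0$.

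For the consequence, set $\kappa_0=\log C$. If $\sigma_j(\Cscr_a)\ge e^{-t}$, then $\sigma_j(\Cscr_{a'})\ge e^{-(t+\kappa_0)}$. Hence
\[
\Ncal_a(t)\le\Ncal_{a'}(t+\kappa_0)
\qquad\text{and symmetrically}\qquad
\Ncal_{a'}(t)\le\Ncal_a(t+\kappa_0).
\]
Since $\log(t+\kappa_0)/\log t\to1$ as $t\to\infty$ and $\Ncal\ge1$, the $\liminf$ and $\limsup$ of $\log\Ncal(t)/\log t$ coincide for $a$ and $a'$. The only step requiring care is the min--max application: we must make sure it is applied in the right Hilbert structure, so that the same subspaces are admissible. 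This is guaranteed by the set equality in Lemma~\ref{lem: choice of the reference is immaterial}. We must also handle the zero eigenvalues, which the sup--inf formula treats correctly.
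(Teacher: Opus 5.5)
Your proposal is correct and follows essentially the same route as the paper. The Rayleigh quotient $\norm{u-\Lcal u}_{L^2(\nu)}^2/\norm{u}_{\Hscr_a}^2$ has an $a$-independent numerator, and Lemma~\ref{lem: choice of the reference is immaterial} compares the denominators. The Min--Max principle then transfers this comparison to the eigenvalues, and shifting $t$ by $\log C$ in $\Ncal_a$ leaves both limits unchanged.
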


To prove this, we apply the following Min--Max principle for self-adjoint operators.
\begin{theorem}[Min--Max principle]{\cite[Problem~37(4)]{Brezis}} \label{thm: min--max principle}
Let $H$ be a Hilbert space, and let $A:H\to H$ be a positive compact self-adjoint operator. Let $\lambda_1(A)\geq\lambda_2(A)\geq\cdots\geq0$ be its positive eigenvalues in nonincreasing order, repeated according to multiplicity, and append zeros if there are only finitely many. Then, for every integer $j\geq1$ with $j\leq\dim H$,
\[
\lambda_j(A)
=\max_{\substack{V\subset H\\ \dim V=j}}
\min_{u\in V\setminus\{0\}}
\frac{\langle u,Au\rangle_H}{\norm{u}_H^2}.
\]
%where the maximum is taken over all $j$-dimensional linear subspaces of $H$.
\end{theorem}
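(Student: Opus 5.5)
This is the classical Courant--Fischer characterization, and I would prove it directly from the spectral theorem for compact self-adjoint operators. By that theorem, $H$ decomposes orthogonally as $\ker A \oplus \overline{\Span}\{e_k\}$, where $\{e_k\}$ is an orthonormal family of eigenvectors with $Ae_k=\lambda_k(A)e_k$ and $\lambda_k(A)>0$ listed nonincreasingly with multiplicity. Expanding any $u\in H$ as $u=u_0+\sum_k\ip{u}{e_k}_He_k$ with $u_0\in\ker A$ gives
\[
\ip{u}{Au}_H=\sum_k\lambda_k(A)\,|\ip{u}{e_k}_H|^2 .
\]
All estimates will be read off from this identity. For a fixed finite-dimensional $V$, the inner minimum is attained because the Rayleigh quotient is continuous on the compact unit sphere of $V$. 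So the content of the statement is the two inequalities, together with the claim that the outer supremum is attained.

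\textbf{Lower bound.} I first exhibit a good $j$-dimensional subspace. If at least $j$ positive eigenvalues exist, take $V=\Span\{e_1,\ldots,e_j\}$. For $u\in V$ the identity gives $\ip{u}{Au}_H=\sum_{k\le j}\lambda_k|\ip{u}{e_k}|^2\ge\lambda_j\norm{u}_H^2$. Otherwise $\lambda_j(A)=0$ by the convention of appending zeros. In that case I need only some $j$-dimensional $V$, which exists since $j\le\dim H$, and the quotient on it is $\ge0$ by positivity of $A$. Either way the maximum is at least $\lambda_j(A)$, and it is attained by this $V$ once the upper bound is proved.

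\textbf{Upper bound.} This is the only step with any substance. Let $V$ be an arbitrary $j$-dimensional subspace and set $F=\{e_1,\ldots,e_{j-1}\}^\perp$, using only those $e_k$ that exist if there are fewer than $j-1$ of them. The space $F$ has codimension at most $j-1$. Hence the linear map $V\to\mathbb R^{j-1}$, $u\mapsto(\ip{u}{e_k})_{k<j}$, has a nontrivial kernel, and there is some $u\in V\cap F$ with $u\neq0$. For this $u$ the identity involves only indices $k\ge j$, so
\[
\ip{u}{Au}_H\le\lambda_j(A)\sum_{k\ge j}|\ip{u}{e_k}|^2\le\lambda_j(A)\norm{u}_H^2 .
\]
Here I use that the eigenvalues are nonincreasing and that terms beyond the positive ones are zero. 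Therefore $\min_{V\setminus\{0\}}\le\lambda_j(A)$ for every $V$, and combined with the lower bound this yields the equality with the maximum attained.

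The main point to handle carefully is the bookkeeping when $A$ has finitely many positive eigenvalues, or when $\ker A$ is large. In those cases the appended zeros and the hypothesis $j\le\dim H$ are exactly what make both directions go through, and I would check that case explicitly rather than fold it into the generic argument.
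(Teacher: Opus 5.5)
Your proof is correct. It is the standard Courant--Fischer argument via the spectral decomposition $H=\ker A\oplus\overline{\Span}\{e_k\}$, with the case of finitely many positive eigenvalues handled by the appended zeros and the hypothesis $j\le\dim H$. The paper gives no proof of its own here and simply cites Brezis, Problem~37(4), whose intended solution is this same argument.
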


\begin{proof}[Proof of Corollary~\ref{cor: reference independence of constellation dimension}]
For every nonzero $u\in\Hscr_a=\Hscr_{a'}$, the \emph{Rayleigh quotient} is
\[
\frac{\ip{u}{\Cscr_a u}_{\Hscr_a}}{\norm u_{\Hscr_a}^2}
=\frac{\norm{u-\Lcal u}_{L^2(\nu)}^2}{\norm u_{\Hscr_a}^2}.
\]
The numerator is independent of $a$, and Lemma~\ref{lem: choice of the reference is immaterial} compares the denominators. Thus Theorem~\ref{thm: min--max principle} gives equation~\eqref{eq: comparable spectrum for different references}. For $t\geq\log C$, it follows that
\[
\Ncal_{a'}(t-\log C) \;\leq\; \Ncal_a(t) \;\leq\; \Ncal_{a'}(t+\log C).
\]
We see that both limits in equation~\eqref{eq: constellation dimensions} are unchanged by changing $a$.
\end{proof}

\subsection{Algebraic dimension bounds the constellation dimension}

Corollary~\ref{cor: reference independence of constellation dimension} established that the choice of $a$ is immaterial. We will fix $a$ from now on. Let us recall the definition of the Zariski closure.

\begin{definition} \label{def: Zariski closure}
Let $H_{\mathbb C} \coloneqq \left\{ z\in\mathbb C^{d+1} \,:\, \sum_j z_j = 1\right\} \supset \Delta$ be the complex affine hyperplane, and $\mathbb C\left[H_{\mathbb C}\right]$ be its polynomial ring. For $K\subset\Delta$, define the ideal
\[
I(K) \coloneqq \left\{ P\in \mathbb C\left[H_{\mathbb C}\right] \;\setcond\; P(x) = 0 \text{ for every } x\in K \right\}.
\]
Then the \emph{Zariski closure} of $K$ is defined as
\[
\overline{K}^{\mathrm{Zar}} \coloneqq \left\{ z\in H_{\mathbb C} \setcond P(z) = 0 \text{ for every } P\in I(K)\right\}.
\]
\end{definition}

The polynomials vanishing on $\Kcal$ can reduce the dimension of the approximation space that we obtained in Section~\ref{sec: polynomial complexity}. For each integer $N\geq0$, let $\mathcal P_N$ be the set of real polynomials in $d$ variables of total degree at most $N$. That is,
\[
\mathcal P_N
\;\coloneqq\;
\Span_{\mathbb R}
\left\{
X_1^{\alpha_1}\cdots X_d^{\alpha_d} \setcond \alpha_j\in\mathbb Z_{\geq0}, \;\alpha_1+\cdots+\alpha_d\leq N
\right\}
\subset\mathbb R[X_1,\ldots,X_d].
\]
We use the affine coordinates $x_1,\ldots,x_d$ on $\Delta$. That is, $P\in \mathcal P_N$ defines
\[
P|_\Delta: \Delta\longrightarrow\mathbb R,
\qquad (x_1,\ldots,x_{d+1})\longmapsto P(x_1,\ldots,x_d).
\]
We write $\mathcal P_N(K)\coloneqq\left\{ P|_K \,:\, P\in\mathcal P_N \right\}$ to denote the restrictions to $K\subset\Delta$.

\begin{lemma}\label{lem: polynomial dimension growth}
Let $K\subset\Delta$ be nonempty and $r=\dim_\mathbb C\overline K^{\mathrm{Zar}}$. There are constants $C,c>0$ such that, for every integer $N\geq0$,
\begin{equation} \label{eq: polynomial dimension growth}
c(1+N)^r \;\leq\; \dim\mathcal P_N(K) \;\leq\; C(1+N)^r.
\end{equation}
\end{lemma}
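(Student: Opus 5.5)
This is a Hilbert polynomial statement. Let $V=\overline K^{\mathrm{Zar}}\subseteq H_{\mathbb C}\cong\mathbb C^d$, with ideal $I=I(K)$; note $I(V)=I$. The first step is to identify $\mathcal P_N(K)$ with a filtered piece of the coordinate ring. Restriction gives a surjection $\mathcal P_N\to\mathcal P_N(K)$ with kernel $\mathcal P_N\cap I_{\mathbb R}$, where $I_{\mathbb R}$ denotes the real polynomials vanishing on $K$. Since $K\subset\mathbb R^d$ is real, complex conjugation preserves $I$, so $I=I_{\mathbb R}\otimes\mathbb C$. Hence
\[
\dim_{\mathbb R}\mathcal P_N(K)=\dim_{\mathbb C}\big(\mathcal P_N^{\mathbb C}/(\mathcal P_N^{\mathbb C}\cap I)\big)\eqqcolon h(N),
\]
which is the affine Hilbert function of the ideal $I$ under the degree filtration.

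The second step is to homogenize. Let $I^h\subset S=\mathbb C[X_0,\ldots,X_d]$ be the homogenization of $I$. The standard correspondence $P\mapsto X_0^{N}P(X/X_0)$ gives an isomorphism
\[
\mathcal P_N^{\mathbb C}/(\mathcal P_N^{\mathbb C}\cap I)\;\cong\;(S/I^h)_N .
\]
Here one has to check that a polynomial of degree at most $N$ lies in $I$ if and only if its degree-$N$ homogenization lies in $I^h$. This holds because $I^h$ is saturated with respect to $X_0$.

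The third step is to invoke Hilbert's theorem. It says $\dim(S/I^h)_N$ agrees for large $N$ with a polynomial of degree $\dim\mathrm{Proj}(S/I^h)=\dim_{\mathbb C}V=r$ and positive leading coefficient. Here we use that the projective closure of $V$ has the same dimension as $V$, and that the Krull dimension of $S/I^h$ is $r+1$. When $r=0$, $V$ is a finite nonempty set and $h(N)$ is eventually constant and positive, which still fits. Since $K\neq\varnothing$, $h(N)\ge1$ for every $N$.

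Finally, convert the asymptotic statement into the uniform bound \eqref{eq: polynomial dimension growth}. For $N\ge N_0$ we have $h(N)=\sum_{k\le r}e_kN^k$ with $e_r>0$, which gives $c(1+N)^r\le h(N)\le C(1+N)^r$ there. The finitely many $N<N_0$ are handled by adjusting the constants, using $1\le h(N)\le\binom{N+d}{d}$.

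I expect the main obstacle to be the clean identification of the filtered quotient with the graded piece of $S/I^h$, together with the matching of dimensions. The dimension matching requires that the projective closure adds no components of larger dimension, which is the standard fact that $\overline V\setminus V$ is a proper closed subset of each component. I would cite these standard facts rather than reprove them. As an alternative, one could use Noether normalization: $V$ is finite over $\mathbb C^r$ via a linear projection, and the images of the monomials of degree $\le N$ in $r$ variables give the lower bound. The upper bound then comes from generating $\mathbb C[V]$ as a finite module, with generators of bounded degree.
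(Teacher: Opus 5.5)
Your proposal is correct and follows the paper's route: identify $\dim_{\mathbb R}\mathcal P_N(K)$ with the affine Hilbert function of $I(K)$ via the real/complex comparison, invoke the Hilbert polynomial theorem to get eventual polynomial growth of degree $r$, and absorb the finitely many small $N$ into the constants using $h(N)\geq1$. The only difference is that you prove the affine Hilbert polynomial theorem (which the paper simply cites from Knapp) via homogenization and the projective Hilbert polynomial, its standard proof; note that the identification $\mathcal P_N^{\mathbb C}/(\mathcal P_N^{\mathbb C}\cap I)\cong(S/I^h)_N$ needs only that dehomogenization maps $I^h$ into $I$, not saturation.
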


This is immediate from the following affine Hilbert polynomial theorem.

\begin{theorem}[affine Hilbert polynomial theorem]{\cite[Corollary~10.80]{Knapp}} \label{thm: affine Hilbert polynomial}
Let $k$ be an algebraically closed field and let $I\subsetneq k[x_1,\ldots,x_d]$ be an ideal. Then
\[
\dim_k V(I)=\deg_s H_{\mathrm a}(s,I).
\]
\end{theorem}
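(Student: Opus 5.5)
The plan is the standard commutative-algebra route: reduce to monomial ideals with a graded monomial order, count standard monomials for the degree of $H_{\mathrm a}$, and use Noether normalization to get the geometric dimension. Recall that $H_{\mathrm a}(s,I)=\dim_k\big(k[x_1,\ldots,x_d]_{\le s}\big/I_{\le s}\big)$, where $I_{\le s}=I\cap k[x_1,\ldots,x_d]_{\le s}$. For large $s$ this agrees with a polynomial in $s$, and the claim is that its degree is $\dim_k V(I)$.

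\textbf{Step 1 (monomial reduction).} Fix a graded monomial order, for instance graded lexicographic. Since the order is degree-compatible, the monomials of degree $\le s$ not lying in the leading-term ideal $\mathrm{LT}(I)$ form a basis of $k[x]_{\le s}/I_{\le s}$. Hence $H_{\mathrm a}(s,I)=H_{\mathrm a}(s,\mathrm{LT}(I))$.

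\textbf{Step 2 (count for monomial ideals).} For a monomial ideal $M$, the standard monomials form a finite union of translated coordinate cones $x^\beta\cdot k[x_{j}: j\in J]$, minus overlaps. Counting lattice points of degree $\le s$ in such a cone gives a polynomial of degree $|J|$. By inclusion–exclusion, $H_{\mathrm a}(s,M)$ is eventually a polynomial whose degree is the largest $|J|$ for which the coordinate subspace $\{x_i=0,\ i\notin J\}$ lies in $V(M)$. That is exactly $\dim V(M)$, the maximal dimension of a coordinate subspace contained in $V(M)$.

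\textbf{Step 3 (transfer back to $I$).} We need $\dim V(I)=\dim V(\mathrm{LT}(I))$, and this is the main obstacle, since the two varieties differ geometrically. I would argue through Krull dimension. Let $r$ be the degree found in Step 2. On one hand, $r$ equals the growth order of $\dim_k(k[x]/I)_{\le s}$. On the other hand, Noether normalization gives a finite injective map $k[y_1,\ldots,y_e]\hookrightarrow k[x]/\sqrt I$ with $e=\dim V(I)$. After a generic linear change of coordinates this map can be taken degree-preserving up to a constant shift. Comparing filtrations then yields
\[
c\,s^{e}\le \dim_k\big(k[x]/\sqrt I\big)_{\le s}\le \dim_k\big(k[x]/I\big)_{\le s}\le C s^{e}.
\]
The upper bound needs finiteness of $k[x]/I$ over $k[y]$, and the lower bound uses injectivity of the $y$-monomials. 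The delicate point is the upper bound for $I$ rather than $\sqrt I$. I would handle it by noting that $k[x]/I$ is still a finite module over $k[y]$, generated by finitely many elements of bounded degree. Therefore $r=e$.

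\textbf{Application to Lemma~\ref{lem: polynomial dimension growth}.} Take $I=I(K)\cap\mathbb C[x_1,\ldots,x_d]$ in the affine coordinates on $H_{\mathbb C}$. Then $\dim_{\mathbb C}\mathcal P_N(K)\otimes\mathbb C=H_{\mathrm a}(N,I)$, and real and complex dimensions agree because $I(K)$ is defined over $\mathbb R$. Equation~\eqref{eq: polynomial dimension growth} then follows from Theorem~\ref{thm: affine Hilbert polynomial}, together with positivity of $H_{\mathrm a}$ for small $N$ (since $K\neq\varnothing$), which adjusts the constants.
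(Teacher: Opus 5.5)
The paper contains no proof of this statement. It is quoted from Knapp and used only in the proof of Lemma~\ref{lem: polynomial dimension growth}, in the same way as your last paragraph. Your argument is therefore a self-contained replacement for the citation, and it is correct in outline. Steps~1--2 (graded order, standard monomials, decomposition into translated coordinate cones) give eventual polynomiality of $H_{\mathrm a}(s,I)$ and identify its degree $r$ with $\dim V(\mathrm{LT}(I))$. Step~3 then identifies $r$ with the Krull dimension $e$ of $k[x]/\sqrt I$, via two-sided growth bounds from a linear Noether normalization. This shows that algebraic closedness enters only through $I(V(I))=\sqrt I$, which you should state explicitly. The paper's citation simply buys brevity.

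The one under-justified step is the upper bound $\dim_k(k[x]/I)_{\le s}\le Cs^e$. Module-finiteness over $k[y]$ with generators of bounded degree does not by itself bound the $y$-degrees of the coefficients needed for elements of the image of $k[x]_{\le s}$. Likewise, ``degree-preserving up to a constant shift'' is not automatic. For $I=(x_2-x_1^2)$ and $y=x_1$, the extension is linear, injective and free of rank one, yet the image of $k[x]_{\le s}$ is $k[y]_{\le 2s}$. A constant shift can be arranged by a careful generic choice of the linear forms, but you only assert it.

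You do not need it. Take generators $g_1=1,g_2,\ldots,g_t$ and write $x_j=\sum_i p_{ji}(y)g_i$ and $g_ig_{i'}=\sum_{i''}q_{ii'i''}(y)g_{i''}$, with all degrees at most $D$. Induction on the degree of monomials shows that the image of $k[x]_{\le s}$ lies in $\sum_i g_i\,k[y]_{\le 2Ds}$. Hence $H_{\mathrm a}(s,I)\le t\binom{2Ds+e}{e}=O(s^e)$, which is all you use.

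Alternatively, you can avoid the upper bound entirely. Suppose the coordinate subspace indexed by $J$, with $|J|=r$, lies in $V(\mathrm{LT}(I))$. Then no nonzero $p\in k[x_J]$ lies in $\sqrt I$, since otherwise $\mathrm{LT}(p^m)=\mathrm{LT}(p)^m$, a monomial in $x_J$, would lie in $\mathrm{LT}(I)$. So $k[x_J]$ embeds in $k[x]/\sqrt I$, and $e\ge r$ (pass to a minimal prime of $k[x]/\sqrt I$ that meets $k[x_J]$ trivially). Your lower bound $cs^e\le H_{\mathrm a}(s,I)$ gives $e\le r$.
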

Here $\dim_k V(I)$ is the Krull dimension of $V(I)$, and $\deg_s$ is the degree of a polynomial in $s$. The polynomial $H_{\mathrm a}(s,I)\in\mathbb Q[s]$ is the \emph{affine Hilbert polynomial} of $I$, characterized by
\[
H_{\mathrm a}(N,I)=\dim^{(k)}\left(\frac{k[x_1,\ldots,x_d]_{\leq N}}{I\cap k[x_1,\ldots,x_d]_{\leq N}}\right)
\]
for all sufficiently large integers $N$ (\cite[Corollary~10.72]{Knapp}). The function in $N$ on the right-hand side is called the \emph{affine Hilbert function} of $I$, where $k[x_1,\ldots,x_d]_{\leq N}$ denotes the vector space of polynomials of total degree at most $N$, and $\dim^{(k)}$ denotes the dimension as a $k$-vector space.

\begin{proof}[Proof of Lemma~\ref{lem: polynomial dimension growth}]
On $H_{\mathbb C}$, the last coordinate satisfies $x_{d+1}=1-\sum_{j=1}^d x_j$. We therefore use $x_1,\ldots,x_d$ as coordinates and identify the ring of polynomial functions on $H_{\mathbb C}$ with $R~\coloneqq~\mathbb C[x_1,\ldots,x_d]$.

We apply Theorem~\ref{thm: affine Hilbert polynomial} with $k=\mathbb C$ and $I = I(K)$. Then $V(I)=\overline K^{\mathrm{Zar}}$, whose complex dimension is $r$.

We can write any $F\in R_{\leq N}$ as $F=P+iQ$ with real polynomials $P,Q$. Since $K$ consists of real points, $F$ vanishes on $K$ if and only if both $P$ and $Q$ vanish there. Moreover, two polynomials in $R_{\leq N}$ have the same restriction to $K$ precisely when their difference belongs to $I(K)$. Thus
\[
\dim_{\mathbb R}\mathcal P_N(K)
=\dim_{\mathbb C}\left( \left\{ F|_K \,:\, F\in R_{\leq N} \right\} \right)
=\dim_{\mathbb C}\left(\frac{R_{\leq N}}{I(K)\cap R_{\leq N}}\right).
\]
The right-most expression is the affine Hilbert function of $I$. Thus, by Theorem~\ref{thm: affine Hilbert polynomial}, $\dim_{\mathbb R}\mathcal P_N(K)$ agrees with a polynomial of degree $r$ for all sufficiently large $N$. We can take $C,c>0$ so that equation~\eqref{eq: polynomial dimension growth} holds for all $N\geq0$.
\end{proof}

The final ingredient for the proof of Proposition~\ref{prop: constellation dimension universal bound} is the following lemma. If $U\subset\Hscr_a$ approximates all $k_x$ for $x\in\Kcal$, then every unit vector $u\in U^\perp$ is uniformly small on $\Kcal$ by the reproducing property. Since $\Kcal$ is forward invariant, $\Lcal u(x)$ averages values of $u$ at points of $\Kcal$, so $\Lcal u$ is uniformly small there as well. Thus $u-\Lcal u$ is small on $\Kcal$.

\begin{lemma}\label{lem: approximation bounds the spectrum}
Let $q\geq0$ be an integer and $U\subset\Hscr_a$ a subspace of dimension at most $q$. Then
\begin{equation*}
\sqrt{\sigma_{q+1}} \;\leq\; 2\sup_{x\in\Kcal}\dist(k_x,U).
\end{equation*}
\end{lemma}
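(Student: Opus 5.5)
We combine the Min--Max principle (Theorem~\ref{thm: min--max principle}) with the reproducing property. Set $\gamma\coloneqq\sup_{x\in\Kcal}\dist(k_x,U)$. If $\dim\Hscr_a\leq q$, then $\sigma_{q+1}=0$ by convention and there is nothing to prove, so assume otherwise. Since $\Cscr_a$ is positive and trace class (Lemma~\ref{lem: the covariance operator is positive and trace class}), hence compact, the Min--Max principle gives
\[
\sigma_{q+1}=\max_{\dim V=q+1}\;\min_{u\in V\setminus\{0\}}\frac{\norm{u-\Lcal u}_{L^2(\nu)}^2}{\norm{u}_{\Hscr_a}^2}.
\]
Fix any $(q+1)$-dimensional $V$. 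Because $\dim U\leq q$, the projection of $V$ onto $U$ has a nontrivial kernel, so $V$ contains a unit vector $u\perp U$. The lemma then reduces to showing $\norm{u-\Lcal u}_{L^2(\nu)}\leq 2\gamma$ for every unit vector $u\in U^\perp$; taking the min over $V$ and then the max gives $\sigma_{q+1}\leq 4\gamma^2$.

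For such $u$ and any $x\in\Kcal$, let $P_U$ denote the orthogonal projection onto $U$. The reproducing property \eqref{eq: reproducing property} and $u\perp U$ give
\[
|u(x)|=|\ip{u}{k_x}_{\Hscr_a}|=|\ip{u}{k_x-P_Uk_x}_{\Hscr_a}|\leq\dist(k_x,U)\leq\gamma .
\]
Since $\Kcal=\bigcup_i f_i(\Kcal)$, each $f_ix$ lies in $\Kcal$, so $|\Lcal u(x)|\leq\sum_i\pi_i|u(f_ix)|\leq\gamma$. Therefore $|u-\Lcal u|\leq 2\gamma$ pointwise on $\Kcal=\supp\nu$, and since $\nu$ is a probability measure, $\norm{u-\Lcal u}_{L^2(\nu)}\leq2\gamma$.

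No step is a real obstacle. The only care needed is the dimension count that produces $u\in V\cap U^\perp$, together with the convention for finitely many eigenvalues. We also use that $\nu$ is concentrated on $\Kcal$, so only the supremum over $\Kcal$ (not over $S$) is needed.
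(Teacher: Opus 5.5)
Your proof is correct and follows the paper's argument: bound $|u|$ by $\dist(k_x,U)\leq\gamma$ on $\Kcal$ for unit $u\perp U$ via the reproducing property, use $f_i(\Kcal)\subset\Kcal$ to get $\norm{u-\Lcal u}_{L^2(\nu)}\leq 2\gamma$, and find such a $u$ by a dimension count in a $(q+1)$-dimensional space. The only cosmetic difference is that you run the dimension count over an arbitrary $(q+1)$-dimensional $V$ inside the Min--Max formula. The paper instead takes $V$ to be the span of the first $q+1$ eigenvectors directly and treats $\sigma_{q+1}=0$ separately.
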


\begin{proof}
Let $\gamma = \sup_{x\in\Kcal}\dist(k_x,U)$, and take any $u\perp U$ with $\norm{u}_{\Hscr_a}=1$. The reproducing property and the Cauchy--Schwarz inequality give, for every $v\in U$,
\[
\abs{u(x)}
=\abs{\ip{u}{k_x}_{\Hscr_a}}
=\abs{\ip{u}{k_x-v}_{\Hscr_a}}
\leq \norm{u}_{\Hscr_a}\norm{k_x-v}_{\Hscr_a}
=\norm{k_x-v}_{\Hscr_a}.
\]
Taking the infimum over $v\in U$, we obtain $\abs{u(x)} \leq \dist(k_x,U) \leq \gamma$. Since $f_i(\Kcal)\subset\Kcal$,
\begin{equation} \label{eq: covariance bound by sup}
\sqrt{\ip{u}{\Cscr_a u}_{\Hscr_a}}
=\norm{u-\Lcal u}_{L^2(\nu)} \leq2 \gamma.
\end{equation}
If $\sigma_{q+1}=0$, the bound is immediate. If $\sigma_{q+1}>0$, the first $q+1$ eigenvectors of $\Cscr_a$ span a space containing a unit vector $u$ perpendicular to $U$, whose Rayleigh quotient is at least $\sigma_{q+1}$, which proves the claim by equation~\eqref{eq: covariance bound by sup}.
\end{proof}

\begin{proof}[Proof of Proposition~\ref{prop: constellation dimension universal bound}]

Corollary~\ref{cor: reference independence of constellation dimension} proves that constellation dimensions are independent of the choice of $a$. Take $P_N: S\to\Hscr_a$ from Proposition~\ref{prop: kolmogorov width of the features is stretched exponentially small}, and recall that $P_N$ is a polynomial on $S$ of degree at most $N$. Therefore, we can write it as
\[
P_N(x) = \sum_{\alpha\in\mathbb Z_{\geq0}^{d+1}, \;\abs{\alpha}\leq N}x^{[\alpha]} w_\alpha. \qquad (w_\alpha\in\Hscr_a)
\]
Here, $x^{[\alpha]} \coloneqq x_1^{\alpha_1}\cdots x_{d+1}^{\alpha_{d+1}}$ for $x = (x_1,\ldots,x_{d+1})\in S$ and $\alpha\in\mathbb Z_{\geq0}^{d+1}$.

Write $d_N=\dim\mathcal P_N(\Kcal)$ and choose a basis $p_1,\ldots,p_{d_N}$ of $\mathcal P_N(\Kcal)$. Since each monomial $x^{[\alpha]}$ in the expression above is an element of $\mathcal P_N(\Kcal)$, the restriction of $P_N$ to $\Kcal$ can be written as
\[
P_N|_{\Kcal}(x)=\sum_{j=1}^{d_N}p_j(x)u_j\qquad(u_j\in\Hscr_a).
\]
Consequently, every $c_a(x)P_N(x)$ for $x\in\Kcal$ belongs to $U_N \coloneqq \Span\{u_1,\ldots,u_{d_N}\}\subset\Hscr_a$, where the definition of $c_a$ is as in Proposition~\ref{prop: kolmogorov width of the features is stretched exponentially small}. Since multiplication by $c_a(x)$ only changes the scalar coefficients, by Proposition~\ref{prop: kolmogorov width of the features is stretched exponentially small}, there are $C_0,c_0>0$ with
\[
\sup_{x\in\mathcal K}\dist(k_x,U_N) \leq C_0e^{-c_0N}.
\]
Since $\dim U_N \leq d_N$, Lemma~\ref{lem: approximation bounds the spectrum} gives $\sigma_{d_N+1}\leq 4C_0^2e^{-2c_0N}$. Therefore, we can take $M>0$ large enough so that $N(t)\coloneqq \lceil M(1+t)\rceil$ satisfies $\sigma_{d_{N(t)}+1}< e^{-t}$ for all $t>0$. Then the definition of $\Ncal_a(t)$ and Lemma~\ref{lem: polynomial dimension growth} give
\[
\Ncal_a(t) \;\leq\; d_{N(t)}+1 \;=\; \dim\mathcal P_{N(t)}(\Kcal) + 1\;\leq\; C_1(1+t)^r
\]
for some fixed $C_1>0$, where $r=\dim_{\mathbb C}\overline{\Kcal}^{\mathrm{Zar}}$. This proves $\ud\leq r$.
\end{proof}

\section{Constellation dimension and minimality} \label{sec: constellation dimension and minimality}

\subsection{Overview}

In this section we will prove that the minimal model size defined in equation \eqref{eq: minimal model size} has asymptotic lower/upper exponent equal to the lower/upper constellation dimension. Since the upper bound is proved using our own construction of constellations, we also establish that our construction has the same lower/upper exponent; namely, equation~\eqref{eq: optimality of the construction}.

\textbf{Upper exponent:} This is shown by constructing a better space approximating $k_x$ for all $x\in\Kcal$. Proposition~\ref{prop: approximation controls the defect} already establishes that a small approximation space implies small errors for constellations. Let $U\subset\Hscr_a$ be a candidate approximation space. The reproducing property gives
\begin{equation} \label{eq: characterization of distance through reproducing property}
\dist(k_x,U) = \sup \left\{ \,\abs{u(x)} \;:\; u\perp U, \;\norm{u}_{\Hscr_a}=1 \right\}
\end{equation}
Thus using $U$ to approximate all vectors $k_x$ for $x\in\Kcal$ is equivalent to controlling the supremum norm on $\Kcal$ of every unit function perpendicular to $U$.

Recall that
\[
\ip{u}{\Cscr_a u}_{\Hscr_a}
= \norm{u-\Lcal u}_{L^2(\nu)}^2.
\]
Suppose $U$ contains all eigenfunctions with eigenvalues at least $e^{-t}$. Then by the spectral theorem, every unit $u$ with $u\perp U$ satisfies
\[
\norm{u-\Lcal u}_{L^2(\nu)}^2
\leq e^{-t}.
\]
The crucial inequality is the following: there are $C\geq1$ and $0<\alpha<1$ such that every $u\in\Hscr_a$ with $\nu(u)=0$ satisfies
\begin{equation}\label{eq: Holder-type interpolation inequality}
\norm u_{C(\Kcal)}\leq C\norm{u-\Lcal u}_{L^2(\nu)}^\alpha\norm u_{\Hscr_a}^{1-\alpha}.
\end{equation}
Here and henceforth,
\[
\norm f_{C(K)} \coloneqq \sup_{x\in K}|f(x)|
\]
denotes the supremum norm for a continuous function $f:K\to \mathbb C$ for some $K$. This inequality says that a function of bounded Hilbert norm with small stationary defect is uniformly close to its $\nu$-mean. We therefore add the mean section $\int k_xd\nu(x)$ to $U$, orthogonality to which implies zero $\nu$-mean. Then every unit $u$ with $u\perp U$ has small stationary defect and zero $\nu$-mean, and thus has small $\norm u_{C(\Kcal)}$. This is how the spectrum of $\Cscr_a$ constructs an approximation space and bounds the error $E_n$ of our construction.

\medskip

\textbf{Lower exponent:} This direction requires us to control every conceivable constellation. Fix $t>0$, and let $U_t$ be the span of the eigenfunctions of $\mathscr C_a$ with eigenvalues at least $e^{-t}$.  Consider a constellation $\eta$ with $\#\supp\eta = r$, and let $\eps = \norm{\eta - \Lcal^*\eta}_{K_a}$. We want to obtain a lower bound for $\eps$ in terms of $r$.

Suppose $r<\dim U_t$. Since evaluation at $r$ points imposes at most $r$ linear conditions, there is a nonzero $u\in U_t$ vanishing at every point in $\supp\eta$. In particular, we have $\eta(u^2)=0$. On the other hand, the spectrum ensures that this function does not vanish under the stationary measure. Indeed, stationarity makes $\Lcal$ non-expanding on $L^2(\nu)$, so with the definition of $U_t$,
\[
e^{-t}\norm{u}_{\Hscr_a}^{2}
\leq \norm{u-\mathcal Lu}_{L^2(\nu)}^{2}
\leq 4\norm{u}_{L^2(\nu)}^{2}
=4\nu(u^2).
\]
Therefore, $\eta$ misses the observation $u^2$ while $\nu$ sees its substance. We need to control $\nu(u^2) = (\nu - \eta)u^2$ from above with the stationary defect $\eps$ of $\eta$. If we define $h$ as the solution to the Poisson equation $u^2-\nu(u^2)=(I-\mathcal L)h$, we have
\[
\nu(u^2) = \abs{\nu(u^2)-\eta(u^2)}
\leq \abs{(\eta-\Lcal^*\eta)(h)}
\leq \eps\norm{h}_{\Hscr_a}.
\]
This is the essential strategy, but the problem is that we cannot obtain a bound on $\norm{h}_{\Hscr_a}$ with this design. So we need to replace the eigenfunctions by more suitable functions, and we will see that this is indeed possible.

%We will show that we can assign some $f_u$ to $u\in U_t$. We prove the two estimates
%\[ \nu(f_u^2)\geq c e^{-4t}\nu(u^2), \qquad \abs{\eta(f_u^2)-\nu(f_u^2)} \leq C\eps\,\nu(u^2), \]
%for some $C,c>0$. We now impose the vanishing conditions on this new family. Because $u\mapsto f_u$ is linear, $r<\dim U_t$ still allows a nonzero $u$ satisfying $f_u(x_i)=0$ at every constellation point. For this choice, the same stationary average has two incompatible bounds:
%\[ c e^{-4t}\nu(u^2) \leq \nu(f_u^2) = \abs{\eta(f_u^2)-\nu(f_u^2)} \leq C\eps\,\nu(u^2). \]

\subsection{Stationary defect bounds the supremum}

Let us begin by proving equation~\eqref{eq: Holder-type interpolation inequality}. This is an ``interpolation inequality'', because we always have $\norm{u-\mathcal Lu}_{L^2(\nu)} \leq 2\norm{u}_{C(\mathcal K)} \leq C'\norm{u}_{\mathscr H_a}$ for some $C'>0$.

\begin{lemma} \label{lem: Holder-type interpolation inequality}
There are $C\geq1$ and $0<\alpha<1$ such that every $u\in\Hscr_a$ with $\nu(u)=0$ satisfies 
\begin{equation*}
\norm u_{C(\Kcal)}\;\leq\; C\norm{u-\Lcal u}_{L^2(\nu)}^\alpha\norm u_{\Hscr_a}^{1-\alpha}.
\end{equation*}
\end{lemma}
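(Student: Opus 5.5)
We will prove the inequality by interpolating between two facts. First, elements of $\Hscr_a$ are uniformly Lipschitz on a neighbourhood of $\Kcal$. Second, $\Lcal$ contracts oscillations exponentially in the Hilbert metric, so a function with small stationary defect is nearly constant along long backward compositions.

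Concretely, write $g\coloneqq u-\Lcal u$. Lemma~\ref{lem: derivative bound for kx and sx} and the reproducing property give
\[
|u(x)-u(y)|\le C_1\norm u_{\Hscr_a}\rho(x,y)\qquad(x,y\in S).
\]
Telescoping gives $u=\Lcal^n u+\sum_{k<n}\Lcal^k g$. For $x\in\Kcal$,
\[
\Lcal^n u(x)=\sum_{|w|=n}\pi_w u(f_wx),
\]
and $\rho(f_wx,f_wy)\le\kappa^n\rho(x,y)$ by Lemma~\ref{lem: basic inequalities for rho and theta}. Since $\nu(u)=0$ and $\nu$ is stationary, $\nu(\Lcal^nu)=0$. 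Hence
\[
|\Lcal^nu(x)|=\Bigl|\int(\Lcal^nu(x)-\Lcal^nu(y))\,d\nu(y)\Bigr|\le C_1\kappa^n D\norm u_{\Hscr_a},
\]
where $D=\sup_{\Kcal^2}\rho<\infty$. So the first term is exponentially small in $n$.

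The second term is the main obstacle: $\Lcal^k g$ must be controlled pointwise, while we only know $g$ in $L^2(\nu)$. We plan to use smoothness to convert $L^2$ smallness into sup smallness. Note that $g$ is also Lipschitz, with $|g(x)-g(y)|\le C_1(1+\kappa)\norm u_{\Hscr_a}\rho(x,y)$.

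The first approach we would try is a ball-mass argument. If $|g(x_0)|=s$ at some $x_0\in\Kcal$, then $|g|\ge s/2$ on the $\rho$-ball of radius $r\asymp s/\norm u_{\Hscr_a}$. That ball has $\nu$-mass at least $c r^{\beta}$ for some $\beta>0$. This lower bound holds because the attractor ball contains a cylinder $f_w(\Kcal)$ with $|w|\asymp\log(1/r)/\log(1/\kappa)$, and $\nu(f_w\Kcal)\ge\pi_w\ge(\min\pi_i)^{|w|}$. It follows that
\[
\norm g_{L^2(\nu)}^2\ge c(s/2)^2(s/\norm u)^{\beta},
\]
so
\[
\norm g_{C(\Kcal)}\le C\norm g_{L^2(\nu)}^{2/(2+\beta)}\norm u_{\Hscr_a}^{\beta/(2+\beta)}.
\]
Because $\Lcal$ preserves $C(\Kcal)$ and is a sup-norm contraction there, we get
\[
\Bigl\|\sum_{k<n}\Lcal^kg\Bigr\|_{C(\Kcal)}\le n\norm g_{C(\Kcal)}.
\]

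Combining the two estimates,
\[
\norm u_{C(\Kcal)}\le C\kappa^n\norm u_{\Hscr_a}+nC\norm g_{L^2(\nu)}^{\alpha_0}\norm u_{\Hscr_a}^{1-\alpha_0},
\qquad \alpha_0=\tfrac{2}{2+\beta}.
\]
We then choose $n\asymp\log(\norm u_{\Hscr_a}/\norm g_{L^2(\nu)})$, which is legitimate since $\norm g_{L^2(\nu)}\le C'\norm u_{\Hscr_a}$. The logarithmic factor $n$ is absorbed by lowering the exponent to any $\alpha<\alpha_0$. If $g=0$, the first estimate alone, with $n\to\infty$, gives $u=0$ on $\Kcal$.

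The delicate step is the ball-mass lower bound, i.e.\ checking that $f_w(\Kcal)$ lies inside the ball uniformly. This follows from $\rho(f_wx,f_wy)\le\kappa^{|w|}D$ for $x,y\in\Kcal$, together with $\Kcal=\supp\nu$ and the self-similarity $\nu\ge\pi_w(f_w)_*\nu$.
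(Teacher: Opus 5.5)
Your proof is correct. It uses the same ingredients as the paper, namely:
\begin{itemize}
\item the Lipschitz bound $|u(x)-u(y)|\le C_1\norm u_{\Hscr_a}\rho(x,y)$;
\item the decay $\norm{\Lcal^n u}_{C(\Kcal)}\le C\kappa^n\norm u_{\Hscr_a}$ when $\nu(u)=0$;
\item the cylinder mass bound $\nu(f_w\Kcal)\ge\pi_w\ge\pi_*^{|w|}$.
\end{itemize}
However, you assemble them in a genuinely different order.

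You first upgrade the defect $g=u-\Lcal u$ from $L^2(\nu)$ to $C(\Kcal)$ by the ball-mass argument. This step needs Lipschitz control of $g$ itself and runs at its own scale $r\asymp \norm g_{C(\Kcal)}/\norm u_{\Hscr_a}$. Only afterwards do you telescope, using that $\Lcal$ is a sup-norm contraction on $C(\Kcal)$. So you work with two independent scales.

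The paper never controls $g$ pointwise. It fixes a single depth $N$ and picks a cylinder $f_w(\Kcal)\ni x$ with $|w|=N$. It then replaces $u(x)$ by the cylinder average $\nu(u\circ f_w)$, at a cost of $O(\kappa^N\norm u_{\Hscr_a})$. The telescoped sum is averaged over that cylinder and bounded using two facts:
\begin{itemize}
\item $\abs{\nu(h\circ f_w)}\le\pi_w^{-1/2}\norm h_{L^2(\nu)}$;
\item $\Lcal$ is nonexpansive on $L^2(\nu)$.
\end{itemize}
Thus the same $N$ serves both as localization depth and telescoping length, and only Lipschitz control of $u$ is used.

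The two routes reach the same critical exponent. With $\beta=\log(1/\pi_*)/\log(1/\kappa)$, which is what your cylinder count gives, your $2/(2+\beta)$ equals $\log(1/\kappa)/\log\big(1/(\kappa\sqrt{\pi_*})\big)$. This is exactly the balance point of the paper's inequality \eqref{eq:word-telescoping}. You obtain any $\alpha$ below it after absorbing the factor $n$. The paper absorbs $N\le 2^N$ instead and gets the explicit, slightly smaller value $\log(1/\kappa)/\log\big(2/(\kappa\sqrt{\pi_*})\big)$, with an explicit constant.

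What your version buys is a reusable intermediate lemma: a Lipschitz function on a set whose balls of radius $r$ carry mass $\gtrsim r^{\beta}$ satisfies a sup-versus-$L^2$ interpolation with exponent $2/(2+\beta)$. The paper's version is shorter and single-scale.

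One small bookkeeping point. In your ball-mass step you should use $\nu(B)\ge\min\{1,cr^\beta\}$ together with the a priori bound $\norm g_{C(\Kcal)}\lesssim\norm u_{\Hscr_a}$. This covers the regime where $r$ exceeds $\sup_{\Kcal^2}\rho$, but it is routine.
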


\begin{proof}
The reproducing property, Cauchy--Schwarz inequality, and Lemma~\ref{lem: derivative bound for kx and sx} give
\begin{equation}\label{eq: Lipschitz estimate for observables}
|u(z)-u(z')|
=\abs{\ip{u}{k_z-k_{z'}}_{\Hscr_a}}
\leq \norm{u}_{\Hscr_a}\norm{k_z-k_{z'}}_{\Hscr_a}
\leq C_1\norm u_{\Hscr_a}\rho(z,z')
\end{equation}
for $z,z'\in S$ and some $C_1>0$. By the stationarity of $\nu$ and $\nu(u)=0$,
\[
\Lcal^Nu(x)
=\Lcal^Nu(x)-\int_{\Kcal}u(y)\,d\nu(y)
=\int_\Kcal\sum_{|w|=N}\pi_w
\big(u(f_w x)-u(f_w y)\big)\,d\nu(y).
\]
Equation~\eqref{eq: Lipschitz estimate for observables}, Lemma~\ref{lem: basic inequalities for rho and theta}, and compactness of $\Kcal$ give
\begin{equation} \label{eq: transfer operator is contractive}
\norm{\Lcal^Nu}_{C(\Kcal)}
\leq C_2\kappa^N\norm u_{\Hscr_a}.
\qquad (C_2>0)
\end{equation}

Fix $x\in\Kcal$. Since $\Kcal=\bigcup_{|w|=N}f_w(\Kcal)$, there is a word $w$ with $|w|=N$ such that
$x=f_w z$ for some $z\in\Kcal$. By Lemma~\ref{lem: basic inequalities for rho and theta} and equation~\eqref{eq: Lipschitz estimate for observables},
\begin{equation} \label{eq: mixing error estimate}
\abs{u(x)-\nu(u\circ f_w)}
\leq \int_\Kcal \abs{u(f_w z)-u(f_w y)}\,d\nu(y)
\leq C_3\kappa^N\norm u_{\Hscr_a}.
\qquad (C_3>0)
\end{equation}

Since $\nu=\sum_{|w_0|=N}\pi_{w_0}(f_{w_0})_*\nu$, the Cauchy--Schwarz inequality gives, for every $g\in L^2(\nu)$,
\begin{equation} \label{eq: average of long composition}
\abs{\nu(g\circ f_w)}
=\left|\int_\Kcal g(f_w y)\,d\nu(y)\right|
\leq \frac{1}{\sqrt{\pi_w}} \left( \pi_w\int_\Kcal|g(f_w y)|^2\,d\nu(y) \right)^{1/2}
\leq \frac{1}{\sqrt{\pi_w}}\norm{g}_{L^2(\nu)}.
\end{equation}

Since $\abs{\sum_i\pi_i g(f_i y)}^2 \leq\sum_i\pi_i\abs{g(f_i y)}^2$ by Jensen's inequality, integrating and using stationarity gives $\norm{\Lcal g}_{L^2(\nu)}^2\leq\norm{g}_{L^2(\nu)}^2$. Finally, notice that
\[
u=\sum_{j=0}^{N-1}\Lcal^j(u-\Lcal u)+\Lcal^Nu.
\]
We compose both sides with $f_w$ and $\nu$-integrate. Equation~\eqref{eq: average of long composition} and $\norm{\Lcal g}_{L^2(\nu)}^2\leq\norm{g}_{L^2(\nu)}^2$ give
\begin{align*}
\abs{\nu(u\circ f_w)}
&\leq \sum_{j=0}^{N-1} \frac1{\sqrt{\pi_w}} \norm{ \Lcal^j(u-\Lcal u)}_{L^2(\nu)} + \norm{\Lcal^Nu}_{C(\Kcal)} \\
&\leq \frac{N}{\sqrt{\pi_w}}\norm{u-\Lcal u}_{L^2(\nu)} +\norm{\Lcal^Nu}_{C(\Kcal)}.
\end{align*}
Since $\pi_w\geq\pi_*^N$ for $\pi_*\coloneqq \min_i\pi_i$, equations \eqref{eq: transfer operator is contractive} and \eqref{eq: mixing error estimate} imply
\begin{equation}\label{eq:word-telescoping}
\norm u_{C(\Kcal)}
\leq N\pi_*^{-\frac{N}{2}}\norm{u-\Lcal u}_{L^2(\nu)} + (C_2+C_3)\kappa^N\norm u_{\Hscr_a}.
\end{equation}

Set
\[
\alpha \coloneqq \frac{\log(1/\kappa)}{\log(2/(\kappa\sqrt{\pi_*}))}\in(0,1).
\]
If $\norm{u-\Lcal u}_{L^2(\nu)}\geq\norm u_{\Hscr_a}$, the assertion follows since $\norm u_{C(\Kcal)}$ is bounded by a constant times $\norm u_{\Hscr_a}$ by the reproducing property and the Cauchy-Schwarz inequality. If $\norm{u-\Lcal u}_{L^2(\nu)}=0$, letting $N\to\infty$ in \eqref{eq:word-telescoping} gives $\norm u_{C(\Kcal)}=0$. If $0<\norm{u-\Lcal u}_{L^2(\nu)}<\norm u_{\Hscr_a}$, choose
\[
N \coloneqq \left\lceil
\frac{\log\big(\norm u_{\Hscr_a}/\norm{u-\Lcal u}_{L^2(\nu)}\big)}{\log(2/(\kappa\sqrt{\pi_*}))}
\right\rceil.
\]
Using $N\leq2^N$ in \eqref{eq:word-telescoping}, we can verify that the assertion holds with constant $\frac{2}{\sqrt{\pi_*}}+C_2+C_3$.
\end{proof}

\begin{lemma} \label{lem: spectral upper bound for our construction}
Suppose $b\in\Kcal$. For $\eps>0$, let $n(\eps)$ be the least nonnegative integer satisfying $E_{n(\eps)}\leq\eps$. Then for all sufficiently small $\eps$,
\begin{equation*}
n(\eps)+1\leq C\Ncal_a\big(C\log(1/\eps)\big).
\end{equation*}
\end{lemma}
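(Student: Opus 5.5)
The plan is to feed Proposition~\ref{prop: approximation controls the defect}, with $K=\Kcal$, an approximation space built from the spectrum of $\Cscr_a$. This choice of $K$ is admissible: $\Kcal\subset S$ because $\Kcal=\bigcup_i f_i(\Kcal)\subset S$, it is forward invariant, and it contains the anchor $b$ by hypothesis.

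Fix $t>0$ and let $U_t$ be the span of the eigenvectors of $\Cscr_a$ with eigenvalues $\geq e^{-t}$, together with the mean section $m\coloneqq\int_\Kcal k_x\,d\nu(x)$. Its dimension is at most $\Ncal_a(t)$, since $\Ncal_a(t)$ already counts the extra $1$. The distances $\dist(k_x,U_t)$ for $x\in\Kcal$ are controlled through \eqref{eq: characterization of distance through reproducing property}. Take any unit $u\perp U_t$. Orthogonality to $m$ gives $\nu(u)=\ip{u}{m}_{\Hscr_a}=0$. Orthogonality to the eigenvectors gives, by the spectral theorem,
\[
\norm{u-\Lcal u}_{L^2(\nu)}^2=\ip{u}{\Cscr_a u}_{\Hscr_a}< e^{-t}.
\]
Note that $\Cscr_a$ leaves the orthogonal complement of its top eigenspace invariant, so orthogonality to $m$ does not spoil this bound. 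Lemma~\ref{lem: Holder-type interpolation inequality} then yields
\[
\sup_{x\in\Kcal}\abs{u(x)}\leq C e^{-\alpha t/2}.
\]
Hence $\sup_{x\in\Kcal}\dist(k_x,U_t)\leq \gamma_t\coloneqq Ce^{-\alpha t/2}$.

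Next apply Proposition~\ref{prop: approximation controls the defect} with $q=\Ncal_a(t)$ and $\gamma=\gamma_t$. This requires $\gamma_t\leq 1/2$, which holds for $t$ large. The proposition gives
\[
E_{2(d+1)\Ncal_a(t)}\leq C'\gamma_t^{1/4}= C''e^{-\alpha t/8}.
\]
Now choose $t=t(\eps)\coloneqq \frac{8}{\alpha}\log(C''/\eps)$, so that $E_{2(d+1)\Ncal_a(t)}\leq\eps$. By minimality of $n(\eps)$ we get $n(\eps)\leq 2(d+1)\Ncal_a(t(\eps))$. For $\eps$ small, $t(\eps)\leq \frac{16}{\alpha}\log(1/\eps)$. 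Since $\Ncal_a$ is nondecreasing and $\Ncal_a\geq 1$ absorbs the $+1$, the claim $n(\eps)+1\leq C\Ncal_a(C\log(1/\eps))$ follows with $C=\max\{2(d+1)+1,\,16/\alpha\}$.

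The main point to check is the passage from the spectral condition to the sup bound, namely that orthogonality to the finite set of eigenvectors plus $m$ gives $\ip{u}{\Cscr_a u}<e^{-t}$. Since $u$ is orthogonal to the whole top eigenspace $E_{\geq e^{-t}}$, it lies in $E_{\geq e^{-t}}^\perp$, where the Rayleigh quotient of $\Cscr_a$ is below $e^{-t}$. Adding $m$ only shrinks the admissible set, so the bound is automatic.

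Two remaining details need attention. First, the convention on $\Ncal_a$ must match the dimension count $q\geq1$ demanded by Proposition~\ref{prop: approximation controls the defect}. Second, the constants must be independent of $t$. Both are routine.
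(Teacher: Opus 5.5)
Your proposal is correct and follows essentially the same route as the paper: you augment the span of the top eigenvectors of $\Cscr_a$ by the mean section $\int_\Kcal k_x\,d\nu(x)$, combine Lemma~\ref{lem: Holder-type interpolation inequality} with the distance characterization to get $\sup_{x\in\Kcal}\dist(k_x,U_t)\lesssim e^{-\alpha t/2}$, and then apply Proposition~\ref{prop: approximation controls the defect} with $K=\Kcal$ and $t\asymp\log(1/\eps)$. You also check explicitly that $\Kcal\subset S$ via $\Kcal=\bigcup_i f_i(\Kcal)\subset\bigcup_i f_i(\Delta)\subset S$, a detail the paper leaves implicit.
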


\begin{proof}
For $t>0$, define $U_t$ as the linear span of the union of $m_\nu$ and all the eigenfunctions for eigenvalues larger than $e^{-t}$, where
\[
m_\nu \coloneqq \int_\Kcal k_x\,d\nu(x).
\]
The integral above exists by compactness of $\Kcal$. Also $m_\nu(a)=1$ since $K_a(x,a)=1$, so
$1\leq\dim U_t\leq\Ncal_a(t)$. Suppose $u\in\Hscr_a$ satisfies $u\perp U_t$ and $\norm{u}_{\Hscr_a}=1$. Then $\nu(u)=\ip{u}{m_\nu}_{\Hscr_a}=0$, and $\norm{u-\Lcal u}_{L^2(\nu)}~\leq~e^{-t/2}$ by the spectral theorem. Lemma~\ref{lem: Holder-type interpolation inequality} and equation~\eqref{eq: characterization of distance through reproducing property} therefore give, for some $C_1>0$,
\begin{equation*}
\sup_{x\in\Kcal}\dist(k_x,U_t)\leq C_1e^{-\alpha t/2}.
\end{equation*}

Since $b\in\Kcal$, the monotonicity of $E_n$ and Proposition~\ref{prop: approximation controls the defect} for $K=\Kcal$ give
\[
E_{2(d+1)\Ncal_a(t)}
\leq E_{2(d+1)\dim U_t}
\leq C_2e^{-\alpha t/8}
\]
with some $C_2>0$ for sufficiently large $t$. Then, taking $t=C\log(1/\eps)$ with a sufficiently large fixed $C$ proves the assertion.
\end{proof}

\subsection{Fractional kernel construction} \label{subsection: fractional kernel construction}

Now we treat the lower exponent. We need a function satisfying two requirements: its square must retain some portion of its stationary average, and that square must have a Poisson solution of bounded Hilbert norm. A small power of the kernel achieves both.

Let $\beta=1/(16(d+1))$ and define $F_g: \Delta\to\mathbb R$ for $g\in L^2(\nu)$ by
\begin{equation} \label{eq: definition of fractional kernel assignment}
F_g(x)=\int_\Kcal K_a(x,y)^\beta g(y)\,d\nu(y).
\end{equation}
Since $\Kcal$ is compact, $K_a$ extends to $\Delta\times\Kcal$ and is bounded there; thus $F_g$ is defined on $\Delta$ and depends linearly on $g$.

\begin{lemma}\label{lem: fractional kernel keeps stationary mean}
There is $C>0$ such that every $g\in L^2(\nu)$ with $\nu(g^2)=1$ satisfies
\begin{equation}\label{eq:small-power-retains-size}
\nu(F_g^2)\geq C\norm{\int_\Kcal k_y g(y)\,d\nu(y)}_{\Hscr_a}^{8}.
\end{equation}
\end{lemma}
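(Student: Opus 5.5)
Write $T_\beta g(x) := \int_\Kcal K_a(x,y)^\beta g(y)\,d\nu(y)$, so that $F_g = T_\beta g$. Put $m_g := \int_\Kcal k_y g(y)\,d\nu(y)$. By the reproducing property,
\[
\norm{m_g}_{\Hscr_a}^2 = \iint K_a(x,y) g(x)g(y)\,d\nu(x)\,d\nu(y) = \ip{g}{T_1 g}_{L^2(\nu)}.
\]
When $\nu(g^2)=1$, Cauchy--Schwarz gives $\ip{g}{T_\beta g}_{L^2(\nu)} \le \norm{F_g}_{L^2(\nu)}$. So \eqref{eq:small-power-retains-size} will follow once I prove the comparison
\begin{equation*}
\ip{g}{T_1 g}_{L^2(\nu)} \;\le\; C\,\ip{g}{T_\beta g}_{L^2(\nu)}^{1/2}
\qquad(\nu(g^2)=1).
\end{equation*}
Squaring twice then gives $\norm{m_g}^8 \le C\,\nu(F_g^2)$. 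Both quadratic forms are nonnegative, because $K_a^\beta$ is positive definite, as the Laplace representation below shows.

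\textbf{Laplace representation.} For $\beta\in(0,1]$ I will use
\[
(x_j+y_j)^{-\beta} = \Gamma(\beta)^{-1}\int_0^\infty s^{\beta-1}e^{-s(x_j+y_j)}\,ds .
\]
Writing $c_a(x)=\prod_j (x_j+a_j)/(2a_j)$ as in Proposition~\ref{prop: kolmogorov width of the features is stretched exponentially small}, this gives
\[
\ip{g}{T_\beta g} = \Gamma(\beta)^{-(d+1)}\int_{(0,\infty)^{d+1}} \big|\widehat{G_\beta}(s)\big|^2 \prod_j s_j^{\beta-1}\,ds,
\qquad
\widehat{G_\beta}(s) := \int_\Kcal 2^{-\beta(d+1)} c_a(x)^\beta e^{-s\cdot x} g(x)\,d\nu(x).
\]
There is an analogous formula for $\ip{g}{T_1 g}$ with weight $\prod_j s_j^{0}$ and amplitude $c_a(x)\,2^{-(d+1)}$ in place of $c_a^\beta$.

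\textbf{Matching the amplitudes.} The amplitudes $c_a$ and $c_a^\beta$ differ, and this mismatch is where I expect the main difficulty. I will write $c_a = c_a^\beta \cdot c_a^{1-\beta}$, where $c_a^{1-\beta}$ is a fixed smooth positive function on a neighbourhood of $\Kcal$. I will then try to realise multiplication by it on the Laplace side as a bounded operation. One option is to expand $c_a^{1-\beta}$ as a uniformly convergent series of exponentials $e^{-\tau\cdot x}$ with summable coefficients; each such factor just shifts $s\mapsto s+\tau$ in $\widehat{G_\beta}$, and the weight $\prod_j s_j^{\beta-1}$ is comparable under such shifts on bounded regions. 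The other option is the polynomial expansion trick from the proof of Lemma~\ref{lem: signed measure comparison}, expanding $(x_j+a_j)(y_j+a_j)$ into monomials times Laplace squares. If neither works cleanly, I would fall back on a cruder bound and lose only powers of logarithms, which the next step can absorb.

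\textbf{Splitting in $s$.} Every $x\in\Kcal$ satisfies $x_j\ge\delta>0$. Hence $|\widehat{G_\beta}(s)| \le C e^{-\delta\sum_j s_j}\norm{g}_{L^1(\nu)} \le C e^{-\delta\sum_j s_j}$, and similarly for the unweighted transform. Fix $R\ge1$ and split the integral for $\ip{g}{T_1g}$ at the box $[0,R]^{d+1}$.
\begin{itemize}
\item Inside the box, $1 \le R^{(1-\beta)(d+1)}\prod_j s_j^{\beta-1}$, so this part is at most $C R^{(d+1)}\ip{g}{T_\beta g}$.
\item Outside the box, the exponential decay bounds this part by $C e^{-\delta R}$.
\end{itemize}
Choosing $R = \delta^{-1}\log\big(1/\ip{g}{T_\beta g}\big)$ gives
\[
\ip{g}{T_1g} \le C\,\ip{g}{T_\beta g}\,\log^{d+1}\!\big(2+1/\ip{g}{T_\beta g}\big) \le C'\ip{g}{T_\beta g}^{1/2}.
\]
The last step uses that $\ip{g}{T_\beta g}$ is bounded above, since $K_a^\beta$ is bounded on $\Kcal\times\Kcal$. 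The room between $1$ and the exponent $1/2$, together with the small value $\beta=1/(16(d+1))$, is the slack available to absorb losses from the amplitude-matching step.
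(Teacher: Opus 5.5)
Your reduction is correct and matches the paper's. Write $Q_t\coloneqq\iint_{\Kcal\times\Kcal}K_a(x,y)^t g(x)g(y)\,d\nu(x)\,d\nu(y)$. Then $Q_1=\norm{\int_\Kcal k_yg(y)\,d\nu(y)}_{\Hscr_a}^2$ and $Q_\beta=\nu(gF_g)\le\nu(F_g^2)^{1/2}$, so everything hinges on $Q_1\le CQ_\beta^{1/2}$. You do not establish that comparison. In your Laplace picture, $\widehat{G_1}$ is the transform of $c_a\,g\,d\nu$, while $\widehat{G_\beta}$ is the transform of $c_a^\beta g\,d\nu$. Your ``inside the box'' bound $\int_{[0,R]^{d+1}}|\widehat{G_1}|^2\,ds\lesssim R^{d+1}Q_\beta$ therefore presupposes exactly the amplitude matching you leave open; as written, the splitting step treats $\widehat{G_1}$ as if it were $\widehat{G_\beta}$.

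Neither of your two options is carried out, and the first cannot work as stated. Take a series $\sum_k\gamma_ke^{-\tau_k\cdot x}$ with $\sum_k|\gamma_k|<\infty$ and $\tau_k\in[0,\infty)^{d+1}$; the shift argument needs $\tau_k\ge0$, since $Q_\beta$ controls $\widehat{G_\beta}$ only on the positive orthant. Such a series is a bounded holomorphic function on $\Omega\cap H_{\mathbb C}$. By contrast, $c_a^{1-\beta}$ extends holomorphically there and is unbounded: take $z_1=\tfrac1{d+1}+iT$, $z_2=\tfrac1{d+1}-iT$. So the two cannot agree on a set of uniqueness, such as an infinite attractor when $d=1$. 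An approximate expansion would have to trade approximation error against coefficient growth, and nothing shows that the loss is only logarithmic. Your claimed slack for the exponent $8$ is therefore unsubstantiated.

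The mismatch is an artifact of the Laplace representation and disappears in Cayley coordinates. Since $K_a(x,y)=\prod_j\big(1-\chi_j(x)\chi_j(y)\big)^{-1}$ by \eqref{eq: connection to Hardy kernel}, we have
\[
K_a(x,y)^t=\sum_{n\in\mathbb Z_{\geq0}^{d+1}}w_t(n)\,\chi(x)^n\chi(y)^n,
\qquad
w_t(n)\coloneqq\prod_{j=1}^{d+1}\binom{t+n_j-1}{n_j},
\qquad
\chi(x)^n\coloneqq\prod_{j=1}^{d+1}\chi_j(x)^{n_j}.
\]
Hence $Q_t=\sum_n w_t(n)M_n^2$ with $M_n\coloneqq\int_\Kcal\chi(x)^n g(x)\,d\nu(x)$, which uses the same features for every $t$. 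Moreover:
\begin{itemize}
\item $w_1\equiv1$ and $w_\beta(n)\geq\beta^{d+1}\prod_j(1+n_j)^{-1}$;
\item $|M_n|\leq\rho^{\sum_j n_j}$, where $\rho\coloneqq\max_{j,\,x\in\Kcal}|\chi_j(x)|<1$.
\end{itemize}
Splitting at $\max_j n_j\leq R$, exactly as you intended, gives $Q_1\leq\beta^{-(d+1)}(1+R)^{d+1}Q_\beta+C\rho^{2R}$. Optimizing in $R$ yields $Q_1\leq CQ_\beta\log^{d+1}(e+1/Q_\beta)$, which is at most $C'Q_\beta^{1/2}$ because $Q_\beta$ is bounded.

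For comparison, the paper needs no splitting. Since $\log K_a$ is a positive-definite kernel, $K_a(x,y)^{st}$ is positive-definite on $\Kcal\times\mathbb R$. A single Cauchy--Schwarz then gives $Q_1^2\leq Q_\beta Q_{1/\beta}\leq C_0Q_\beta$. Your route, once repaired as above, yields a stronger, nearly linear comparison.
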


\begin{proof}
The identity
\[
L(x,y)\coloneqq\log K_a(x,y)=\sum_{j=1}^{d+1}\int_0^\infty \frac1t\big(e^{-tx_j}-e^{-ta_j}\big)\big(e^{-ty_j}-e^{-ta_j}\big)\,dt
\]
shows that $L$ is a positive-definite kernel. Hence $((x,s),(y,t))\mapsto stL(x,y)$ is positive-definite on $\Kcal\times\mathbb R$. Since exponentiation preserves positive-definiteness (\cite[\S3: Corollary~1.14]{Berg--Christensen--Ressel}),
\[
\widetilde K\big((x,s),(y,t)\big)\coloneqq \exp\big(stL(x,y)\big)=K_a(x,y)^{st}
\]
is a positive-definite kernel on $\Kcal\times\mathbb R$.

For $t>0$, define $Q_t\in\mathbb R$ by
\[
Q_t\coloneqq\iint_{\Kcal\times\Kcal}K_a(x,y)^t g(x)g(y)\,d\nu(x)\,d\nu(y).
\]
Restricting $\widetilde K$ to $\Kcal\times\{\beta^{1/2},\beta^{-1/2}\}$ and using positive-definiteness of $\widetilde K$ shows that the matrix
\[
\begin{pmatrix}
Q_\beta & Q_1\\
Q_1 & Q_{1/\beta}
\end{pmatrix}
\]
is positive semidefinite, and hence $Q_1^2\leq Q_\beta Q_{1/\beta}$. Since $\Kcal$ is compact and $\nu(|g|)\leq\sqrt{\nu(g^2)}=1$,
\[
Q_{1/\beta}\leq\max_{x,y\in\Kcal}K_a(x,y)^{1/\beta}\,\nu(|g|)^2\leq C_0
\]
for some $C_0>0$ independent of $g$. Finally, the reproducing property and the definition of $F_g$ give
\[
Q_1=\norm{\int_\Kcal k_y g(y)\,d\nu(y)}_{\Hscr_a}^{2},
\qquad Q_\beta=\nu(gF_g).
\]
The following calculation using the Cauchy--Schwarz inequality and $\nu(g^2)=1$ proves the assertion.
\[
\nu(F_g^2) = \nu(F_g^2)\nu(g^2) \geq \nu(gF_g)^2 = Q_\beta^2 \geq C_0^{-2}Q_1^4 = C_0^{-2}\norm{\int_\Kcal k_y g(y)\,d\nu(y)}_{\Hscr_a}^{8}. \qedhere
\]
\end{proof}

\subsection{Connection to the Hardy kernel} \label{subsec: Connection to the Hardy kernel}

Before continuing with the proof, let us see the connection of $\Hscr_a$ to the Hardy space.
\begin{definition}
The \emph{Hardy space} $H^2(\mathbb D^{d+1})$ consists of holomorphic functions $u:\mathbb D^{d+1}\to\mathbb C$ for which
\[
\norm{u}_{H^2(\mathbb D^{d+1})}^2\coloneqq\sup_{0<r<1}\frac1{(2\pi)^{d+1}}\int_{[0,2\pi)^{d+1}}\abs{u(re^{i\theta_1},\ldots,re^{i\theta_{d+1}})}^2\,d\theta_1\cdots d\theta_{d+1}<\infty.
\]
\end{definition}

This is a reproducing kernel Hilbert space with the \emph{Szeg\H{o} kernel} \cite[\S1.4.3]{Paulsen--Raghupathi}
\[
K_{\mathrm{Hardy}}:\mathbb D^{d+1}\times\mathbb D^{d+1}\longrightarrow\mathbb C,
\qquad K_{\mathrm{Hardy}}(\zeta,\eta)\coloneqq\prod_{j=1}^{d+1}\frac1{1-\zeta_j\overline{\eta_j}}.
\]
Denote by $\operatorname{Re}z$ the real part of $z\in\mathbb C$, and write $\Omega\coloneqq\left\{z\in\mathbb C^{d+1} \setcond \operatorname{Re}z_j>0\text{ for all }j\right\}$ and $\mathbb D\coloneqq\{\zeta\in\mathbb C:\abs{\zeta}<1\}$. Define
\[
\chi_j:\Omega\longrightarrow\mathbb D,\qquad \chi_j(z)=\frac{z_j-a_j}{z_j+a_j}.
\qquad \big(j\in\{1,\ldots,d+1\}\big)
\]
Then $\chi = (\chi_1,\ldots,\chi_{d+1})$ identifies $\Omega$ with the polydisc $\mathbb D^{d+1}$ and sends $a$ to the origin. For $x,y\in\Delta^\circ$, direct substitution gives
\begin{equation} \label{eq: connection to Hardy kernel}
K_{\mathrm{Hardy}}\big(\chi(x),\chi(y)\big) = \prod_{j=1}^{d+1}\frac1{1-\chi_j(x)\chi_j(y)}=\prod_{j=1}^{d+1}\frac{(x_j+a_j)(y_j+a_j)}{2a_j(x_j+y_j)}=K_a(x,y).
\end{equation}
Therefore, our kernel is in fact a restriction of the Hardy kernel through the coordinatewise Cayley transform $\chi$. We will use the following Pull-back theorem.

\begin{theorem}[Pull-back theorem]{\cite[Proposition~5.6,\ Theorem~5.7]{Paulsen--Raghupathi}} \label{thm: pull-back theorem}
Let $X$ and $S$ be sets, $\varphi:S\to X$ a map, and $K:X\times X\to\mathbb C$ a positive-definite kernel. Define $K_\varphi:S\times S\to\mathbb C$ by
\[
K_\varphi(s,t) \coloneqq K\big(\varphi(s),\varphi(t)\big).
\]
Then $K_\varphi$ is positive-definite. Let $\Hscr(K)$ and $\Hscr(K_\varphi)$ denote the corresponding reproducing kernel Hilbert spaces of functions $X\to\mathbb C$ and $S\to\mathbb C$, respectively. Then
\[
\Hscr(K_\varphi) = \left\{ f\circ\varphi \setcond f\in\Hscr(K) \right\},
\]
and every $u\in\Hscr(K_\varphi)$ satisfies
\[
\norm{u}_{\Hscr(K_\varphi)} = \min\left\{ \norm{f}_{\Hscr(K)} \setcond f\in\Hscr(K),\ u=f\circ\varphi \right\}.
\]
\end{theorem}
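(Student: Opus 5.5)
This is the standard pull-back theorem for reproducing kernel Hilbert spaces, and the plan is to verify it directly from the Moore--Aronszajn characterization.

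\emph{Positive-definiteness.} For points $s_1,\ldots,s_L\in S$ and scalars $c_1,\ldots,c_L$, we have
\[
\sum_{i,j} c_i\overline{c_j}\,K_\varphi(s_i,s_j)=\sum_{i,j} c_i\overline{c_j}\,K\big(\varphi(s_i),\varphi(s_j)\big).
\]
After merging coefficients of coinciding points $\varphi(s_i)$, this is a quadratic form of $K$ at points of $X$, hence nonnegative.

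\emph{Setting up the restriction map.} Define
\[
\Gamma:\Hscr(K)\to\{\text{functions }S\to\mathbb C\},\qquad \Gamma f=f\circ\varphi .
\]
Let $\Ncal=\ker\Gamma=\{f\in\Hscr(K) : f\equiv 0 \text{ on } \varphi(S)\}$. By the reproducing property,
\[
\Ncal=\overline{\Span}\{k_{\varphi(t)}: t\in S\}^{\perp},
\]
so $\Ncal$ is closed and $\Gamma$ is injective on $\Ncal^\perp=\overline{\Span}\{k_{\varphi(t)}\}$. Equip $\Ran\Gamma$ with the norm $\norm{\Gamma f}:=\norm{P_{\Ncal^\perp}f}$. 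Since $f-P_{\Ncal^\perp}f\in\Ncal$ for every $f$ with $\Gamma f=u$, this norm equals the minimal norm among all preimages, and the minimum is attained by the orthogonal projection. This makes $\Ran\Gamma$ a Hilbert space isometric to $\Ncal^\perp$.

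\emph{Identification with $\Hscr(K_\varphi)$.} First, $\Gamma k_{\varphi(t)}=K(\varphi(\cdot),\varphi(t))=K_\varphi(\cdot,t)$, and $k_{\varphi(t)}\in\Ncal^\perp$. For $u=\Gamma f$ with $f\in\Ncal^\perp$,
\[
\ip{u}{\Gamma k_{\varphi(t)}}=\ip{f}{k_{\varphi(t)}}=f(\varphi(t))=u(t).
\]
So $\Ran\Gamma$ is a Hilbert space of functions on $S$ containing the $K_\varphi(\cdot,t)$ and having $K_\varphi$ as its reproducing kernel. By uniqueness of the RKHS associated with a kernel (Moore--Aronszajn), $\Ran\Gamma=\Hscr(K_\varphi)$ isometrically. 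This yields both the set equality and the norm formula.

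The only step that needs care is this last identification: one must check that the reproducing identity holds on all of $\Ran\Gamma$, not just on the span of the kernel functions. That is why the norm is defined through $\Ncal^\perp$, where $\Gamma$ is an isometry onto its image.
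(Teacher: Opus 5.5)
Your proof is correct: transporting the norm from $\overline{\Span}\{k_{\varphi(t)} : t\in S\}$ to the range of $f\mapsto f\circ\varphi$, checking that $K_\varphi$ reproduces on that whole range, and invoking uniqueness of the reproducing kernel Hilbert space with a given kernel gives both the set equality and the minimal-norm formula. The paper does not prove this statement; it quotes it from Paulsen--Raghupathi, and your argument is essentially the standard proof of that cited theorem.
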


\begin{remark}
By equation~\eqref{eq: connection to Hardy kernel} and Theorem~\ref{thm: pull-back theorem}, $\Hscr_a$ is isometric with the real Hilbert subspace
\[
\Hscr
\coloneqq
\overline{\Span_{\mathbb R}
\left\{ K_{\mathrm{Hardy}}\big(\,\cdot\,,\chi(x)\big) \setcond x\in\Delta^\circ \right\}}^{\,H^2(\mathbb D^{d+1})}
\subset H^2(\mathbb D^{d+1}),
\]
via $\Hscr\to\Hscr_a$, $F\mapsto F\circ\chi|_{\Delta^\circ}$. This identification will be implicit in the subsequent discussions.
\end{remark}

\subsection{Poisson equation with bounded norm} \label{subsection: Poisson equation with bounded norm}

The viewpoint in the previous subsection provides a necessary tool to prove the following.
\begin{lemma}\label{lem: Poisson equation with bounded norm}
There is $C>0$ such that the following holds. For each $g\in L^2(\nu)$ with $\nu(g^2)=1$, there is $h_g\in\Hscr_a$ satisfying $\norm{h_g}_{\Hscr_a}\leq C$ and
\begin{equation} \label{eq: Poisson equation}
(I-\Lcal)h_g = F_g^2-\nu(F_g^2)\quad\text{on }\Delta^\circ.
\end{equation}
\end{lemma}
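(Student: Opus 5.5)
We solve the Poisson equation by the Neumann series $h_g \coloneqq \sum_{n\geq0}\Lcal^n\big(F_g^2-\nu(F_g^2)\big)$. The proof has two parts: showing that $F_g^2$ lies in a suitable holomorphic class, and showing that this series converges in $\Hscr_a$ at a uniform geometric rate.

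\textbf{Step 1: holomorphic extension of $F_g$.} Through the Cayley identification of \S\S\ref{subsec: Connection to the Hardy kernel}, we extend $K_a(\cdot,y)^\beta$ holomorphically in $x$. For $y\in\Kcal$, each factor $\big((x_j+a_j)(y_j+a_j)/(2a_j(x_j+y_j))\big)^\beta$ is holomorphic and nonvanishing for $\operatorname{Re}x_j>-\min_{y\in\Kcal}y_j$, using the principal branch. Hence $F_g$ extends to a function $\widetilde F_g$ holomorphic on a neighbourhood of $\overline\Omega$ in the sense of the Cayley chart. More precisely, $\widetilde F_g\circ\chi^{-1}$ is holomorphic on a polydisc $\mathbb D_R^{d+1}$ with $R>1$. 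On $\mathbb D_R^{d+1}$ we have $\abs{\widetilde F_g}\leq C\,\nu(\abs g)\leq C$, uniformly over all $g$ with $\nu(g^2)=1$. The small exponent $\beta=1/(16(d+1))$ is what keeps this bound uniform: moduli of the factors stay bounded as long as $\operatorname{Re}(x_j+y_j)$ stays away from $0$.

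Squaring, $G\coloneqq F_g^2$ also extends holomorphically to $\mathbb D_R^{d+1}$, with uniform bound $C^2$. Cauchy estimates then give Taylor coefficients decaying like $R^{-\abs\alpha}$. Therefore $G\circ\chi^{-1}$ lies in $H^2(\mathbb D^{d+1})$ with norm at most $C'$, uniformly in $g$. By Theorem~\ref{thm: pull-back theorem} (since $\Hscr_a$ is the pull-back of the Hardy space, restricted to real points of $\Delta^\circ$), the restriction $F_g^2$ lies in $\Hscr_a$ with $\norm{F_g^2}_{\Hscr_a}\leq C'$. The same holds for the constant $\nu(F_g^2)$.

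\textbf{Step 2: geometric convergence of the series.} This is the main obstacle. We need $\norm{\Lcal^n(G-\nu(G))}_{\Hscr_a}\leq C\theta^n$ with some $\theta<1$, but $\Lcal$ is not known to contract in $\Hscr_a$ directly. The approach is to work in a space of bounded holomorphic functions on a complex neighbourhood $\mathcal U$ of $S$ in $H_{\mathbb C}$.

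First, by positivity, each $f_i$ extends holomorphically to $\mathcal U$ and maps $\overline{\mathcal U}$ compactly into $\mathcal U$. Here we use that $f_i(\Delta)\subset S$ with room to spare, choosing $\mathcal U$ as a small complex Hilbert-metric neighbourhood. Second, $\Lcal$ acting on $A^\infty(\mathcal U)$, the bounded holomorphic functions with the sup norm, is then a compact operator. On the real slice it has a simple eigenvalue $1$ with eigenvector the constants and dual eigenvector $\nu$. The rest of the spectrum lies in a disc of radius $\theta<1$, by the contraction estimate \eqref{eq: transfer operator is contractive} extended holomorphically: a complex Hilbert-metric contraction (Birkhoff) combined with Cauchy estimates gives $\norm{\Lcal^N u-\nu(u)}_{A^\infty(\mathcal U')}\leq C\kappa'^N\norm u_{A^\infty(\mathcal U)}$.

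Consequently $\sum_n\Lcal^n(G-\nu(G))$ converges in $A^\infty(\mathcal U')$ for a slightly smaller neighbourhood $\mathcal U'$, with norm at most $C\norm G_{A^\infty(\mathcal U)}\leq C''$.

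\textbf{Step 3: returning to $\Hscr_a$.} We need the sum $h_g$ to lie in $\Hscr_a$, which is defined on all of $\Delta^\circ$, not just near $S$. To handle this, write $h_g=(G-\nu(G))+\Lcal\widetilde h$ with $\widetilde h\coloneqq\sum_{n\geq0}\Lcal^n(G-\nu(G))$. The function $\Lcal\widetilde h=\sum_i\pi_i\,\widetilde h\circ f_i$ is defined on all of $\Delta^\circ$, and since each $f_i$ maps a complex neighbourhood of $\overline\Omega$ (in Cayley coordinates, $\mathbb D_{R'}^{d+1}$ with $R'>1$) into $\mathcal U'$, it is bounded holomorphic there. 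The Cauchy-estimate argument of Step 1 then bounds its Hardy norm, and Theorem~\ref{thm: pull-back theorem} gives $\norm{h_g}_{\Hscr_a}\leq C$.

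Finally, the identity $(I-\Lcal)h_g=G-\nu(G)$ on $\Delta^\circ$ follows by telescoping the series, since $\Lcal^n(G-\nu(G))\to0$ uniformly. The delicate point throughout is the uniform spectral gap on the complexified neighbourhood, which we expect to handle via the complex Birkhoff contraction.
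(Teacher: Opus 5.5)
\textbf{There is a genuine gap in Step~3, and it sits exactly where the difficulty of the lemma lies.} You claim that $f_i$ maps $\chi^{-1}(\mathbb D_{R'}^{d+1})$ into a small neighbourhood $\mathcal U'$ of $S$. This is false, even for $\Omega$ in place of $\chi^{-1}(\mathbb D_{R'}^{d+1})$: the projective extension of $f_i$ is not even bounded on $\Omega$.

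Suppose $\operatorname{rank}A_i\geq2$. Choose $\xi\in\mathbb R^{d+1}$ with $\one^\top A_i\xi=0\neq A_i\xi$, and set $z=\eps\one+i\xi\in\Omega$. Then $f_i(z)=(\eps A_i\one+iA_i\xi)/(\eps\,\one^\top A_i\one)$, which blows up as $\eps\to0$. For invertible $A_i$, taking instead $\one^\top A_i\xi\neq0$ makes $f_i(z)$ approach the point $A_i\xi/\one^\top A_i\xi$. This can be any real point of $\{\sum_jx_j=1\}$, for example one with $x_j=-y_j$, where $K_a(\cdot,y)^\beta$ is singular.

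Choosing a different extension does not help. By Theorem~\ref{thm: pull-back theorem}, every element of $\Hscr_a$ equals $F\circ\chi$ with $F\in H^2(\mathbb D^{d+1})$, so it extends holomorphically to all of $H_{\mathbb C}\cap\Omega$. Here is a concrete failure. Take $d=1$ and $A_i=\bigl(\begin{smallmatrix}3&1\\1&2\end{smallmatrix}\bigr)$. On the strip $H_{\mathbb C}\cap\Omega$ one has $f_i(z)_1=(2z_1+1)/(z_1+3)$, which takes values $w_0$ near $2$. Then $u=1/(z_1-w_0)$ is bounded holomorphic near $S$, yet $u\circ f_i$ has a pole in $H_{\mathbb C}\cap\Omega$, so $u\circ f_i\notin\Hscr_a$.

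Hence no bound of the form $\norm{\widetilde h\circ f_i}_{\Hscr_a}\lesssim\norm{\widetilde h}_{A^\infty(\mathcal U')}$ can hold. The spectral gap in Step~2 (plausible via Earle--Hamilton) only controls the iterates near $S$. But $\norm{\cdot}_{\Hscr_a}$ is a Hardy norm on the whole polydisc, whose distinguished boundary corresponds to $\operatorname{Re}z_j=0$ for all $j$, and that is precisely where the maps $f_w$ are unbounded. Step~1's conclusion is fine; it follows most cleanly from $K_a(\cdot,y)\circ\chi^{-1}(\zeta)=\prod_j(1-\zeta_j\chi_j(y))^{-1}$. However, the smallness of $\beta$ plays no role there, contrary to what you say.

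\textbf{What is missing is a way to bound the Hardy norm of the iterates on the whole polydisc without sup bounds, and this is what the paper does.}
\begin{enumerate}
\item It sets $G_n=\Lcal^n(F_g^2)-\Lcal^n(F_g^2)(a)$ and expands it as an average, weighted by $\pi_wg(y)g(y')$, over words $\abs w=n$ and $y,y'\in\Kcal$, of $U_{w,y,y'}-U_{w,y,y'}(a)$.
\item It writes $U_{w,y,y'}/U_{w,y,y'}(a)$ as a product of $2(d+1)$ factors $(b^\top x/c^\top x)^\beta$. Here $b,c$ are images under $A_w^\top$ of fixed positive vectors, normalized so that $b^\top a=c^\top a=1$. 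Birkhoff contraction of the transposes gives $d_H(b,c)\leq C_0\kappa^n$.
\item Lemma~\ref{lem: nearby positive forms} then bounds $\norm{V-1}_{\Hscr_a}\leq2^{1/4}t^{1/8}$ directly on the polydisc. By AM--GM, $\abs V^2$ is dominated by averages of $\abs{b^\top z/c^\top z}^{1/4}$. These ratios are unbounded on $\Omega$. But after writing $b=e^{-t}c+(1-e^{-t})v$ with $v\geq0$, the function $(v^\top z/c^\top z)^{1/4}$ takes values in the sector $\abs{\arg}<\pi/4$. The mean-value property then gives $\mathbb E_r\abs{v^\top z/c^\top z}^{1/4}\leq\sqrt2$.
\item Lemma~\ref{lem: characterization of membership in the Hilbert space} passes the word and $y,y'$ averages through the norm, giving $\norm{G_n}_{\Hscr_a}\leq C\kappa^{n/8}$. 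The series $\sum_nG_n$ then converges in $\Hscr_a$, and your telescoping argument finishes the proof.
\end{enumerate}
The estimate in step 3 is an $H^p$-type bound with small $p$, and it is the real reason for the choice $\beta=1/(16(d+1))$. To rescue your route you would need an estimate of this kind on the whole Cayley polydisc; a spectral gap on a neighbourhood of $S$ cannot substitute for it.
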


To prove this, we will need two technical lemmas.

\begin{lemma}\label{lem: nearby positive forms}
Fix $t\geq0$. Suppose that $b_\ell,c_\ell\in(0,\infty)^{d+1}$, $\ell = 1,2,\ldots,2(d+1)$,  satisfy
\begin{equation} \label{eq: bc assumption}
b_\ell^\top a=c_\ell^\top a=1,
\qquad d_H(b_\ell,c_\ell)\leq t.
\end{equation}
Define
\[
V:\Delta^\circ\longrightarrow\mathbb R,
\qquad
V(x)\coloneqq\prod_{\ell=1}^{2(d+1)}
\left(\frac{b_\ell^\top x}{c_\ell^\top x}\right)^\beta.
\]
Then $V\in\Hscr_a$ and $\norm{V-1}_{\Hscr_a}\leq2^{1/4}t^{1/8}$.
\end{lemma}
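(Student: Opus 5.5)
The plan is to work in the Hardy picture of \S\S\ref{subsec: Connection to the Hardy kernel}. First I reduce the claim to a bound on $\norm{V}$, then bound it through boundary values on the torus.

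\textbf{Reduction.} Since $K_a(x,a)=1$ for all $x$, the constant function $1$ equals $k_a$, and $\norm{1}_{\Hscr_a}^2=K_a(a,a)=1$. If $V\in\Hscr_a$, the reproducing property gives $\ip{V}{1}_{\Hscr_a}=V(a)=1$, because $b_\ell^\top a=c_\ell^\top a=1$. Therefore
\[
\norm{V-1}_{\Hscr_a}^2=\norm{V}_{\Hscr_a}^2-1,
\]
and it suffices to show $V\in\Hscr_a$ with $\norm{V}_{\Hscr_a}^2\leq 1+\sqrt2\,t^{1/4}$. Equivalently, it suffices to bound $\norm{\widetilde V-1}_{H^2}^2\leq \sqrt2\,t^{1/4}$ for some holomorphic extension $\widetilde V$.

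\textbf{Holomorphic extension and membership.} Set $z_j=a_j(1+\zeta_j)/(1-\zeta_j)$, the inverse of $\chi$. Each linear form $b_\ell^\top z$, $c_\ell^\top z$ is nonvanishing on $\Omega$ and lies in the open right half-plane there. So each ratio $b_\ell^\top z/c_\ell^\top z$ lies in $\mathbb C\setminus(-\infty,0]$, and the principal branch of its $\beta$-th power is holomorphic. This gives a holomorphic $\widetilde V$ on $\mathbb D^{d+1}$ with $\widetilde V\circ\chi=V$ on $\Delta^\circ$.

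By Theorem~\ref{thm: pull-back theorem}, once $\widetilde V\in H^2(\mathbb D^{d+1})$ we get $V\in\Hscr_a$ (taking real parts if necessary, since $V$ is real on $\Delta^\circ$). We also get $\norm{V-1}_{\Hscr_a}\leq\norm{\widetilde V-1}_{H^2}$. The $H^2$ norm is the supremum over $r<1$ of torus integrals, and I plan to bound these uniformly in $r$. On the distinguished boundary, $z$ is taken with $\operatorname{Re}z_j\geq0$ (purely imaginary in the limit).

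\textbf{The main estimate.} This is the hard step. Write $\lambda_j=b_{\ell,j}/c_{\ell,j}$. The normalization $b_\ell^\top a=c_\ell^\top a$ forces $\min_j\lambda_j\leq1\leq\max_j\lambda_j$, hence $\abs{\lambda_j-1}\leq e^t-1$. Then
\[
\frac{b_\ell^\top z}{c_\ell^\top z}-1=\frac{\sum_j(\lambda_j-1)c_{\ell,j}z_j}{c_\ell^\top z}.
\]
Since $c_\ell^\top z$ is a positive combination of points in the closed right half-plane, $\abs{c_\ell^\top z}\geq \sum_j c_{\ell,j}\operatorname{Re}z_j$. Near the boundary this is weak, so I will not try for a pointwise bound.

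Instead I will use $\abs{w^\beta-1}\lesssim \abs{w-1}^\beta(1+\abs{w})^{\beta}$ on the slit plane, a telescoping of the product of $2(d+1)$ factors, and H\"older's inequality with $2(d+1)$ exponents. This reduces everything to torus integrals of $\abs{\cdot}^{1/4}$-type powers of single ratios; here I use $2(d+1)\beta=1/8$. These powers are small enough to be integrable against the product of Poisson-type densities coming from the Cayley transform. The integrals should produce a factor $\min(1,e^t-1)^{1/4}$, which is dominated by a constant times $t^{1/4}$.

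The main obstacle is precisely this uniform integrability: near points where $\operatorname{Re}z_j\to0$, the denominators $c_\ell^\top z$ may become small, and I must bound their negative fractional moments. I expect the very small exponent $\beta$ to be exactly what makes those moments finite. I also expect it to be what makes the constant come out as $\sqrt2$, giving the stated $2^{1/4}t^{1/8}$ after the square root.
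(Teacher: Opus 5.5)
Your reduction (via $\ip{V}{1}_{\Hscr_a}=V(a)=1$, so that $\norm{V-1}_{\Hscr_a}^2=\norm{V}_{\Hscr_a}^2-1$) and the holomorphic extension by principal branches agree with the paper. The paper does the same at each radius through the mean-value identity $\mathbb E_r[V]=V(a)=1$, where $\mathbb E_r$ is the average over the preimage under $\chi$ of the torus of radius $r$.

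The gap is the main estimate, which is the whole content of the lemma. You never obtain a bound on $\mathbb E_r\abs{V}^2$ that is uniform in $r<1$. You only say that negative fractional moments of $c_\ell^\top z$ ``should'' be finite and ``should'' produce a factor $\min(1,e^t-1)^{1/4}$. Integrating over the distinguished boundary needs boundary values, i.e.\ essentially the $H^2$ membership being proved. On interior tori, uniformity in $r$ is exactly the unproved point. Moreover, your route (the inequality $\abs{w^\beta-1}\lesssim\abs{w-1}^\beta(1+\abs{w})^\beta$, telescoping over $2(d+1)$ factors, H\"older) carries unspecified constants at every step. Even if completed it would give $\norm{V-1}_{\Hscr_a}\leq C_d\,t^{1/8}$, not the stated $2^{1/4}t^{1/8}$.

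The missing idea removes the need for any moment computation, and has three steps.

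\textbf{Positivity splitting.} Take $t>0$. Since the weights $a_jc_{\ell,j}$ sum to $1$, $\max_jb_{\ell,j}/c_{\ell,j}\geq1$, so $d_H(b_\ell,c_\ell)\leq t$ forces $\min_jb_{\ell,j}/c_{\ell,j}\geq e^{-t}$. Hence $b_\ell=e^{-t}c_\ell+(1-e^{-t})v_\ell$ with $v_\ell\geq0$ and $v_\ell^\top a=1$. Setting $F_\ell\coloneqq v_\ell^\top z/c_\ell^\top z$, this gives $b_\ell^\top z/c_\ell^\top z=e^{-t}+(1-e^{-t})F_\ell$.

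\textbf{Sector bound with mean value.} $F_\ell$ is a quotient of two functions with positive real part on $\Omega$, so $F_\ell^{1/4}$ takes values in the sector $\abs{\arg w}<\pi/4$. Therefore $\abs{F_\ell}^{1/4}\leq\sqrt2\operatorname{Re}F_\ell^{1/4}$. Since $\operatorname{Re}F_\ell^{1/4}$ is pluriharmonic, its $\mathbb E_r$-average equals its value at $a$, which is $1$. So $\mathbb E_r\abs{F_\ell}^{1/4}\leq\sqrt2$ for every $r$, with no control near the boundary required.

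\textbf{AM--GM instead of telescoping and H\"older.} Because $\beta=1/(16(d+1))$, we have $\abs{V}^2\leq\frac{1}{2(d+1)}\sum_\ell\abs{b_\ell^\top z/c_\ell^\top z}^{1/4}$. Subadditivity of $s\mapsto s^{1/4}$ then gives $\mathbb E_r\abs{V}^2\leq e^{-t/4}+\sqrt2(1-e^{-t})^{1/4}$, hence $\mathbb E_r\abs{V-1}^2\leq\sqrt2\,t^{1/4}$.

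So the constant $\sqrt2$ is $1/\cos(\pi/4)$. The role of the small $\beta$ is to bring the exponent on each single ratio down to $1/4<1/2$ so that the sector bound applies, not to make negative moments finite.
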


\begin{proof}
%Maybe move the proof to appendices?
If $t=0$, then equation~\eqref{eq: bc assumption} implies $b_\ell=c_\ell$ for every $\ell$, so $V=1$. Suppose $t>0$. The same formula defines the extension of $V$ to $\Omega$, taking principal powers of the individual ratios. Then $V$ is holomorphic on $\Omega$. Indeed, for $z\in\Omega$, the arguments of both $b_\ell^\top z$ and $c_\ell^\top z$ lie in $(-\pi/2,\pi/2)$, since $b_\ell,c_\ell$ are positive vectors and $\operatorname{Re}z_j>0$ for every $j$. Their ratios therefore avoid the negative real axis.

To apply the Pull-back theorem~\ref{thm: pull-back theorem}, we estimate the Hardy norm of $(V-1)\circ\chi^{-1}$. For $0<r<1$, let $\mathbb E_r$ denote angular averaging in the Cayley coordinates:
\[
\big(z_r(\theta)\big)_j \coloneqq a_j\frac{1+re^{i\theta_j}}{1-re^{i\theta_j}} \quad (1\leq j\leq d+1),
\qquad
\mathbb E_r[W]\coloneqq\frac1{(2\pi)^{d+1}}
\int_{[0,2\pi)^{d+1}}W\big(z_r(\theta)\big)\,d\theta,
\]
for integrable $W:\Omega\to \mathbb C$. Since $\chi\big(z_r(\theta)\big)=(re^{i\theta_1},\ldots,re^{i\theta_{d+1}})$, by Theorem~\ref{thm: pull-back theorem},
\[
\sup_{0<r<1}\mathbb E_r\big[|V-1|^2\big]
= \norm{(V-1)\circ\chi^{-1}}_{H^2(\mathbb D^{d+1})}^2
\geq \norm{V-1}_{\Hscr_a}^2,
\]
once the supremum is proven to be finite. We will therefore bound $\mathbb E_r\big[|V-1|^2\big]$.

The mean-value theorem gives $\mathbb E_r[V]=V(a)=1$. Then
\begin{equation}\label{eq: mean square of V minus one}
\mathbb E_r\big[|V-1|^2\big]
=\mathbb E_r\big[|V|^2\big]-2\operatorname{Re}\mathbb E_r[V]+1
=\mathbb E_r\big[|V|^2\big]-1.
\end{equation}
Since $\beta=1/(16(d+1))$, the inequality of arithmetic and geometric means gives
\begin{equation}\label{eq: product bounded by fourth roots}
|V(z)|^2
=\left(
\prod_{\ell=1}^{2(d+1)}\abs{\,b_\ell^\top z / c_\ell^\top z\,}^{1/4}
\right)^{1/(2(d+1))}
\!\!\leq\;
\frac1{2(d+1)}\sum_{\ell=1}^{2(d+1)}
\abs{\,b_\ell^\top z / c_\ell^\top z\,}^{1/4}.
\end{equation}
We therefore estimate the $\mathbb E_r$-average of each summand on the right. Fix $\ell$ and write $b_\ell = (b_{\ell,1},\ldots,b_{\ell,\,d+1})$ and $c_\ell = (c_{\ell,1},\ldots,c_{\ell,\,d+1})$. Since $b_\ell^\top a=c_\ell^\top a=1$, we have $\sum_{j=1}^{d+1}a_jc_{\ell,j}\cdot\frac{b_{\ell,j}}{c_{\ell,j}}=1$. Thus the maximum of the ratios $b_{\ell,j}/c_{\ell,j}$ is at least one. By $d_H(b_\ell,c_\ell)\leq t$,
\[
\min_j\frac{b_{\ell,j}}{c_{\ell,j}}\geq e^{-t}\max_j\frac{b_{\ell,j}}{c_{\ell,j}}\geq e^{-t}.
\]
Consequently, $b_\ell-e^{-t}c_\ell$ has nonnegative entries. Define $v_\ell \coloneqq \displaystyle \frac{b_\ell-e^{-t}c_\ell}{1-e^{-t}}$. Then $v_\ell\geq0$ entrywise, and
\begin{equation}\label{eq: ratio with small remainder}
\frac{b_\ell^\top z}{c_\ell^\top z}=e^{-t}+(1-e^{-t})\frac{v_\ell^\top z}{c_\ell^\top z}.
\end{equation}

We now bound $\mathbb E_r\left[\,\abs{\left(v_\ell^\top z / c_\ell^\top z\right)}^{1/4}\,\right]$. Since $b_\ell^\top a=c_\ell^\top a=1$, we have $v_\ell^\top a~=~1$. In particular, $v_\ell\ne0$. Since $\operatorname{Re}z_j>0$ for every $j$, both $v_\ell^\top z$ and $c_\ell^\top z$ have positive real parts. Their ratio therefore avoids the negative real axis. Then the map $z\mapsto \left(v_\ell^\top z / c_\ell^\top z\right)^{1/4}$ is holomorphic on $\Omega$ and has argument in $(-\pi/4,\pi/4)$. Therefore,
\[
\operatorname{Re}\left(v_\ell^\top z / c_\ell^\top z\right)^{1/4}
\geq \cos\left(\frac{\pi}{4}\right) \abs{\left(v_\ell^\top z / c_\ell^\top z\right)}^{1/4}.
\]
Also, by the mean-value theorem, $\mathbb E_r\left[\operatorname{Re}\left(v_\ell^\top z / c_\ell^\top z\right)^{1/4}\right] = \left(v_\ell^\top a / c_\ell^\top a\right)^{1/4}=1$. Therefore,
\begin{equation} \label{eq: angular average of ratio for the modified vector}
\mathbb E_r\left[\,\abs{\,v_\ell^\top z / c_\ell^\top z\,}^{1/4}\,\right] \leq \sqrt2.
\end{equation}

We return to equation~\eqref{eq: ratio with small remainder}. The triangle inequality and the inequality $(u+w)^{1/4}\leq u^{1/4}+w^{1/4}$ for $u,w\geq0$ give 
\[
\abs{\,b_\ell^\top z / c_\ell^\top z\,}^{1/4}
\leq\left(e^{-t}+(1-e^{-t})\abs{\,v_\ell^\top z / c_\ell^\top z\,}\right)^{1/4}
\leq e^{-t/4}+(1-e^{-t})^{1/4}\abs{\,v_\ell^\top z / c_\ell^\top z\,}^{1/4}.
\]
Taking the mean with $\mathbb E_r$ and using equation~\eqref{eq: angular average of ratio for the modified vector}, we obtain
\[
\mathbb E_r\left[\,\abs{\,b_\ell^\top z / c_\ell^\top z\,}^{1/4}\,\right]
\leq e^{-t/4}+\sqrt2(1-e^{-t})^{1/4}.
\]
This holds for every $\ell$. Averaging equation~\eqref{eq: product bounded by fourth roots} therefore gives
\[
\mathbb E_r\big[|V|^2\big]\leq e^{-t/4}+\sqrt2(1-e^{-t})^{1/4}.
\]
Substituting into equation~\eqref{eq: mean square of V minus one} and using $e^{-t/4}\leq1$ and $1-e^{-t}\leq t$, we conclude that
\[
\mathbb E_r\big[|V-1|^2\big]\leq e^{-t/4}-1+\sqrt2(1-e^{-t})^{1/4}\leq\sqrt2\,t^{1/4}.
\]

This holds for every $0<r<1$. Hence $(V-1)\circ\chi^{-1}\in H^2(\mathbb D^{d+1})$, with
\[
\norm{(V-1)\circ\chi^{-1}}_{H^2(\mathbb D^{d+1})}^2=\sup_{0<r<1}\mathbb E_r\big[|V-1|^2\big]\leq\sqrt2\,t^{1/4}.
\]
Theorem~\ref{thm: pull-back theorem} gives $V-1\in\Hscr_a$ and $\norm{V-1}_{\Hscr_a}\leq2^{1/4}t^{1/8}$. Finally, $K_a(x,a)=1$, so the constant function $1$ belongs to $\Hscr_a$, so $V\in\Hscr_a$.
\end{proof}

The series $\sum_{n=0}^\infty G_n$, with $G_n$ defined below, yields the Poisson solution.

\begin{lemma} \label{lem: decay of centered iterates}
There are $C>0$ and $0<\kappa<1$ such that the following holds. For each $g\in L^2(\nu)$ with $\nu(g^2)=1$ and each $n\geq0$, define
\[
G_n: \Delta^\circ \longrightarrow \mathbb R,
\qquad
G_n(x)\coloneqq\Lcal^n(F_g^2)(x)-\Lcal^n(F_g^2)(a).
\]
Then $G_n\in\Hscr_a$ and $\norm{G_n}_{\Hscr_a}\leq C\kappa^{n/8}$.
\end{lemma}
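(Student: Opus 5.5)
The idea is to write $\Lcal^n(F_g^2)$ as an average of products of fractional powers of ratios of linear forms. Then Lemma~\ref{lem: nearby positive forms} applies term by term, with the Hilbert-metric contraction of Lemma~\ref{lemma: Birkhoff-Hopf constants} supplying the parameter $t\lesssim\kappa^n$. Here $\kappa$ is the common Birkhoff--Hopf constant chosen after Lemma~\ref{lemma: Birkhoff-Hopf constants}.

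First expand the square:
\[
F_g^2(x)=\iint_{\Kcal\times\Kcal}K_a(x,y)^\beta K_a(x,y')^\beta g(y)g(y')\,d\nu(y)\,d\nu(y').
\]
Hence
\[
\Lcal^n(F_g^2)(x)=\sum_{|w|=n}\pi_w\iint K_a(f_wx,y)^\beta K_a(f_wx,y')^\beta g(y)g(y')\,d\nu\,d\nu .
\]
Fix $w$ and $y,y'\in\Kcal$. Since $(f_wx)_j=e_j^\top A_wx/\one^\top A_wx$, each factor of $K_a$ can be rewritten as
\[
\frac{(f_wx)_j+a_j}{(f_wx)_j+y_j}=\frac{(e_j+a_j\one)^\top A_wx}{(e_j+y_j\one)^\top A_wx},
\]
because the normalization $\one^\top A_wx$ cancels. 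The remaining factors $(y_j+a_j)/(2a_j)$ do not depend on $x$. Therefore
\[
K_a(f_wx,y)^\beta K_a(f_wx,y')^\beta=\Theta_{w,y,y'}\prod_{\ell=1}^{2(d+1)}\Big(\frac{\tilde b_\ell^\top x}{\tilde c_\ell^\top x}\Big)^\beta ,
\]
where $\tilde b_\ell=A_w^\top(e_j+a_j\one)$, $\tilde c_\ell=A_w^\top(e_j+y_j\one)$, and similarly with $y'$. Rescale to $b_\ell=\tilde b_\ell/\tilde b_\ell^\top a$ and $c_\ell=\tilde c_\ell/\tilde c_\ell^\top a$, and call the resulting product $V_{w,y,y'}$. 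Then $V_{w,y,y'}(a)=1$, and the integrand equals $\Theta'_{w,y,y'}V_{w,y,y'}(x)$, where $\Theta'_{w,y,y'}=K_a(f_wa,y)^\beta K_a(f_wa,y')^\beta$. Evaluating at $x=a$ gives
\[
G_n(x)=\sum_{|w|=n}\pi_w\iint \Theta'_{w,y,y'}\big(V_{w,y,y'}(x)-1\big)g(y)g(y')\,d\nu\,d\nu .
\]

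Next, apply Lemma~\ref{lemma: Birkhoff-Hopf constants} to the transposes $A_i^\top$, using that $d_H$ is scale invariant. This gives
\[
d_H(b_\ell,c_\ell)\le\kappa^n d_H(e_j+a_j\one,e_j+y_j\one)\le C_1\kappa^n,
\]
uniformly for $y\in\Kcal$, since $\Kcal\subset\Delta^\circ$ is compact. Lemma~\ref{lem: nearby positive forms} then yields
\[
\norm{V_{w,y,y'}-1}_{\Hscr_a}\le2^{1/4}C_1^{1/8}\kappa^{n/8}.
\]
The weights satisfy $0<\Theta'_{w,y,y'}\le C_2$, because $f_wa\in S$ for $n\ge1$ (and $a\in S$ for $n=0$), $\Kcal$ is compact, and $K_a$ is bounded on $S\times\Kcal$. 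Together with $\nu(|g|)\le1$ and $\sum_w\pi_w=1$, this suggests the claimed bound. The constant $\kappa$ in the statement can be taken to be this $\kappa$.

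The one point needing care is that $G_n$ is an integral of $\Hscr_a$-valued functions, so we must justify that it lies in $\Hscr_a$ with the integrated norm bound. I will not argue via Bochner measurability. Instead I will use Lemma~\ref{lem: characterization of membership in the Hilbert space}. For any finitely supported signed measure $\sigma$ on $\Delta^\circ$, Fubini (all integrands are bounded and continuous) and the reproducing property give
\[
|\sigma(G_n)|\le\sum_w\pi_w\iint \Theta'_{w,y,y'}\,|\sigma(V_{w,y,y'}-1)|\,|g(y)g(y')|\,d\nu\,d\nu .
\]
Each term satisfies
\[
|\sigma(V_{w,y,y'}-1)|\le\norm{V_{w,y,y'}-1}_{\Hscr_a}\norm{\sigma}_{K_a}.
\]
Hence
\[
|\sigma(G_n)|\le C_2\,2^{1/4}C_1^{1/8}\kappa^{n/8}\norm{\sigma}_{K_a},
\]
and the characterization lemma gives both $G_n\in\Hscr_a$ and the norm bound.

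The main obstacle I anticipate is the bookkeeping of the normalizations. The rewriting of $K_a(f_wx,y)$ as exactly $2(d+1)$ ratios must produce positive vectors with matching $a$-normalization, and the leftover scalar must be exactly $\Theta'_{w,y,y'}$, independent of $x$.
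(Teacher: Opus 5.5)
Your proposal is correct and follows essentially the same route as the paper's proof. It uses the same rewriting of $K_a(f_wx,y)$ as a product of ratios of linear forms, normalized to $b_j^\top a=c_j^\top a=1$, with $d_H$-contraction under $A_w^\top$ giving $t\lesssim\kappa^n$; it then applies Lemma~\ref{lem: nearby positive forms} to the $2(d+1)$ ratios, bounds the prefactor $K_a(f_wa,y)^\beta K_a(f_wa,y')^\beta$ uniformly, and concludes via Lemma~\ref{lem: characterization of membership in the Hilbert space} tested against finitely supported $\sigma$.
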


\begin{proof}
Fix $g$ as in the statement and an integer $n\geq0$. For a word $w$ of length $n$ and $y,y'\in\Kcal$, let
\[
U_{w,y,y'}:\Delta^\circ\longrightarrow(0,\infty),
\qquad
U_{w,y,y'}(x)\coloneqq K_a(f_wx,y)^\beta K_a(f_wx,y')^\beta.
\]
For the empty word $w=\varnothing$ with $n=0$, we set $A_w=I$, $f_wx=x$, and $\pi_w=1$.

Recall the definition of $F_g$ in equation~\eqref{eq: definition of fractional kernel assignment}. Since $K_a$ is bounded on $\Delta\times\Kcal$ and $\nu(\abs{g})\leq\sqrt{\nu(g^2)}=1$, Fubini's theorem implies that we can write $F_g^2$ as a double integral. Applying $\Lcal^n$ to $F_g^2$ and subtracting the value at $a$ gives
\begin{equation}\label{eq: centered iterates as kernel products}
G_n(x) = \sum_{\abs{w}=n}\pi_w\iint_{\Kcal\times\Kcal}\big(U_{w,y,y'}(x) - U_{w,y,y'}(a)\big)g(y)g(y')\,d\nu(y)d\nu(y').
\end{equation}

We first estimate the Hilbert norm of $U_{w,y,y'}-U_{w,y,y'}(a)$. Fix $\abs{w}=n$ and $y,y'\in\Kcal$. Let $e_j\in \mathbb R^{d+1}$ be the standard basis vectors. Since $f_wx=A_wx/(\one^\top A_wx)$, the definition of $K_a$ gives
\begin{equation} \label{eq: ratio expression of the kernel}
K_a(f_wx,y)
=\prod_{j=1}^{d+1}\frac{y_j+a_j}{2a_j}\frac{(f_wx)_j+a_j}{(f_wx)_j+y_j}
=\prod_{j=1}^{d+1}\frac{y_j+a_j}{2a_j}\frac{(e_j+a_j\one)^\top A_wx}{(e_j+y_j\one)^\top A_wx}.
\end{equation}
To apply Lemma~\ref{lem: nearby positive forms}, we need the coefficient vectors to have inner product one with $a$. For $j=1,\ldots,d+1$, define vectors $b_j, c_j(\zeta)\in(0,\infty)^{d+1}$ for $\zeta\in\Kcal$ by
\[
b_j\coloneqq\frac{A_w^\top(e_j+a_j\one)}{(e_j+a_j\one)^\top A_wa},
\qquad
c_j(\zeta)\coloneqq\frac{A_w^\top(e_j+\zeta_j\one)}{(e_j+\zeta_j\one)^\top A_wa}.
\]
These vectors are positive and satisfy $b_j^\top a=c_j(\zeta)^\top a=1$. Dividing $K_a(f_wx,\zeta)$ by $K_a(f_wa,\zeta)$ using equation~\eqref{eq: ratio expression of the kernel} cancels the factors independent of $x$ and gives
\[
\frac{K_a(f_wx,\zeta)}{K_a(f_wa,\zeta)}=\prod_{j=1}^{d+1}\frac{b_j^\top x}{c_j(\zeta)^\top x}.
\]
Then
\begin{equation}\label{eq: normalized pair of kernel products}
\frac{U_{w,y,y'}(x)}{U_{w,y,y'}(a)}
=\prod_{j=1}^{d+1}\left(\frac{b_j^\top x}{c_j(y)^\top x}\right)^\beta\left(\frac{b_j^\top x}{c_j(y')^\top x}\right)^\beta.
\end{equation}

We next estimate $d_H\big(b_j,c_j(\zeta)\big)$. By the choice of $\kappa$ right before Lemma~\ref{lem: basic inequalities for rho and theta},
\[
d_H(A_i^\top u,A_i^\top v)\leq\kappa d_H(u,v)
\]
for every $i$ and all positive vectors $u,v$. Applying this inequality $n$ times gives, for all $\zeta\in\Kcal$,
\[
d_H\big(b_j,c_j(\zeta)\big)
= d_H\big(A_w^\top(e_j+a_j\one),A_w^\top(e_j+\zeta_j\one)\big)
\leq \kappa^n d_H(e_j+a_j\one,e_j+\zeta_j\one)
\leq C_0\kappa^n,
\]
where we used the compactness of $\Kcal$ to obtain $C_0>0$. This also holds for the empty word with $n=0$. Therefore, the $2(d+1)$ pairs of vectors $\big(b_j, c_j(y)\big)$ and $\big(b_j, c_j(y')\big)$ for $j\in\left\{1,\ldots,d+1\right\}$ satisfy equation~\eqref{eq: bc assumption} with $t=C_0\kappa^n$.

Applying Lemma~\ref{lem: nearby positive forms} to the $2(d+1)$ pairs of vectors and using equation~\eqref{eq: normalized pair of kernel products}, we obtain $U_{w,y,y'}/U_{w,y,y'}(a)\in\Hscr_a$. Also, $U_{w,y,y'}(a)$ is positive and it is uniformly bounded for $y,y'$ because $K_a$ is bounded on $\Delta\times\Kcal$. The norm estimate from Lemma~\ref{lem: nearby positive forms} therefore gives
\begin{equation}\label{eq:poisson-word-norm}
\norm{U_{w,y,y'} - U_{w,y,y'}(a)}_{\Hscr_a}
= U_{w,y,y'}(a)\norm{\frac{U_{w,y,y'}}{U_{w,y,y'}(a)}-1}_{\Hscr_a}
\leq 2^{1/4}U_{w,y,y'}(a)C_0^{1/8}\kappa^{n/8}
\leq C_1\kappa^{n/8},
\end{equation}
where $C_1>0$ is independent of $w,y,y'$ and $n$.

We return to equation~\eqref{eq: centered iterates as kernel products}. By Lemma~\ref{lem: characterization of membership in the Hilbert space}, to obtain the bound for $\norm{G_n}_{\Hscr_a}$ it suffices to show that for every finitely-supported signed measure $\sigma$ on $\Delta^\circ$, we have $\abs{\sigma(G_n)}\leq C_1\kappa^{n/8}\norm{\sigma}_{K_a}$. Fix such $\sigma$. The reproducing property, Cauchy--Schwarz inequality, and equation~\eqref{eq:poisson-word-norm} give
\[
\abs{\sigma\big(U_{w,y,y'} - U_{w,y,y'}(a)\big)}\leq C_1\kappa^{n/8}\norm{\sigma}_{K_a}.
\]
Since $\sigma$ has finite support, by applying $\sigma$ to equation~\eqref{eq: centered iterates as kernel products} and using Fubini's theorem,
\begin{align*}
\abs{\sigma(G_n)}&\leq\sum_{\abs{w}=n}\pi_w\iint_{\Kcal\times\Kcal}C_1\kappa^{n/8}\norm{\sigma}_{K_a}\abs{g(y)g(y')}\,d\nu(y)d\nu(y')\\
&=C_1\kappa^{n/8}\norm{\sigma}_{K_a}\left(\sum_{\abs{w}=n}\pi_w\right)\nu(\abs{g})^2
\leq C_1\kappa^{n/8}\norm{\sigma}_{K_a},
\end{align*}
where we used $\sum_{\abs{w}=n}\pi_w=1$ and $\nu(\abs{g})^2\leq\nu(g^2)\nu(1^2)=1$. Lemma~\ref{lem: characterization of membership in the Hilbert space} gives $G_n\in\Hscr_a$ and the asserted norm bound.
\end{proof}

\begin{proof}[Proof of Lemma~\ref{lem: Poisson equation with bounded norm}]
Fix $g\in L^2(\nu)$ with $\nu(g^2)=1$, and let $G_n$ be as in Lemma~\ref{lem: decay of centered iterates}. By that lemma,
\[
\sum_{n=0}^\infty \norm{G_n}_{\Hscr_a}\leq C\sum_{n=0}^\infty\kappa^{n/8} = \frac{C}{1-\kappa^{1/8}}<\infty.
\]
Thus the series $\sum_{n=0}^\infty G_n$ converges in $\Hscr_a$. Convergence in $\Hscr_a$ implies convergence at every point by the reproducing property; then we can define
\[
h_g:\Delta^\circ\to\mathbb R,
\qquad
h_g(x)\coloneqq\sum_{n=0}^\infty G_n(x).
\]
Moreover, $\norm{h_g}_{\Hscr_a}\leq C/(1-\kappa^{1/8})$, independently of $g$.

Let us now see that $h_g$ is the solution to the Poisson equation~\eqref{eq: Poisson equation}. Since $\Lcal1=1$, applying $I-\Lcal$ to $G_n$ removes the constant $\Lcal^n(F_g^2)(a)$. Then, for each integer $N\geq0$,
\begin{equation} \label{eq: finite Poisson sum}
(I-\Lcal)\sum_{n=0}^N G_n
= \sum_{n=0}^N\big(\Lcal^n(F_g^2) - \Lcal^{n+1}(F_g^2)\big)
= F_g^2 - \Lcal^{N+1}(F_g^2).
\end{equation}
We will show that $\Lcal^n(F_g^2)(x)\to\nu(F_g^2)$ as $n\to\infty$ for every $x\in\Delta^\circ$. The reproducing property, Cauchy--Schwarz inequality, and Lemma~\ref{lem: decay of centered iterates} give
\[
\abs{G_n(x)}\leq\sqrt{K_a(x,x)}\norm{G_n}_{\Hscr_a}\leq C\sqrt{K_a(x,x)}\kappa^{n/8}.
\]
Thus $G_n(x)\to0$ as $n\to\infty$ for every $x\in\Delta^\circ$. Since $K_a(x,x)$ is bounded on the compact set $\Kcal$, this convergence is uniform on $\Kcal$, so $\nu(G_n)\to0$. Now, stationarity of $\nu$ and the definition of $G_n$ give
\[
\nu(G_n) = \nu\big(\Lcal^n(F_g^2)\big)-\Lcal^n(F_g^2)(a) = \nu(F_g^2)-\Lcal^n(F_g^2)(a).
\]
Using this and the definition of $G_n$, we obtain
\begin{equation} \label{eq: convergence of fractional assignment to stationary mean}
\Lcal^n(F_g^2)(x) - \nu(F_g^2)
= G_n(x) - \nu(G_n)
\xrightarrow{n\to\infty}0.
\end{equation}

It remains to pass to the limit on the left of equation~\eqref{eq: finite Poisson sum}. For each $x\in\Delta^\circ$, the series $\sum_{n=0}^\infty G_n$ converges at $x$ and at each of the finitely many points $f_i x$. Hence
\begin{equation} \label{eq: transferred sum of Gn converges}
\Lcal\left(\sum_{n=0}^N G_n\right)(x)
= \sum_i\pi_i\sum_{n=0}^N G_n(f_i x)
\xrightarrow{N\to\infty}\sum_i\pi_i h_g(f_i x)
= \Lcal h_g(x).
\end{equation}
Hence, passing to the limit in equation~\eqref{eq: finite Poisson sum} gives equation~\eqref{eq: Poisson equation} by equations \eqref{eq: convergence of fractional assignment to stationary mean} and \eqref{eq: transferred sum of Gn converges}.
\end{proof}

\subsection{Exponents for minimal model size} \label{subsection: exponents for minimal model size}

We now bound the stationary defect of an arbitrary constellation from below by the eigenvalues. If the constellation has at most $r$ support points, we can choose a nonzero linear combination $u$ of $r+1$ eigenfunctions such that $F_u$ vanishes at those points. We will bound $\nu(F_u^2)$ from below using the eigenvalues, and from above using the Poisson solution $h_u$.

\begin{proposition}\label{prop: eigenvalues lower bounds the stationary defect}
There is $C\geq1$ such that every constellation $\eta$ with at most $r$ support points satisfies
\begin{equation*}
\sigma_{r+1}^{4}\leq C\norm{\eta-\Lcal^*\eta}_{K_a}.
\end{equation*}
\end{proposition}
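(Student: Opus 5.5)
Let $\eta$ be a constellation with support $\{x_1,\ldots,x_r\}$ and $\eps=\norm{\eta-\Lcal^*\eta}_{K_a}$. We may assume $\sigma_{r+1}>0$. Let $e_1,\ldots,e_{r+1}$ be orthonormal eigenfunctions of $\Cscr_a$ with eigenvalues $\sigma_1,\ldots,\sigma_{r+1}$, and set $g_j\coloneqq e_j-\Lcal e_j=T_ae_j\in L^2(\nu)$. The map $c\mapsto F_{g_c}$ with $g_c=\sum_jc_jg_j$ is linear in $c\in\mathbb R^{r+1}$. The $r$ conditions $F_{g_c}(x_i)=0$ therefore leave a nonzero $c$. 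Because $\ip{T_ae_j}{T_ae_k}_{L^2(\nu)}=\sigma_j\delta_{jk}$, we get $\nu(g_c^2)=\sum_j\sigma_jc_j^2>0$. Normalize $c$ so that $\nu(g_c^2)=1$ and write $g=g_c$, $u=\sum_jc_je_j$. Then $g=T_au$, $\norm{u}_{\Hscr_a}^2=\sum c_j^2\leq \sigma_{r+1}^{-1}$, and $F_g$ vanishes on $\supp\eta$.

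\textbf{Upper bound.} Apply Lemma~\ref{lem: Poisson equation with bounded norm} to obtain $h_g$ with $\norm{h_g}_{\Hscr_a}\leq C$ and $(I-\Lcal)h_g=F_g^2-\nu(F_g^2)$. Pair this with $\eta$, which has total mass one and satisfies $\eta(F_g^2)=0$:
\[
\nu(F_g^2)=\abs{\eta\big((I-\Lcal)h_g\big)}=\abs{(\eta-\Lcal^*\eta)(h_g)}\leq \eps\norm{h_g}_{\Hscr_a}\leq C\eps,
\]
using the reproducing property and Cauchy--Schwarz. The identity $\eta(\Lcal h)=(\Lcal^*\eta)(h)$ holds because $\eta$ is finitely supported in $\Delta^\circ$ and the $f_i$ preserve $\Delta^\circ$.

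\textbf{Lower bound.} By Lemma~\ref{lem: fractional kernel keeps stationary mean}, $\nu(F_g^2)\geq C\norm{\int k_yg(y)\,d\nu(y)}_{\Hscr_a}^8$. Observe that $\int k_y g\,d\nu=T_a^*g$, since for $v\in\Hscr_a$ the reproducing property gives $\ip{v}{\int k_yg\,d\nu}=\int vg\,d\nu=\ip{v}{g}_{L^2(\nu)}$. This is the step that needs care: $T_a^*$ maps into $\Hscr_a$ while $\int v\,g\,d\nu$ is the $L^2(\nu)$ pairing, and this identification holds. Hence
\[
\norm{T_a^*g}_{\Hscr_a}=\norm{\Cscr_au}_{\Hscr_a}=\Big(\sum_j\sigma_j^2c_j^2\Big)^{1/2}\geq \sigma_{r+1}^{1/2}\Big(\sum_j\sigma_jc_j^2\Big)^{1/2}=\sigma_{r+1}^{1/2}.
\]
This gives $\nu(F_g^2)\geq C\sigma_{r+1}^4$.

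Combining the two bounds yields $\sigma_{r+1}^4\leq C\eps$. The main obstacle I anticipate is the precise adjoint identification and checking that $F_g$ is defined at the support points, which lie in $\Delta^\circ$ but possibly outside $\Kcal$. The latter is handled because $K_a$ is bounded on $\Delta\times\Kcal$, as noted after \eqref{eq: definition of fractional kernel assignment}.
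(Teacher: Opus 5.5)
Your overall architecture matches the paper: pick a nonzero combination in the top $r+1$ eigenspace so that $F_g$ vanishes on $\supp\eta$, bound $\nu(F_g^2)$ above via Lemma~\ref{lem: Poisson equation with bounded norm}, and bound it below via Lemma~\ref{lem: fractional kernel keeps stationary mean}. The setup is fine: $c\neq0$ gives $\nu(g_c^2)=\sum_j\sigma_jc_j^2>0$ and $\norm{u}_{\Hscr_a}^2\leq\sigma_{r+1}^{-1}$. The upper bound is also fine.

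\textbf{The gap.} The lower bound rests on the claim $\int_\Kcal k_y g(y)\,d\nu(y)=T_a^*g$, and this is false. Your computation $\ip{v}{\int_\Kcal k_yg\,d\nu}_{\Hscr_a}=\int_\Kcal vg\,d\nu$ is correct. But it identifies $\int k_yg\,d\nu$ as $\iota^*g$, where $\iota:\Hscr_a\to L^2(\nu)$ is the restriction $v\mapsto v|_\Kcal$. The adjoint of $T_a$ instead satisfies $\ip{v}{T_a^*g}_{\Hscr_a}=\int_\Kcal(v-\Lcal v)g\,d\nu$, so $T_a^*g=\int_\Kcal s_y g(y)\,d\nu(y)$, consistent with \eqref{eq:covariance-integral}. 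The extra term $\sum_i\pi_i\int k_{f_iy}g(y)\,d\nu(y)$ does not vanish in general. Hence the equality $\norm{\int k_yg\,d\nu}_{\Hscr_a}=\norm{\Cscr_au}_{\Hscr_a}$ is unjustified, and the bound $\geq\sigma_{r+1}^{1/2}$ does not follow as written. This is exactly the step you flagged as needing care.

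\textbf{The repair.} Test $\psi\coloneqq\int_\Kcal k_yg\,d\nu$ against $u$ itself:
\[
\norm{\psi}_{\Hscr_a}\geq\frac{\ip{u}{\psi}_{\Hscr_a}}{\norm{u}_{\Hscr_a}}=\frac{\nu\big(u(u-\Lcal u)\big)}{\norm{u}_{\Hscr_a}}.
\]
Jensen and stationarity give $\nu\big((\Lcal u)^2\big)\leq\nu(u^2)$, so
\[
\nu\big(u(u-\Lcal u)\big)=\tfrac12\nu\big((u-\Lcal u)^2\big)+\tfrac12\Big(\nu(u^2)-\nu\big((\Lcal u)^2\big)\Big)\geq\tfrac12\nu(g^2)=\tfrac12.
\]
Combined with $\norm{u}_{\Hscr_a}\leq\sigma_{r+1}^{-1/2}$, this yields $\norm{\psi}_{\Hscr_a}\geq\frac12\sigma_{r+1}^{1/2}$. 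Therefore $\nu(F_g^2)\geq C2^{-8}\sigma_{r+1}^4$, and the rest of your argument goes through.

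\textbf{Comparison with the paper.} The paper's route is slightly more direct. It takes $g=u$, the eigen-combination itself normalized by $\nu(u^2)=1$. Then $\ip{u}{\psi}_{\Hscr_a}=\nu(u^2)=1$ immediately. It controls $\norm{u}_{\Hscr_a}$ through
\[
\sqrt{\sigma_{r+1}}\norm{u}_{\Hscr_a}\leq\norm{u-\Lcal u}_{L^2(\nu)}\leq2\sqrt{\nu(u^2)}.
\]
Both routes use non-expansiveness of $\Lcal$ on $L^2(\nu)$ at the same point.
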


\begin{proof}
If $\sigma_{r+1}=0$, the assertion follows. Suppose $\sigma_{r+1}>0$, and let $U\subset\Hscr_a$ be the span of the first $r+1$ orthonormal eigenfunctions of $\Cscr_a$, counting multiplicities. For every $u\in U$, the spectral theorem gives
\[
\norm{u-\Lcal u}_{L^2(\nu)}^2 = \ip{u}{\Cscr_a u}_{\Hscr_a}\geq\sigma_{r+1}\norm{u}_{\Hscr_a}^2.
\]
Also, as we have seen in the proof of Lemma~\ref{lem: Holder-type interpolation inequality}, $\Lcal$ is non-expansive on $L^2(\nu)$, so $\norm{u-\Lcal u}_{L^2(\nu)}\leq2\norm{u}_{L^2(\nu)}$. Combining these inequalities gives
\begin{equation}\label{eq: leading eigenfunctions control Hilbert norm}
\sqrt{\sigma_{r+1}}\norm{u}_{\Hscr_a}\leq2\sqrt{\nu(u^2)}.
\qquad(u\in U)
\end{equation}
In particular, nonzero $u\in U$ satisfies $\nu(u^2)>0$.

For each $x\in\supp\eta$, the map $U\to\mathbb R$, $u\mapsto F_u(x)$, is linear. Thus the equations $F_u(x)=0$ for $x\in\supp\eta$ impose at most $r$ linear conditions on $U$. Since $\dim U=r+1$, there is a nonzero $u\in U$ for which $F_u(x) = 0$ for all $x\in \supp\eta$. Let us scale it so that $\nu(u^2)=1$.

We want to sandwich $\nu(F_u^2)$. With Lemma~\ref{lem: fractional kernel keeps stationary mean} in mind, we will estimate the norm of the kernel average of $u$. Define
\[
\psi:\Delta^\circ\longrightarrow \mathbb R,
\qquad
\psi \coloneqq \int_\Kcal k_yu(y)\,d\nu(y).
\]
The integral above exists by the compactness of $\Kcal$. The reproducing property and Cauchy--Schwarz inequality give
\[
\nu(u^2)=\ip{u}{\psi}_{\Hscr_a}\leq\norm{u}_{\Hscr_a}\norm{\psi}_{\Hscr_a}.
\]
Equation~\eqref{eq: leading eigenfunctions control Hilbert norm} and $\nu(u^2)=1$ imply $\norm{\psi}_{\Hscr_a}\geq\sqrt{\sigma_{r+1}}/2$. Lemma~\ref{lem: fractional kernel keeps stationary mean} then gives a constant $C_1>0$ independent of $\eta$ and $r$ such that
\begin{equation}\label{eq: replacement has spectral lower bound}
\nu(F_u^2)\geq C_1\norm{\psi}_{\Hscr_a}^{8}\geq\frac{C_1}{2^8}\sigma_{r+1}^4.
\end{equation}

We next bound the same $\nu(F_u^2)$ from above by the stationary defect of $\eta$. Let $h_u:\Delta^\circ\to\mathbb R$ be the Poisson solution supplied by Lemma~\ref{lem: Poisson equation with bounded norm}, which satisfies $\norm{h_u}_{\Hscr_a}\leq C_2$ for some fixed $C_2>0$. By the definition of $u$, we have $\eta(F_u^2)=0$. Since $\eta(1)=1$, applying $\eta$ to the Poisson equation~\eqref{eq: Poisson equation} gives
\[
(\eta-\Lcal^*\eta)(h_u) = \eta\big((I-\Lcal)h_u\big) = \eta(F_u^2)-\nu(F_u^2) = -\nu(F_u^2).
\]
The reproducing property, Cauchy--Schwarz inequality, and the bound on $\norm{h_u}_{\Hscr_a}$ therefore give
\[
\nu(F_u^2) = \abs{(\eta-\Lcal^*\eta)(h_u)} \leq \norm{\eta-\Lcal^*\eta}_{K_a}\norm{h_u}_{\Hscr_a} \leq C_2\norm{\eta-\Lcal^*\eta}_{K_a}.
\]
Combining this with equation~\eqref{eq: replacement has spectral lower bound} proves the assertion.
\end{proof}

\begin{lemma} \label{lem: spectral lower bound for minimal model size}
Fix $0<c<1/4$. Then for all sufficiently small $\eps>0$,
\begin{equation*}
r_{\min}^{\,a}(\eps) \geq \frac12\Ncal_a\big(c\log(1/\eps)\big).
\end{equation*}
\end{lemma}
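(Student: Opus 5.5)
The plan is to convert Proposition~\ref{prop: eigenvalues lower bounds the stationary defect} into a counting statement. Take a constellation $\eta$ attaining the minimum in \eqref{eq: minimal model size}, and set $r=r_{\min}^{\,a}(\eps)=\#\supp\eta$. Then $\norm{\eta-\Lcal^*\eta}_{K_a}\leq\eps$, so the proposition gives
\[
\sigma_{r+1}\leq (C\eps)^{1/4}.
\]
The task is to show that this forces $r$ to be at least half of $\Ncal_a(t)$ for $t=c\log(1/\eps)$.

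First, fix $c<1/4$ and set $t=c\log(1/\eps)$, so that $e^{-t}=\eps^{c}$. Because $c<1/4$, we have $(C\eps)^{1/4}=C^{1/4}\eps^{1/4}<\eps^{c}$ for all sufficiently small $\eps$. Hence $\sigma_{r+1}<e^{-t}$. Since the eigenvalues are non-increasing, $\sigma_j<e^{-t}$ for every $j\geq r+1$. Therefore
\[
\#\{j:\sigma_j\geq e^{-t}\}\leq r, \qquad\text{that is,}\qquad \Ncal_a(t)\leq r+1.
\]

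Second, since every constellation has at least one support point, $r\geq1$, so $r+1\leq 2r$. This gives $r\geq\frac12\Ncal_a(t)$, which is the claim. The factor $\frac12$ exists precisely to absorb the ``$+1$'' in the definition of $\Ncal_a$.

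There are two technical points to check. The first is that the minimum in \eqref{eq: minimal model size} is attained, or at least that the set is nonempty. Nonemptiness follows from Theorem~\ref{thm: polynomial computation}, which supplies constellations with arbitrarily small defect. If attainment is in doubt, it suffices to take any constellation with $\#\supp\eta=r_{\min}^{\,a}(\eps)$ and defect at most $\eps$; this exists because the minimum of a nonempty set of natural numbers is attained. The second point is that Proposition~\ref{prop: eigenvalues lower bounds the stationary defect} is stated for at most $r$ support points, which fits directly. The step where care is needed is the threshold comparison $(C\eps)^{1/4}<\eps^c$, which is where the strict inequality $c<1/4$ and the restriction to small $\eps$ are used.
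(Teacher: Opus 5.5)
Your proposal is correct and follows essentially the same route as the paper: both take $t=c\log(1/\eps)$, use $c<1/4$ to get $\sigma_{r+1}\le (C\eps)^{1/4}<e^{-t}$ from Proposition~\ref{prop: eigenvalues lower bounds the stationary defect}, deduce $r\ge \Ncal_a(t)-1$, and absorb the $+1$ via $r\ge1$. The only difference is cosmetic: you fix a minimizing constellation from the start, whereas the paper argues for an arbitrary admissible $\eta$ and then takes the minimum.
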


\begin{proof}
Let $C\geq1$ be the constant in Proposition~\ref{prop: eigenvalues lower bounds the stationary defect}, and put $t = c\log(1/\eps)$. Since $4c<1$, for all sufficiently small $0<\eps<1$ we have $t\geq0$ and
\[
C\eps < \eps^{4c} = e^{-4t}.
\]
Let $\eta$ be any constellation satisfying $\norm{\eta - \Lcal^*\eta}_{K_a} \leq \eps$, and put $r \coloneqq \#\supp\eta\geq1$. Proposition~\ref{prop: eigenvalues lower bounds the stationary defect} gives
\[
\sigma_{r+1}^4 \leq C\eps < e^{-4t}, \qquad \text{so} \qquad \sigma_{r+1} < e^{-t}.
\]
Since the eigenvalues are in non-increasing order, at most $r$ of them are more than or equal to $e^{-t}$. By the definition of $\Ncal_a$, we obtain $r \geq \Ncal_a(t) - 1$. Taking the minimum over all constellations gives
\[
r_{\min}^{\,a}(\eps) \geq \max\{1,\Ncal_a(t) - 1\} \geq \frac12\Ncal_a(t) = \frac12\Ncal_a\big(c\log(1/\eps)\big). \qedhere
\]
\end{proof}

We now give the proof that constellation dimension governs the minimal model size.

\begin{proof}[Proof of Proposition~\ref{prop: optimal cardinality is the constellation dimension} and equation~\eqref{eq: optimality of the construction}]
Fix $b\in\Kcal$. Let $n(\eps)$ be the least nonnegative integer satisfying $E_{n(\eps)}\leq\eps$, as in Lemma~\ref{lem: spectral upper bound for our construction}. Thus, for all sufficiently small $\eps>0$, the constellation $\eta(\eps)$ in Theorem~\ref{thm: intrinsic complexity} is $\eta_{n(\eps)}$.

By the definition of $r_{\min}^{\,a}(\eps)$, Lemma \ref{lem: spectral upper bound for our construction}, and Lemma~\ref{lem: spectral lower bound for minimal model size}, we have some $C>0$ with
\begin{equation} \label{eq:minimum-sandwich}
C^{-1}\Ncal_a\big(C^{-1}\log(1/\eps)\big) \leq r_{\min}^{\,a}(\eps) \leq \#\supp\eta(\eps)
\leq 2n(\eps)+1 \leq C\Ncal_a\big(C\log(1/\eps)\big).
\end{equation}
Taking logarithms and limits proves Proposition~\ref{prop: optimal cardinality is the constellation dimension} and equation~\eqref{eq: optimality of the construction}.
\end{proof}

\section{Invertibility and algebraic dimension} \label{section: invertibility and algebraic dimension}

In this section we will prove Theorem~\ref{thm: invertible subfamily lower bounds constellation dimension}.

\subsection{Overview}

Recall that $\ip{u}{\Cscr_a u}_{\Hscr_a}=\norm{u-\Lcal u}_{L^2(\nu)}^2$. By the Min--Max principle (Theorem~\ref{thm: min--max principle}), it suffices to find a large space $U\subset\Hscr_a$ whose every unit function has a stationary defect $\norm{u-\Lcal u}_{L^2(\nu)}$ bounded from below. Lemma~\ref{lem: Holder-type interpolation inequality} provides a way to control $\norm{u-\Lcal u}_{L^2(\nu)}$: every $u\in\Hscr_a$ with $\nu(u)=0$ satisfies
\[
\norm{u}_{C(\Kcal)} \;\leq\; C\norm{u-\Lcal u}_{L^2(\nu)}^\alpha\norm{u}_{\Hscr_a}^{1-\alpha}.
\]
Therefore, if we could prevent functions from being too small on $\Kcal$ relative to their Hilbert norms, then their stationary defect will not be too small. Then, how can we construct $U$ on which small values of $\norm{\cdot}_{C(\Kcal)}$ force a small $\norm{\cdot}_{\Hscr_a}$?

A useful bridge is the Pull-back theorem~\ref{thm: pull-back theorem}: it allows us to bound $\norm{\cdot}_{\Hscr_a}$ using the supremum norm $\norm{\cdot}_{C(\mathbb D^{d+1})}$ on the complex closed polydisk $\overline{\mathbb D}^{d+1}$ through the Cayley transform $\chi$.

Now suppose that $U$ has a basis $\{P_1,\ldots,P_q\}$. Then we need to control every linear combination $Q = \sum_j c_jP_j$. We need $U$ to satisfy two properties:
\begin{align*}
&\text{[Property X]: \qquad Small $\norm{P_j}_{C(\Kcal)}$ $\implies$ Small $\norm{P_j}_{C(\mathbb D^{d+1})}$.} \\
&\text{[Property Y]: \qquad Small $\norm{Q}_{C(\Kcal)}$ $\implies$ Small $\norm{c_jP_j}_{C(\Kcal)}$ for all $j$.}
\end{align*}
Then we obtain the implication:
\[
\text{Small $\norm{Q}_{C(\Kcal)}$ $\implies$ Small $\norm{Q}_{C(\mathbb D^{d+1})}$,}
\]
and thus follows the desired control over the stationary defect of $Q$. The difficult part is to construct $U$ so that Property Y holds, since arbitrary linear combinations could exhibit cancellations among the generators $P_j$.

This is where invertibility plays a role: it lets us pull back a polynomial on a small copy of $\Kcal_0$ to compress it in that region. By combining it with an analogue of Lagrange interpolation polynomials, we can construct bump-like functions scattered across $\Kcal_0$, so that each function's contribution remains visible in its own region and cannot be cancelled by the others. The resulting $U$ therefore satisfies Property Y.

\subsection{Polynomial observations in the Hilbert space}

We first see that every polynomial on $\Delta$ can be turned into a Hilbert-space function by multiplying by a rational function. Write $\mathcal P_n^{\mathrm{hom}}$ for the real homogeneous polynomials of degree $n$ in $d+1$ variables:
\[
\mathcal P_n^{\mathrm{hom}}
\;\coloneqq\;
\Span_{\mathbb R}
\left\{
X_1^{\alpha_1}\cdots X_{d+1}^{\alpha_{d+1}} \setcond \alpha_j\in\mathbb Z_{\geq0}, \;\alpha_1+\cdots+\alpha_{d+1}=n
\right\}
\subset\mathbb R[X_1,\ldots,X_{d+1}].
\]
Since $\sum_{j=1}^{d+1} X_j = 1$ as a polynomial on $\Delta$, every polynomial $P\in \mathcal P_N(\Delta)$ has a homogeneous representation $\widetilde P\in \mathcal P_N^{\mathrm{hom}}$ such that $P(x) = \widetilde P(x)$ for all $x\in\Delta$; this follows by multiplying each monomial of degree $k\leq N$ of $P$ by $\big(\sum_{j=1}^{d+1} X_j\big)^{N-k}$.

\begin{lemma} \label{lem: weighted polynomials}
For an integer $n\geq0$, define $\omega_n: \Delta \to \mathbb R$ by $\omega_n(x) \coloneqq \prod_{j=1}^{d+1}(1+x_j)^{-n}$. Then for every $P\in\mathcal P_n^{\mathrm{hom}}$,
\[
\omega_nP\in\Hscr_a,\qquad
\norm{\omega_nP}_{\Hscr_a}\leq\norm P_{C(\mathbb D^{d+1})},\qquad
e^{-n} \leq \omega_n(x) \leq1 \quad (x\in\Delta).
\]
\end{lemma}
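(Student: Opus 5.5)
The plan is to prove the three assertions separately, handling membership and the norm bound together through the Hardy-space picture of \S\S\ref{subsec: Connection to the Hardy kernel}. The pointwise bounds are elementary. For $x\in\Delta$ each factor satisfies $1\leq 1+x_j\leq 1+x_j$ with $\sum_j x_j=1$, so $1\leq\prod_j(1+x_j)\leq\prod_j e^{x_j}=e$. Raising to the power $-n$ gives $e^{-n}\leq\omega_n(x)\leq 1$.

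For the Hilbert-space statements, extend $Q\coloneqq\omega_nP$ to $\Omega$ by the same formula, $Q(z)=P(z)\prod_j(1+z_j)^{-n}$. Since $\operatorname{Re}z_j>0$, we have $|1+z_j|>1$, so $Q$ is holomorphic on $\Omega$. Set $F\coloneqq Q\circ\chi^{-1}$, which is holomorphic on $\mathbb D^{d+1}$. The key claim will be
\[
\sup_{z\in\Omega}|Q(z)|\;\leq\;\norm{P}_{C(\mathbb D^{d+1})}.
\]
Granting this, $F$ is bounded on $\mathbb D^{d+1}$ by $\norm P_{C(\mathbb D^{d+1})}$. Because the angular measures defining the $H^2$ norm are probability measures, this gives $\norm{F}_{H^2(\mathbb D^{d+1})}\leq\norm P_{C(\mathbb D^{d+1})}$. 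By \eqref{eq: connection to Hardy kernel} and the Pull-back theorem~\ref{thm: pull-back theorem} applied with $\varphi=\chi|_{\Delta^\circ}$, the restriction $F\circ\chi|_{\Delta^\circ}=\omega_nP|_{\Delta^\circ}$ lies in $\Hscr(K_a)$, with norm at most $\norm{F}_{H^2}$. One detail needs checking: $\Hscr_a$ is the real space, whereas the pull-back produces a complex RKHS. Since $P$ has real coefficients, $\omega_nP$ is real on $\Delta^\circ$, so I will pass from the complex space to $\Hscr_a$ by taking real parts of the representing functions. This does not increase the norm, and $K_a$ is real-valued, so it is legitimate.

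The main step is the sup bound, and I would prove it using homogeneity. Fix $z\in\Omega$ and put $s\coloneqq\max_j|1+z_j|>1$. Because $\operatorname{Re} z_j>0$, each $j$ satisfies $|z_j|<|1+z_j|\leq s$, so $z/s$ lies in the open polydisc. Since $P$ is homogeneous of degree $n$,
\[
|P(z)|=s^n\,|P(z/s)|\leq s^n\norm P_{C(\mathbb D^{d+1})}.
\]
Every factor $|1+z_j|$ exceeds $1$, so $\prod_j|1+z_j|^n\geq s^n$. Combining the two inequalities yields $|Q(z)|\leq\norm P_{C(\mathbb D^{d+1})}$, which is the claim.

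The only remaining point of care is the real-versus-complex Hilbert space identification noted above, together with confirming that the pull-back norm inequality applies to the subspace $\Hscr$ described in the Remark. I expect both to be routine.
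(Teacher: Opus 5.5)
Your proof is correct and is essentially the paper's: extend $\omega_nP$ holomorphically to $\Omega$, bound it there by $\norm P_{C(\mathbb D^{d+1})}$ using homogeneity, and apply the Pull-back theorem. The only differences are that the paper rescales by the complex factor $\omega_1(z)$, which gives the exact identity $\omega_n(z)P(z)=P\big(\omega_1(z)z\big)$ with $\abs{\omega_1(z)z_j}<1$, whereas you rescale by $1/\max_j\abs{1+z_j}$ and then compare; and in the elementary bound you meant $1\leq 1+x_j\leq e^{x_j}$.
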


\begin{proof}
The same expression extends $\omega_n$ to a holomorphic function on $\Omega$. We have $\omega_n(z)P(z) = P\big(\omega_1(z)z_1,\ldots,\omega_1(z)z_{d+1}\big)$ for $z\in\Omega$ by homogeneity of $P$. Since
\[
\abs{\omega_1(z)z_j}
=\left|\frac{z_j}{1+z_j}\right|\prod_{k\ne j}\left|\frac1{1+z_k}\right|<1,
\qquad(z\in\Omega)
\]
we have $\abs{\omega_n(z)P(z)}\leq\norm P_{C(\mathbb D^{d+1})}$ on $\Omega$. Thus the Pull-back theorem~\ref{thm: pull-back theorem} implies $\omega_nP\in\Hscr_a$ and the norm bound. The last bound follows from $1\leq\prod_j(1+x_j)\leq e^{\sum_jx_j}=e$.
\end{proof}

\subsection{Preventing cancellation} \label{subsection: preventing cancellation}

We construct polynomial spaces satisfying Properties X and Y.

\begin{lemma} \label{lem: enlarging polynomial spaces}
Let $A_0,\ldots,A_{m-1}$ be positive matrices. Let $\Kcal_0$ be the attractor of a nonempty invertible subfamily $\mathcal A$, and fix $0<s<\dim_\mathbb C\overline{\Kcal_0}^{\mathrm{Zar}}$. There are a constant $B\geq1$ and integers $q,\Lambda\geq2$ with $q>\Lambda^s$ such that there is $\mathcal E_\ell\subset\mathcal P_{\Lambda^\ell}^{\mathrm{hom}}$ for every integer $\ell\geq0$ satisfying
\begin{equation} \label{eq: growing polynomial spaces}
\norm{Q}_{C(\mathbb D^{d+1})}\leq B^{\Lambda^\ell}\norm{Q}_{C(\Kcal_0)}
\;\;\text{for all}\;\; Q\in \mathcal E_\ell, \;\text{and} \;\;
\dim \mathcal E_\ell=q^\ell.
\end{equation}
\end{lemma}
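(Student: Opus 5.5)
The plan is to build the spaces $\mathcal E_\ell$ self-similarly. Each level will be obtained from the previous one by pulling back through $q$ invertible branches of $\mathcal A$ and gluing the results with bump-like homogeneous multipliers. Then $\dim\mathcal E_{\ell+1}=q\dim\mathcal E_\ell$ and $\deg=\Lambda\cdot\Lambda^\ell$ by construction.

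\emph{Step 1 (choice of $\Lambda$, $q$, pieces).} Let $r=\dim_{\mathbb C}\overline{\Kcal_0}^{\mathrm{Zar}}>s$. Pick $q$ words $w_1,\ldots,w_q$ in $\mathcal A$, all of a common length $k$, whose cylinders $\Kcal_i\coloneqq f_{w_i}(\Kcal_0)\subset\Kcal_0$ are pairwise at positive distance. Each $\Kcal_i$ is a projective image of $\Kcal_0$ under an invertible linear map, so it has the same Zariski dimension $r$. For each $i$, take a homogeneous polynomial $\psi_i$ of some fixed degree $D$ with $\psi_i\geq1$ on $\Kcal_i$ and $|\psi_j|\leq\theta$ on $\Kcal_i$ for $j\neq i$. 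For example, $\psi_i$ can be a suitably scaled power of a positive form peaked near $\Kcal_i$, or a Lagrange-type product of separating linear forms. Here $\theta<1$ is a parameter to be made small by enlarging $D$.

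Then set $\Lambda\coloneqq D+1$. The induction is
\[
\mathcal E_{\ell+1}\coloneqq\Span\Big\{\psi_i^{\Lambda^\ell\cdot D/(\Lambda-1)}\,(Q\circ A_{w_i}^{-1})\;:\;Q\in\mathcal E_\ell,\ 1\leq i\leq q\Big\}.
\]
The exponent is arranged so that the total degree is $\Lambda^{\ell+1}$; if needed, I would instead adjust the degrees to $D\Lambda^\ell$ and absorb the difference by multiplying by $(\sum_jX_j)^{e}$. Here $Q\circ A_{w_i}^{-1}$ is homogeneous of degree $\Lambda^\ell$ and agrees on $\Delta$ with $Q\circ f_{w_i}^{-1}$ times a positive factor $c_i(x)^{\Lambda^\ell}$, with $c_i$ bounded above and below on $\Kcal_i$.

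The base case $\mathcal E_0$ is spanned by a single constant degree-one form, for instance $\sum_jX_j$. Since we need $q>\Lambda^s$ with $\Lambda\approx D$ fixed, the number of pieces $q$ must be taken large once $D$ is fixed. This ordering is the delicate point: $\theta$ must beat constants that depend on the chosen words. To get enough pieces with uniformly controlled distortion, I would use the polynomial dimension growth $\dim\mathcal P_N(\Kcal_0)\asymp N^r$ (Lemma~\ref{lem: polynomial dimension growth}) together with $r>s$. An alternative is to replace the single-level bumps by a base space $\mathcal E_1$ of restrictions with $\dim\approx\Lambda^r>\Lambda^s$, on which $\norm{\cdot}_{C(\Kcal_0)}$ is a norm; finite-dimensionality then gives Property~X with some $B^\Lambda$.

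\emph{Step 2 (Properties X and Y by induction).} Take $P=\sum_i\psi_i^{e_\ell}(Q_i\circ A_{w_i}^{-1})$ with $Q_i\in\mathcal E_\ell$ and $e_\ell$ the exponent above. On $\Kcal_i$, the $i$-th term is at least $\sup_{\Kcal_0}|Q_i|\cdot c^{\Lambda^\ell}$ at some point, because $f_{w_i}$ maps $\Kcal_0$ onto $\Kcal_i$. Each other term $j\neq i$ is at most $\theta^{e_\ell}M^{\Lambda^\ell}\norm{Q_j}_{C(\mathbb D^{d+1})}$, where $M$ bounds $A_{w_j}^{-1}$ acting on the polydisk; by induction this is at most $\theta^{e_\ell}(MB)^{\Lambda^\ell}\norm{Q_j}_{C(\Kcal_0)}$. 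If $\theta^{D/(\Lambda-1)}MB\,q<c$, the cross terms cannot cancel the diagonal one. This gives Property~Y: $\max_i\norm{Q_i}_{C(\Kcal_0)}\leq C^{\Lambda^\ell}\norm{P}_{C(\Kcal_0)}$. Property~X for $P$ then follows from the triangle inequality on the polydisk, using $|\psi_i|\leq R$ and $|Q_i\circ A_{w_i}^{-1}|\leq (MB)^{\Lambda^\ell}\norm{Q_i}_{C(\Kcal_0)}$. The result is a bound $(R^{D}MBC q)^{\Lambda^\ell}$, which must be at most $B^{\Lambda^{\ell+1}}=(B^\Lambda)^{\Lambda^\ell}$. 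This holds once $B$ is large, since $B^{\Lambda}$ dominates $B\cdot(\text{const})$ for $\Lambda\geq2$. The dimension $q^{\ell+1}$ follows from the same non-cancellation: the map $(Q_i)_i\mapsto P$ is injective.

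\emph{Main obstacle.} The hardest part is the order of constants. The suppression factor $\theta$ must beat $MBq$. Here $B$ is fixed only at the end, while $M$ and $q$ depend on the chosen words. I expect this to close because $B$ enters linearly per level while $B^\Lambda$ is available, so it suffices to make $\theta^{D/(\Lambda-1)}$ small relative to $M q$ alone before choosing $B$. If that fails, I would first try taking all $w_i$ to be words of one fixed length in a single two-letter invertible subfamily. This keeps $M$ uniform, and $q$ grows like $2^k$ while the separation of the pieces is controlled by the Hilbert-metric contraction (Lemma~\ref{lemma: Birkhoff-Hopf constants}).
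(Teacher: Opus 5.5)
\textbf{There is a genuine gap: the order in which the constants are chosen.}

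Your architecture is the paper's: pull $\mathcal E_\ell$ back through $M_{w_j}^{-1}$, glue with homogeneous bump multipliers raised to a power proportional to $\Lambda^\ell$, get Property~Y from domination on each piece and Property~X from the triangle inequality, and read off the dimension from non-cancellation. The problem is the step you flag as delicate, and neither ordering you suggest closes it.

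Your cross terms are bounded by $(\theta MB)^{\Lambda^\ell}\norm{Q_j}_{C(\Kcal_0)}$, so they contain the inductive constant $B$. Two constraints then squeeze $B$ from both sides:
\begin{itemize}
\item non-cancellation needs roughly $\theta\,(M/c)\,B\,q<1$;
\item Property~X needs $B^{\Lambda-1}\gtrsim R_*(M/c)q$, where $R_*=\sup_{\mathbb D^{d+1}}|\psi_i|$.
\end{itemize}
So it is not enough to make $\theta$ small relative to $Mq$ before choosing $B$.

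Making $\theta$ small by raising $D$ also raises $\Lambda=D+1$. You also cannot take $q$ large with $D$ fixed. Property~X makes restriction to $\Kcal_0$ injective on $\mathcal E_1\subset\mathcal P^{\mathrm{hom}}_\Lambda$, hence $q=\dim\mathcal E_1\leq\dim\mathcal P_\Lambda(\Kcal_0)\asymp\Lambda^r$. Thus $q$ must grow with $\Lambda$, somewhere between $\Lambda^s$ and $\Lambda^r$. That needs ever finer pieces, i.e.\ longer words. The distortion $M/c$ is essentially the condition number of $A_{w_i}$, which grows exponentially in the word length. Nothing in your proposal shows that degree-$D$ bumps separating that many pieces reach a $\theta$ that beats this growth, together with the growth of $R_*$.

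The two-letter fallback does not help either. $M$ is uniform over words of a fixed length $k$, but it still grows exponentially in $k$, and the $2^k$ cylinders need not even be separated.

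\textbf{The missing idea: one word length $N$ drives both suppression and distortion, with $q$ and $\Lambda$ fixed beforehand.} The paper first normalizes each $M_j$ to maximal column sum one. Then $(\one^\top M_wy)^{-n}\geq1$ on the piece's own copy, and $\norm{P\circ M_w^{-1}}_{C(\mathbb D^{d+1})}\leq R^{Nn}\norm{P}_{C(\mathbb D^{d+1})}$ for $|w|=N$ and $P\in\mathcal P_n^{\mathrm{hom}}$. It then chooses, in this order:
\begin{enumerate}
\item an integer $c$ with $R^2\kappa^c<1$;
\item an integer $p$ so large that $q\coloneqq\dim\mathcal P_p(\Kcal_0)>\Lambda^s$ for $\Lambda\coloneqq pc+1$ (possible by Lemma~\ref{lem: polynomial dimension growth}, since $c$ is already fixed and $r>s$);
\item nodes $x_1,\ldots,x_q\in\Kcal_0$ with Lagrange-type $L_j\in\mathcal P_p^{\mathrm{hom}}$ satisfying $L_j(x_k)=\delta_{jk}$;
\item only then, words $w_j$ of a common length $N$ with $x_j\in f_{w_j}(\Kcal_0)$.
\end{enumerate}
These cylinders have diameter $O(\kappa^N)$ while the $L_j$ are fixed, so $|L_j-\delta_{jk}|\leq A\kappa^N$ on $f_{w_k}(\Kcal_0)$. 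The multipliers $(2L_j)^{c\Lambda^\ell}$ therefore suppress cross terms by $(2A\kappa^N)^{c\Lambda^\ell}$, while the pullback costs only $(R^NB)^{\Lambda^\ell}$. Setting $B\coloneqq2q(2A)^cR^N$, the choice of $c$ gives $B^2\kappa^{cN}\leq1$ for large $N$. This single inequality is exactly the compatibility between your two constraints on $B$, which your proposal leaves open.
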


\begin{proof}
Write $\mathcal A = \{M_j\}_{j\in J}$ and $M_w = M_{w_1}\cdots M_{w_n}$ for a word $w = (w_1,\ldots,w_n)$. Within this proof, we multiply each $M_j$, $j\in J$, by a positive scalar so that its maximal column sum is one; this does not change their projective maps or $\Kcal_0$.

Choose $R\geq1$ upper-bounding the absolute row sums of the inverse of every $M_j$. For a word $w\in J^N$, we have $0<\one^\top M_wy\leq1$ for $y\in\Delta$, and every absolute row sum of $M_w^{-1}$ is at most $R^N$. Then $R^{-N}M_w^{-1}z\in\mathbb D^{d+1}$ whenever $z\in\mathbb D^{d+1}$. Let $n\geq1$ be an integer and fix $P\in\mathcal P_n^{\mathrm{hom}}$. We have
\[
\abs{P(M_w^{-1}z)}=R^{Nn}\abs{P(R^{-N}M_w^{-1}z)}\leq R^{Nn}\norm{P}_{C(\mathbb D^{d+1})}.
\qquad (z\in\mathbb D^{d+1})
\]
Since $M_w^{-1}f_wy=y/(\one^\top M_wy)$, the bound above and the homogeneity give
\begin{equation} \label{eq: polynomial on a small copy}
\norm{P\circ M_w^{-1}}_{C(\mathbb D^{d+1})}\leq R^{Nn}\norm{P}_{C(\mathbb D^{d+1})}
\qquad
(P\circ M_w^{-1})(f_wy)=\frac{P(y)}{(\one^\top M_wy)^n}.
\end{equation}

We next choose bump-like polynomials which are close to one on their own copies of $\Kcal_0$ and close to zero on the other copies. Choose an integer $c\geq1$ such that $R^2\kappa^c<1$. By Lemma~\ref{lem: polynomial dimension growth} and $s<\dim_\mathbb C\overline{\Kcal_0}^{\mathrm{Zar}}$, we can choose an integer $p\geq1$ large enough that
\[
q\coloneqq\dim\mathcal P_p(\Kcal_0)>\Lambda^s,
\qquad \Lambda\coloneqq pc+1.
\]
If $P\in \mathcal P_p(\Kcal_0)$ vanishes on $\Kcal_0$, it is $0$ as a polynomial in $\mathcal P_p(\Kcal_0)$. Hence the evaluation functional $\mathcal P_p(\Kcal_0)\longrightarrow\mathbb R$, $P\longmapsto P(x)$, spans the dual of $\mathcal P_p(\Kcal_0)$ as $x$ ranges over $\Kcal_0$. Then we can choose $x_1,\ldots,x_q\in\Kcal_0$ such that
\[
\mathcal P_p(\Kcal_0)\longrightarrow\mathbb R^q,
\qquad P\longmapsto\big(P(x_1),\ldots,P(x_q)\big)
\] 
is an isomorphism. For each $j\in\{1,\ldots,q\}$, we take the preimage of the standard basis vector $e_j$ under this map and choose a homogeneous representation $L_j\in\mathcal P_p^{\mathrm{hom}}$. These are analogues of the Lagrange interpolation polynomials, as they satisfy $L_j(x_k)=\delta_{jk}$ for all $j,k$.

For every integer $N\geq1$, we have $\Kcal_0=\bigcup_{w\in J^N}f_w(\Kcal_0)$. We can therefore choose words $w_1,\ldots,w_q\in J^N$ such that $x_j\in f_{w_j}(\Kcal_0)$ for every $j$. We will show that $2L_j$ acts like a bump function: it is more than $1$ on the same-address copy $f_{w_j}(\Kcal_0)$, and exponentially small in $N$ on the other copies $f_{w_k}(\Kcal_0)$ for $k\ne j$.

By Lemma~\ref{lem: basic inequalities for rho and theta}, the Euclidean diameter of each $f_{w_j}(\Kcal_0)$ is at most $\kappa^N\diam_\rho(\Kcal_0)$. Since the derivatives of the fixed polynomials $L_j$ are bounded on $\Delta$, there is a constant $A\geq d+1$, independent of $N$, such that
\begin{equation} \label{eq: interpolation on small copies}
\norm{L_j}_{C(\mathbb D^{d+1})}\leq A,
\qquad
\abs{L_j(x)-\delta_{jk}}\leq A\kappa^N
\qquad (x\in f_{w_k}(\Kcal_0),\;1\leq j,k\leq q).
\end{equation}

Since $R^2\kappa^c<1$, we can fix $N$ large enough that $2q(2A)^c R^N \kappa^{cN/2}\leq1$. Put $B\coloneqq2q(2A)^cR^N$. Then $B^2\kappa^{cN}\leq1$, and $(2A\kappa^N)^c\leq B^2\kappa^{cN}\leq1$. Thus $A\kappa^N\leq1/2$ since $c\geq1$. Then equation~\eqref{eq: interpolation on small copies} gives
\begin{equation} \label{eq: 2L is a bump function}
2L_j(x)\geq1 \quad \big(x\in f_{w_j}(\Kcal_0)\big),
\qquad
\abs{2L_j(x)}\leq2A\kappa^N \quad \big(x\in f_{w_k}(\Kcal_0) \;\text{with}\; k\ne j\big).
\end{equation}

We now construct the spaces $\mathcal E_\ell$ by induction. Let $\mathcal E_0\coloneqq\Span_{\mathbb R}\{X_1+\cdots+X_{d+1}\}\subset\mathcal P_1^{\mathrm{hom}}$. Since $\sum_j X_j = 1$ on $\Kcal_0$, equation~\eqref{eq: growing polynomial spaces} holds for $\ell=0$. Suppose that $\mathcal E_\ell$ has been constructed and satisfies equation~\eqref{eq: growing polynomial spaces}. Define
\[
\mathcal E_{\ell+1}\coloneqq\left\{\sum_{j=1}^q(2L_j)^{c\Lambda^\ell}(P_j\circ M_{w_j}^{-1})\setcond P_1,\ldots,P_q\in \mathcal E_\ell\right\}.
\]
The composition by $M_{w_j}^{-1}$ compresses the polynomial $P_j$ onto the small copy $f_{w_j}(\Kcal_0)$, whereas the factor $(2L_j)^{c\Lambda^\ell}$ suppresses it on the other chosen copies $f_{w_k}(\Kcal_0)$ for $k\ne j$. This $\mathcal E_{\ell+1}$ is a linear subspace of $\mathcal P_{\Lambda^{\ell+1}}^{\mathrm{hom}}$, since each summand is homogeneous of degree $pc\Lambda^\ell+\Lambda^\ell=\Lambda^{\ell+1}$.

Pick any $Q = \sum_{j=1}^q(2L_j)^{c\Lambda^\ell}(P_j\circ M_{w_j}^{-1})\in \mathcal E_{\ell+1}$ where $P_1,\ldots,P_q\in \mathcal E_\ell$. Fix $j\in\{1,\ldots,q\}$. By equation~\eqref{eq: polynomial on a small copy} and the induction hypothesis,
\[
\norm{P_j\circ M_{w_j}^{-1}}_{C(\mathbb D^{d+1})}\leq R^{N\Lambda^\ell}\norm{P_j}_{C(\mathbb D^{d+1})}\leq R^{N\Lambda^\ell}B^{\Lambda^\ell}\norm{P_j}_{C(\Kcal_0)}.
\]
Together with $\norm{L_j}_{C(\mathbb D^{d+1})}\leq A$ and the definition of $B$, this gives
\begin{equation} \label{eq: complex norm of each polynomial term}
\norm{(2L_j)^{c\Lambda^\ell}(P_j\circ M_{w_j}^{-1})}_{C(\mathbb D^{d+1})}
\leq \big((2A)^cR^NB\big)^{\Lambda^\ell}\norm{P_j}_{C(\Kcal_0)}
= \left(\frac{B^2}{2q}\right)^{\Lambda^\ell}\norm{P_j}_{C(\Kcal_0)}.
\end{equation}

We next estimate the same polynomial on $f_{w_k}(\Kcal_0)$ for $k\ne j$. On this domain, equation~\eqref{eq: 2L is a bump function} gives $\norm{2L_j}_{C(f_{w_k}(\Kcal_0))}\leq2A\kappa^N$. Therefore, using $B^2\kappa^{cN}\leq1$,
\begin{equation} \label{eq: linear combination evaluated at suppressed regions}
\norm{(2L_j)^{c\Lambda^\ell}(P_j\circ M_{w_j}^{-1})}_{C\big(f_{w_k}(\Kcal_0)\big)}
\leq \left(\frac{B^2\kappa^{cN}}{2q}\right)^{\Lambda^\ell}\norm{P_j}_{C(\Kcal_0)}
\leq \frac{\norm{P_j}_{C(\Kcal_0)}}{(2q)^{\Lambda^\ell}}.
\end{equation}

On its own copy $f_{w_j}(\Kcal_0)$, the factor $(2L_j)^{c\Lambda^\ell}$ is at least one by equation~\eqref{eq: 2L is a bump function}. Since $0<\one^\top M_wy\leq1$ for $y\in\Delta$, equation~\eqref{eq: polynomial on a small copy} gives
\begin{equation} \label{eq: linear combination evaluated at bump}
\norm{(2L_j)^{c\Lambda^\ell}(P_j\circ M_{w_j}^{-1})}_{C\big(f_{w_j}(\Kcal_0)\big)}\geq\norm{P_j}_{C(\Kcal_0)}.
\end{equation}

Choose $j$ such that $\norm{P_j}_{C(\Kcal_0)}=\max_{1\leq k\leq q}\norm{P_k}_{C(\Kcal_0)}$. On $f_{w_j}(\Kcal_0)$, the $j$th summand of the equation defining $Q$ has norm at least $\norm{P_j}_{C(\Kcal_0)}$ by equation~\eqref{eq: linear combination evaluated at bump}, while each other summand has norm at most $\norm{P_j}_{C(\Kcal_0)}/(2q)^{\Lambda^\ell}$ by equation~\eqref{eq: linear combination evaluated at suppressed regions}. By the triangle inequality and $\Lambda^\ell\geq1$,
\begin{equation} \label{eq: no cancellation between polynomial copies}
\norm{Q}_{C(\Kcal_0)}\geq\norm{Q}_{C\big(f_{w_j}(\Kcal_0)\big)}
\geq\left(1-\frac{q-1}{(2q)^{\Lambda^\ell}}\right)\norm{P_j}_{C(\Kcal_0)}
\geq\frac12\max_{1\leq k\leq q}\norm{P_k}_{C(\Kcal_0)}.
\end{equation}
(This is Property Y.) Summing equation~\eqref{eq: complex norm of each polynomial term} and using equation~\eqref{eq: no cancellation between polynomial copies}, we obtain
\[
\norm{Q}_{C(\mathbb D^{d+1})}
\leq\left(\frac{B^2}{2q}\right)^{\Lambda^\ell}\sum_{k=1}^q\norm{P_k}_{C(\Kcal_0)}
\leq\frac{2q}{(2q)^{\Lambda^\ell}}B^{2\Lambda^\ell}\norm{Q}_{C(\Kcal_0)}
\leq B^{\Lambda^{\ell+1}}\norm{Q}_{C(\Kcal_0)},
\]
where the last inequality follows from $\Lambda^\ell\geq1$, $B\geq1$, and $\Lambda\geq2$.

Finally, if $Q=0$, equation~\eqref{eq: no cancellation between polynomial copies} implies that every $P_k$ vanishes on $\Kcal_0$. The induction hypothesis then gives $\norm{P_k}_{C(\mathbb D^{d+1})}=0$, so every $P_k$ is the zero polynomial. This shows that each $Q\in \mathcal E_{\ell+1}$ has a unique representation. Hence $\dim \mathcal E_{\ell+1}=q\dim \mathcal E_\ell=q^{\ell+1}$. This completes the induction.
\end{proof}

\subsection{Completion of the proofs of the main theorems}

\begin{proof}[Proof of Theorem~\ref{thm: invertible subfamily lower bounds constellation dimension}]
We have $\Kcal_0\subset\Kcal$. If $\dim_\mathbb C\overline{\Kcal_0}^{\mathrm{Zar}}=0$, the assertion is immediate. Otherwise, fix $0<s<\dim_\mathbb C\overline{\Kcal_0}^{\mathrm{Zar}}$ and let $\mathcal E_\ell$, $\ell\geq0$, be the spaces supplied by Lemma~\ref{lem: enlarging polynomial spaces}. Let
\[
\widetilde U_\ell \coloneqq \left\{ u: \Delta^\circ \to \mathbb R \setcond u = \omega_{\Lambda^\ell}P,\; P\in \mathcal E_\ell \right\},
\qquad
U_\ell \coloneqq \left\{ u\in\widetilde U_\ell \setcond \nu(u)=0 \right\}.
\]
Then $\widetilde U_\ell\subset\Hscr_a$ by Lemma~\ref{lem: weighted polynomials}. Multiplication by $\omega_{\Lambda^\ell}$ is injective on $\mathcal E_\ell$: if $\omega_{\Lambda^\ell}P=0$, then $P=0$ on $\Kcal_0$, and equation~\eqref{eq: growing polynomial spaces} implies that $P$ is $0$ as a polynomial. Thus $\dim\widetilde U_\ell=q^\ell$. The map $\widetilde U_\ell\to\mathbb R$, $u\mapsto\nu(u)$, is linear, so $\dim U_\ell\geq q^\ell-1$.

For any $u = \omega_{\Lambda^\ell}P\in U_\ell$, Lemmas~\ref{lem: weighted polynomials} and~\ref{lem: enlarging polynomial spaces} give
\begin{equation} \label{eq: bounding the sup norm from below by Hilbert norm}
\norm{u}_{\Hscr_a}
\leq B^{\Lambda^\ell}\norm{P}_{C(\Kcal_0)}
\leq B^{\Lambda^\ell}e^{\Lambda^\ell}\norm{u}_{C(\Kcal_0)}
\leq (eB)^{\Lambda^\ell}\norm{u}_{C(\Kcal)}.
\end{equation}
Since $\nu(u)=0$, we can apply Lemma~\ref{lem: Holder-type interpolation inequality} to $u$. Using equation~\eqref{eq: bounding the sup norm from below by Hilbert norm} and rearranging gives
\[
\ip{u}{\Cscr_a u}_{\Hscr_a}
= \norm{u-\Lcal u}_{L^2(\nu)}^2
\geq e^{-C\Lambda^\ell}\norm{u}_{\Hscr_a}^2
\]
for a constant $C\geq1$ independent of $\ell$. Thus using $U_\ell$ in the Min--Max principle (Theorem~\ref{thm: min--max principle}) yields
\[
\Ncal_a(C\Lambda^\ell)= 1+\#\left\{j\in \mathbb N \setcond \sigma_j\geq e^{-C\Lambda^\ell}\right\}
\geq q^\ell>\Lambda^{s\ell}.
\qquad (\ell\geq1)
\]
We conclude that $\ld\geq s$, and the assertion follows because $0<s<\dim_\mathbb C\overline{\Kcal_0}^{\mathrm{Zar}}$ was arbitrary.
\end{proof}

\begin{proof}[Proof of Theorem~\ref{thm: intrinsic complexity}]
Equation~\eqref{eq: optimality of the construction} was proved in \S\S~\ref{subsection: exponents for minimal model size}. It remains to prove equation~\eqref{eq: work count expressed through constellation dimension} under the assumption that there are two invertible matrices with distinct Perron-Frobenius vectors.

First, the attractor of the two invertible matrices is infinite. Otherwise, their injective projective maps would permute this finite attractor, and contraction would force it to be a singleton, contradicting their distinct Perron--Frobenius vectors. Its Zariski closure therefore has dimension at least one, so Theorem~\ref{thm: invertible subfamily lower bounds constellation dimension} gives $\ld\geq1$.

Put $\eps'=\eps/4$, and let $n(\eps')$ be the stopping index defined in Lemma~\ref{lem: spectral upper bound for our construction}. The clock inequality (Theorem~\ref{thm: Clock inequality}) then gives $\abs{\eta_{n(\eps')}(\Phi)-\lambda}<\eps$. The operation count in \S\S~\ref{subsection: proof of universal poly complexity}, including the $O(n(\eps')+1)$ operations for evaluating $\eta_{n(\eps')}(\Phi)$, gives
\[
c\big(n(\eps')+1\big)^3\leq W(\eps)\leq C\left(\big(n(\eps')+1\big)^3+\big(n(\eps')+1\big)^2\log(1/\eps)\right)
\]
for the dense incremental factorization used there. By equation~\eqref{eq:minimum-sandwich}, the ratio $\log\big(n(\eps')+1\big)/\log\log(1/\eps)$ has lower and upper limits $\ld$ and $\ud$. Since $\ld\geq1$, the additional term $\big(n(\eps')+1\big)^2\log(1/\eps)$ does not change these exponents: the displayed bounds give $\log W(\eps)=3\log\big(n(\eps')+1\big)+o\big(\log\log(1/\eps)\big)$. This proves equation~\eqref{eq: work count expressed through constellation dimension}.
\end{proof}

\appendix

\section{Reduction of the random Fibonacci sequence} \label{appendix: Reduction of the random Fibonacci sequence}

\subsection{Background}

Here we will prove the reduction stated in Example~\ref{ex: the random Fibonacci sequence}. The starting point is the cancellation discovered by Janvresse--Rittaud--de la Rue \cite{Janvresse--Rittaud--deLaRue}: $B_+B_-B_-B_+=-B_-$, $B_+B_-B_-B_-=-B_+$. These then yield
\[
B_-^3=-I,
\qquad
(B_+B_-^2)^2=I.
\]
Thus, ignoring the signs, we have a generator of order three, represented by $B_-$, and a generator of order two, represented by $B_+B_-^2$. The original random choices $B_-$ and $B_+$ are then represented by the steps $b$ and $ab$ in the free product
\[
\mathbb Z/2\mathbb Z*\mathbb Z/3\mathbb Z
\;=\;\langle a,b\mid a^2=b^3=1\rangle.
\]
This led us to consider the work by Bordenave--Dubail \cite{Bordenave--Dubail} on traffic equations for random walks on free products of finite groups. In this case the traffic equation can be solved explicitly, and that is how we initially obtained the i.i.d.\ representation using the four matrices. The direct proof below was obtained after seeing the answer.

\subsection{Proof}

Let $q\coloneqq(\sqrt5-1)/2$, so $q+q^2=1$. Consider independent choices of $B_+$ and $B_+B_-$ with probabilities $q$ and $q^2$, and call this stochastic process $\mathcal Y$. Its two-step matrices are
\[
(B_+B_-)^2=A_0,
\qquad (B_+B_-)B_+=A_1,
\qquad B_+(B_+B_-)=A_2,
\qquad B_+^2=A_3,
\]
with probabilities $q^4,q^3,q^3,q^2$, respectively. These are exactly $\pi_0,\pi_1,\pi_2,\pi_3$, so the Lyapunov exponent for $\mathcal Y$ is $\lambda_{\mathrm{red}}/2$. We will see that any stationary measure for $\mathcal Y$ can be used to construct one for the original family. The construction uses the reflection
\[
D\coloneqq B_+B_-B_-=\begin{pmatrix}1&0\\0&-1\end{pmatrix},
\qquad DB_+=-B_-,
\qquad DB_-=-B_+.
\]

For an invertible matrix $B$, write $f_B$ for its projective map:
\[
f_B: \mathbb{RP}^1\longrightarrow\mathbb{RP}^1,
\qquad
[x] \longmapsto [xB],
\]
where $\mathbb{RP}^1$ is the real projective space and $[x]$ denotes the equivalence class of the nonzero row vector $x$. The variational principle in equation~\eqref{eq: variational principle for stationary measures} applies to both families after replacing $\Delta$ with $\mathbb{RP}^1$. Since neither family preserves a proper subspace of $\mathbb{RP}^1$, any stationary measure for each family attains the supremum \cite[p.27, Corollary]{Furstenberg--Kifer}. Since the nonnegative cone $\mathcal V$ is preserved by both $f_{B_+}$ and $f_{B_+B_-}$, there is a stationary measure $\mu$ for $\mathcal Y$ supported on $\mathcal V$; see Furstenberg--Kifer \cite[Lemma~1.3, and p.16]{Furstenberg--Kifer}. By stationarity and $f_{B_-}\circ f_B=f_{BB_-}$,
\[
\mu=q(f_{B_+})_*\mu+q^2(f_{B_+B_-})_*\mu,
\qquad
(f_{B_-})_*\mu=q(f_{B_+B_-})_*\mu+q^2(f_D)_*\mu.
\]
Combining the two identities, we obtain
\[
\nu\coloneqq\frac{(f_{B_+})_*\mu+(f_{B_-})_*\mu}{2}
=\frac1{2q}\mu+\frac{q^2}{2}(f_D)_*\mu.
\]
The coefficients on the right sum to $1$. Writing $\Lcal$ for the transfer operator of the original family, the above translates to $\Lcal^*\mu = \nu$. Since $DB_+=-B_-$ and $DB_-=-B_+$, we also have $\Lcal^*\big((f_D)_*\mu\big) = \nu$. This shows that $\nu$ is a stationary measure for the original family.

We now compute the logarithmic averages for $B_+$ and $B_-$. Write $\norm{(u,v)}_\infty \coloneqq \max\{\abs{u},\abs{v}\}$. For every nonzero nonnegative row $x=(u,v)$,
\begin{equation} \label{eq: the modified family of RF has same log increment}
\norm{xB_+}_\infty=\norm{xB_+B_-}_\infty=u+v.
\end{equation}
The two choices in $\mathcal Y$ therefore give the same logarithmic growth. Then by stationarity of $\mu$,
\[
\int_{\mathbb{RP}^1}\log\frac{\norm{xB_+}_\infty}{\norm{x}_\infty}\,d\mu([x])
=
q\int_{\mathbb{RP}^1}\log\frac{\norm{xB_+}_\infty}{\norm{x}_\infty}\,d\mu([x])
+q^2\int_{\mathbb{RP}^1}\log\frac{\norm{xB_+B_-}_\infty}{\norm{x}_\infty}\,d\mu([x])
=\frac{\lambda_{\mathrm{red}}}{2}.
\]

On the other hand, stationarity of $\mu$ and $B_+B_-B_- = D$ give
\[
\int_{\mathbb{RP}^1}\log\frac{\norm{xB_-}_\infty}{\norm{x}_\infty}\,d\mu([x])
=
q\int_{\mathbb{RP}^1}\log\frac{\norm{xB_+B_-}_\infty}{\norm{xB_+}_\infty}\,d\mu([x])
+q^2\int_{\mathbb{RP}^1}\log\frac{\norm{xD}_\infty}{\norm{xB_+B_-}_\infty}\,d\mu([x]).
\]
The first integral is $0$ by equation~\eqref{eq: the modified family of RF has same log increment}. Thus, using $\norm{xD}_\infty = \norm{x}_\infty$ and equation~\eqref{eq: the modified family of RF has same log increment},
\[
\int_{\mathbb{RP}^1}\log\frac{\norm{xB_-}_\infty}{\norm{x}_\infty}\,d\mu([x])
=
q^2 \int_{\mathbb{RP}^1}\log\frac{\norm{x}_\infty}{\norm{xB_+}_\infty}\,d\mu([x])
=
-q^2\frac{\lambda_{\mathrm{red}}}{2}.
\]

Finally, by $\norm{xD}_\infty = \norm{x}_\infty$, $DB_+=-B_-$, and $DB_-=-B_+$, replacing $x$ by $xD$ interchanges the two logarithms for $B_+$ and $B_-$. Their sum therefore has the same integral against $\mu$ and $(f_D)_*\mu$, and hence against $\nu$. The stationary-measure formula for $\nu$ now gives
\[
\lambda_{\mathrm{orig}}
=\frac12\left(\frac{\lambda_{\mathrm{red}}}{2}-q^2\frac{\lambda_{\mathrm{red}}}{2}\right)
=\frac q4\lambda_{\mathrm{red}}
=\frac{\sqrt5-1}{8}\lambda_{\mathrm{red}}.
\]

\section{Constellation dimension for the Sierpi\'nski gasket family} \label{appendix: Constellation dimension for Sierpi\'nski gasket family}

Let us explain the proof outline for $\lud = 4$ for the matrix family in Example~\ref{ex: Sierpinski gaskets}. For this family, the constellation dimension can be explained with algebraic relations.

First, the attractor contains countably many copies of a fractal with algebraic dimension $3$. Indeed, the two products $(A_0A_3)^3$, $(A_0A_3)^3A_0$ preserve $A_0(\mathbb R^8)$, which has vector dimension $4$. In a suitable basis, their restrictions are positive and invertible. An elementary matrix calculation shows that they generate a Zariski-dense group in $\mathrm{GL}_4(\mathbb R)$ under that basis. Consequently, their attractor, denoted by $\Kcal_0$, has algebraic dimension $3$. The additional dimension comes from repeatedly applying $A_2$. Direct calculation gives
\[
f_{A_2}^{\,k}(\Kcal_0)
\subset
\left\{x\in\Delta\setcond x_7-x_3=kx_2\right\},
\qquad k=1,2,\ldots.
\]
Since $x_2>0$, these copies lie on distinct hyperplanes. (The quantity $(x_7-x_3)/x_2$ records which copy a point belongs to.) This lets us prove $\ld\geq4$ by mimicking \S\S\ref{subsection: preventing cancellation}.

On the other hand, we can show that the same attractor lies in a finite union of affine spaces of dimension at most $4$. Indeed, the identity
\[
A_1^n=A_1+\frac{3^n-3}{6}(A_1^2-A_1),\qquad n\geq1,
\]
and a rank calculation show that the images $A_1^nA_i(\mathbb R^8)$, for $i\in\{0,2,3\}$ and $n\geq1$, lie in three fixed five-dimensional vector spaces. Together with $A_0(\mathbb R^8),A_2(\mathbb R^8),A_3(\mathbb R^8)$, whose dimensions are at most $5$, these give six spaces whose intersections with $\Delta$ contain the attractor. Together with an estimate near the boundary of $\Delta$, this lets us prove $\ud\leq4$ by adapting the polynomial approximation argument in Proposition~\ref{prop: constellation dimension universal bound}.

\section{Liouville reset example} \label{appendix: Liouville reset}

Let us prove the constellation-dimension assertions for Example~\ref{ex: Liouville reset}. With the parametrization $x(r)=\frac12(1+r,1-r)^\top$, we identify $\Delta\cong[-1,1]$. The projective maps $f_{A_0}$, $f_{A_1}$, $f_{A_2}$ send $r$ to $1/2$, $r/2$, $2^{-\alpha}r$, respectively. The application of the singular $A_0$ serves as a ``reset'': we have
\[
\Kcal=\{a_{j,k}:j,k\geq0\}\cup\{0\},
\qquad a_{j,k}\coloneqq\frac12\,2^{-j-\alpha k},
\]
where $j$ and $k$ are the number of times $A_1$ and $A_2$ are applied after a reset, respectively.

The intuition is the following: at some resolutions, $\alpha$ admits a rational approximation that is extraordinarily accurate. In this case many points of $\Kcal$ are nearly identical, making the approximation of stationarity very cheap. At other scales, the two independent counts $j$ and $k$ remain distinguishable, and more information is needed.

\subsection{Lower bounds}

\begin{claim}
There is $C>1$ such that the following holds. If for some $h\geq1$ and $m\in\mathbb N$, there are orbit points $r_1,\ldots,r_m\in[0,1/4]$ whose indices satisfy $j+k\leq h$ and whose mutual distances are at least $e^{-h}$, then $\sigma_m\geq e^{-2Cmh}$.
\end{claim}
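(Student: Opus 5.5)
By the Min--Max principle (Theorem~\ref{thm: min--max principle}), it suffices to exhibit an $m$-dimensional subspace $U\subset\Hscr_a$ such that every $u\in U$ satisfies
\[
\norm{u-\Lcal u}_{L^2(\nu)}^2\geq e^{-2Cmh}\norm{u}_{\Hscr_a}^2.
\]
I would follow the strategy of Section~\ref{section: invertibility and algebraic dimension}. First, a lower bound on $\norm{u}_{C(\Kcal)}$ in terms of $\norm{u}_{\Hscr_a}$ is converted into a lower bound on the stationary defect via Lemma~\ref{lem: Holder-type interpolation inequality}. Second, the Hilbert norm is controlled by a supremum over the complex polydisc via Lemma~\ref{lem: weighted polynomials}. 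Since $d=1$, I take $U$ to consist of weighted polynomials $\omega_{n}P$ with $P\in\mathcal P_{n}^{\mathrm{hom}}$ of degree $n\approx m$, restricted to those with $\nu(u)=0$. To do this, I use $m+1$ separated points: the given $r_1,\dots,r_m$ together with one extra point, say $r=1/2$, which is the reset point with $j=k=0$ and lies at distance $\geq 1/4$ from $[0,1/4]$. Adding this point leaves an $m$-dimensional zero-mean subspace after imposing one linear condition.

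The key quantitative step is a Lagrange-type estimate. Let $L_i$ be the Lagrange basis polynomials, of degree $m$, at the $m+1$ nodes (written homogeneously in $x=x(r)$). Because the nodes are $e^{-h}$-separated and lie in $[-1,1]$, each $L_i$ has a denominator at least $e^{-mh}$ and a numerator bounded by $C^m$ on the Cayley image of the polydisc. Hence
\[
\norm{L_i}_{C(\mathbb D^{2})}\leq (Ce^{h})^{m}.
\]
For $P=\sum_i c_iL_i$ we have $c_i=P(r_i)$, so
\[
\norm{P}_{C(\mathbb D^2)}\leq (m+1)(Ce^h)^m\max_i|P(r_i)|\leq e^{C'mh}\norm{P}_{C(\Kcal)}.
\]
This is Property X together with Property Y at once, and no cancellation argument is needed since $d=1$. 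Combining this with Lemma~\ref{lem: weighted polynomials}, which gives $\omega_m\geq e^{-m}$, yields
\[
\norm{u}_{\Hscr_a}\leq e^{C''mh}\norm{u}_{C(\Kcal)}.
\]

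It remains to insert this into Lemma~\ref{lem: Holder-type interpolation inequality}:
\[
\norm{u}_{C(\Kcal)}\leq C\norm{u-\Lcal u}^\alpha_{L^2(\nu)}\norm{u}^{1-\alpha}_{\Hscr_a}.
\]
Rearranging gives
\[
\norm{u-\Lcal u}_{L^2(\nu)}^2\geq e^{-2C''mh/\alpha}\norm{u}_{\Hscr_a}^2,
\]
and hence $\sigma_m\geq e^{-2Cmh}$ after renaming constants.

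The hypothesis $j+k\leq h$ has not been used so far, and I expect this to be the main subtlety. The plain Lagrange argument only needs separation, but Lemma~\ref{lem: Holder-type interpolation inequality} is stated for positive matrix families, whereas $A_0$ here is singular. If needed, I would use $j+k\leq h$ to re-derive the interpolation inequality directly. The key estimate is \eqref{eq: average of long composition}: since $r_i=f_w(1/2)$ for a word $w$ of length at most $h+1$ with $\pi_w\geq 3^{-h-1}$, we have $u(r_i)=u(f_w(1/2))$. Because $\nu$ gives mass at least $3^{-h-1}\cdot 3^{-1}$ near this orbit point, and the defect telescoping of \eqref{eq:word-telescoping} runs for only $h$ steps, the argument should give
\[
|u(r_i)|\leq e^{Ch}\norm{u-\Lcal u}_{L^2(\nu)}+(\text{small}),
\]
which still fits within the target bound $e^{-2Cmh}$.

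Checking this step, and in particular the reset structure, which makes $\Lcal^N u$ constant-like after a reset, is where I would spend the effort.
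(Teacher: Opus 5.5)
Your argument is correct, but it takes a genuinely different route from the paper.

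\textbf{The paper's route.} It takes the reference $a$ to be the reset point $r=1/2$ and works in $\Span\{\xi,\ldots,\xi^m\}$, where $\xi$ is the Cayley coordinate. These functions vanish at $a$, and $f_{A_0}$ maps everything to $a$. So Cauchy--Schwarz gives $\norm{\Lcal u}_{L^2(\nu)}^2\le\frac23\norm{u}_{L^2(\nu)}^2$, and hence a direct gap $\norm{u-\Lcal u}_{L^2(\nu)}\ge(1-\sqrt{2/3})\norm{u}_{L^2(\nu)}$. This bypasses Lemma~\ref{lem: Holder-type interpolation inequality} entirely. The paper then bounds $\norm{u}_{\Hscr_a}\le e^{C'mh}\abs{u(r_i)}$ at a single node $r_i$ chosen far from all roots of $P$. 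This is a root-avoidance argument rather than Lagrange interpolation. Finally, it converts that point value into $L^2(\nu)$ mass through the atom $\nu(\{r_i\})\ge 3^{-h-1}$. That atom bound is the only place where $j+k\le h$ is used.

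\textbf{Your route.} You control $\norm{u}_{C(\Kcal)}$ instead of $\norm{u}_{L^2(\nu)}$, so no atom masses are needed. Your argument is essentially the $d=1$ case of Section~\ref{section: invertibility and algebraic dimension}. One-variable Lagrange interpolation at $m+1$ nodes of $\Kcal$ delivers Properties X and Y at once. It proves the claim for arbitrary $e^{-h}$-separated points of $\Kcal\cap[0,1/4]$, so the depth bound becomes superfluous. The cost is a factor $1/\alpha$ in the exponent, which is harmless for $\ld\ge 1/2$ and $\ud\ge 2/3$. The paper's version, in return, is self-contained and exploits the reset structure directly.

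\textbf{The subtlety you flag does not exist.} Positivity in the paper means entrywise positivity, and $A_0$ is entrywise positive. It is merely of rank one, with $\kappa(A_0)=0$. Lemma~\ref{lem: Holder-type interpolation inequality} never uses invertibility, so it applies verbatim to this family. You should delete the fallback sketch; as written it would not stand as a proof anyway.

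\textbf{A minor correction.} Lemma~\ref{lem: weighted polynomials} needs the supremum of the homogeneous form over $\overline{\mathbb D}^2$ itself, not over a Cayley image. With $r=x_1-x_2$ and $x_1+x_2=1$, the numerator of $L_i$ is $\prod_{k\ne i}\big((1-r_k)X_1-(1+r_k)X_2\big)$. This is at most $2^m$ on $\overline{\mathbb D}^2$, which gives your bound $\norm{L_i}_{C(\mathbb D^2)}\le(Ce^h)^m$.
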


\begin{proof}
By Corollary~\ref{cor: reference independence of constellation dimension}, we may set $a=1/2$ as the reference. Recall $T=I-\Lcal$. If $u\in\Hscr_a$ satisfies $u(a)=0$, the Cauchy--Schwarz inequality and stationarity of $\nu$ imply
\begin{equation} \label{eq: Liouville reset proof LT bounds}
\norm{\Lcal u}_{L^2(\nu)}^2\leq\frac23\norm u_{L^2(\nu)}^2, \qquad \norm{Tu}_{L^2(\nu)}\geq\bigl(1-\sqrt{2/3}\bigr)\norm u_{L^2(\nu)}.
\end{equation}

The Cayley transformation is $\chi_1\big(x(r)\big) =\dfrac{2r-1}{2r+5} \eqqcolon \xi(r)$. Its powers $g_j(r)\coloneqq\xi(r)^j$, $j\geq1$, vanish at $a$ and satisfy $\norm{g_j}_{\Hscr_a}\leq1$ by the Pull-back theorem~\ref{thm: pull-back theorem}.

Write a nonzero $u\in\Span\{g_1,\ldots,g_m\}$ as $u=\xi P(\xi)$, where $P$ is a polynomial of degree less than $m$. Let $\beta_1,\ldots,\beta_s\in\mathbb C$ be the roots of $P$. Since $\xi(r_1),\ldots,\xi(r_m)$ are separated by $ce^{-h}$ for some $c>0$, each $\beta_j$ can be closer than $ce^{-h}/3$ to at most one of $\xi(r_1),\ldots,\xi(r_m)$. Therefore, there is $\xi(r)$ such that it is further than $ce^{-h}/4$ from every $\beta_j$. For $\abs z=1$, since $\abs{z-\xi(r)}\leq2$,
\[
\frac{\abs{P(z)}}{\abs{P(\xi(r))}}
=\prod_{\ell=1}^s\frac{\abs{z-\beta_\ell}}{\abs{\xi(r)-\beta_\ell}}
\leq\prod_{\ell=1}^s\left( \frac{\abs{z-\xi(r)} + \abs{\xi(r)-\beta_\ell}}{\abs{\xi(r)-\beta_\ell}} \right)
\leq\left(\frac{8e^h}{c}+1\right)^s,
\]
Since $\abs \xi \geq 1/11$ on $[0,1/4]$, the Pull-back theorem gives
\begin{equation} \label{eq: proof of Liouville reset; bounding the Hilbert norm of u}
\norm{u}_{\Hscr_a}\leq\sup_{\abs z=1}\abs{P(z)}\leq11\left(1+\frac{8e^h}{c}\right)^s\abs{u(r)}\leq e^{C'mh}\abs{u(r)}.
\qquad (C'>0)
\end{equation}
Each $r_i = a_{j,k}$ satisfies $\nu(\{r_i\})\geq3^{-h-1}$, since a reset followed by $j+k$ maps reaches it. Thus we have $\norm{u}_{L^2(\nu)}^2\geq3^{-h-1}\abs{u(r)}^2$. Combining this with equations~\eqref{eq: Liouville reset proof LT bounds} and \eqref{eq: proof of Liouville reset; bounding the Hilbert norm of u} gives
\[
\norm{u}_{\Hscr_a}
\leq e^{Cmh}\norm{Tu}_{L^2(\nu)},
\]
where $C>0$ is independent of $m,h$. As $u\in\Span\{g_1,\ldots,g_m\}$ was arbitrary, the Min--Max principle (Theorem~\ref{thm: min--max principle}) implies the assertion.
\end{proof}

For any $m\in\mathbb N$ and the orbit points $a_{j,0}$, $1\leq j\leq m$, take $h=2m$ and apply the claim: their indices satisfy $j+k\leq m$, and their mutual distances are at least $2^{-m-1}$. Hence $\sigma_m\geq e^{-4Cm^2}$ for every $m$, proving $\ld\geq1/2$.

\medskip

Let us prove $\ud\geq2/3$. For $n\geq1$, the rational number $p_n/q_n=1+\sum_{k\leq n}q_k^{-1}$ is reduced, and
\[
 0<\delta_n\coloneqq\alpha - \dfrac{p_n}{q_n}\leq2^{-2^{q_n}+1}.
\]
Take all $q_n^2$ orbit points $a_{j,k}$ with $1\leq j,k\leq q_n$. For each $(j,k)$, write $j+kp_n/q_n = b_{j,k}/q_n$. Equality $b_{j,k}  =b_{j',k'}$ implies $q_n\mid(k-k')$, and as $\abs{k-k'}<q_n$, this is possible only when $(j,k)=(j',k')$. Thus the indices $j+kp_n/q_n$ are separated by $1/q_n$. Replacing $p_n/q_n$ by $\alpha$ changes their differences by at most $q_n\delta_n<1/(2q_n)$ for large enough $n$, so the indices $j+k\alpha$ remain separated by $1/(2q_n)$.

The map $t\mapsto\frac12\,2^{-t}$ sends $j+k\alpha$ to the orbit point $a_{j,k}$. Since $j+k\alpha\leq(1+\alpha)q_n$ by $j,k\leq q_n$, the mean-value theorem converts the separation $1/(2q_n)$ between the indices into
\[
\abs{a_{j,k}-a_{j',k'}}\geq\frac{\log2}{2}\,2^{-(1+\alpha)q_n}\cdot\frac1{2q_n}=\frac{\log2}{4q_n}\,2^{-(1+\alpha)q_n}\geq e^{-3q_n}
\]
for distinct pairs $(j,k)$, $(j',k')$ and large enough $n$. Since $j+k\leq2q_n$, we may now apply the claim with $m=q_n^2$ and $h=3q_n$, obtaining $\sigma_{q_n^2}\geq e^{-6Cq_n^3}$. This proves $\ud\geq2/3$.

\subsection{Upper bounds}

\begin{claim}
There is a constant $C>0$ such that, for every $M\in\mathbb N$ and distinct $b_1,\ldots,b_M\in[0,1/2]$, the polynomial $P_M(r)=\prod_{i=1}^M(r-b_i)$ satisfies $\sqrt{\sigma_{M+1}}\leq2C6^M\norm{P_M}_{C(\Kcal)}$.
\end{claim}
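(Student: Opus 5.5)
The plan is to combine Lemma~\ref{lem: approximation bounds the spectrum} with an explicit approximation space built from $P_M$. That lemma says that if a subspace $U$ of dimension at most $M$ satisfies $\sup_{x\in\Kcal}\dist(k_x,U)\leq\gamma$, then $\sqrt{\sigma_{M+1}}\leq 2\gamma$. So it suffices to exhibit an $M$-dimensional $U$ with $\dist(k_x,U)\leq C6^M\norm{P_M}_{C(\Kcal)}$ for every $x\in\Kcal$.

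I will work in the one-dimensional parametrization $r\in[-1,1]$ with reference $a=1/2$ (that is, $r=0$), which is allowed by Corollary~\ref{cor: reference independence of constellation dimension}; the constants only change by a fixed factor. The natural choice is
\[
U\coloneqq\Span\{k_{b_1},\ldots,k_{b_M}\},
\]
the span of kernel sections at the nodes. Such sections are available because $b_i\in[0,1/2]$ lies in $\Delta^\circ$.

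By equation~\eqref{eq: characterization of distance through reproducing property}, $\dist(k_x,U)=\sup\{\abs{u(x)}: \norm{u}_{\Hscr_a}=1,\ u(b_i)=0\ \forall i\}$. Every $u$ in this set vanishes at $b_1,\ldots,b_M$. Transported by the Pull-back theorem~\ref{thm: pull-back theorem} to an $H^2(\mathbb D)$ function $F$ on the disc, $F$ vanishes at $\xi(b_i)$, where $\xi(r)=(2r-1)/(2r+5)$ as in the previous claim. Dividing by the finite Blaschke product $\mathcal B(z)=\prod_i (z-\xi(b_i))/(1-\overline{\xi(b_i)}z)$ is an isometry in $H^2$. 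Hence
\[
\abs{u(x)}\leq \abs{\mathcal B(\xi(x))}\,\sqrt{K_a(x,x)}.
\]
The factor $K_a(x,x)$ is bounded on $\Kcal\subset[0,1/2]$.

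Next I compare $\abs{\mathcal B(\xi(x))}$ with $\abs{P_M(x)}$. From
\[
\xi(r)-\xi(b)=\frac{12(r-b)}{(2r+5)(2b+5)},
\]
the numerator of $\mathcal B$ is $\prod_i \frac{12(x-b_i)}{(2x+5)(2b_i+5)}$. Since $2x+5\geq5$ and $2b_i+5\geq5$, each factor is at most $\frac{12}{25}\abs{x-b_i}$. For the denominator, the points $\xi(b_i)$ and $\xi(x)$ lie in $[-1/5,0]$, so $\abs{1-\xi(b_i)\xi(x)}\geq 1-1/25$. This already gives $\abs{\mathcal B(\xi(x))}\leq C^M\abs{P_M(x)}$ with a constant of size below $1$, which is consistent with the claimed $6^M$.

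Combining, $\dist(k_x,U)\leq C6^M\abs{P_M(x)}\leq C6^M\norm{P_M}_{C(\Kcal)}$ for $x\in\Kcal$. Since $\dim U\leq M$, Lemma~\ref{lem: approximation bounds the spectrum} then yields $\sqrt{\sigma_{M+1}}\leq 2C6^M\norm{P_M}_{C(\Kcal)}$.

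The main obstacle is rigor in the Blaschke division step. The space $\Hscr_a$ is the real closed span inside $H^2(\mathbb D)$ rather than all of $H^2$, so I must check the inequality $\abs{F(w)}\leq\abs{\mathcal B(w)}\norm{F}_{H^2}(1-\abs w^2)^{-1/2}$ for $F\in H^2$ vanishing at the nodes. This inequality holds on all of $H^2$, so restricting to the subspace and using the minimal-norm representative from the Pull-back theorem causes no loss.

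If the Blaschke route is awkward, the fallback is a direct construction. Write $k_x-\sum_i c_i k_{b_i}$ via Lagrange interpolation in the variable $\xi$. The residual is then $(\prod_i(\xi-\xi(b_i)))$ times a divided-difference kernel. Its $H^2$ norm can be bounded by its sup over the unit circle, where $\abs{z-\xi(b_i)}\leq 6/5$ and the divided difference is controlled. This route is where a constant like $6^M$ would naturally arise.
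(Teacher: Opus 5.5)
\textbf{There is a genuine gap, in the pointwise bound.} Your reduction is fine. Taking $U=\Span\{k_{b_1},\ldots,k_{b_M}\}$ in Lemma~\ref{lem: approximation bounds the spectrum} and using equation~\eqref{eq: characterization of distance through reproducing property}, it suffices to show $|u(x)|\le C6^M|P_M(x)|$ on $\Kcal$ for every unit $u\in\Hscr_a$ with $u(b_i)=0$. This is the same reduction the paper makes via the Min--Max principle.

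The problem is the Hilbert space you divide in. Here $d=1$, so $K_a$ has two factors. By equation~\eqref{eq: connection to Hardy kernel}, $K_a$ is the pull-back of the Szeg\H{o} kernel of the \emph{bidisc} along $r\mapsto\big(\chi_1(x(r)),\chi_2(x(r))\big)$. Theorem~\ref{thm: pull-back theorem} therefore gives $F\in H^2(\mathbb D^2)$, not $H^2(\mathbb D)$. The hypothesis only says that $F$ vanishes at the finitely many points $\chi(x(b_i))$ on a curve in $\mathbb D^2$. It does not vanish on the slices $\{z_1=\xi(b_i)\}$, so $F$ cannot be divided by the one-variable Blaschke product $\mathcal B(z_1)$.

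In fact the inequality $|u(x)|\le|\mathcal B(\xi(x))|\sqrt{K_a(x,x)}$ is false already for $M=1$. Let $\rho_j$ be the pseudo-hyperbolic distance between $\chi_j$ at the two points, and let $P$ be the projection onto $\mathbb R k_{b_1}$. The extremal $u=(k_x-Pk_x)/\norm{k_x-Pk_x}_{\Hscr_a}$ gives
\[
|u(x)|=\dist(k_x,\mathbb Rk_{b_1})=\sqrt{K_a(x,x)}\,\sqrt{\rho_1^2+\rho_2^2(1-\rho_1^2)}\;>\;\sqrt{K_a(x,x)}\,\rho_1=|\mathcal B(\xi(x))|\sqrt{K_a(x,x)}
\]
whenever the two points differ. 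For example, the point $r=0$ with $b_1=1/2$ (so that $x(b_1)=a$) gives $33/192$ versus $9/192$ for the squares.

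Your fallback has the same defect, since it bounds a norm by a supremum over the unit circle in the single variable $\xi$; it is also only sketched. That your per-factor constant came out below $1$ is a symptom: you have not located where $6^M$ comes from.

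\textbf{The missing idea is to use holomorphy in $r$ itself, not in one Cayley coordinate.} This is what the paper does:
\begin{enumerate}
\item The function $u=F\circ\chi\circ x$ extends holomorphically to $|r|<1$, which lies inside the strip $|\operatorname{Re} r|<1$ corresponding to $\Omega$.
\item The image of $\{|r|\le 2/3\}$ under $r\mapsto\chi(x(r))$ is a compact subset of $\mathbb D^2$. So $|F(\zeta)|\le\norm{F}_{H^2(\mathbb D^2)}K_{\mathrm{Hardy}}(\zeta,\zeta)^{1/2}$ gives $|u|\le C$ on $|r|\le 2/3$.
\item Since $u(b_i)=0$, the quotient $u/P_M$ is holomorphic there. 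On $|r|=2/3$ one has $|r-b_i|\ge 1/6$, which is the source of $6^M$.
\item The maximum principle then yields $|u(r)|\le C6^M|P_M(r)|$ on $[0,1/2]\supset\Kcal$.
\end{enumerate}
Inserting this bound into your reduction completes the proof.

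A minor point: $\xi(r)=(2r-1)/(2r+5)$ corresponds to the reference $a=x(1/2)$, not to $r=0$.
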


\begin{proof}
Every fixed $(M+1)$-dimensional subspace of $\Hscr_a$ contains $u\ne0$ satisfying the $M$ conditions $u(b_i)=0$. Take such $u$ with $\norm{u}_{\Hscr_a}=1$. The Pull-back theorem~\ref{thm: pull-back theorem} supplies a function $F\in H^2(\mathbb D^2)$ such that $u(r)=F\circ\chi\big(x(r)\big)$ and $\norm{F}_{H^2(\mathbb D^2)}=1$. This composition lets us extend $u$ holomorphically to $\abs r<1$. By the reproducing property of the Hardy kernel, we have $\abs{u}\leq C$ on $\abs r\leq2/3$, where $C$ is independent of $u$. Since $u(b_i)=0$, the quotient $u/P_M$ is holomorphic on $\abs r<1$. On $\abs r=2/3$, we have $\abs{r-b_i}\geq1/6$, so $\abs{u/P_M}\leq C6^M$. The maximum principle therefore gives
\[
\abs{u(r)}\leq C6^M\abs{P_M(r)}.
\qquad (0\leq r\leq1/2)
\]
Since $\norm{Tu}_{L^2(\nu)}\leq2\norm{u}_{C(\Kcal)}$, the Min--Max principle (Theorem~\ref{thm: min--max principle}) implies the assertion.
\end{proof}

To prove $\ud\leq2/3$, take any integer $h>0$ and set all $a_{j,k}$ with $j+k\leq h$ as $b_1,\ldots,b_M$. Since $\alpha$ is irrational, these points are distinct. If $r\in\Kcal\setminus\{b_1,\ldots,b_M\}$, we have $r\leq2^{-h-1}$. Both $r$ and $a_{j,k}$ are therefore at most $2^{-j-k-1}$ for every $j+k\leq h$. Hence
\[
\abs{P_M(r)}\leq\prod_{j+k\leq h}2^{-j-k-1}
=2^{-(h+1)(h+2)(2h+3)/6}.
\]
Since $M=(h+1)(h+2)/2$, the claim yields some $c'>0$ independent of $h$ with $\sigma_{M+1}\leq e^{-c'h^3}$. As this holds for every sufficiently large integer $h$, we obtain $\ud\leq2/3$.

\medskip

We are left to prove $\ld\leq1/2$. Set $M=2^{q_n/2}$ and take $b_\ell= \dfrac12\cdot2^{-(\ell-1)/q_n}$, $1\leq\ell\leq M$. Then
\[
\left|P_M\left(\frac12\,2^{-j-(p_n/q_n)k}\right)\right|\leq\prod_{i=1}^M b_i.
\qquad (j,k\geq0)
\]
The mean-value theorem gives $\left|2^{-j-\alpha k}-2^{-j-(p_n/q_n)k}\right|\leq\delta_n$. Since $\abs{P'_M}\leq1$ on $[0,1/2]$, 
\[
\norm{P_M}_{C(\Kcal)}
=\sup_{j,k\geq0}\abs{P_M\left(\frac12\,2^{-j-\alpha k}\right)}
\leq\delta_n+\prod_{i=1}^M b_i.
\]
Now, $\delta_n\leq2^{1-M^2}$ and $\prod_i b_i=2^{-M-M(M-1)/(2q_n)}$. The claim consequently gives
\[
\sqrt{\sigma_{M+1}}\leq2C6^M\left(\delta_n+\prod_i b_i\right)\leq e^{-c_0M^2/q_n}
\]
for some $c_0>0$ and all large enough $n$. Hence $\Ncal_a(c_0M^2/q_n)\leq M+1$. This proves $\ld\leq1/2$.

\section{Fredholm determinants and constellations} \label{appendix: Fredholm determinants and constellations}

Here we explain how the Fredholm determinant approximation of \cite{Pollicott10} is actually a construction of constellations. Assume that $A_0,\ldots,A_{m-1}$ are positive invertible matrices.

\subsection{The Fredholm determinant and its truncation} \label{subsec: The Fredholm determinant and its truncation}

The analytic construction in \cite[Sections 3--4]{Pollicott10} provides a bounded connected complex neighborhood $U\subset H_{\mathbb C}$ of $\Kcal$ on which the maps $\Phi, f_i$ have bounded holomorphic extensions $\Phi:U\to \mathbb C$, $f_i:U\to U$ with $f_i(U)\Subset U$, and $\overline U\subset\Omega$, where $\Omega$ is the domain in \S\S\ref{subsec: Connection to the Hardy kernel}. Let $\mathscr A$ be the Banach space of bounded holomorphic functions $u:U\to\mathbb{C}$, equipped with the supremum norm. Fix $g\in\mathscr A$ with $g(\Kcal)\subset\mathbb R$, and let
\[
\Lcal_{t,g}:\mathscr A\longrightarrow\mathscr A,
\qquad
\Lcal_{t,g}u\coloneqq\Lcal(e^{tg}u)
=\sum_{i=0}^{m-1}\pi_i e^{tg\circ f_i(x)} u\circ f_i(x)
\]
Here $t$ belongs to a complex neighborhood $V$ of zero. Since $\Lcal$ is nuclear of order zero \cite[Lemma~3.1(2)]{Pollicott10} and multiplication by $e^{tg}$ is bounded on $\mathscr A$, the operator $\Lcal_{t,g}$ is nuclear of order zero. Therefore its Fredholm determinant $\mathfrak D_g(z,t)$ is defined:
\begin{equation*} %\label{eq: Fredholm determinant}
\mathfrak D_g:\mathbb{C}\times V\longrightarrow\mathbb{C},
\qquad
\mathfrak D_g(z,t)\coloneqq\det(I-z\Lcal_{t,g})
=\sum_{j=0}^{\infty}c_j(t;g)z^j,
\end{equation*}
where $c_j(\,\cdot\,;g):V\to\mathbb{C}$ are analytic and $c_0(t;g)=1$. Recall the trace identity
\begin{equation} \label{eq: Fredholm determinant trace identity}
\mathfrak D_g(z,t)
=\exp\left(-\sum_{\ell=1}^{\infty}\frac{z^\ell}{\ell}\operatorname{tr}(\Lcal_{t,g}^{\,\ell})\right),
\end{equation}
which holds for $z$ near zero. For $N\geq1$, denote by $\mathfrak D_{N,g}$ the truncation of $\mathfrak D_g(z,t)$ up to degree $N$:
\begin{equation*} %\label{eq: truncated Fredholm determinant}
\mathfrak D_{N,g}:\mathbb{C}\times V\longrightarrow\mathbb{C},
\qquad
\mathfrak D_{N,g}(z,t)\coloneqq\sum_{j=0}^{N}c_j(t;g)z^j.
\end{equation*}
Since $\Lcal_{0,g}=\Lcal$, the number $c_j(0;g)$ is independent of $g$; let us write $c_j\coloneqq c_j(0;g)$.

By \cite[Proposition~6.1]{Pollicott10},
\begin{equation} \label{eq: Fredholm stationary integral}
\nu(g)=\frac{\partial_t\mathfrak D_g(1,0)}{\partial_z\mathfrak D_g(1,0)}.
\end{equation}
Replacing the Fredholm determinants by their truncation gives the functional approximating $\nu(g)$:
\begin{equation} \label{eq: Fredholm Lyapunov approximation}
\mathcal I_N(g)
\coloneqq \frac{\partial_t\mathfrak D_{N,g}(1,0)}{\partial_z\mathfrak D_{N,g}(1,0)}
=\frac{\sum_{j=1}^{N}\partial_t c_j(0;g)}{\sum_{j=1}^{N}j c_j},
\end{equation}
whenever the denominator is nonzero.

Then $\widehat\lambda_N\coloneqq \mathcal I_N(\Phi)$ is the proposed approximation to $\lambda = \nu(\Phi)$. Now how can we compute the right-hand side? For each $\ell\geq1$, let $\sigma:\{0,\ldots,m-1\}^{\ell}\to\{0,\ldots,m-1\}^{\ell}$ be the cyclic shift $\sigma(w_1,\ldots,w_\ell)=(w_2,\ldots,w_\ell,w_1)$. For a word $w$ of length $\ell$, let $x_w\in\Kcal$ be the unique fixed point of $f_w$. By equation~\eqref{eq: Fredholm determinant formula for matrix}, $\det\big(I-Df_w(x_w)\big)$ is real and nonzero because the largest eigenvalue of $A_w$ is simple. Then the fixed-point trace formula obtained in \cite[Lemma~4.1(1)]{Pollicott10} is
\begin{equation} \label{eq: Fredholm fixed-point trace formula}
\operatorname{tr}(\Lcal_{t,g}^{\,\ell})=\sum_{\abs{w}=\ell}\frac{\pi_w\exp\left(t\sum_{r=0}^{\ell-1}g(x_{\sigma^r w})\right)}{\det\big(I-Df_w(x_w)\big)}.
\end{equation}
This gives, by equation~\eqref{eq: Fredholm determinant trace identity},
\begin{equation*}
\left.\partial_t\operatorname{tr}(\Lcal_{t,g}^{\,\ell})\right|_{t=0}\!=\ell\sum_{\abs{w}=\ell}\!\frac{\pi_wg(x_w)}{\det\big(I\!-\!Df_w(x_w)\big)},
\quad
\sum_{j=1}^{N}\partial_t c_j(0;g)=-\sum_{\ell=1}^{N}\sum_{\abs{w}=\ell}\frac{\pi_wg(x_w)}{\det\big(I\!-\!Df_w(x_w)\big)}\sum_{j=0}^{N-\ell}c_j.
\end{equation*}
Using these identities, $\mathcal I_N(g)$ takes the following explicit form.
\begin{equation} \label{eq: Fredholm determinant method functional explicit}
\mathcal I_N(g)
\;=\; \sum_{\ell=1}^{N}\sum_{\abs{w}=\ell}
\underbrace{ \left(
-\frac{1}{\sum_{j=1}^{N}j c_j} \cdot \frac{\pi_w}{\det\big(I-Df_w(x_w)\big)}
\sum_{j=0}^{N-|w|}c_j \right)}_{\eqqcolon \omega_{N,w}}
g(x_w).
\end{equation}
The coefficients $c_j$ can be calculated using the following recursion, obtained from equation~\eqref{eq: Fredholm determinant trace identity}:
\begin{equation} \label{eq: recursion for Fredholm determinant coefficients}
c_0=1,\qquad
c_j=-\frac1j\sum_{\ell=1}^{j}c_{j-\ell}
\sum_{\abs{w}=\ell}\frac{\pi_w}{\det\big(I-Df_w(x_w)\big)},
\qquad 1\leq j\leq N,
\end{equation}

This is how \cite{Pollicott10} obtains an explicit approximation $\widehat\lambda_N$ to $\lambda$. In fact, he proves the following.
%(Pollicott performs the calculation for the individual logarithmic observables. Since the numerator in equation~\eqref{eq: Fredholm Lyapunov approximation} is linear in $g$, the two calculations give the same number.)
\begin{theorem}{\cite[Theorem 0.1]{Pollicott10}} \label{thm: Pollicott superexponential decay}
There are $C,\gamma>0$ and $N_{\mathrm{FD}}\in\mathbb N$ such that, for all $N\geq N_{\mathrm{FD}}$, the denominator in equation~\eqref{eq: Fredholm Lyapunov approximation} is nonzero and
\begin{equation*}
\abs{\lambda-\widehat\lambda_N}\leq C\exp\left(-\gamma N^{1+1/d}\right).
\end{equation*}
\end{theorem}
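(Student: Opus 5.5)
The plan is to compare $\mathcal I_N(\Phi)$ with the exact formula \eqref{eq: Fredholm stationary integral} for $g=\Phi$. That formula gives $\lambda=\nu(\Phi)$ as a ratio of two derivatives of $\mathfrak D_\Phi$ at $(1,0)$. The truncated ratio differs only by the tails $\sum_{j>N}\partial_t c_j(0;\Phi)$ in the numerator and $\sum_{j>N}jc_j$ in the denominator. Everything therefore reduces to one uniform coefficient estimate:
\[
\sup_{t\in V'}|c_j(t;\Phi)|\le C\exp\bigl(-\gamma j^{1+1/d}\bigr)
\]
on a smaller complex neighborhood $V'\Subset V$ of $0$. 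From this, Cauchy's estimate in $t$ on a fixed disc gives the same bound, with a new constant, for $\partial_t c_j(0;\Phi)$. The factor $j$ in $\sum_{j>N}jc_j$ is absorbed by slightly decreasing $\gamma$, and summing the tails gives $O(\exp(-\gamma' N^{1+1/d}))$ for both.

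The coefficient bound comes from the nuclearity of $\Lcal_{t,g}$ on $\mathscr A$. Each summand is a composition operator $u\mapsto \pi_i e^{tg\circ f_i}\,u\circ f_i$ with $f_i(U)\Subset U$, where $U$ is a domain in the $d$-dimensional complex hyperplane $H_{\mathbb C}$.
\begin{enumerate}[label=\textup{(\roman*)},leftmargin=*]
\item Expand $u$ on a slightly larger polydisc-type neighborhood of $f_i(\overline U)$ in a Taylor/Faber basis. The coefficients of degree $k$ shrink like $\theta^k$ for some $\theta<1$, uniformly in $i$ and in $t\in V'$, because $f_i(U)\Subset U$.
\item There are about $k^d$ monomials of degree at most $k$ in $d$ variables. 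Counting them shows that the approximation numbers of $\Lcal_{t,g}$ satisfy $s_n\le C\theta^{c n^{1/d}}$.
\item Grothendieck's bound $|c_j|\le \sum_{i_1<\dots<i_j} s_{i_1}\cdots s_{i_j}$ (the Lidskii/Hadamard-type estimate for determinants of nuclear operators) then gives $|c_j|\le C^j\prod_{n\le j}s_n$. Summing $n^{1/d}$ up to $j$ yields $\exp(-\gamma j^{1+1/d})$.
\end{enumerate}

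The remaining step is nondegeneracy of the denominator. Since the matrices are positive, $\Lcal$ acting on $\mathscr A$ has $1$ as a simple leading eigenvalue, with eigenvector the constants and eigenprojection $u\mapsto\nu(u)$, and a spectral gap. This follows from the Hilbert-metric contraction of Lemma~\ref{lemma: Birkhoff-Hopf constants}, which gives $\Lcal^n u\to\nu(u)$ uniformly on $U$ after shrinking $U$ if needed. Hence $z=1$ is a simple zero of $z\mapsto\mathfrak D(z,0)$, so $\partial_z\mathfrak D_\Phi(1,0)=\sum_{j\ge1}jc_j\ne0$. Because the tail of this series tends to $0$, $\sum_{j=1}^N jc_j$ stays bounded away from zero for $N\ge N_{\mathrm{FD}}$. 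The elementary inequality
\[
\Bigl|\frac{a}{b}-\frac{a'}{b'}\Bigr|\le\frac{|a-a'|}{|b'|}+\frac{|a|\,|b-b'|}{|b||b'|}
\]
then converts the two tail bounds into $|\lambda-\widehat\lambda_N|\le C\exp(-\gamma N^{1+1/d})$.

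I expect the main obstacle to be the singular value estimate in step (ii), uniformly in $t$, for a composition operator on a general domain $U\subset H_{\mathbb C}$. I would first try to arrange $U$ as a finite union of polydiscs in affine coordinates, or work directly on a polydisc containing $\bigcup_i f_i(\overline U)$ compactly. On such sets explicit monomial truncations give the $\theta^{k}$ decay with exactly $\binom{k+d}{d}$ terms, which is what produces the exponent $1+1/d$.
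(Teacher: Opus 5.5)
The paper gives no proof of this statement; it is quoted from \cite[Theorem~0.1]{Pollicott10}. Your outline is the standard argument behind that theorem. It has the same skeleton as the paper's own proof of Proposition~\ref{prop: Fredholm constellation has stationary defect decay}: write the error through the tails $\sum_{j>N}\partial_t c_j(0;\Phi)$ and $\sum_{j>N}jc_j$, bound $c_j(t;\Phi)$ uniformly for $t$ in a disc, pass to $\partial_t$ by Cauchy's estimate, and keep $\sum_{j\le N}jc_j$ away from zero by algebraic simplicity of the eigenvalue $1$. Two of your justifications, however, do not work as written.

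\textbf{Step (iii).} The bound $|c_j|\le\sum_{i_1<\cdots<i_j}s_{i_1}\cdots s_{i_j}$ is a Hilbert-space inequality; it comes from $|\operatorname{tr}\Lambda^jT|\le\norm{\Lambda^jT}_{\mathcal S_1}$. On the sup-norm Banach space $\mathscr A$ it is not available in this form. You have two ways out.
\begin{itemize}
\item Use the nuclear representation $\sum_n\lambda_n\,x_n^*\otimes y_n$ (unit vectors) that your Taylor expansion produces, together with Grothendieck's bound $|c_j|\le j^{j/2}\sum_{i_1<\cdots<i_j}|\lambda_{i_1}\cdots\lambda_{i_j}|$. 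The factor $j^{j/2}$ is harmless because $j\log j=o(j^{1+1/d})$.
\item Do what the paper does in Proposition~\ref{prop: Fredholm constellation has stationary defect decay}. Pass to the Bergman space $A^2(U)$. Since $f_i(U)\Subset U$, the operator $\widetilde{\Lcal}_{t,g}$ maps $A^2(U)$ boundedly into $\mathscr A$, so it has the same nonzero eigenvalues with multiplicities, and hence the same determinant, as $\Lcal_{t,g}$. Then invoke Theorem~\ref{thm: Fredholm coefficient estimate}.
\end{itemize}
The Hilbert setting also removes the obstacle you anticipate in step (ii). Cover $\bigcup_i\overline{f_i(U)}$ by finitely many polydiscs whose concentric enlargements lie in $U$. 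The restriction map to a neighborhood of this set is dominated in norm by the map into the direct sum of the polydisc Bergman spaces, where monomials are orthogonal. Min--max then gives $s_n\le Ce^{-cn^{1/d}}$ with no condition on the shape of $U$.

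\textbf{Spectral gap.} The Birkhoff--Hopf contraction of Lemma~\ref{lemma: Birkhoff-Hopf constants} controls only real points. It says nothing about $f_w(z)$ for complex $z\in U$, so it does not give $\Lcal^nu\to\nu(u)$ uniformly on $U$, and shrinking $U$ does not help. What works is the Earle--Hamilton theorem: since $f_i(U)\Subset U$, each $f_i$ is a strict contraction of the Carath\'eodory--Reiffen--Finsler metric of the bounded domain $U$. Hence $\operatorname{diam} f_w(U)\le C\theta^{\abs{w}}$ in that metric, and every $u\in\mathscr A$ is $\norm{u}_{\mathscr A}$-Lipschitz for it. Stationarity then gives, for all $z\in U$,
\[
\abs{\Lcal^nu(z)-\nu(u)}\le\sum_{\abs{w}=n}\pi_w\int_\Kcal\abs{u(f_wz)-u(f_wy)}\,d\nu(y)\le C\theta^n\norm{u}_{\mathscr A}.
\]
So $\Lcal^n\to\nu(\cdot)\one$ in operator norm, which gives the spectral gap and the algebraic simplicity of $1$.

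With these two repairs, the rest of your argument goes through as stated: Cauchy's estimate, absorbing the factor $j$, the nonvanishing of $\sum_{j\le N}jc_j$ for large $N$, and the quotient inequality.
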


\subsection{The resulting constellation exhibits decay in stationary defect} \label{subsec: The resulting constellation exhibits decay in stationary defect}

\begin{lemma} \label{lem: Fredholm determinant truncated functional is a constellation}
Let $N\geq N_{\mathrm{FD}}$. For each word $w$ with $1\leq\abs{w}\leq N$, the number $\omega_{N,w}$ is real and $\sum_{\ell=1}^{N}\sum_{\abs{w}=\ell}\omega_{N,w}=1$.
\end{lemma}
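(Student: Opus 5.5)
The argument has two parts: reality of the weights, then normalization by testing $\mathcal I_N$ on constants.

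\textbf{Reality.} Each $x_w$ is the fixed point of $f_w$, and $f_w$ is the projective action of the positive matrix $A_w$. Hence $x_w$ is its Perron--Frobenius direction and lies in $\Kcal\subset\Delta^\circ$. The matrix $A_w$ is real with a simple leading eigenvalue, so by equation~\eqref{eq: Fredholm determinant formula for matrix} the quantity $\det\big(I-Df_w(x_w)\big)=\prod_{j\geq2}\big(1-\lambda_j(A_w)/\lambda_1(A_w)\big)$ is real: the nonreal eigenvalues come in conjugate pairs, whose factors multiply to a positive number. This quantity is also nonzero, since $\abs{\lambda_j(A_w)}<\lambda_1(A_w)$ for $j\geq2$. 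Every $\pi_w$ is real. By the recursion~\eqref{eq: recursion for Fredholm determinant coefficients} and induction on $j$, every $c_j$ is real. Since $N\geq N_{\mathrm{FD}}$, Theorem~\ref{thm: Pollicott superexponential decay} guarantees that the denominator $\sum_{j=1}^N jc_j$ is nonzero. Therefore each $\omega_{N,w}$, which is built from these quantities by field operations, is real.

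\textbf{Normalization.} Apply the explicit formula~\eqref{eq: Fredholm determinant method functional explicit} to the constant function $g\equiv1$. This function lies in $\mathscr A$ and is real on $\Kcal$, and the formula gives $\mathcal I_N(1)=\sum_{\ell=1}^N\sum_{\abs{w}=\ell}\omega_{N,w}$. It remains to show $\mathcal I_N(1)=1$ from the definition~\eqref{eq: Fredholm Lyapunov approximation}. For $g\equiv1$ we have $\Lcal_{t,1}=e^t\Lcal$, so
\[
\mathfrak D_1(z,t)=\det(I-ze^t\Lcal)=\mathfrak D_1(ze^t,0),
\]
and hence $c_j(t;1)=c_je^{jt}$. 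Differentiating in $t$ gives $\partial_tc_j(0;1)=jc_j$. The numerator of~\eqref{eq: Fredholm Lyapunov approximation} is therefore $\sum_{j=1}^Njc_j$, which equals the denominator, so $\mathcal I_N(1)=1$.

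As a cross-check, the same conclusion follows from the trace formula~\eqref{eq: Fredholm fixed-point trace formula}. There each exponent $t\sum_r g(x_{\sigma^rw})$ becomes $t\ell$, so $\operatorname{tr}(\Lcal_{t,1}^{\,\ell})=e^{\ell t}\operatorname{tr}(\Lcal^\ell)$. Substituting into~\eqref{eq: Fredholm determinant trace identity} shows that $\mathfrak D_1(z,t)=\mathfrak D_1(ze^t,0)$ near $z=0$, and the coefficients are then identified by analyticity.

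\textbf{Main obstacle.} The only point needing care is that the explicit formula~\eqref{eq: Fredholm determinant method functional explicit} and the definition~\eqref{eq: Fredholm Lyapunov approximation} are applied to the same $g\equiv1$. The derivation of~\eqref{eq: Fredholm determinant method functional explicit} above was carried out for general $g$, so this is legitimate. It also requires the coefficient identity $c_j(t;1)=c_je^{jt}$ to hold for all $t$ in $V$. Here I would rely on analyticity in $(z,t)$ of the Fredholm determinant: the identity $\mathfrak D_1(z,t)=\mathfrak D_1(ze^t,0)$ holds near $z=0$, so it extends, and uniqueness of Taylor coefficients in $z$ then gives the coefficient identity.
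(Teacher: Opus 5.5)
Your proof is correct. The reality part is essentially the paper's argument. The paper reads off that $\operatorname{tr}(\Lcal^\ell)$ is real from the fixed-point trace formula and then gets $c_j\in\mathbb R$ from the trace identity \eqref{eq: Fredholm determinant trace identity}. You run the equivalent induction through the recursion \eqref{eq: recursion for Fredholm determinant coefficients}, and you also spell out why $\det(I-Df_w(x_w))$ is real and nonzero, which the paper only asserts.

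The normalization is where you genuinely differ.
\begin{itemize}
\item The paper substitutes $g\equiv1$ directly into the explicit weights \eqref{eq: Fredholm determinant method functional explicit}. Writing $\tau_\ell=\sum_{\abs{w}=\ell}\pi_w/\det(I-Df_w(x_w))$, it reindexes $-\sum_{\ell\le N}\tau_\ell\sum_{j\le N-\ell}c_j$ as $-\sum_{k\le N}\sum_{\ell\le k}c_{k-\ell}\tau_\ell$. The recursion then identifies this as $\sum_{k\le N}kc_k$.
\item You return to the definition \eqref{eq: Fredholm Lyapunov approximation} and use $\Lcal_{t,1}=e^t\Lcal$. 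Then $\partial_t$ acts on the determinant as $z\partial_z$, so $\partial_tc_j(0;1)=jc_j$.
\end{itemize}

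Your route explains conceptually why the mass is one and avoids the index bookkeeping. However, it leans on the general-$g$ derivation that links \eqref{eq: Fredholm Lyapunov approximation} to \eqref{eq: Fredholm determinant method functional explicit}. The paper's route is a purely finite identity between the quantities actually computed. It therefore certifies the normalization of the weights even when the $c_j$ are simply defined by the recursion.

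Finally, the analytic-continuation concern in your last paragraph is unnecessary. By definition $\mathfrak D_1(z,t)=\det(I-ze^t\Lcal)$ for all $(z,t)$. Since $w\mapsto\det(I-w\Lcal)=\sum_jc_jw^j$ is entire, $c_j(t;1)=c_je^{jt}$ follows at once, and only $t$ near $0$ is needed anyway.
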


\begin{proof}
At $t=0$, equation~\eqref{eq: Fredholm fixed-point trace formula} shows that $\operatorname{tr}(\Lcal^\ell)$ is real. Equation~\eqref{eq: Fredholm determinant trace identity} therefore gives $c_j\in\mathbb R$ for every $j$, and hence $\omega_{N,w}\in\mathbb R$. Finally, taking $g=1$ in equation~\eqref{eq: Fredholm determinant method functional explicit} and using equation~\eqref{eq: recursion for Fredholm determinant coefficients} give $\sum_{\ell=1}^{N}\sum_{\abs{w}=\ell}\omega_{N,w}=\frac{\sum_{j=1}^{N}jc_j}{\sum_{j=1}^{N}jc_j}=1$.
\end{proof}

Equation~\eqref{eq: Fredholm determinant method functional explicit} shows that $\mathcal I_N(g)$ is the integral of $g$ against the signed measure
\begin{equation*} %\label{eq: Fredholm constellation}
\eta_N^{\mathrm F} \;\coloneqq\; \sum_{\ell=1}^{N}\sum_{\abs{w}=\ell}\omega_{N,w}\delta_{x_w}.
\end{equation*}
Since $\eta_N^{\mathrm F}$ has mass one by Lemma~\ref{lem: Fredholm determinant truncated functional is a constellation}, it is a constellation satisfying
\[
\widehat\lambda_N=\eta_N^{\mathrm F}(\Phi).
\]
%The weights can have either sign, since the partial sums of determinant coefficients in equation~\eqref{eq: Fredholm quadrature weights} need not have a common sign. Also, note that $\Hscr_a$ contains Cayley coordinates, which are dense in $C(\Kcal)$. Since $\eta_N^{\mathrm F}$ has uniformly bounded total variation, it converges weakly to $\nu$. On the other hand, our $\eta_n$ may not have bounded total variation.

\begin{proposition} \label{prop: Fredholm constellation has stationary defect decay}
For every $a\in\Delta^\circ$, there are $C,\gamma>0$ such that, for all $N\geq N_{\mathrm{FD}}$,
\begin{equation*}
\norm{\eta_N^{\mathrm F}-\Lcal^*\eta_N^{\mathrm F}}_{K_a}\leq C\exp\left(-\gamma N^{1+1/d}\right).
\end{equation*}
\end{proposition}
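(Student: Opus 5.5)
The plan is to reduce the $K_a$-norm of the stationary defect to Pollicott's estimate, applied not to $\Phi$ but to a whole family of test functions, uniformly. By Lemma~\ref{lem: characterization of membership in the Hilbert space} (or directly by the reproducing property), we have
\[
\norm{\eta_N^{\mathrm F}-\Lcal^*\eta_N^{\mathrm F}}_{K_a}=\sup\bigl\{(\eta_N^{\mathrm F}-\Lcal^*\eta_N^{\mathrm F})(u) \,:\, u\in\Hscr_a,\ \norm u_{\Hscr_a}\le1\bigr\}.
\]
For such $u$ we can write $(\eta_N^{\mathrm F}-\Lcal^*\eta_N^{\mathrm F})(u)=\eta_N^{\mathrm F}(g)$ with $g=u-\Lcal u$, and stationarity gives $\nu(g)=0$. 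The defect is therefore $\mathcal I_N(g)-\nu(g)$, which is exactly the error of Pollicott's truncated functional applied to the observable $g$. It thus suffices to prove
\[
\abs{\mathcal I_N(g)-\nu(g)}\le C e^{-\gamma N^{1+1/d}}\norm{g}_{\mathscr A}
\]
for all $g\in\mathscr A$, together with a uniform bound $\norm{u-\Lcal u}_{\mathscr A}\le C'$ for every unit $u\in\Hscr_a$.

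The second bound comes from the Hardy picture. Since $\overline U\subset\Omega$ is bounded, $\chi(\overline U)$ is a compact subset of $\mathbb D^{d+1}$. By the Pull-back theorem~\ref{thm: pull-back theorem}, every unit $u\in\Hscr_a$ is $F\circ\chi$ with $\norm F_{H^2}\le1$. The reproducing property of the Szeg\H{o} kernel then gives $\sup_{\overline U}\abs{u}\le\sup_{\zeta\in\chi(\overline U)}K_{\mathrm{Hardy}}(\zeta,\zeta)^{1/2}$, and this is finite. Because $f_i(U)\Subset U$, the same bound controls $\Lcal u$ on $U$. Hence $g=u-\Lcal u\in\mathscr A$ with a uniform norm bound, and $g$ is real on $\Kcal$.

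For the first bound, I would retrace Pollicott's proof of Theorem~\ref{thm: Pollicott superexponential decay} with $\Phi$ replaced by a general $g$, keeping track of how the constants depend on $g$. Start from equation~\eqref{eq: Fredholm stationary integral}. The denominator $\sum_{j\le N}jc_j$ does not depend on $g$: it converges to $\partial_z\mathfrak D(1,0)\ne0$ with error $\sum_{j>N}j\abs{c_j}$. The numerator error is $\sum_{j>N}\partial_tc_j(0;g)$. Since $\mathcal I_N(g)$ is linear in $g$, the numerator is linear too, so only first derivatives in $t$ are needed. The key input is Pollicott's coefficient bound, which comes from Grothendieck nuclearity and approximation numbers of composition operators on polydisc-like domains in dimension $d$:
\[
\abs{c_j(t;g)}\le C\exp\bigl(-\gamma' j^{1+1/d}\bigr)\quad\text{uniformly for } \abs t\le r,\ \norm{g}_{\mathscr A}\le1.
\]
This bound is uniform because the multiplication operators $e^{tg}$ are uniformly bounded on $\mathscr A$. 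Cauchy's estimate in $t$ on $\abs t=r$ then gives $\abs{\partial_tc_j(0;g)}\le (C/r)e^{-\gamma'j^{1+1/d}}$ for $\norm g_{\mathscr A}\le1$. Summing the tail over $j>N$ yields $\abs{\mathcal I_N(g)-\nu(g)}\le C''e^{-\gamma N^{1+1/d}}\norm g_{\mathscr A}$ for $N\ge N_{\mathrm{FD}}$. Combining this with the previous paragraph and taking the supremum over $u$ proves the proposition.

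The main obstacle is the uniformity of Pollicott's coefficient decay over the family $\{e^{tg}\}$. His paper states the result for the single observable $\Phi$, so I would check that his bound on $c_j(t;g)$ depends on $g$ only through $\sup_U\abs{e^{tg}}$. That dependence should come through the singular-value estimate $s_k(\Lcal_{t,g})\le\norm{e^{tg}}_\infty s_k(\Lcal)$, followed by the standard bound of Fredholm coefficients by elementary symmetric functions of the singular values.
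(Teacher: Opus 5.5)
Your proposal is correct and follows essentially the same route as the paper: bound the holomorphic extension $\widetilde u$ and $g=\widetilde u-\Lcal\widetilde u$ uniformly on $U$ via the Hardy kernel, rewrite the defect as $\mathcal I_N(g)$ with $\nu(g)=0$, and control $\partial_t c_j(0;g)$ by a coefficient bound that is uniform in the weights $e^{tg}$, combined with Cauchy's estimate in $t$. The one difference is how the uniform coefficient bound is obtained. The paper takes it directly from Bandtlow--Jenkinson on the Bergman space $A^2(U)$, which is where your singular-value argument naturally lives, and then identifies $\det(I-z\widetilde{\Lcal}_{t,g})$ on $A^2(U)$ with $\mathfrak D_g(z,t)$ on $\mathscr A$; you would need that identification too if you run the singular-value argument in a Hilbert space, rather than retracing Pollicott's nuclear estimates on $\mathscr A$.
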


We use the coefficient estimate for nuclear operators by Bandtlow--Jenkinson \cite{Bandtlow--Jenkinson}. Let $A^2(U)$ be the Bergman space, the set of holomorphic functions $f: U\to \mathbb C$ satisfying $\int_U\abs{f(z)}^2\,dV(z)<\infty$ for the Lebesgue measure $V$ on $\mathbb C^d$.

\begin{theorem}{\cite[Theorem~5.9 and 6.1.]{Bandtlow--Jenkinson}} \label{thm: Fredholm coefficient estimate}
Let $U\subset\mathbb C^d$ be a bounded domain, and let $f_i:U\to U$, $i=0,\ldots,m-1$, be holomorphic maps with $f_i(U)\Subset U$. For every $B>0$, there are $C,\gamma>0$ such that the following holds. For any bounded holomorphic weights $w_i:U\to\mathbb C$ satisfying $\sum_{i=0}^{m-1}\norm{w_i}_{L^\infty(U)}\leq B$, the operator
\[
\mathcal T:A^2(U)\to A^2(U),\qquad \mathcal Th\coloneqq\sum_{i=0}^{m-1}w_i(h\circ f_i)
\]
has Fredholm determinant $\det(I-z\mathcal T)=\sum_{j=0}^{\infty}\alpha_jz^j$, whose coefficients satisfy
\[
\abs{\alpha_j}\leq C\exp\left(-\gamma j^{1+1/d}\right)\qquad(j\geq1).
\]
\end{theorem}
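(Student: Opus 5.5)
The plan is to realize the coefficients $\alpha_j$ as traces of exterior powers and then control them through the singular values of $\mathcal T$ on the Hilbert space $A^2(U)$. Since $A^2(U)$ is a Hilbert space and $\mathcal T$ is trace class (shown below), the Fredholm determinant satisfies $\alpha_j=(-1)^j\operatorname{tr}(\Lambda^j\mathcal T)$. Hence
\[
\abs{\alpha_j}\leq\norm{\Lambda^j\mathcal T}_{\mathrm{tr}}=\sum_{i_1<\cdots<i_j}s_{i_1}(\mathcal T)\cdots s_{i_j}(\mathcal T),
\]
where $s_n(\mathcal T)$ are the singular values of $\mathcal T$. This reduces the statement to a stretched-exponential bound
\[
s_n(\mathcal T)\leq C_1B\,e^{-c n^{1/d}},
\]
with $C_1,c$ depending only on $U$ and the maps $f_i$, followed by a combinatorial summation.

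\textbf{Singular values.} I would factor $\mathcal T=M\circ R$ as follows. Let $W\coloneqq\bigcup_if_i(U)\Subset U$ and choose an intermediate domain $W\Subset W'\Subset U$.
\begin{itemize}
\item $R$ is the restriction map from $A^2(U)$ to $H^\infty(W')$.
\item $M$ is the map $g\mapsto\sum_iw_i\,(g\circ f_i)$ into $A^2(U)$.
\end{itemize}
Since $f_i(U)\subset W\subset W'$, the map $M$ is well defined. Its norm is at most $C_2\sum_i\norm{w_i}_{L^\infty(U)}\leq C_2B$, using that $U$ is bounded. By the multiplicativity of approximation numbers, $s_n(\mathcal T)\leq\norm{M}\,a_n(R)$, so it suffices to show $a_n(R)\leq C_1e^{-cn^{1/d}}$.

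For this, cover $\overline{W'}$ by finitely many closed polydiscs $D_k$ whose enlargements $\widetilde D_k$ lie in $U$. By the mean-value property, point evaluations on $\widetilde D_k$ are bounded by the Bergman norm. By Cauchy estimates, truncating the Taylor series of $h$ at each centre to total degree at most $N$ gives a uniform error $\leq C\rho^{N}\norm{h}_{A^2(U)}$ on $D_k$, for some $\rho<1$.

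The truncation is linear, and its range has dimension at most $K\binom{N+d}{d}$, where $K$ is the number of polydiscs. To obtain a genuine finite-rank map into $H^\infty(W')$, I would glue the local truncations through a finite-rank projection argument. The cleanest route is to use instead the Kolmogorov widths of the unit ball of $A^2(U)$ in $C(\overline{W'})$, which equal approximation numbers up to the standard Hilbert-space duality. Choosing $n\asymp N^d$ then gives $a_n(R)\lesssim e^{-cn^{1/d}}$. This factorization also shows that $\mathcal T$ is trace class.

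\textbf{Summation.} Write $s_i\leq(C_1B)\,e^{-c i^{1/d}}$. For increasing indices we have $i_k\geq k$, so $e^{-c i_k^{1/d}}\leq e^{-\frac c2 k^{1/d}}e^{-\frac c2 i_k^{1/d}}$. Therefore
\[
\abs{\alpha_j}\leq (C_1B)^j\exp\Big(-\tfrac c2\sum_{k=1}^jk^{1/d}\Big)\Big(\sum_{i\geq1}e^{-\frac c2 i^{1/d}}\Big)^j .
\]
Since $\sum_{k\leq j}k^{1/d}\geq\frac{d}{d+1}j^{1+1/d}$, the factor $(C_3B)^j=e^{j\log(C_3B)}$ is absorbed into $e^{-\gamma j^{1+1/d}}$ after shrinking $\gamma$ and enlarging $C$. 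The resulting constants depend on $B$, as the statement permits.

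\textbf{Main obstacle.} I expect the hardest step to be the uniform width estimate $a_n(R)\lesssim e^{-cn^{1/d}}$ for a general bounded domain $U$ rather than a polydisc, in particular producing a single finite-rank approximant from local Taylor truncations. I would first try to avoid gluing altogether. Inflate $W$ to a finite union of polydiscs inside $U$, and bound $\norm{h-P}_{C(\overline{W'})}$ through one linear finite-dimensional map: the orthogonal projection in $A^2(U)$ onto the span of the local Taylor data. Its error is controlled by the Bergman-kernel bound on each $\widetilde D_k$.
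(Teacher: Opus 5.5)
Your outline is correct and is essentially the argument of the cited source; the paper gives no proof of its own and simply imports the result from Bandtlow--Jenkinson. The shared structure is: stretched-exponential decay $s_n(\mathcal T)\le C_1Be^{-cn^{1/d}}$, obtained by factoring $\mathcal T$ through restriction to a relatively compact subdomain covered by polydiscs, with the weights entering only through $\norm{M}$; then $\abs{\alpha_j}\le\norm{\Lambda^j\mathcal T}_{\mathrm{tr}}$ followed by your summation.

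The only slip is the claim that Kolmogorov widths of $R$ equal its approximation numbers. For an operator with a Hilbert \emph{domain} it is the Gelfand numbers that coincide with $a_n(R)$, not the Kolmogorov widths. Your fallback already fixes this: orthogonal projection onto the span of the Riesz representers of $h\mapsto\partial^\alpha h(c_k)$, $\abs{\alpha}\le N$, is exactly that Gelfand-number argument and settles the step. Alternatively, the gluing issue disappears if you let $M$ act from $C(\overline{W'})$ into $L^2(U)$ and follow it by the Bergman projection; then local Taylor truncations patched by a continuous partition of unity already give the finite-rank approximant.
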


\begin{proof}[Proof of Proposition~\ref{prop: Fredholm constellation has stationary defect decay}]
By the Pull-back theorem~\ref{thm: pull-back theorem} and the reproducing property of the Hardy kernel, every $u\in\Hscr_a$ with $\norm{u}_{\Hscr_a}\leq1$ has a holomorphic extension $\widetilde u:U\to\mathbb C$ satisfying $\norm{\widetilde u}_{\mathscr A}\leq C_a$, where $C_a>0$ is obtained by the compactness of $\chi(\overline U)$ and thus independent of $u$. Define $g:U\to\mathbb C$ by $g\coloneqq\widetilde u-\Lcal\widetilde u$. Since $\norm{\Lcal}_{\mathscr A\to\mathscr A}\leq1$ and $\nu$ is stationary, we have $\norm{g}_{\mathscr A}\leq2C_a$ and $\nu(g)=0$.

Equation~\eqref{eq: Fredholm stationary integral} gives $\partial_t\mathfrak D_g(1,0)=0$. Thus, by equations~\eqref{eq: Fredholm Lyapunov approximation} and \eqref{eq: Fredholm determinant method functional explicit},
\begin{equation} \label{eq: Fredholm stationary defect tail}
(\eta_N^{\mathrm F}-\Lcal^*\eta_N^{\mathrm F})(u)
=\eta_N^{\mathrm F}(g)=\mathcal I_N(g)
=-\frac{\sum_{j>N}\partial_t c_j(0;g)}{\sum_{j=1}^{N}jc_j}.
\end{equation}

We can extend $\Lcal_{t,g}$ to $\widetilde{\Lcal}_{t,g}: A^2(U)\to A^2(U)$ with the same expression. For $\abs{t}\leq1$,
\[
\sum_{i=0}^{m-1}\norm{\pi_i e^{tg\circ f_i}}_{\mathscr A}\leq e^{2C_a}.
\]
Then applying Theorem~\ref{thm: Fredholm coefficient estimate} with $B=e^{2C_a}$ bounds the coefficients for $\det(I-z\widetilde{\Lcal}_{t,g})$ on $A^2(U)$ uniformly in $u$ and $t$. Since $f_i(U)\Subset U$ by the choice of $U$, the map $\widetilde{\Lcal}_{t,g}$ sends $A^2(U)$ boundedly into $\mathscr A\subset A^2(U)$. Thus $\widetilde{\Lcal}_{t,g}$ and $\Lcal_{t,g}$ have the same nonzero generalized eigenspaces, and hence $\det(I-z\widetilde{\Lcal}_{t,g})=\mathfrak D_g(z,t)$. The boundedness of $g$ and equation~\eqref{eq: Fredholm determinant trace identity} allow $\mathfrak D_g(z,t)$ to extend analytically to $t\in\mathbb C$. Consequently, $\abs{c_j(t;g)}\leq C\exp(-\gamma j^{1+1/d})$ for $\abs{t}\leq1$, with $C,\gamma>0$ independent of $u$. Cauchy's estimate gives
\begin{equation} \label{eq: Fredholm determinant coefficients derivative bounds}
\abs{\partial_t c_j(0;g)}\leq C\exp\left(-\gamma j^{1+1/d}\right).
\end{equation}

The denominators in equation~\eqref{eq: Fredholm stationary defect tail} are nonzero for $N\geq N_{\mathrm{FD}}$ and converge to $\partial_z\mathfrak D_g(1,0)$, which is nonzero as $1$ is an algebraically simple eigenvalue of $\Lcal$. Their absolute values therefore have a positive lower bound. Using the reproducing property and equation~\eqref{eq: Fredholm determinant coefficients derivative bounds}, and taking the supremum over $\norm{u}_{\Hscr_a}\leq1$ yield the following, after adjusting $C$ and $\gamma$.
\[
\norm{\eta_N^{\mathrm F}-\Lcal^*\eta_N^{\mathrm F}}_{K_a}
=\sup_{\norm{u}_{\Hscr_a}\leq1}\abs{(\eta_N^{\mathrm F}-\Lcal^*\eta_N^{\mathrm F})(u)}
\leq C\exp\left(-\gamma N^{1+1/d}\right). \qedhere
\]
\end{proof}

\section{Comparison to the Fredholm determinant approach} \label{appendix: Comparison to Fredholm determinant approach}

Let $S_{\square}\subset\mathbb T^2$ be the Sierpi\'nski carpet: that is,
\[
S_{\square} \coloneqq
\left\{
\left( \sum_{n=1}^\infty \frac{a_n}{3^n}, \sum_{n=1}^\infty \frac{b_n}{3^n} \right) \in \mathbb T^2
\setcond
(a_n,b_n)\in \{0,1,2\}^2\setminus\{(1,1)\} \text{ for every } n\geq1
\right\}.
\]
By Kenyon--Peres \cite[Corollary~5.8]{Kenyon--Peres: translated Cantor sets} and Theorem~\ref{thm: polynomial computation}, for Lebesgue a.e.\ $t\in\mathbb T^2$,
\[
\dim_H\big(S_{\square}\cap(S_{\square}+t)\big)
=
\frac{\lambda}{\log 3}
=
1.7853035593787899023276390789798386893785338\cdots\footnote{See Remark~\ref{rmk: visualizer and reproducible materials} for a visualizer of the constellations for this example.},
%1.78530355937878990232763907897983868937853389011173\cdots
\]
where $\lambda$ is the Lyapunov exponent of the symmetric random products of the following matrices:
\begingroup
\small
\setlength{\arraycolsep}{3pt}
\[
\begin{gathered}
B_0=
\begin{pmatrix}
8&0&0&0\\
4&3&0&0\\
4&0&3&0\\
2&2&2&1
\end{pmatrix},
\;
B_1=
\begin{pmatrix}
4&3&0&0\\
3&4&0&0\\
2&2&2&1\\
2&2&1&2
\end{pmatrix},
\;
B_2=
\begin{pmatrix}
3&4&0&0\\
0&8&0&0\\
2&2&1&2\\
0&4&0&3
\end{pmatrix},
\;
B_3=
\begin{pmatrix}
4&0&3&0\\
2&2&2&1\\
3&0&4&0\\
2&1&2&2
\end{pmatrix},
\;
B_4=
\begin{pmatrix}
2&2&2&1\\
2&2&1&2\\
2&1&2&2\\
1&2&2&2
\end{pmatrix},
\\[1em]
B_5=
\begin{pmatrix}
2&2&1&2\\
0&4&0&3\\
1&2&2&2\\
0&3&0&4
\end{pmatrix},
\qquad
B_6=
\begin{pmatrix}
3&0&4&0\\
2&1&2&2\\
0&0&8&0\\
0&0&4&3
\end{pmatrix},
\qquad
B_7=
\begin{pmatrix}
2&1&2&2\\
1&2&2&2\\
0&0&4&3\\
0&0&3&4
\end{pmatrix},
\qquad
B_8=
\begin{pmatrix}
1&2&2&2\\
0&3&0&4\\
0&0&3&4\\
0&0&0&8
\end{pmatrix}.
\end{gathered}
\]
\endgroup
The matrices $A_i=P B_iP^{-1}$ are positive and invertible for every $i=0,\ldots,8$, where
\begingroup
\small
\setlength{\arraycolsep}{3pt}
\[
P=
\begin{pmatrix}
4&-1&-1&0\\
-1&4&0&-1\\
-1&0&4&-1\\
0&-1&-1&4
\end{pmatrix}.
\]
\endgroup
We can prove that $d_* = d = 3$. Also, we have $\max_i \kappa(A_i) = 6/7$.
%The matrix $P$ is obtained by choosing a symmetric tetrahedron $T$, invariant under every $f_{B_i}$, so that $\max_i\diam_{d_H}\!\left(PB_iP^{-1}\Delta\right)$ is small.

We apply our method and the Fredholm determinant method of \cite{Pollicott10} to $\{A_i\}_{i=0}^8$. Table~\ref{tab: constellations for Sierpinski carpets intersection} records the results for our constellation method using Theorem~\ref{thm: polynomial computation} with anchor $b=(1/4,1/4,1/4,1/4)$. Table~\ref{tab: Fredholm determinant method for Sierpinski carpets intersection} records the results for the Fredholm determinant method. Here $N$ is the truncation order of the Fredholm determinant as in \S\S\ref{subsec: The Fredholm determinant and its truncation}, and $\eta_N^{\mathrm{F}}$ is as in \S\S\ref{subsec: The resulting constellation exhibits decay in stationary defect}. The error bounds in both tables are obtained from the clock inequality (Theorem~\ref{thm: Clock inequality}) using interval arithmetic.

To achieve error $\eps>0$, the required arithmetic work is polynomial in $\log(1/\eps)$, or $O(n^3)$ for Table~\ref{tab: constellations for Sierpinski carpets intersection}, and subexponential in $\log(1/\eps)$, or $O(9^N)$ for Table~\ref{tab: Fredholm determinant method for Sierpinski carpets intersection}.

To carry out the large number of computations for Table~\ref{tab: Fredholm determinant method for Sierpinski carpets intersection}, we used family-specific symmetry to reduce the number of needed matrix products; at $N$, the number of raw matrix products of length $N$ is $9^N$, and if it is reduced to $a$, this is indicated as $9^N(a)$ in the ``Orbit'' column.

\begin{table}[!htbp]
\centering
\fontsize{9}{11}\selectfont
\setlength{\tabcolsep}{4pt}
\begin{tabular*}{\textwidth}{@{\extracolsep{\fill}}ll@{\hspace{14pt}}lll@{}}
\toprule
$n$ & $\#\supp\eta_n$ & $\eta_n(\Phi)$ & Observed error & Error bound \\
\midrule
4 & 5 & $1.9613672816\,1765635263\,0878746061\,0440406904\,1467517379$ & $1.09\times10^{-5}$ & $6.18\times10^{-4}$ \\
55 & 57 & $1.9613564293\,4563410269\,0498866274\,9767236370\,7872805186$ & $9.18\times10^{-12}$ & $1.05\times10^{-8}$ \\
150 & 152 & $1.9613564293\,3645185737\,4648693878\,4743870782\,9026358179$ & $2.99\times10^{-15}$ & $9.63\times10^{-13}$ \\
607 & 625 & $1.9613564293\,3645484337\,0855222448\,4798786711\,9112436002$ & $5.07\times10^{-24}$ & $7.94\times10^{-21}$ \\
982 & 1020 & $1.9613564293\,3645484337\,0860296418\,8970784383\,4021003982$ & $3.19\times10^{-28}$ & $9.90\times10^{-25}$ \\
2184 & 2245 & $1.9613564293\,3645484337\,0860296099\,5906677739\,0733112412$ & $1.08\times10^{-36}$ & $9.99\times10^{-33}$ \\
4109 & 4234 & $1.9613564293\,3645484337\,0860296099\,5906666913\,1283942757$ & $9.62\times10^{-45}$ & $9.99\times10^{-41}$ \\
% Unrounded n=6066 approximation (certificate midpoint): 1.96135642933645484337086029609959066669131284904394080056014968769594428093189803300896728937922415181330736223967447160022303503038082499812946780589439294766975
% Unrounded n=6066 certified error bound: 0.0000000000000000000000000000000000000000000001316124262077209102087197099043260623022963712400900731145925606179912784228185993494477013938005866685987616349624
6066 & 6281 & $1.9613564293\,3645484337\,0860296099\,5906666913\,1284904394$ & $7.53\times10^{-51}$ & $1.32\times10^{-46}$ \\
\midrule
$\infty$ & & $1.9613564293\,3645484337\,0860296099\,5906666913\,1284904394\mathrlap{\cdots}$ & & \\
\bottomrule
\end{tabular*}
\caption{Constellation approximations with certified error bounds rounded upward. The $\infty$ row records the true value (certified with the clock inequality).}
\label{tab: constellations for Sierpinski carpets intersection}
\end{table}

\begin{table}[!htbp]
\centering
\fontsize{9}{11}\selectfont
\setlength{\tabcolsep}{3pt}
\begin{tabular}{@{}l@{\hspace{18pt}}ll@{\hspace{12pt}}l@{\hspace{12pt}}l@{\hspace{11pt}}l@{}}
\toprule
$N$ & Orbit & $\#\supp\eta_N^{\mathrm{F}}$ & $\widehat\lambda_N = \eta_N^{\mathrm{F}}(\Phi)$ & Observed error & Error bound \\
\midrule
% N=1, Orbit=9, R=5
% 1.9608003131057206431912820487895032783853821456432342529296875
% Exact counts: N=1, total=9, symmetry representatives=6, support=5.
1 & $9(6)$ & $5$ & $1.9608003131\,0572064319\,1282048790$ & $5.56\times10^{-4}$ & $2.80\times10^{-2}$ \\
% N=2, Orbit=81, R=57
% 1.960544302162972170387560122417625052548828534781932830810546875
% Exact counts: N=2, total=81, symmetry representatives=27, support=57.
2 & $81(27)$ & $57$ & $1.9605443021\,6297217038\,7560122418$ & $8.12\times10^{-4}$ & $4.07\times10^{-2}$ \\
% N=3, Orbit=729, R=625
% 1.960579475398680516780129667875343102423357777297496795654296875
% Exact counts: N=3, total=729, symmetry representatives=130, support=625.
3 & $729(130)$ & $625$ & $1.9605794753\,9868051678\,0129667875$ & $7.77\times10^{-4}$ & $3.88\times10^{-2}$ \\
% N=4, Orbit=6561, R=6281
% 1.959582541666488885042339551656453977557248435914516448974609375
% Exact counts: N=4, total=6561, symmetry representatives=855, support=6281.
4 & $6561(855)$ & $6281$ & $1.9595825416\,6648888504\,2339551656$ & $1.77\times10^{-3}$ & $8.83\times10^{-2}$ \\
% N=5, Orbit=59049, R=60185
% 1.96098802493341657833016145051630019224830903112888336181640625
% Exact counts: N=5, total=59049, symmetry representatives=5934, support=60185.
5 & $59049(5934)$ & $60185$ & $1.9609880249\,3341657833\,0161450516$ & $3.68\times10^{-4}$ & $1.83\times10^{-2}$ \\
% N=6, Orbit=531441, R=561905
% 1.961452744408717389023276422843622412983677349984645843505859375
% Exact counts: N=6, total=531441, symmetry representatives=44487, support=561905.
6 & $531441(44487)$ & $561905$ & $1.9614527444\,0871738902\,3276422844$ & $9.63\times10^{-5}$ & $4.76\times10^{-3}$ \\
% N=7, Orbit=4782969, R=5174265
% 1.961345847353281576287177545570017400677897967398166656494140625
% Exact counts: N=7, total=4782969, symmetry representatives=341802, support=5174265.
7 & $\mathllap{{\scriptstyle\approx}\,}4.78\times10^{6}(3.42\times10^{5})$ & $5174265$ & $1.9613458473\,5328157628\,7177545570$ & $1.06\times10^{-5}$ & $5.21\times10^{-4}$ \\
% N=8, Orbit=43046721, R=47240649
% 1.961357377745979746756950501396232766637695021927356719970703125
% Exact counts: N=8, total=43046721, symmetry representatives=2691675, support=47240649.
8 & $\mathllap{{\scriptstyle\approx}\,}4.30\times10^{7}(2.69\times10^{6})$ & $47240649$ & $1.9613573777\,4597974675\,6950501396$ & $9.48\times10^{-7}$ & - \\
% N=9, Orbit=387420489, R=429047857
% 1.961356365403767380568671174767558795792865566909313201904296875
% Exact counts: N=9, total=387420489, symmetry representatives=21524542, support=429047857.
9 & $\mathllap{{\scriptstyle\approx}\,}3.87\times10^{8}(2.15\times10^{7})$ & $429047857$ & $1.9613563654\,0376738056\,8671174768$ & $6.39\times10^{-8}$ & - \\
% N=10, Orbit=3486784401, R=3883808321
% 1.96135643263951556735345815862814333740971051156520843505859375
% Exact counts: N=10, total=3486784401, symmetry representatives=174348099, support=3883808321.
10 & $\mathllap{{\scriptstyle\approx}\,}3.49\times10^{9}(1.74\times10^{8})$ & $3883808321$ & $1.9613564326\,3951556735\,3458158628$ & $3.30\times10^{-9}$ & - \\
% N=11, Orbit=31381059609, R=35082812825
% 1.961356429205458177448921641694568052116665057837963104248046875
% Exact counts: N=11, total=31381059609, symmetry representatives=1426419858, support=35082812825.
11 & $\mathllap{{\scriptstyle\approx}\,}3.14\times10^{10}(1.43\times10^{9})$ & $35082812825$ & $1.9613564292\,0545817744\,8921641695$ & $1.31\times10^{-10}$ & - \\
% N=12, Orbit=282429536481, R=316481022121
% 1.9613564293404165774815928191543434877530671656131744384765625
% Exact counts: N=12, total=282429536481, symmetry representatives=11767964475, support=316481022121.
12 & $\mathllap{{\scriptstyle\approx}\,}2.82\times10^{11}(1.18\times10^{10})$ & $316481022121$ & $1.9613564293\,4041657748\,1592819154$ & $3.96\times10^{-12}$ & - \\
\midrule
$\infty$ & & & $1.9613564293\,3645484337\,0860296099\mathrlap{\cdots}$ & & \\
\bottomrule
\end{tabular}
\caption{Fredholm approximations with certified error bounds calculated using the clock inequality. The $\infty$ row records the true value. The dash denotes an uncalculated bound due to limits in compute resources.}
\label{tab: Fredholm determinant method for Sierpinski carpets intersection}
\end{table}

\begin{remark} \label{rmk: visualizer and reproducible materials}
For the random intersection of Sierpi\'nski carpets considered in Appendix~\ref{appendix: Comparison to Fredholm determinant approach}, a visualizer showing how the constellations evolve is available at \url{https://constellations-for-lyapunov-exponents-visualizer.nimaalibabaei.workers.dev/}.

The constellations used to certify the numerical values displayed in Examples~\ref{ex: the random Fibonacci sequence} and~\ref{ex: Sierpinski gaskets}, as well as the Hausdorff dimension of the random intersection of Sierpi\'nski carpets considered in Appendix~\ref{appendix: Comparison to Fredholm determinant approach}, are available at \url{https://github.com/NimaAlibabaei/Constellations-for-Lyapunov-exponents}. Applying the clock inequality (Theorem~\ref{thm: Clock inequality}) to these constellations reproduces the certified digits. These constellations were obtained using an implementation of our construction written by ChatGPT (OpenAI). The author has verified the application of the clock inequality to these constellations and the resulting certified values.
\end{remark}

\section*{Acknowledgements}
I would like to thank my supervisor, Masaki Tsukamoto, for his steady support throughout the research process.

The development of this paper has been a collaborative effort with ChatGPT (OpenAI). The author formulated the central idea leading to the clock inequality and developed the final construction of constellations, following many unsuccessful attempts by both the author and ChatGPT. The main contributions of ChatGPT were the initial ideas underlying the proof of the lower-exponent assertion in Proposition~\ref{prop: optimal cardinality is the constellation dimension}, the proof of Theorem~\ref{thm: invertible subfamily lower bounds constellation dimension}, and Example~\ref{ex: Liouville reset}. All ideas from ChatGPT have been carefully digested, refined, and incorporated by the author.

\vspace{0.5cm}

\address{
Department of Mathematics, Kyoto University, Kyoto 606-8501, Japan}

\textit{E-mail}: \texttt{alibabaei.nima.28c@st.kyoto-u.ac.jp}

This work was supported by JSPS KAKENHI Grant Number 25KJ1473.

\end{document}